\newtheorem{lemma}{Lemma}[section]
\newtheorem{theorem}[lemma]{Theorem}
\newtheorem{proposition}[lemma]{Proposition}
\newtheorem{definition}[lemma]{Definition}
\newtheorem{corollary}[lemma]{Corollary}
\newtheorem{example}[lemma]{Example}
\newtheorem{remark}[lemma]{Remark}
\providecommand{\cov}[2]{\mathrm{Cov} \rbraces{#1 , #2}} 
\providecommand{\Var}[1]{\mathrm{Var} \rbraces{#1}}	
\providecommand{\EQ}[2]{\mathbb{E}_{#2} \rbraces{#1}}
\providecommand{\E}[1]{\EQ{#1}{}} 	
\providecommand{\abs}[1]{\left\lvert#1 \right\rvert} 
\newcommand\norm[1]{\left\lVert#1\right\rVert}
\providecommand{\Pb}{\mathbb{P}}
\providecommand{\distConv}{\xrightarrow{\mathcal{D}}}	
\providecommand{\pConv}{\xrightarrow{\mathbb{P}}}	
\providecommand{\NoD}[2]{\mathcal{N} \rbraces{#1 , #2}}
 \newcommand{\rbraces}[1]{\left( #1 \right)}
\providecommand{\cov}[2]{\mathrm{Cov} \rbraces{#1 , #2}} 
\providecommand{\Var}[1]{\mathrm{Var} \rbraces{#1}}	
\providecommand{\EQ}[2]{\mathbb{E}_{#2} \rbraces{#1}}
\providecommand{\E}[1]{\EQ{#1}{}} 	
\providecommand{\abs}[1]{\left\lvert#1 \right\rvert}
\providecommand{\Pb}{\mathbb{P}}
\newcommand{\limn}{\lim\limits_{n \rightarrow \infty}}
\providecommand{\distConv}{\xrightarrow{\mathcal{D}}}	
\providecommand{\pConv}{\xrightarrow{\mathbb{P}}}	
\providecommand{\NoD}[2]{\mathcal{N} \rbraces{#1 , #2}}
\newcommand{\1}{\ensuremath{\mathds{1}}}
\newcommand{\N}{{\mathbb N}}
\newcommand{\R}{{\mathbb R}}
\newcommand{\elln}{{\tilde{\ell}_n}}
\newcommand{\bn}{{\tilde{b}_n}}
\begin{document}

\begin{frontmatter}

\title{Detecting changes in the trend function of heteroscedastic time series}
\runtitle{Detecting changes in the trend function of heteroscedastic time series}

\begin{aug}

\author[A]{\fnms{Sara Kristin} \snm{Schmidt}\ead[label=e1]{sara.schmidt@ruhr-uni-bochum.de}}

\address[A]{Department of Mathematics, Ruhr-Universit\"at Bochum,
  Universit\"atsstra\ss e 150, 44780 Bochum, Germany, \printead{e1}}

\end{aug}

\begin{abstract}
We propose a new asymptotic test to assess the stationarity of a time series' mean that is applicable in the presence of both heteroscedasticity and short-range dependence.  Our test statistic is composed of Gini's mean difference of local sample means. To analyse its asymptotic behaviour, we develop new limit theory for U-statistics of strongly mixing triangular arrays under non-stationarity. Most importantly, we show asymptotic normality of the test statistic under the hypothesis of a constant mean and prove the test's consistency against a very general class of alternatives, including both smooth and abrupt changes in the mean. We propose estimators for all parameters involved, including an adapted subsampling estimator for the long run variance, and show their consistency. 
Our procedure is practically evaluated in an extensive simulation study and in two data examples. 
\end{abstract}

\begin{keyword}[class=MSC2020]
\kwd[Primary ]{62G10}
\kwd[; secondary ]{62M10, 60F05}
\end{keyword}

\begin{keyword}
\kwd{Change-point analysis}
\kwd{tests for a constant trend function}
\kwd{heteroscedasticity}
\kwd{U-statistics of non-stationary triangular arrays}
\kwd{short-range dependence}
\end{keyword}

\end{frontmatter}

\section{Introduction}

Tests for the constancy of a time series' trend function have attracted vivid research interest over the past decades. 
 In their simplest form, they aim to test the hypothesis of stationarity against the alternative of a single change in the mean, considering 
 a sequence of observations $X_i=\mu(i/n)+Y_i$, $i=1, \ldots, n$, with  i.i.d. errors $Y_1, \ldots, Y_n$.
A vast literature has developed around this basic testing problem, proposing different methodologies, generalizing the formulation of the testing problem with respect to the mean function $\mu$, and weakening the assumptions on the error sequence $(Y_i)_{i\in\N}$.
In the classical framework of change point analysis, one tests against the alternative of a piecewise constant mean and if the hypothesis of stationarity is rejected, one oftentimes estimates the location of the change points as well 
(see, for example, the wild binary segmentation algorithm proposed in Fryzlewicz \citep{Fryzlewicz.2014}, the simultaneous multiscale estimator SMUCE introduced in Frick, Munk and Sieling \citep{Frick.2014}, or the monograph  by Cs\"org\H o and Horv\'ath \citep{Csorgo.1997} for a survey). 
More generally, Wu and Chu \citep{WuChu.1993}, Loader \citep{Loader.1996} and Wu and Zhao \citep{WuZhao.2007} 
 examine structural breaks in an otherwise smooth mean function, and  Wu, Woodroofe and Mentz \citep{Wu.2001} study the hypothesis of a constant trend function against the alternative of a monotonic one.
In these and other works, the error sequence is either assumed to be independent or to be short-range dependent, but it is always stationary. 
Relaxing the assumption of homoscedasticity, Horv\'ath \citep{Horvath.1993}
 tests for a structural break in the mean and variance of a sequence $X_i=\mu(i/n)+\sigma(i/n)Y_i$, $i=1, \ldots, n$, with i.i.d. standard Gaussian errors, while Pein, Sieling and Munk \citep{Pein.2017} study the SMUCE estimator for the detection of multiple change points in a similar model. 
The literature considering models with both short-range dependent errors and heteroscedasticity is more scarce. 
Zhao and Li \citep{ZhaoLi.2013} and Pe\v{s}ta and Wendler \citep{Pesta.2020} combine a self-normalized cumulative sum test with  wild bootstrap to test for a single change in the mean.
G\'orecki, Horv\'ath and Kokoszka \citep{Gorecki.2018} 
consider a more general class of alternatives, including mean functions with multiple change points. 
Zhou \citep{Zhou.2013}  proposes a bootstrap approach to test for stationarity
against the alternative of a piecewise Lipschitz-continuous trend function, and Dette and Wu \citep{DetteWu.2019} investigate relevant deviations of a smooth mean function from its original value.

We propose an asymptotic test for the constancy of a time series' trend function that is able to detect arbitrary changes in the mean, including smooth and abrupt ones, and that allows for both short-range dependence and heteroscedasticity. 
More precisely, we develop our  test for time series data $X_1, \ldots, X_n$  stemming from a triangular array of the form

\begin{equation}
\label{Eq: Triangular array X}
  X_i:=X_{i,n}=\mu({i}/{n})+ \sigma({i}/{n}) Y_i, \quad i=1, \ldots, n,
\end{equation}
where $(Y_i)_{i\in\N}$ is a stationary, $\alpha$-mixing process with mean zero and variance one.
The non-stationary mean and variance are described by the mean and the variance function, $\mu:[0,1]\rightarrow \R$ and $\sigma^2:[0,1]\rightarrow (0,\infty)$, respectively. 
In case $\mu$ and $\sigma$ are both continuous, model \eqref{Eq: Triangular array X} is a special case of local stationarity as introduced by Dahlhaus \citep{Dahlhaus.1997}. 
The hypothesis we want to test now reads  
\begin{equation}
\label{Eq: Hypothesis mu constant}
\mathbb{H}: \mu(x)=\mu_H \quad \text{for almost all } x\in [0,1].
\end{equation}

For the construction of the test statistic, we divide the observations $X_1, \ldots, X_n$ into $b_n$ blocks of length $\ell_n$, such that the $j$-th block comprises all indices $\{(j-1)\ell_n+1, \ldots, j\ell_n\}$. On each block $j$, we compute the local sample mean as 
 $$\hat{\mu}_j:=\hat{\mu}_{j,n}:=\frac{1}{\ell_n} \sum_{i=(j-1) \ell_n+1}^{j\ell_n} X_i$$
 and compare these local estimates pairwise, giving rise to the statistic
\begin{equation}\label{teststatistic}
  U(n):=\frac{1}{b_n(b_n-1)} \sum_{1\leq j\neq k \leq b_n} \left|\hat{\mu}_j -\hat{\mu}_k \right|.
\end{equation}
Intuitively, a high value of $U(n)$ hints at large differences between the local block means and hence at some form of non-stationarity in the mean. From a more general view point, $U(n)$ constitutes a U-statistic $\frac{1}{b_n(b_n-1)}\sum_{1\leq j\neq k \leq b_n} \left| \nu_j-\nu_k\right|$, known as Gini's mean difference, whose arguments $\nu_j$ stem from the triangular array of local sample means $(\hat{\mu}_{j,n})_{1\leq j \leq b_n, n\in \N}$.  Due to the heteroscedasticity of the observations $X_1, \ldots, X_n$, 
 the entries $\hat{\mu}_{j,n}$, $j=1, \ldots, b_n$, are non-stationary even under the hypothesis. 
In order to analyse the limit behaviour of $U(n)$, we thus develop completely new asymptotic theory for U-statistics of row-wise non-stationary, $\alpha$-mixing triangular arrays. 
In particular, we establish a central limit theorem for the statistic $U(n)$ under the hypothesis \eqref{Eq: Hypothesis mu constant} of a constant mean, which is the key result for the construction of our asymptotic test. We prove the test's consistency against a very general class of alternatives, namely against all non-constant c\`adl\`ag  functions $\mu$. This class includes, as special cases, the piecewise constant mean functions as considered in change-point analysis as well as arbitrary smoothly varying ones.  
Note that we deliberately omit a standardization of the local sample means in \eqref{teststatistic}, that is, we do not use $\hat{\mu}_j/\hat{\sigma}_j$ with  $\hat{\sigma}_j^2=\frac{1}{\ell_n} \sum_{i=(j-1) \ell_n+1}^{j\ell_n}\left(X_i - \hat{\mu}_j\right)^2$, since we would otherwise not be able to discriminate between high values of $U(n)$ caused by a non-constant mean and those caused by a non-constant variance (see, for example, Theorem \ref{Thm: Behaviour under A} below). As a result, the non-constant variance function $\sigma$ influences the limit distribution of $U(n)$ under the hypothesis of a constant mean (see, Theorems \ref{THM: LLN} and \ref{Thm: Central limit theorem}). 

Our test statistic is inspired by the one recently proposed by Schmidt et al. \citep{Schmidt.2021}. The authors test for stationarity of the variance in an absolutely regular time series via a test statistic based on Gini's mean difference of logarithmic local sample variances. 
Wornowizki, Fried and Meintanis \citep{Wornowizki.2017} previously utilized the same statistic to test for the constancy of the variance in a sequence of i.i.d. observations via a permutation principle. 
Apart from these works, 
to some extent related problems have also been considered by Vogt \citep{Vogt.2015} and by Juhl and Xiao \citep{Juhl.2005}. 
Note that our procedure could be combined with that of Schmidt et al. \citep{Schmidt.2021} to yield a test for stationarity of the first two moments. Moreover, it can be employed to evaluate the fit of some estimated mean function, e.g., of some parametric model as considered in Zhang and Wu \citep{Zhang.2011}, 
by subtracting the fitted trend function from the data and 
applying our test to the residuals. We will discuss such extensions later on in Section \ref{Sec: Extensions}. 

The performance of our test is evaluated in an extensive simulation study. Examining a variety of data-generating processes, including ARMA- and GARCH-processes,  we find our test to have good finite sample properties both under the hypothesis and under several alternatives considered, among which are smoothly varying as well as piecewise constant mean functions. In particular, we find our test to be robust with respect to different types of variation in the variance. 
As opposed to that, procedures constructed to test against only smooth or only abrupt variation in the mean will seldom achieve consistency in such a general setting, whereas tests designed for a constant variance are likely to be oversized in the presence of heteroscedasticity.  Our test has a low computation time and does not involve any bootstrap techniques since the test statistic is based on a pairwise comparison of local estimates and since critical values are easily obtained from the 
normal limit distribution. The applicability of our test is demonstrated in 
two data examples. As a first example, we analyse the well-studied annual central England temperature record from 1659 to 2020 and find that the annual average temperatures can be described by either a piecewise constant or a by cubic trend function, the latter coinciding with the findings in Zhang and Wu \citep{Zhang.2011}. In a second data example, we use our test to detect mean changes in the annual increments of the monthly number of live births of girls in Germany from 1950 to 2020.

The rest of the paper is structured as follows: Section \ref{Sec: Main results} presents our main results and is divided into four parts.  After introducing the required preliminaries in \ref{Subsec: Defs and assumptions}, the asymptotic results for the U-statistic $U(n)$ and the estimation of the nuisance parameters are covered in \ref{Subsec: Limit theory for U(n)} and \ref{Subsec: Estimation of Parameters}, respectively. These results are afterwards combined in \ref{Subsec: The asymptotic test} to yield the asymptotic test. Section \ref{Sec: Outline of Proofs} outlines the proof ideas for the main theorems, while Section \ref{Sec: Extensions} proposes some extensions of our test. The results of the simulation study are presented in Section \ref{Sec: Simulations}, and the data examples are treated in Section \ref{Sec: Data Example}. All proof details as well as some additional results from the simulation study and the data examples are deferred to an appendix.

\section{Main results}
\label{Sec: Main results}
\subsection{Basic definitions and assumptions} 
\label{Subsec: Defs and assumptions}
Our asymptotic test is based on observations $X_1, \ldots, X_n$ stemming from a triangular array of type \eqref{Eq: Triangular array X}, that is, $X_i=\mu({i}/{n})+ \sigma({i}/{n}) Y_i$, where $(Y_i)_{i\in \N}$ is a stationary,  $\alpha$-mixing process with mean zero and variance one.

\begin{definition}
The $\alpha$-mixing coefficient of two $\sigma$-fields $\mathcal{A}$ and $\mathcal{B}$ is defined as
$$\alpha(\mathcal{A},\mathcal{B})= \sup \{\abs{\Pb(A)\Pb(B)-\Pb(A\cap B)}: A\in \mathcal{A}, B\in \mathcal{B}  \}$$
and a process $(Y_i)_{i\in\N}$ is called $\alpha$-mixing (or strongly mixing) if
$$\alpha_Y(k):=\sup_{m\in\N} \alpha(\sigma\rbraces{Y_1, \dots, Y_m}, \sigma\rbraces{Y_{k+m}, Y_{k+m+1}, \dots })\longrightarrow 0 \quad \text{  as } k \rightarrow \infty.$$
\end{definition} 

The assumption of $\alpha$-mixing is met by a wide range of time series models $(Y_i)_{i\in \N}$; very general examples can be found in Doukhan \citep{Doukhan.1994}. In light of the conditions of our asymptotic test and its practical applicability, we focus on three important examples, all of which are $\alpha$-mixing with a geometric rate, that is, $\alpha_Y(k)=O(e^{-\xi k})$ for some $\xi>0$. 

\begin{example}{
\label{Example: ARMA, GARCH}
\begin{enumerate}
\item Let $(Y_i)_{i\in\N}$ be a strictly stationary, causal ARMA(p,q)-process, i.e. let 
$$Y_i= \varepsilon_i + \sum_{j=1}^p \alpha_jY_{i-j} +\sum_{m=1}^q \beta_m \varepsilon_{i-m},$$
with all roots of  the AR-polynomial 
being larger than one in absolute value and with the AR-and MA-polynomials having no common roots. If the innovations $(\varepsilon_i)_{i\in\N}$ are independent with an absolutely continuous distribution with respect to the Lebesgue measure, then $(Y_i)_{i\in\N}$ is $\alpha$-mixing with geometric rate as shown by Mokkadem \citep{Mokkadem.1988}. 
\item Let $(Y_i)_{i\in\N}$ be a strictly stationary GARCH(p,q)-process, i.e. a strictly stationary solution to the equations
$$Y_i=\sigma_i\varepsilon_i\quad \text{and} \quad\sigma_i^2=\alpha_0+ \sum_{j=1}^p\alpha_j Y_{i-j}^2+\sum_{m=1}^q \beta_m \sigma_{i-m}^2,$$
where $(\varepsilon_i)_{i\in \N}$ is an i.i.d. error sequence. Given $\E{\abs{\varepsilon_1}^r}<\infty$ for some $r\in(0,\infty)$ and given $\varepsilon_1$ is absolutely continuous with a Lebesgue density that is strictly positive in a neighbourhood of zero, $(Y_i)_{i\in\N}$ is $\alpha$-mixing with geometric rate (see, Lindner \citep{Lindner2009} and the references therein). 
\item Let $(Y_i)_{i\in\N}$ be a strictly stationary non-linear AR(1)-process, that is, let
$$Y_i=f(Y_{i-1})+\varepsilon_i,$$
where 
$(\varepsilon_i)_{i\in \N}$ is an i.i.d. 
error sequence and $f:\R\rightarrow \R$ is measurable. Assume the distribution of $\varepsilon_1$ is equivalent to the Lebesgue measure with $\E{\varepsilon_1}=0$ and $\E{\abs{\varepsilon_1}}<\infty$. 
Moreover, let there exist some $0\leq \lambda<1$ and some constant $C$ such that $\abs{f(x)}\leq \lambda
\abs{x}+C$.
Then $(Y_i)_{i\in\N}$ is $\alpha$-mixing with geometric rate (see, e.g., Theorem 3.2 in An and Huang \citep{An.1996} in combination with Theorem 3.7 in Bradley \citep{Bradley.2005}).
\end{enumerate}
}\end{example}

Coming back to model \eqref{Eq: Triangular array X}, we will subsequently assume that there exists a $\delta>0$ such that 
 \begin{align*}
\text{(A1)} & \quad \E{\abs{Y_1}^{2+\delta}}<\infty \\
 \text{(A2)} & \quad \sum_{k=1}^{\infty} \alpha_Y(k)^{\delta/(2+\delta)}<\infty
\end{align*}
hold,  in which case the long run variance 
\begin{equation*}
{\kappa}_Y^2:=\Var{Y_1} +2 \sum_{k=1}^\infty \cov{Y_1}{Y_{k+1}}
\end{equation*}
of the process $(Y_i)_{i\in \N}$ is finite. We will henceforth additionally assume that ${\kappa}_Y^2>0$. 
Moreover, we assume the mean $\mu:[0,1]\rightarrow \R$ to be a c\`adl\`ag function and $\sigma: [0,1]\rightarrow (0,\infty)$  to be piecewise Lipschitz-continuous (i.e. a Lipschitz-continuous function to which one added a piecewise constant function with a fixed number of $k_\sigma$ jumps of maximal absolute height $\Delta_\sigma$) with $\sigma(x)\geq \sigma_0$ for all $x\in [0,1]$ and some $\sigma_0>0$. Our intention is to test the hypothesis of a constant mean, i.e. $\mu=\mu_H$, against the alternative of a non-constant  c\`adl\`ag mean function. Under the hypothesis, it then holds $\E{X_i}=\mu_H$ and $\Var{X_i}=\sigma^2(i/n)$.

To construct our asymptotic test, we split the observations $X_1, \ldots, X_n$ into $b_n$ blocks of length $\ell_n$, where 
we denote $\ell_n=n^s$ and $b_n=n^{1-s}$. For simplicity, we assume that both $\ell_n$ and $b_n$ are integers. We will subsequently always assume that $s\in (0.5, 1)$, i.e. we assume the block length $\ell_n$ to increase faster than the number of blocks $b_n$. By comparing the local sample means $\hat{\mu}_j$ of the blocks $j=1, \ldots, b_n$ within a U-statistic, we arrive at the statistic $U(n)=\frac{1}{b_n(b_n-1)} \sum_{1\leq j\neq k \leq b_n} \left|\hat{\mu}_j -\hat{\mu}_k \right|$ from \eqref{teststatistic}, whose asymptotic behaviour is analysed in the next section.

\subsection{Limit theory for the U-statistic $U(n)$}
\label{Subsec: Limit theory for U(n)}
 Under mild assumptions, we obtain $L^2$-convergence of the statistic $U(n)$ towards a double Riemann-integral.
\begin{theorem}
\label{Thm: Behaviour under A}
Let the assumptions (A1) and (A2) hold. Then, it holds both under the hypothesis and under the alternative that
$$U(n) \overset{L^2}{\longrightarrow}  \int_0^1 \int_0^1 \abs{\mu(x)-\mu(y)} \mathrm{d}x \mathrm{d}y \quad \text{as } n\rightarrow \infty.$$
\end{theorem}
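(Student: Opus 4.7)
The plan is to decompose each local sample mean as $\hat{\mu}_j = \bar{\mu}_j + \bar{e}_j$, where
\[
\bar{\mu}_j := \frac{1}{\ell_n}\sum_{i=(j-1)\ell_n+1}^{j\ell_n}\mu(i/n), \qquad \bar{e}_j := \frac{1}{\ell_n}\sum_{i=(j-1)\ell_n+1}^{j\ell_n}\sigma(i/n)Y_i,
\]
separate the deterministic trend contribution from the mean-zero stochastic fluctuation. This induces the splitting $U(n) = V(n) + R(n)$, where
\[
V(n) := \frac{1}{b_n(b_n-1)}\sum_{1\leq j\neq k \leq b_n}\abs{\bar{\mu}_j - \bar{\mu}_k}
\]
is purely deterministic and $R(n) := U(n) - V(n)$ is a random remainder. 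Writing $T := \int_0^1\int_0^1|\mu(x)-\mu(y)|\,\mathrm{d}x\,\mathrm{d}y$, it suffices to show $V(n)\to T$ and $R(n)\to 0$ in $L^2$, since then $\mathbb{E}[(U(n)-T)^2]\leq 2(V(n)-T)^2+2\mathbb{E}[R(n)^2]\to 0$.

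For the random remainder, the reverse triangle inequality gives the pointwise bound $\abs{|\hat{\mu}_j-\hat{\mu}_k|-|\bar{\mu}_j-\bar{\mu}_k|}\leq|\bar{e}_j|+|\bar{e}_k|$. Applying Jensen's inequality to $x\mapsto x^2$ against the uniform average over pairs and taking expectations yields
\[
\mathbb{E}[R(n)^2]\leq\frac{1}{b_n(b_n-1)}\sum_{j\neq k}\mathbb{E}\bigl[(|\bar{e}_j|+|\bar{e}_k|)^2\bigr]\leq 4\max_{1\leq j\leq b_n}\mathbb{E}[\bar{e}_j^2].
\]
Expanding $\mathbb{E}[\bar{e}_j^2]$ as a weighted double sum of covariances $\mathrm{Cov}(Y_i,Y_{i'})$, using the uniform bound $\sigma\leq\sigma_{\max}$ and the Davydov covariance inequality for $\alpha$-mixing sequences under (A1) and (A2), one obtains $\sum_{k}|\mathrm{Cov}(Y_1,Y_{k+1})|<\infty$ and consequently $\mathbb{E}[\bar{e}_j^2]=O(1/\ell_n)$ uniformly in $j$. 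Since $\ell_n\to\infty$, this shows $R(n)\to 0$ in $L^2$.

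For the deterministic part, introduce the step function $\mu_n(x) := \bar{\mu}_{\lceil xb_n\rceil}$ on $(0,1]$. Because the diagonal terms vanish,
\[
V(n) = \frac{b_n}{b_n-1}\int_0^1\!\!\int_0^1|\mu_n(x)-\mu_n(y)|\,\mathrm{d}x\,\mathrm{d}y,
\]
and the prefactor converges to $1$. As $\mu$ is c\`adl\`ag on $[0,1]$, it is bounded and its discontinuity set is at most countable, hence Lebesgue-null. At any continuity point $x$, every sampling point $i/n$ contributing to $\bar{\mu}_{\lceil xb_n\rceil}$ lies within $1/b_n$ of $x$, so $\mu_n(x)\to\mu(x)$ pointwise almost everywhere. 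Uniform boundedness and dominated convergence give $\mu_n\to\mu$ in $L^1([0,1])$, and one further application of the triangle inequality under the double integral transfers this to $\int_0^1\!\int_0^1|\mu_n(x)-\mu_n(y)|\,\mathrm{d}x\,\mathrm{d}y\to T$.

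The principal obstacle is ensuring the $O(1/\ell_n)$ variance bound for $\bar{e}_j$ holds uniformly in $j$ in spite of the non-stationary variance function $\sigma(i/n)$; this is resolved by factoring $\sigma_{\max}^2$ out of the weighted covariance sum so that the stationary mixing structure of $(Y_i)_{i\in\mathbb{N}}$ can be exploited directly. The c\`adl\`ag assumption on $\mu$ is precisely what the Riemann-type approximation step requires: boundedness (for dominated convergence) and almost everywhere pointwise convergence of the block averages, with no smoothness needed beyond at most countably many jumps.
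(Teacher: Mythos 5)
Your proof is correct, and while the overall strategy (split off the mean-zero noise, kill it in $L^2$ via Davydov's inequality, then handle a deterministic Riemann-type sum) matches the paper's, the execution of both halves differs in ways worth noting. For the stochastic part, you bound $U(n)-V(n)$ directly by $\frac{2}{b_n}\sum_j|\bar e_j|$ without first passing to the intermediate statistic $U_1(n)$ in which $\sigma(i/n)$ is replaced by $\sigma(j/b_n)$ blockwise; that detour is what the paper uses because $U_1(n)$ is needed again for the later CLT, but it is genuinely unnecessary for this theorem, so your route is leaner. For the deterministic part, the paper replaces the block averages $\bar\mu_j$ by the endpoint values $\mu(j/b_n)$ using Billingsley's oscillation lemma for c\`adl\`ag functions and then invokes Lebesgue's criterion for Riemann integrability of $g(x,y)=|\mu(x)-\mu(y)|$ to pass from the Riemann sum to the double integral; you instead keep the block averages, encode them as a step function $\mu_n$, prove $\mu_n\to\mu$ pointwise off the (countable, hence null) discontinuity set, and conclude by dominated convergence that $\mu_n\to\mu$ in $L^1([0,1])$, which transfers to the double integral via the triangle inequality. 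Your version buys a more elementary argument that sidesteps Riemann-integrability considerations entirely and works within Lebesgue theory; the paper's version buys the reusable intermediate fact that $\frac{1}{\ell_n}\sum_i\mu(i/n)$ is uniformly close to $\mu(j/b_n)$ on all but finitely many blocks, which feeds into its subsequent estimator analysis. Both are complete; no gap.
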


Clearly, the above double integral is zero if and only if $\mu$ is non-constant only on a Lebesgue-negligible set. Consequently, our test will achieve consistency against the large class of non-constant c\`adl\`ag mean functions. 
Under the hypothesis, $U(n)$ converges towards zero, such that we further investigate its behaviour for constant means. 
By using an additional scaling factor $\frac{\sqrt{\ell_n}}{\kappa_Y}$ comprising the block length $\ell_n$ and the long run variance $\kappa_Y^2$, we obtain a law of large numbers. 

\begin{theorem}
\label{THM: LLN}
Assume there is a $\rho>1$ and a $0< \delta \leq 1$ such that $\mathbb{E}\big(\abs{Y_1}^{2+\delta}\big)<\infty$ and $\alpha_Y(k)\leq Ck^{-\rho (2+\delta)(1+\delta)/\delta^2}$. Moreover, let  
$s>\big(1+\frac{\delta^2}{\rho (2+\delta)(1+\delta)}\big)/\big(2+\frac{\delta^2}{\rho (2+\delta)(1+\delta)}\big)$ for $\ell_n=n^s$. Then it holds under the hypothesis  
$$\frac{\sqrt{\ell_n}}{\kappa_Y}U(n)\overset{L^1}{\longrightarrow}  \int_0^1 \int_0^1 \sqrt{\sigma^2(x)+\sigma^2(y)}\mathrm{d}x\mathrm{d}y \cdot \sqrt{\frac{2}{\pi}} \quad \text{as } n\rightarrow \infty. $$
\end{theorem}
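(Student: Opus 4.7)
The plan is to write $V_n := (\sqrt{\ell_n}/\kappa_Y)\,U(n)$ as the U-statistic
\begin{align*}
V_n = \frac{1}{b_n(b_n-1)}\sum_{j \neq k} h_{jk}, \qquad h_{jk} := \frac{|Z_j - Z_k|}{\kappa_Y},
\end{align*}
built from the centred block sums $Z_j := \sqrt{\ell_n}(\hat\mu_j - \mu_H) = \ell_n^{-1/2}\sum_{i=(j-1)\ell_n+1}^{j\ell_n} \sigma(i/n)\,Y_i$ (which have mean zero under the hypothesis), and then to split the desired $L^1$-limit into a bias and a concentration contribution:
\begin{align*}
\mathbb{E}|V_n - I| \;\le\; |\mathbb{E}[V_n] - I| \;+\; \mathbb{E}|V_n - \mathbb{E}[V_n]|, \qquad I := \sqrt{2/\pi}\int_0^1\!\int_0^1 \sqrt{\sigma^2(x)+\sigma^2(y)}\,dx\,dy.
\end{align*}

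For the bias, set $x_j := j/b_n$. Using piecewise Lipschitz continuity of $\sigma$ to replace $\sigma(i/n)$ by $\sigma(x_j)$ on block $B_j$ (with an $O(\ell_n/n)$ error on the Lipschitz blocks and an $O(1/b_n)$ contribution from the finitely many jump blocks, both negligible after averaging), together with the standard stationary covariance calculation, one checks that $\mathrm{Var}(Z_j)/\kappa_Y^2 \to \sigma^2(x_j)$ uniformly in $j$ outside the jump blocks. A central limit theorem for weighted $\alpha$-mixing triangular arrays (e.g.\ Herrndorf or Rio), together with the Cram\'er--Wold device and the vanishing cross-covariance between two distinct block sums under (A2), then yields $(Z_j,Z_k)/\kappa_Y \distConv (\sigma(x_j)\xi_1,\sigma(x_k)\xi_2)$ with independent standard Gaussians $\xi_1,\xi_2$. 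A Rosenthal-type inequality for $\alpha$-mixing sums gives $\mathbb{E}[h_{jk}^{1+\eta}] \le C$ uniformly in $(j,k,n)$ for some $\eta>0$ (this is precisely where the sharpened moment and mixing assumptions enter), hence uniform integrability and therefore $\mathbb{E}[h_{jk}] \to \sqrt{2/\pi}\sqrt{\sigma^2(x_j)+\sigma^2(x_k)}$. A Riemann-sum argument then gives $\mathbb{E}[V_n] \to I$.

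For the concentration part, I would decompose $\mathrm{Var}(V_n)$ according to $|\{j,k\}\cap\{j',k'\}|$. The quadruples with a shared index (or $\{j,k\}=\{j',k'\}$) number $O(b_n^3)$, and since $\|h_{jk}\|_2$ is bounded uniformly, their combined contribution is $O(1/b_n)$. The critical term comes from the $O(b_n^4)$ quadruples with four distinct indices; there I would invoke the Rio covariance inequality for $\alpha$-mixing variables applied to the disjoint block sums, whose constituents are at least $\ell_n$ indices apart. Combined with the polynomial decay $\alpha_Y(k) \lesssim k^{-\rho(2+\delta)(1+\delta)/\delta^2}$ and the lower bound on $s$ posited in the theorem, the total contribution vanishes as $n \to \infty$, giving $\mathrm{Var}(V_n) \to 0$ and hence $\mathbb{E}|V_n - \mathbb{E}[V_n]| \to 0$.

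The main obstacle I expect is making the bias-step convergence uniform over the $\sim b_n^2$ pairs $(j,k)$, since pointwise convergence of $\mathbb{E}[h_{jk}]$ does not by itself imply convergence of the Riemann sum. The cleanest route is a big-block/small-block decomposition of each $Z_j$: shave off an interior buffer of length $p_n$ at each block boundary so that the shortened block sums from different blocks are independent up to an $\alpha_Y(p_n)$ total-variation error, and then apply a Lyapunov CLT with an explicit Berry--Esseen-type remainder for the resulting independent summands with heteroscedastic weights $\sigma(i/n)$. The precise mixing-rate condition and the specific lower bound on $s$ in the hypotheses of the theorem are exactly what is needed for both the buffer loss and the Berry--Esseen remainder to vanish uniformly in $(j,k)$; once this is in place, everything else is Riemann-sum and variance bookkeeping.
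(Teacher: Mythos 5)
Your overall architecture matches the paper's: freeze the variance at $\sigma(j/b_n)$ on each block, pass to a big-block/small-block decomposition with a coupling to independent block sums, use a Berry--Esseen-type bound (the paper uses Tikhomirov's theorem for $\alpha$-mixing sums together with Petrov's weighted bound) to get convergence of $\E{h_{jk}}$ \emph{uniformly} in $(j,k)$, and finish with a Riemann-sum argument plus a variance bound. Your diagnosis that pointwise convergence of $\E{h_{jk}}$ is not enough, and that the lower bound on $s$ is exactly what makes the buffer loss and the coupling error $b_n\alpha_Y(m_n)$ vanish, is correct and is precisely how the paper proceeds (Propositions \ref{Prop: Approximation U U1}, \ref{Prop: Approx U1 U2} and \ref{Prop: Replacing centring term}).

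The genuine gap is in your concentration step. For a quadruple of four distinct indices you propose to bound $\cov{\abs{Z_j-Z_k}}{\abs{Z_{j'}-Z_{k'}}}$ by "the Rio covariance inequality applied to the disjoint block sums, whose constituents are at least $\ell_n$ indices apart." Neither clause holds in general: distinct blocks can be adjacent (the last index of block $j$ and the first index of block $j+1$ differ by one, not by $\ell_n$), and, more seriously, for interleaved or nested configurations such as $\{j,k\}=\{1,3\}$ versus $\{j',k'\}=\{2,4\}$ the two index sets cannot be separated into a ``past'' and a ``future'' $\sigma$-field with a growing gap, so the Davydov/Rio covariance inequality for $\alpha$-mixing simply does not apply; one would need an interlaced mixing condition ($\alpha^*$- or $\rho^*$-mixing) that is not assumed. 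The paper sidesteps this entirely: after the Peligrad coupling the block sums $\tilde{S}_{nj}'$ are \emph{exactly} independent, so every covariance with $\{j,k\}\cap\{j',k'\}=\emptyset$ vanishes identically, and only the $O(b_n^3)$ shared-index terms survive, giving $\Var{\frac{\sqrt{\ell_n}}{\kappa_Y}U_2(n)}=O(1/b_n)$ as in Proposition \ref{Prop: LLN U2}. Since you already invoke the coupling for the bias step, the fix is to perform it once, globally, \emph{before} splitting into bias and concentration — i.e., reduce $U(n)$ to the independent-argument statistic $U_2(n)$ first (controlling $\E{\abs{U(n)-U_2(n)}}$ by the Lipschitz property of $\sigma$, the negligibility of the shaved remainders, and the coupling error $\sqrt{b_n\alpha_Y(m_n)}$), and only then run your bias/variance argument on $U_2(n)$.
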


The limit in Theorem \ref{THM: LLN} can be used as centring term when deriving a central limit theorem for $U(n)$, for which we require an additional scaling $\sqrt{b_n}$ that depends on the number of blocks. 

\begin{theorem}
\label{Thm: Central limit theorem}
Assume there is a $\rho>1$ and a $0< \delta \leq 1$ such that $\mathbb{E}\big(\abs{Y_1}^{2+\delta}\big)<\infty$ and $\alpha_Y(k)\leq Ck^{-\rho (2+\delta)(1+\delta)/\delta^2}$. Let  $s>1/\big(1+\delta\frac{\rho-1}{\rho+1}\big)\vee \big(1+\frac{\delta^2}{\rho (2+\delta)(1+\delta)}\big)/\big(2+\frac{\delta^2}{\rho (2+\delta)(1+\delta)}\big)$ for $\ell_n=n^s$.  Then it holds under the hypothesis 
$$\sqrt{b_n}\bigg(\frac{\sqrt{\ell_n}}{\kappa_Y}U(n) -  \int_0^1 \int_0^1 \sqrt{\sigma^2(x)+\sigma^2(y)}\mathrm{d}x\mathrm{d}y \cdot \sqrt{\frac{2}{\pi}}\bigg) \distConv \NoD{0}{\psi^2} \quad \text{as } n\rightarrow \infty, $$
where  $$\psi^2=4\int_0^1 \mathbb{E}\bigg(\bigg |\int_0^1 \E{\abs{\sigma(x)Z-\sigma(y)Z'}|Z}-\E{\abs{\sigma(x)Z-\sigma(y)Z'}}\mathrm{d}y \bigg |^2\bigg) \mathrm{d}x$$ 
for two independent standard normal random variables $Z$ and $Z'$.
\end{theorem}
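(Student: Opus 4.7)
Under the hypothesis $\mu\equiv\mu_H$ one has $X_i-\mu_H=\sigma(i/n)Y_i$, so the normalized block means
$$S_j := \frac{\sqrt{\ell_n}}{\kappa_Y}\rbraces{\hat\mu_j-\mu_H}=\frac{1}{\kappa_Y\sqrt{\ell_n}}\sum_{i=(j-1)\ell_n+1}^{j\ell_n}\sigma(i/n)Y_i$$
satisfy $\frac{\sqrt{\ell_n}}{\kappa_Y}U(n)=\frac{1}{b_n(b_n-1)}\sum_{j\neq k}\abs{S_j-S_k}$. The plan is to split this into a deterministic piece $\bar\theta_n:=\frac{1}{b_n(b_n-1)}\sum_{j\neq k}\E{\abs{S_j-S_k}}$ and a stochastic fluctuation $W_n:=\frac{1}{b_n(b_n-1)}\sum_{j\neq k}(\abs{S_j-S_k}-\E{\abs{S_j-S_k}})$, handle these separately, and recombine. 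For the deterministic piece I would combine a Berry--Esseen bound for partial sums of the $\alpha$-mixing, heteroscedastic triangular array $(\sigma(i/n)Y_i)_i$ over a single block---so that $S_j$ is approximated in distribution by $\sigma_j^*Z$ with $\sigma_j^{*2}=\frac{1}{\kappa_Y^2\ell_n}\Var{\sum_{i\in B_j}\sigma(i/n)Y_i}\to\sigma^2(j/b_n)$---with the identity $\E{\abs{aZ-bZ'}}=\sqrt{2(a^2+b^2)/\pi}$ and a Riemann-sum approximation using the piecewise Lipschitz continuity of $\sigma$. Controlling both residuals at rate $o(1/\sqrt{b_n})$ is what dictates the lower bound on $s$ in the hypothesis.

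For the stochastic piece I would carry out a Hoeffding-type projection adapted to the non-stationary, dependent setting. Define
$$\phi_{jk}(s):=\E{\abs{s-S_k}}-\E{\abs{S_j-S_k}},\qquad \bar\phi_j(s):=\frac{1}{b_n-1}\sum_{k\neq j}\phi_{jk}(s),$$
$$R_{jk}:=\abs{S_j-S_k}-\E{\abs{S_j-S_k}}-\phi_{jk}(S_j)-\phi_{kj}(S_k),$$
so that $W_n=\frac{2}{b_n}\sum_{j=1}^{b_n}\bar\phi_j(S_j)+\frac{1}{b_n(b_n-1)}\sum_{j\neq k}R_{jk}$. The residuals $R_{jk}$ are doubly centred against the marginal laws of $S_j$ and $S_k$; exploiting this cancellation together with Rio-type $\alpha$-mixing covariance inequalities applied to the $1$-Lipschitz kernel, I would bound $b_n\cdot\Var{\frac{1}{b_n(b_n-1)}\sum_{j\neq k}R_{jk}}$ by a series over block-index lags that is summable thanks to the mixing-rate assumption, so this contribution is $o_{L^2}(1)$.

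The remaining linear term $\frac{2}{\sqrt{b_n}}\sum_j\bar\phi_j(S_j)$ is then analyzed via a big-block/small-block construction at the level of the block index $j$: group the $b_n$ summands into alternating long blocks of length $p_n$ and short blocks of length $q_n$ with $p_n,q_n\to\infty$, $q_n/p_n\to 0$ and $(b_n/p_n)\alpha_Y(q_n\ell_n)\to 0$, which is feasible because the geometric-type mixing rate imposed in the hypothesis controls $\alpha_Y(q_n\ell_n)$. A coupling along the small blocks reduces the long-block sums to an asymptotically independent array, and a Lindeberg--Lyapunov CLT---with moment bounds on $\bar\phi_j(S_j)$ inherited from (A1) via the $1$-Lipschitz property---yields asymptotic normality. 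The limit variance is identified by replacing $\bar\phi_j(S_j)$ with $h_1(j/b_n,Z_j)$, where $h_1(x,z):=\int_0^1\E{\abs{\sigma(x)z-\sigma(y)Z'}}\mathrm dy-\int_0^1\int_0^1\E{\abs{\sigma(x)Z-\sigma(y)Z'}}\mathrm dy\mathrm dx$ (the substitution error being negligible by continuity of $(a,b)\mapsto\E{\abs{aZ-bZ'}}$ together with the same block-sum CLT used for the deterministic piece), and by a Riemann-sum convergence $\frac{1}{b_n}\sum_j\Var{h_1(j/b_n,Z_j)}\to\int_0^1\Var{h_1(x,Z)}\mathrm dx$, producing exactly the stated $\psi^2$.

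The hardest part is the uniform joint normal approximation for $(S_j,S_k)$ required in the deterministic piece: because the kernel $\abs{\cdot}$ is only Lipschitz, transporting the CLT to $\E{\abs{S_j-S_k}}$ at rate $o(1/\sqrt{b_n})$ after summation over all $b_n^2$ pairs demands a Berry--Esseen bound for heteroscedastic $\alpha$-mixing arrays that is quantitatively sharp in every parameter. This is exactly what forces the delicate interplay between the moment exponent $\delta$, the mixing-rate exponent $\rho$ and the block-length exponent $s$ appearing in the hypothesis, and I expect most of the technical work in a full proof to be concentrated in this quantitative CLT together with the Rio-type covariance bound needed to dispose of the residual sum $\sum_{j\neq k}R_{jk}$.
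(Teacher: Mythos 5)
Your overall strategy is viable and the centring/fluctuation split, the Hoeffding-type projection, and the identification of $\psi^2$ all mirror what must happen in any proof, but you take a genuinely different route from the paper in one structural respect: you decompose first and fight the dependence inside each piece, whereas the paper removes the dependence first. Concretely, the paper (i) replaces $\sigma(i/n)$ by $\sigma(j/b_n)$ within each block (Proposition \ref{Prop: Approximation U U1}) and (ii) uses Peligrad's coupling to replace the slightly shortened block sums by an array of \emph{independent} random variables $\tilde{S}_{nj}'$ with the same marginals (Proposition \ref{Prop: Approx U1 U2}), showing both substitutions cost $o(1/\sqrt{n})$. After that the U-statistic has row-wise independent arguments, so the Hoeffding decomposition is exact: the degenerate part has pairwise uncorrelated summands and its variance is $O(1/b_n)$ by direct computation, and the linear part is a sum of independent centred variables to which Lyapunov's CLT applies with no Bernstein blocking at the block-index level. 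The Berry--Esseen step for the centring term is then applied to $\sigma_{j\ell_n}\tilde{S}_{n1}'-\sigma_{k\ell_n}\tilde{S}_{n2}'$, which is a partial sum of a genuinely \emph{stationary} mixing sequence built from two independent copies, so Tikhomirov's theorem applies as stated.

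Two places in your route are harder than your sketch suggests. First, your residual $R_{jk}$ is centred against the \emph{marginal} laws of $S_j$ and $S_k$, so under dependence $\E{R_{jk}\mid S_j}$ does not vanish; the degeneracy is only approximate, and the cross-covariances of $R_{j_1k_1}$ and $R_{j_2k_2}$ for disjoint index pairs are not zero but only small. You would need the full Yoshihara-type machinery --- bounding the gap between conditional and marginal expectations by mixing coefficients and then summing over all configurations of four block indices --- rather than a single covariance inequality; this is feasible here because non-adjacent blocks are separated by at least $\ell_n$ observations, but it is a substantial piece of work compressed into one sentence. Second, your Berry--Esseen bound must hold for the \emph{non-stationary} weighted array $\sigma(i/n)Y_i$, while the standard result (Tikhomirov) is stated for strictly stationary sequences; this is precisely why the paper freezes $\sigma$ within blocks and couples to independence before invoking it. Neither issue is fatal, but both are exactly the points where the paper's order of operations buys a much shorter argument.
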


For certain process classes, the assumptions of Theorem \ref{Thm: Central limit theorem} can be simplified considerably.

\begin{example}{
If $(Y_i)_{i\in\N}$ is $\alpha$-mixing with a geometric rate, as is the case for the processes in Example \ref{Example: ARMA, GARCH}, Theorem \ref{Thm: Central limit theorem} only requires the existence of some $0<\delta\leq 1$ such that $\E{\abs{Y_1}^{2+\delta}}<\infty$ and $s>1/(1+\delta)$ since a geometric rate corresponds to the case $\rho\rightarrow \infty$. 
}\end{example}

Theorem \ref{Thm: Central limit theorem} is not only the key result to our asymptotic test in Section \ref{Subsec: The asymptotic test} 
but also gives several interesting insights into the underlying mathematical theory: Firstly, the double asymptotics of the statistic  $U(n)=\frac{1}{b_n(b_n-1)} \sum_{1\leq j\neq k \leq b_n} \left|\hat{\mu}_{j,n} -\hat{\mu}_{k,n} \right|$ become visible. They are due to the composed structure of $U(n)$, with the inner structure consisting of the triangular array of local sample means $\hat{\mu}_{j,n}=\frac{1}{\ell_n}\sum_{i=(j-1)\ell_n+1}^{j\ell_n} X_i$, $1\leq j\leq b_n$, and with the outer structure consisting of the U-statistic $\frac{1}{b_n(b_n-1)} \sum_{1\leq j\neq k\leq b_n} \abs{\nu_j-\nu_k}$ 
known as Gini's mean difference. Depending on the scaling, both components can separately exhibit a law of large numbers or a central limit theorem, where the latter requires the additional scaling factors $\sqrt{\ell_n}$ and $\sqrt{b_n}$ for the sample means and the U-statistic, respectively. Hence, Theorem \ref{THM: LLN} constitutes a law of large numbers for the U-statistic, with its arguments $\frac{\sqrt{\ell_n}}{\kappa_Y}\hat{\mu}_{j,n}$ converging towards normal laws, whereas Theorem \ref{Thm: Central limit theorem} constitutes a ``twofold'' central limit theorem.   
Secondly, the non-stationarity of the time series $(X_i)_{i\in\N}$ is reflected in Theorem \ref{Thm: Central limit theorem}  both in the centring term and in the limit variance $\psi^2$ via the double Riemann-integrals of the variance function $\sigma$. Most interestingly, the limit variance $\psi^2$ does not only depend on the Hoeffding-decomposition of the kernel $h(x,y)=\abs{x-y}$, but instead, the latter is interwoven with $\sigma$, hinting that the proofs will require an adaptation of U-statistic theory to triangular arrays under non-stationarity.

To practically employ Theorem \ref{Thm: Central limit theorem}, reliable estimators for the long run variance $\kappa_Y^2$, the variance function $\sigma$ and the parameter $\psi^2$ are essential.  

\subsection{Estimation of nuisance parameters}
\label{Subsec: Estimation of Parameters}
This section introduces estimators for the nuisance parameters in Theorem \ref{Thm: Central limit theorem}. The centring term as well as the variance $\psi^2$ of the limit distribution depend upon an integral of the variance function. Since $\sigma$ is usually unknown, we will approximate it with the help of the empirical block variances. Define the local sample variance of the $j$-th block as
\begin{align}
\label{Eq: sigmajhat Definition}
\hat{\sigma}^2_{j,n}:=& \frac{1}{\ell_n} \sum_{i=(j-1) \ell_n+1}^{j\ell_n}
\left(X_i - \hat{\mu}_j\right)^2
 =\frac{1}{\ell_n} \sum_{i=(j-1)\ell_n+1}^{j\ell_n} \bigg (X_i-\frac{1}{\ell_n} \sum_{r=(j-1)\ell_n+1}^{j\ell_n}X_r \bigg )^2.
\end{align}
In the following, we will replace integrals of the form $\int_0^1 \sigma(x)\mathrm{d}x$ by Riemann-type sums $\frac{1}{b_n} \sum_{j=1}^{b_n} \sigma\rbraces{j/b_n}$ 
 and argue that  the estimated block variances $\hat{\sigma}^2_{j,n}$ are asymptotically close to  the theoretical variances $\sigma\rbraces{j/b_n}$. 

\subsubsection{Estimation of the long run variance}
\label{Subsec: Estimation of LRV} 
The literature contains various approaches to estimate the long run variance $\kappa_Y^2$ from stationary observations $Y_1, \ldots, Y_n$. The main drawback of these estimators is their lack of consistency in the presence of a non-stationary mean, 
leading to an overestimation of $\kappa_Y$. In the light of Theorem \ref{Thm: Central limit theorem}, an overestimation of $\kappa_Y$ under the alternative will lower the value of the test statistic and will consequently result in a loss of the test's power. To mitigate this effect, we will follow the subsampling approach first introduced by Carlstein \citep{Carlstein.1986} and modify the estimator to allow for certain non-constant mean functions. 
We start by deriving the basic estimator for centred time series. It uses the relation 
$$\E{\abs{\frac{1}{\sqrt{n}}\sum_{i=1}^{n} Y_i}}\longrightarrow \kappa_Y \sqrt{\frac{2}{\pi}}$$
and estimates the expectation on the left hand side via subsampling: Dividing the observations into $\bn$ blocks of length $\elln$ with $\elln=n^q$  for some $q\in (0,1)$ and $\bn=n^{1-q}$ (both are assumed to be integers), replacing the expectation by the sample mean over the $\bn$ subsampling blocks, and rearranging leads to the estimator
\begin{equation*}
\hat{\kappa}_Y(n):=\frac{1}{\bn}\sqrt{\frac{\pi}{2}} \sum_{j=1}^{\bn} \frac{1}{\sqrt{\elln}}\abs{\sum_{i=(j-1)\elln+1}^{j\elln} Y_i}.
\end{equation*}
The consistency of such long run variance subsampling estimators has been established under several mixing conditions (see, e.g., Peligrad and Shao \citep{PeligradShao.1995},  Peligrad and Suresh \citep{PeligradSuresh.1995} or Dehling et al. \citep{Dehling.2013}). However, as we generally only observe data $X_1, \dots, X_n$ of type \eqref{Eq: Triangular array X}, the observations need to be centred in a suitable manner. 
Define the estimator
\begin{equation}
\label{eq: kappahat Xtilde}
\hat{\kappa}_{\tilde{X}}(n)
:=\sqrt{\frac{2c_0}{1+2c_0}}\frac{1}{\bn}\sqrt{\frac{\pi}{2}} \sum_{j=1}^{\bn} \frac{1}{\sqrt{\elln}}\abs{\sum_{i=(j-1)\elln+1}^{j\elln} X_i-\frac{1}{2c_0}\rbraces{\sum_{i=(j-1-c_0)\elln+1}^{(j-1)\elln}X_i+\sum_{i=j\elln+1}^{(j+c_0)\elln}X_i}}
\end{equation}
for some $c_0\in [1, \bn)$, where we set $X_i:=0$ whenever $i\not\in \{1, \ldots, n\}$ and adjust the weight $\frac{1}{2c_0}$ accordingly. Intuitively, the $j$-th subsampling block is centred by the sample mean of the observations in the $c_0$ blocks surrounding it to each side. One can show that given a sufficiently ``nice'' mean function, it holds $\hat{\kappa}_{\tilde{X}}(n) \rightarrow \int_0^1 \sigma(x) \mathrm{d}x \cdot \kappa_Y$ in $L^2$ (see, Proposition \ref{Prop: Consistency alpha-mix} in the appendix). Replacing the variance function $\sigma$ by the block estimates $\hat{\sigma}_{j,n}$ from \eqref{Eq: sigmajhat Definition} leads to an estimator for $\kappa_Y$ itself, 
\begin{equation}
\label{Eq: kappahat Definition}
\hat{\kappa}(n):= \frac{\hat{\kappa}_{\tilde{X}}(n)}{\frac{1}{b_n}\sum_{j=1}^{b_n}\hat{\sigma}_{j,n}}.
\end{equation}

\begin{proposition}
\label{Prop: convergence estimator kappahat kappaY under H}
Assume there is  a $0<\delta \leq 1$ and a $\rho>(\delta^2+4\delta)/(2+2\delta)\vee 1$ such that $\mathbb{E}\big(\abs{Y_1}^{2+\delta}\big)<\infty$ and $\alpha_Y(k)\leq Ck^{-\rho (2+\delta)(1+\delta)/\delta^2}$.  
Denote $\ell_n=n^s$ and $\elln=n^q$. 
\begin{enumerate}
\item Let $\mu$ be a constant mean function and let $\mathbb{E}\big(\abs{Y_1}^{4+2\delta}\big)<\infty$. Assume $s>0.5$, $q<s$ as well as $q>(1-s) \cdot \max\big(\delta^{-1}\frac{\rho+1}{\rho-1},\rbraces{1+\delta^2(\rho(1+\delta)(2+\delta))^{-1}}\big)$. 
Then it holds 
$$\sqrt{b_n}\rbraces{\hat{\kappa}(n)-\kappa_Y}\pConv 0 \quad \text{as } n \rightarrow \infty.$$
\item Let $\mu$ be a piecewise Lipschitz-continuous mean function.
Assume $q<2/3$ and let either $\mathbb{E}\big(\abs{Y_1}^{4+2\delta}\big)<\infty$ or $\rho>\frac{\delta(8+\delta)}{(2+\delta)(1+\delta)}$. 
Then it holds
$$\rbraces{\hat{\kappa}(n)-\kappa_Y}\pConv 0 \quad \text{as } n \rightarrow \infty.$$
\end{enumerate}
\end{proposition}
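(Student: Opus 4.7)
The plan is to prove both parts via the decomposition
\[\hat{\kappa}(n)-\kappa_Y=\frac{\hat{\kappa}_{\tilde{X}}(n)-\kappa_Y\,\bar{\sigma}_n}{\bar{\sigma}_n},\qquad \bar{\sigma}_n:=\frac{1}{b_n}\sum_{j=1}^{b_n}\hat{\sigma}_{j,n},\]
analysing numerator and denominator separately and combining by Slutsky. The assumption $q<s$ forces $\bn\gg b_n$, which drives all the second-order bookkeeping; Riemann-sum approximations will replace $\int_0^1\sigma$ by averages over $b_n$ or $\bn$ equispaced points at respective cost $O(b_n^{-1})$ and $O(\bn^{-1})$.

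First I would handle the denominator. Expanding
\[\hat{\sigma}_{j,n}^2=\frac{1}{\ell_n}\sum_{i\in B_j}\bigl(\sigma(i/n)Y_i+\mu(i/n)-\hat{\mu}_j\bigr)^2\]
and using piecewise Lipschitz continuity of $\sigma$ (and, under the alternative, of $\mu$) to replace $\sigma(i/n)\approx\sigma(j/b_n)$ on all but $O(1)$ jump blocks, together with a mixing law of large numbers for $\ell_n^{-1}\sum Y_i^2\to 1$, yields $\hat{\sigma}_{j,n}^2=\sigma^2(j/b_n)(1+o_P(1))$ up to a trend contribution of order $O(\ell_n^2/n^2)$ on smooth blocks and $O(1)$ on the $k_\mu$ jump blocks. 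Averaging and noting that the latter contribute only $O(1/b_n)$ gives $\bar{\sigma}_n\to\int_0^1\sigma(x)\,\mathrm{d}x$ in probability and in $L^1$. For part (1), the moment assumption $\E{\abs{Y_1}^{4+2\delta}}<\infty$ further provides $\Var{\hat{\sigma}_{j,n}^2}=O(\ell_n^{-1})$, so $\Var{\bar{\sigma}_n}=O((b_n\ell_n)^{-1})=o(b_n^{-1})$, delivering $\sqrt{b_n}(\bar{\sigma}_n-\int_0^1\sigma)\to 0$ in probability.

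For the numerator in part (1), under $\mu\equiv\mu_H$ the constant cancels in the local centering by construction, so $\hat{\kappa}_{\tilde{X}}(n)$ depends only on the $\sigma(i/n)Y_i$. Writing the $j$-th summand as $Z_{j,n}$ and decomposing
\[\hat{\kappa}_{\tilde{X}}(n)-\kappa_Y\bar{\sigma}_n=\bigl(\hat{\kappa}_{\tilde{X}}(n)-\E{\hat{\kappa}_{\tilde{X}}(n)}\bigr)+\bigl(\E{\hat{\kappa}_{\tilde{X}}(n)}-\kappa_Y\tilde R_n\bigr)+\kappa_Y\bigl(\tilde R_n-\bar{\sigma}_n\bigr)\]
with $\tilde R_n=\bn^{-1}\sum_j\sigma(j/\bn)$, I would bound the three pieces after multiplying by $\sqrt{b_n}$. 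The stochastic term is handled by a covariance inequality for the $\alpha$-mixing array $(Z_{j,n})$ giving $\Var{\hat{\kappa}_{\tilde{X}}(n)}=O(\bn^{-1})$, so that $\sqrt{b_n/\bn}=n^{(q-s)/2}\to 0$ suffices. The third piece is absorbed by the previous step plus a Riemann-sum estimate. The delicate piece is the bias $\E{\hat{\kappa}_{\tilde{X}}(n)}-\kappa_Y\tilde R_n$, which I would estimate via a quantitative refinement of $\E{\abs{S_N}}/\sqrt{N}\to\kappa_Y\sqrt{2/\pi}$ for mixing partial sums; this produces a per-block error $O(\elln^{-\beta(\delta,\rho)})$, and the condition $q>(1-s)\delta^{-1}(\rho+1)/(\rho-1)$ is precisely what promotes it to $o(b_n^{-1/2})$.

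For part (2), only convergence in probability is required. The $2c_0$-block local centering is designed to approximately annihilate piecewise Lipschitz trends: on a non-jump subsampling block, $\mu(i/n)$ minus its neighbourhood average is of order $c_0\elln/n$, so its contribution to $Z_{j,n}$ is $O(\sqrt{\elln}\cdot c_0\elln/n)=O(n^{3q/2-1})$, which vanishes precisely when $q<2/3$, accounting for that hypothesis. The $k_\mu$ jump-containing blocks contribute $O(k_\mu/\bn)$ on average and are negligible. Combined with Proposition \ref{Prop: Consistency alpha-mix} in the appendix (consistency of $\hat{\kappa}_{\tilde{X}}(n)$) and with $\bar{\sigma}_n\to\int_0^1\sigma>0$, the continuous mapping theorem yields $\hat{\kappa}(n)\to\kappa_Y$ in probability. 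The main obstacle throughout is the bias step in part (1): obtaining a usable rate for $\abs{\E{\abs{S_N}}/\sqrt{N}-\kappa_Y\sqrt{2/\pi}}$ under the prescribed moment/mixing trade-off requires a Berry--Esseen-type argument for strongly mixing sequences, and it is from here that the intricate exponent conditions on $(q,s,\delta,\rho)$ originate.
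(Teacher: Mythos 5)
Your decomposition is exactly the paper's: write $\hat{\kappa}(n)-\kappa_Y$ as a quotient with numerator $\hat{\kappa}_{\tilde{X}}(n)-\kappa_Y\int_0^1\sigma$ plus $\kappa_Y\big(\int_0^1\sigma-\bar{\sigma}_n\big)$ and denominator $\bar{\sigma}_n$, control $\bar{\sigma}_n$ via block-variance consistency plus a Riemann-sum estimate (at rate $o(b_n^{-1/2})$ in part (1), using $s>1/2$ and the $(4+2\delta)$-moments), and split the numerator into a variance term of order $O(\bn^{-1})$ and a bias term controlled by a quantitative Berry--Esseen-type rate; your derivation of $q<2/3$ in part (2) from the $O(n^{3q/2-1})$ residual trend after the $2c_0$-block centering also matches the paper's computation.

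The one concrete gap is in the bias step of part (1). The quantity whose expectation you need to compare with $\kappa_Y\sqrt{2/\pi}$ is not the normalized partial sum of a stationary mixing sequence but the \emph{weighted} sum $\elln^{-1/2}\big(\sum_{i\in B_j}Y_i-\frac{1}{2c_0}\sum_{\text{neighbours}}Y_i\big)$, so a Berry--Esseen theorem for stationary $\alpha$-mixing partial sums (Tikhomirov's) does not apply directly. The paper first inserts separation blocks of length $\tilde{m}_n$ and uses Peligrad's coupling to replace the $2c_0+1$ overlapping block sums by independent copies; only then does the weighted sum become a partial sum of a stationary mixing array to which the Tikhomirov--Petrov bounds apply. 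This coupling is not cosmetic: the requirement that a sequence $\tilde{m}_n$ exist with $b_n\tilde{m}_n/\elln\to0$ and $b_n\alpha_Y(\tilde{m}_n)\to0$ is precisely the source of the second branch $q>(1-s)\big(1+\delta^2(\rho(1+\delta)(2+\delta))^{-1}\big)$ of the max in the hypothesis, which your argument never produces (you account only for the branch $q>(1-s)\delta^{-1}\frac{\rho+1}{\rho-1}$). Relatedly, in part (2) you do not explain where the alternative condition ``$\mathbb{E}\big(\abs{Y_1}^{4+2\delta}\big)<\infty$ or $\rho>\frac{\delta(8+\delta)}{(2+\delta)(1+\delta)}$'' enters; in the paper it is needed for the uniform integrability of $\ell_n^{-1}\sum_i(Y_i^2-1)$ so that the averaged block variances converge in $L^1$ rather than merely termwise in probability. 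Your order $O(k_\mu/\bn)$ for the jump blocks' contribution also misses a factor $\sqrt{\elln}$, though the conclusion survives under $q<2/3$.
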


\begin{remark}
Despite the different conditions imposed on the parameters $s$ and $q$ in the first part of Proposition \ref{Prop: convergence estimator kappahat kappaY under H}, note that there always exists an admissible choice: One can find an admissible $q$ as long as 
$1>s>\max\big(1/(1+\delta\frac{\rho-1}{\rho+1}), 1/(1+\frac{(1+\delta)(2+\delta)\rho}{(1+\delta)(2+\delta)\rho+\delta^2}) \big)$ and since $\delta>0$ and $\rho>1$, the latter maximum is always strictly smaller than one. Moreover, note that the conditions imposed on $\rho$ are rather mild since $\frac{\delta^2+4\delta}{2+2\delta}\leq 1.25$ and $\frac{\delta(8+\delta)}{(2+\delta)(1+\delta)}\leq 1.5$ for all $\delta\in (0,1]$. 
\end{remark}

Under the hypothesis, we require the convergence in Proposition \ref{Prop: convergence estimator kappahat kappaY under H} to be faster than $\sqrt{b_n}$ in order to replace $\kappa_Y$ by $\hat{\kappa}$ in the central limit theorem \ref{Thm: Central limit theorem}. Under the alternative, it suffices to obtain divergence of $\hat{\kappa}$ at a rate slower than $\sqrt{\ell_n}$, although under the particular alternative of a piecewise Lipschitz-continuous mean function, we can still obtain convergence. Given a general c\`adl\`ag mean function, one can show (see, Lemma \ref{Lemma: convergence estimator kappahat kappaY under A} in the appendix) that 
$$\hat{\kappa}(n)= O_{\Pb}\left(\elln^{1/2}\right).$$
If we choose the subsampling block length $\elln$ to be of order $o(\ell_n)$, that is, if $q<s$,  the divergence of $\hat{\kappa}$ under the alternative is thus slow enough not to eliminate the divergence of the U-statistic $U(n)$ itself.

\subsubsection{Estimation of the centring term}
We estimate the centring term from the central limit theorem \ref{Thm: Central limit theorem} by  replacing the variance function $\sigma$ by the block sample variances $\hat{\sigma}_{j,n}$, $1\leq j\leq b_n$, from \eqref{Eq: sigmajhat Definition}. 
\begin{proposition}
\label{Prop: Convergence estimated centring term}
Let $\ell_n=n^s$ with $s>0.5$ and assume there exists a $\delta>0$ such that $\sum_{k=1}^{\infty}\alpha_Y(k)^{\delta/(2+\delta)}<\infty$ and $\mathbb{E}\big(\abs{Y_1}^{4+2\delta}\big)<\infty$. Then it holds under the hypothesis that 
$$\sqrt{b_n}\bigg |\frac{1}{b_n(b_n-1)}\sum_{1\leq j\neq k \leq b_n} \sqrt{\hat{\sigma}_{j,n}^2+\hat{\sigma}_{k,n}^2}-  \int_0^1 \int_0^1 \sqrt{\sigma^2(x)+\sigma^2(y)}\mathrm{d}x\mathrm{d}y\bigg |\overset{L^1}{\longrightarrow} \; 0 \quad \text{as } n \rightarrow \infty.$$
\end{proposition}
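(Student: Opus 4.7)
The plan is to split the centring error into a deterministic Riemann--sum error and a stochastic estimation error. Let $I=\int_0^1\int_0^1\sqrt{\sigma^2(x)+\sigma^2(y)}\mathrm{d}x\mathrm{d}y$, let $S_n$ denote the random quantity appearing inside $\abs{\cdot}$ in the display, and set
$$T_n:=\frac{1}{b_n(b_n-1)}\sum_{1\le j\ne k\le b_n}\sqrt{\sigma^2(j/b_n)+\sigma^2(k/b_n)}.$$
The triangle inequality then gives $\sqrt{b_n}\abs{S_n-I}\le\sqrt{b_n}\abs{S_n-T_n}+\sqrt{b_n}\abs{T_n-I}$, and I would treat the two summands separately.

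For the deterministic piece $\abs{T_n-I}$, piecewise Lipschitz continuity of $\sigma$ together with the lower bound $\sigma\ge\sigma_0$ transfers to the integrand $(x,y)\mapsto\sqrt{\sigma^2(x)+\sigma^2(y)}$. Standard Riemann-sum bookkeeping then yields $\abs{T_n-I}=O(1/b_n)$: the cells of the $b_n\times b_n$ grid that avoid the finitely many jump lines of $\sigma$ contribute through the Lipschitz constant, the $O(k_\sigma b_n)$ cells crossing a jump have total area $O(1/b_n)$ with bounded integrand, and the diagonal correction between $\frac{1}{b_n(b_n-1)}$ and $\frac{1}{b_n^2}$ is itself $O(1/b_n)$. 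Multiplication by $\sqrt{b_n}$ produces $O(b_n^{-1/2})\to 0$.

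For the stochastic piece, the elementary inequality $\abs{\sqrt a-\sqrt b}\le\abs{a-b}/(\sqrt a+\sqrt b)$ combined with $\sqrt{\sigma^2(j/b_n)+\sigma^2(k/b_n)}\ge\sigma_0\sqrt 2$ reduces the task to showing $\sqrt{b_n}\cdot b_n^{-1}\sum_{j=1}^{b_n}\E{\abs{\hat\sigma_{j,n}^2-\sigma^2(j/b_n)}}\to 0$. Under the hypothesis $\mu\equiv\mu_H$ one has the algebraic identity
$$\hat\sigma_{j,n}^2=\frac{1}{\ell_n}\sum_{i=(j-1)\ell_n+1}^{j\ell_n}\sigma^2(i/n)Y_i^2-(\hat\mu_j-\mu_H)^2,$$
and I would decompose $\hat\sigma_{j,n}^2-\sigma^2(j/b_n)$ into three parts: a within-block drift of $\sigma^2$, a centred block average of $Y_i^2-1$, and the squared centred block mean. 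The first has $L^1$-norm $O(1/b_n)$ for the $b_n-O(k_\sigma)$ blocks that do not straddle a jump of $\sigma$ and is bounded by a deterministic constant on the remaining at most $k_\sigma$ blocks. The second has $L^1$-norm $O(\ell_n^{-1/2})$ by Cauchy--Schwarz combined with a Davydov-type covariance inequality for $(Y_i^2-1)$, which is precisely where the moment assumption $\E{\abs{Y_1}^{4+2\delta}}<\infty$ together with $\sum_k\alpha_Y(k)^{\delta/(2+\delta)}<\infty$ enters. The third has expectation $O(\ell_n^{-1})$ by the same variance bound applied to $\hat\mu_j-\mu_H=\frac{1}{\ell_n}\sum\sigma(i/n)Y_i$. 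Collecting these estimates yields
$$\sqrt{b_n}\,\E{\abs{S_n-T_n}}=O\!\left(b_n^{-1/2}+\sqrt{b_n/\ell_n}\right)=O\!\left(n^{-(1-s)/2}+n^{(1-2s)/2}\right)\longrightarrow 0,$$
since $b_n/\ell_n=n^{1-2s}\to 0$ for $s>1/2$.

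The main obstacle is the careful accounting of the at most $k_\sigma$ blocks crossing a jump of $\sigma$: each such block contributes only an $O(1)$ deterministic bias, but their number is bounded uniformly in $n$, so their total contribution to the average $b_n^{-1}\sum_j$ is $O(1/b_n)$ and, after multiplication by $\sqrt{b_n}$, is absorbed into the final $O(b_n^{-1/2})$ rate. All remaining estimates reduce to routine covariance bounds for partial sums of strongly mixing sequences under the stated moment and mixing assumptions.
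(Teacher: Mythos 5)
Your proof is correct and follows essentially the same route as the paper's: the same split into a deterministic Riemann-sum error of order $O(b_n^{-1/2})$ after scaling and the stochastic error $b_n^{-1/2}\sum_{j}\E{\abs{\hat{\sigma}_{j,n}^2-\sigma^2(j/b_n)}}$, the same algebraic decomposition of $\hat{\sigma}_{j,n}^2$ under the hypothesis into the within-block drift of $\sigma^2$, the centred average of $Y_i^2-1$ (which is exactly where the $(4+2\delta)$-moment condition and Davydov's inequality enter), and the squared centred block mean, with matching rates $O(b_n^{-1/2})+O(\sqrt{b_n/\ell_n})+O(\sqrt{b_n}/\ell_n)$. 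The one place you diverge is the square-root comparison: you bound $\abs{\sqrt{a}-\sqrt{b}}\leq \abs{a-b}/\sqrt{b}$ with $\sqrt{b}\geq \sigma_0\sqrt{2}$ supplied by the deterministic term alone, which is valid and neatly avoids the paper's case distinction via indicator functions on $\{\abs{\hat{\sigma}_{j,n}^2-\sigma_{j\ell_n}^2}\leq \sigma_0^2/2\}$ together with the bound $\abs{\sqrt{a}-\sqrt{b}}\leq\sqrt{\abs{a-b}}$ and a Markov-type estimate on the complementary event.
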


Once more, we require the convergence in Proposition \ref{Prop: Convergence estimated centring term} to be faster than $\sqrt{b_n}$, whereas under the alternative, we have to check that the estimated centring term does not diverge  
and obtain (see, Lemma \ref{Lemma: estimated centring term alternative} in the appendix) $$\frac{1}{b_n(b_n-1)}\sum_{1\leq j\neq k \leq b_n} \sqrt{\hat{\sigma}_{j,n}^2+\hat{\sigma}_{k,n}^2} \cdot \sqrt{\frac{2}{\pi}}=O_{\mathbb{P}}(1).$$

\subsubsection{Estimation of the limit variance} Lastly,  we estimate the variance $\psi^2$ of the limit distribution from Theorem \ref{Thm: Central limit theorem}.
Define the functions $f_n: \R_{\geq 0}^{b_n} \rightarrow \R_{\geq 0}$ via 
\begin{align*}
& f_n(x_1, \ldots, x_{b_n}) :=  \frac{1}{b_n} \sum_{j=1}^{b_n}\mathbb{E}\bigg (\bigg |\frac{1}{b_n-1} \sum_{\substack{1\leq k\leq b_n\\k\neq j}} \E{\abs{x_j Z-x_kZ'}\middle|Z}-\E{\abs{x_jZ-x_kZ'}}\bigg |^2\bigg )
\end{align*}
for two independent standard normal random variables $Z$ and $Z'$, and the estimator by 

\begin{equation}
\label{Eq: psinhat Definition}
\hat{\psi}_n^2:=4f_n(\hat{\sigma}_{1,n}, \ldots, \hat{\sigma}_{b_n,n}).
\end{equation}

\begin{proposition} \label{Prop: hatpsin psin approx H}
Let there exist a $0< \delta \leq 1$ and a $\rho> 1$ such that $\mathbb{E}\big(\abs{Y_1}^{2+\delta}\big)<\infty$ and $\alpha_Y(k)\leq Ck^{-\rho (2+\delta)(1+\delta)/\delta^2}$. Moreover, let either $\rho>\frac{\delta(8+\delta)}{(2+\delta)(1+\delta)}$ or  $\mathbb{E}\big(\abs{Y_1}^{4+2\delta}\big)<\infty$. Then it holds under the hypothesis that
$$\abs{\hat{\psi}_n^2-\psi^2}\overset{L^1}{\longrightarrow} \; 0 \quad \text{as } n \rightarrow \infty. $$
\end{proposition}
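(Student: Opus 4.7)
The plan is to insert the intermediate quantity $\tilde{\psi}_n^2 := 4\, f_n(\sigma(1/b_n), \ldots, \sigma(b_n/b_n))$, obtained by plugging the population variances into $f_n$, and bound
$$|\hat{\psi}_n^2 - \psi^2| \leq |\hat{\psi}_n^2 - \tilde{\psi}_n^2| + |\tilde{\psi}_n^2 - \psi^2|.$$
Throughout I abbreviate $g(a,b,z) := \mathbb{E}[\abs{az - bZ'} \mid Z = z] - \mathbb{E}\abs{aZ - bZ'}$, so that $f_n(x_1,\ldots,x_{b_n}) = \frac{1}{b_n}\sum_{j=1}^{b_n}\mathbb{E}|G_{n,j}^{x}(Z)|^2$ with $G_{n,j}^{x}(Z) := \frac{1}{b_n-1}\sum_{k\neq j} g(x_j, x_k, Z)$. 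A direct computation using the explicit absolute moment of $\mathcal{N}(az, b^2)$ yields the uniform estimates $|g(a,b,z)| \leq C(1 + |z|)(a + b)$ and $|g(a,b,z) - g(a',b',z)| \leq C(1 + |z|)(|a - a'| + |b - b'|)$ on $[0,\infty)^2 \times \mathbb{R}$, which are the only analytic inputs needed.

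For the deterministic step, $\sigma$ takes values in a bounded interval $[\sigma_0, \sigma_1]$, so the growth bound is uniform and the map $(x,y) \mapsto \mathbb{E}|g(\sigma(x), \sigma(y), Z)|^2$ is continuous off the finitely many discontinuities of $\sigma$. For each fixed $Z$ the inner sum $\frac{1}{b_n-1}\sum_{k\neq j} g(\sigma(j/b_n), \sigma(k/b_n), Z)$ is a Riemann sum converging to $\int_0^1 g(\sigma(j/b_n), \sigma(y), Z)\,\mathrm{d}y$; squaring, applying dominated convergence with envelope $C(1+|Z|)^2$, and then treating the outer average $\frac{1}{b_n}\sum_j$ as a second Riemann sum produces $\tilde{\psi}_n^2 \to \psi^2$.

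For the stochastic step, the Lipschitz and growth bounds for $g$ together with Cauchy--Schwarz in $Z$, and the elementary inequality $|\hat\sigma_{j,n} - \sigma(j/b_n)| \leq \sigma_0^{-1}|\hat\sigma_{j,n}^2 - \sigma^2(j/b_n)|$ that follows from $\sigma \geq \sigma_0$, reduce matters to controlling the averaged block-variance error, in the cleanest form
$$\frac{1}{b_n}\sum_{j=1}^{b_n}\mathbb{E}\bigl|\hat\sigma_{j,n}^2 - \sigma^2(j/b_n)\bigr|^2 \longrightarrow 0.$$
Under the hypothesis, $\hat\sigma_{j,n}^2 = \frac{1}{\ell_n}\sum_i \sigma^2(i/n) Y_i^2 - \bar V_j^2$ with $\bar V_j := \frac{1}{\ell_n}\sum_i \sigma(i/n) Y_i$. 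The at-most-$k_\sigma$ blocks containing a jump of $\sigma$ contribute only $O(k_\sigma/b_n) = o(1)$ to the average; on the remaining blocks, piecewise Lipschitz continuity of $\sigma$ replaces $\sigma^2(i/n)$ by $\sigma^2(j/b_n) + O(1/b_n)$, the centring term is bounded via $\mathbb{E}\bar V_j^2 = O(\kappa_Y^2 \sigma_1^2/\ell_n)$, and what remains is a quantitative weak law of large numbers for $Y_i^2$.

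The main obstacle is precisely this WLLN for $Y_i^2$ under only $(2+\delta)$-moments on $Y_1$ and $\alpha$-mixing, which explains the dichotomy $\mathbb{E}|Y_1|^{4+2\delta} < \infty$ versus $\rho > \delta(8+\delta)/((2+\delta)(1+\delta))$ in the hypotheses. In the high-moment case, $Y_i^2$ has $(2+\delta)$-moments, so Davydov's covariance inequality for $\alpha$-mixing sequences, summed against the geometric-type mixing rate, delivers an $L^2$-rate for $\frac{1}{\ell_n}\sum_i Y_i^2 - 1$ directly. In the low-moment case, $Y_i^2$ has only $(1 + \delta/2)$-moments, so $L^2$ is not directly available; the plan is to truncate $Y_i^2$ at a level like $\ell_n^\gamma$ diverging slowly, apply the covariance bound to the truncated sequence (whose summability requires the sharper rate on $\rho$), control the truncation remainder via the available absolute moments, and, if necessary, upgrade from $L^1$ to the required mode of convergence by also truncating $\hat\sigma_{j,n}$ in the preceding Cauchy--Schwarz step. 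Balancing the truncation level against the mixing rate will be the most delicate calculation.
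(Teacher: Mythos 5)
Your overall architecture coincides with the paper's: the intermediate quantity $\tilde{\psi}_n^2=4f_n(\sigma(1/b_n),\ldots,\sigma(b_n/b_n))$ is exactly the paper's $\psi_n^2$, the Riemann-sum argument for $|\tilde{\psi}_n^2-\psi^2|\to 0$ is the same (the paper gets a rate $O(1/b_n)$ inside the proof of the CLT for $U_2$), and the dichotomy in the hypotheses is indeed there to secure a weak law for $Y_i^2$. The problem lies in your stochastic step. You reduce to
$$\frac{1}{b_n}\sum_{j=1}^{b_n}\mathbb{E}\bigl|\hat\sigma_{j,n}^2-\sigma^2(j/b_n)\bigr|^2\longrightarrow 0,$$
but this quantity involves $\mathbb{E}\bigl(\frac{1}{\ell_n}\sum_i\sigma^2(i/n)Y_i^2\bigr)^2$ and hence $\mathbb{E}(Y_1^4)$, which is not assumed finite in the branch where only $\mathbb{E}|Y_1|^{2+\delta}<\infty$ with $\delta\le 1$ and the stronger mixing rate holds; the reduction target is then generally $+\infty$. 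Your own inequality $|\hat\sigma_{j,n}-\sigma(j/b_n)|\le\sigma_0^{-1}|\hat\sigma_{j,n}^2-\sigma^2(j/b_n)|$ is what forces the square onto the variance difference. The paper avoids this entirely with the sharper elementary bound $|\sqrt{a}-\sqrt{b}|^2\le|a-b|$ for $a,b\ge 0$, which gives $\lVert\hat\sigma_{j,n}-\sigma(j/b_n)\rVert_2^2\le\mathbb{E}|\hat\sigma_{j,n}^2-\sigma^2(j/b_n)|$, so that after the Lipschitz-type bound on $f_n$ (obtained from $a^2-b^2=(a+b)(a-b)$ and Cauchy--Schwarz) only the \emph{first} moment of the squared-variance error is ever needed.

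Your proposed repair by truncation of $Y_i^2$ at level $\ell_n^\gamma$ is left unexecuted and is harder than necessary: the paper handles the low-moment branch not by truncation but by proving uniform integrability of $\frac{1}{\ell_n}\sum_i(Y_i^2-1)$ via a Rosenthal-type moment inequality of order $1+\delta/4$, which is where the condition $\rho>\delta(8+\delta)/((2+\delta)(1+\delta))$ enters (it guarantees $\sum_k\alpha_Y(k)^{\delta/(8+\delta)}<\infty$). Uniform integrability plus $\frac{1}{\ell_n}\sum_i(Y_i^2-1)\pConv 0$ then upgrades to $L^1$-convergence, which is all that is required once the reduction is done at the first-moment level. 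So: right skeleton, but the concrete reduction would fail under the stated hypotheses, and the fix you sketch, while plausibly completable, replaces a short uniform-integrability argument with a delicate truncation analysis you have not carried out.
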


Under the alternative, it holds $ \hat{\psi}_n=O_{\mathbb{P}}(1)$ (see, Lemma \ref{Lemma: hatpsin under A} in the appendix).

\begin{remark} The estimator $\hat{\psi}_n$ solely depends upon the estimated sample variances $\hat{\sigma}_{j,n}$, $1\leq j\leq b_n$ and on two independent standard normal laws. In principle, it could thus be calculated. For a constant variance function, $\sigma(x)=\sigma$, the exact value of the limit variance $\psi^2$ has been explicitly derived as $\psi^2=\sigma^2 \rbraces{\frac{4}{3}+\frac{8}{\pi}(\sqrt{3}-2)}$ (see, Gerstenberger and Vogel \citep{Gerstenberger.2015}).  However, the derivation is rather cumbersome, and it is unclear whether it is possible to simplify $\psi^2$ for non-constant variance functions, such that we employ a Monte Carlo estimator in Section \ref{Sec: Simulations}. 
\end{remark}

\subsection{The asymptotic test}
\label{Subsec: The asymptotic test}
This section presents the actual test for constancy of the mean by combining the results of the former two sections. 
\begin{theorem}
\label{Cor: Asymptotic test H}
Assume there is a $0< \delta \leq 1$ and a  $\rho>(\delta^2+4\delta)/(2+2\delta)\vee 1$ such that $\mathbb{E}\big(\abs{Y_1}^{4+2\delta}\big)<\infty$ and $\alpha_Y(k)\leq Ck^{-\rho (2+\delta)(1+\delta)/\delta^2}$. For $\ell_n=n^s$ and $\elln=n^q$, let  $s>1/(1+\delta\frac{\rho-1}{\rho+1})\vee (1+\frac{\delta^2}{\rho (2+\delta)(1+\delta)})/(2+\frac{\delta^2}{\rho (2+\delta)(1+\delta)})$ as well as $s>q$ and $q>(1-s) \cdot\max \big(\delta^{-1}\frac{\rho+1}{\rho-1}, \rbraces{1+\delta^2(\rho(1+\delta)(2+\delta))^{-1}}\big)$. Then it holds under the hypothesis  
$$\sqrt{b_n}\frac{1}{\hat{\psi}_n}\bigg(\frac{\sqrt{\ell_n}}{\hat{\kappa}(n)}U(n) -  \frac{1}{b_n(b_n-1)}\sum_{1\leq j\neq k \leq b_n} \sqrt{\hat{\sigma}_{j,n}^2+\hat{\sigma}_{k,n}^2} \sqrt{\frac{2}{\pi}}\bigg) \distConv \NoD{0}{1} \quad \text{as } n\rightarrow \infty, $$
where the estimators $\hat{\sigma}^2_{j,n}$, $\hat{\kappa}(n)$ and $\hat{\psi}_n$ are defined by \eqref{Eq: sigmajhat Definition}, \eqref{Eq: kappahat Definition} and \eqref{Eq: psinhat Definition}, respectively.
\end{theorem}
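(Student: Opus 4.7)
The plan is to apply Slutsky's theorem after decomposing the studentised statistic into a dominant term that already satisfies the central limit theorem of Theorem \ref{Thm: Central limit theorem} together with several remainder terms that vanish by the consistency results for the nuisance parameters. The parameter restrictions in the statement are the union of those required by Theorem \ref{Thm: Central limit theorem} and Propositions \ref{Prop: convergence estimator kappahat kappaY under H}, \ref{Prop: Convergence estimated centring term} and \ref{Prop: hatpsin psin approx H}, so no new quantitative work is needed beyond assembling these ingredients.

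Abbreviate the true centring by $C:=\int_0^1\int_0^1\sqrt{\sigma^2(x)+\sigma^2(y)}\,\mathrm{d}x\,\mathrm{d}y\cdot\sqrt{2/\pi}$ and its plug-in version by $\widehat{C}_n:=\tfrac{1}{b_n(b_n-1)}\sum_{1\le j\neq k\le b_n}\sqrt{\hat\sigma_{j,n}^2+\hat\sigma_{k,n}^2}\cdot\sqrt{2/\pi}$. The decomposition I would use is
\begin{align*}
\sqrt{b_n}\bigg(\frac{\sqrt{\ell_n}}{\hat\kappa(n)}U(n)-\widehat{C}_n\bigg)
&= \sqrt{b_n}\bigg(\frac{\sqrt{\ell_n}}{\kappa_Y}U(n)-C\bigg) \\
&\quad + \sqrt{b_n}\bigg(\frac{\sqrt{\ell_n}}{\hat\kappa(n)}-\frac{\sqrt{\ell_n}}{\kappa_Y}\bigg)U(n)
+ \sqrt{b_n}\bigl(C-\widehat{C}_n\bigr).
\end{align*}
Theorem \ref{Thm: Central limit theorem} implies that the first summand converges in distribution to $\NoD{0}{\psi^2}$. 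Proposition \ref{Prop: Convergence estimated centring term} ensures that the third summand converges to zero in $L^1$, hence in probability.

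For the middle summand I rewrite it as $\frac{\kappa_Y-\hat\kappa(n)}{\hat\kappa(n)}\cdot\sqrt{b_n}\cdot\frac{\sqrt{\ell_n}}{\kappa_Y}U(n)$. Part (i) of Proposition \ref{Prop: convergence estimator kappahat kappaY under H} gives both $\hat\kappa(n)\pConv\kappa_Y>0$ and $\sqrt{b_n}(\hat\kappa(n)-\kappa_Y)\pConv 0$, while Theorem \ref{THM: LLN} ensures that $\frac{\sqrt{\ell_n}}{\kappa_Y}U(n)$ converges in $L^1$ to the constant $C$ and is therefore $O_{\Pb}(1)$. Slutsky's theorem hence shows the middle summand is $o_{\Pb}(1)$, so that $\sqrt{b_n}\bigl(\tfrac{\sqrt{\ell_n}}{\hat\kappa(n)}U(n)-\widehat{C}_n\bigr)\distConv\NoD{0}{\psi^2}$. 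Finally, Proposition \ref{Prop: hatpsin psin approx H} delivers $\hat\psi_n^2\pConv\psi^2$, and since $\psi^2>0$ the continuous mapping theorem gives $\hat\psi_n^{-1}\pConv\psi^{-1}$; a further application of Slutsky's theorem normalises the limit to $\NoD{0}{1}$.

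The only real obstacle is bookkeeping: one must verify that the hypotheses on $(s,q,\rho,\delta)$ stated in the corollary simultaneously imply each of the prerequisite hypotheses. I would tabulate them side by side: the lower bound on $s$ is the maximum of the ones required by Theorem \ref{Thm: Central limit theorem} and Proposition \ref{Prop: convergence estimator kappahat kappaY under H}; the two-sided constraint on $q$ is taken verbatim from Proposition \ref{Prop: convergence estimator kappahat kappaY under H} and automatically satisfies the conditions $q<s$ and $q<2/3$ that are also needed in Proposition \ref{Prop: Convergence estimated centring term}; and the moment/mixing conditions are chosen as the strongest among those appearing in any of the four ingredients, in particular $\E{\abs{Y_1}^{4+2\delta}}<\infty$, which is strictly stronger than the moment assumptions required elsewhere. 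Once these verifications are in place, the proof reduces to the Slutsky argument outlined above.
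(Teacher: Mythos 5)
Your proof is correct and follows essentially the same route as the paper: the identical three-term decomposition (CLT term from Theorem \ref{Thm: Central limit theorem}, the long-run-variance replacement controlled via $\sqrt{b_n}(\hat\kappa(n)-\kappa_Y)\pConv 0$ together with $\tfrac{\sqrt{\ell_n}}{\kappa_Y}U(n)=O_{\Pb}(1)$ from Theorem \ref{THM: LLN}, and the centring replacement from Proposition \ref{Prop: Convergence estimated centring term}), followed by Slutsky with $\hat\psi_n^{-1}\pConv\psi^{-1}$ from Proposition \ref{Prop: hatpsin psin approx H}. The only nit is a bookkeeping slip: the condition $q<2/3$ belongs to part (2) of Proposition \ref{Prop: convergence estimator kappahat kappaY under H} (used under the alternative), not to Proposition \ref{Prop: Convergence estimated centring term}, which imposes no condition on $q$; this does not affect the validity of the argument.
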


 \begin{example} 
Recall that the processes from Example \ref{Example: ARMA, GARCH} correspond to the border case $\rho\rightarrow \infty$. The above conditions on the different block lengths $\ell_n=n^s$ and $\elln=n^q$ then read $s>1/(1+\delta)$, $q<s$ and $q>(1-s)/\delta$. 
\end{example}

Under the alternative, the test statistic 
diverges at a rate that depends on the form of the mean function.

\begin{theorem}
\label{Cor: Asymptotic test A}
 Assume there is  a $0< \delta \leq 1$ and a $\rho>(\delta^2+4\delta)/(2+2\delta)\vee 1$ such that $\mathbb{E}\big(\abs{Y_1}^{2+\delta}\big)<\infty$ and $\alpha_Y(k)\leq Ck^{-\rho (2+\delta)(1+\delta)/\delta^2}$.  Additionally, assume that  either $\mathbb{E}\big(\abs{Y_1}^{4+2\delta}\big)<\infty$ or $\rho>\frac{\delta(8+\delta)}{(2+\delta)(1+\delta)}$.  
\begin{enumerate}
\item Let $\mu$ be a piecewise Lipschitz-continuous mean function and $\elln=n^q$ with $q<2/3$. Then, 
$$\sqrt{n}\bigg(\sqrt{b_n}\frac{1}{\hat{\psi}_n}\bigg(\frac{\sqrt{\ell_n}}{\hat{\kappa}(n)}U(n) -  \frac{1}{b_n(b_n-1)}\sum_{1\leq j\neq k \leq b_n} \sqrt{\hat{\sigma}_{j,n}^2+\hat{\sigma}_{k,n}^2} \sqrt{\frac{2}{\pi}}\bigg)\bigg)^{-1}= O_{\Pb}\rbraces{1}.$$
\item Let $\mu$ be a general c\`adl\`ag mean function. Then it holds
$$\sqrt{\bn}\bigg(\sqrt{b_n}\frac{1}{\hat{\psi}_n}\bigg(\frac{\sqrt{\ell_n}}{\hat{\kappa}(n)}U(n) -  \frac{1}{b_n(b_n-1)}\sum_{1\leq j\neq k \leq b_n} \sqrt{\hat{\sigma}_{j,n}^2+\hat{\sigma}_{k,n}^2} \sqrt{\frac{2}{\pi}}\bigg)\bigg)^{-1}=O_{\Pb}\rbraces{1}.$$
\end{enumerate}
\end{theorem}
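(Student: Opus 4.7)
The plan is to lower-bound $|T_n|$ directly, where
$$T_n := \sqrt{b_n}\frac{1}{\hat{\psi}_n}\bigg(\frac{\sqrt{\ell_n}}{\hat{\kappa}(n)}U(n) -  \frac{1}{b_n(b_n-1)}\sum_{1\leq j\neq k \leq b_n} \sqrt{\hat{\sigma}_{j,n}^2+\hat{\sigma}_{k,n}^2} \sqrt{\frac{2}{\pi}}\bigg),$$
since both assertions are of the form $T_n^{-1}=O_\Pb(\text{rate}^{-1})$. Three of the four stochastic factors behave identically in the two regimes. First, Theorem \ref{Thm: Behaviour under A} together with $L^2$-convergence gives $U(n)\pConv I_\mu := \int_0^1\int_0^1 |\mu(x)-\mu(y)|\,\mathrm{d}x\,\mathrm{d}y$, where $I_\mu>0$ because $\mu$ is a non-constant c\`adl\`ag function; hence $U(n)\geq I_\mu/2$ on an event of probability tending to one. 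Second, Lemma \ref{Lemma: estimated centring term alternative} gives $C_n := \frac{1}{b_n(b_n-1)}\sum_{j\neq k}\sqrt{\hat{\sigma}_{j,n}^2+\hat{\sigma}_{k,n}^2}\sqrt{2/\pi}=O_\Pb(1)$, and Lemma \ref{Lemma: hatpsin under A} gives $\hat{\psi}_n=O_\Pb(1)$. Only the behaviour of $\hat{\kappa}(n)$ distinguishes the two cases.

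In Case (1), $\mu$ is piecewise Lipschitz-continuous and $q<2/3$, so the hypotheses of Proposition \ref{Prop: convergence estimator kappahat kappaY under H}(2) are satisfied (the moment/mixing dichotomy needed there is inherited from the hypotheses of the theorem) and $\hat{\kappa}(n)\pConv \kappa_Y$. Consequently $\hat{\kappa}(n)\leq M$ with probability tending to one, so $\frac{\sqrt{\ell_n}}{\hat{\kappa}(n)}U(n)\geq \frac{I_\mu}{2M}\sqrt{\ell_n}$, which diverges and therefore dominates the $O_\Pb(1)$ subtraction of the centering term $C_n$. Multiplying by $\sqrt{b_n}/\hat{\psi}_n$ together with $b_n\ell_n=n$ gives $|T_n|\geq c\sqrt{n}/\hat{\psi}_n$ with high probability, and since $\hat{\psi}_n=O_\Pb(1)$ this establishes $\sqrt{n}\,T_n^{-1}=O_\Pb(1)$.

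In Case (2), a general c\`adl\`ag $\mu$ precludes convergence of $\hat{\kappa}(n)$ to $\kappa_Y$, but Lemma \ref{Lemma: convergence estimator kappahat kappaY under A} supplies the weaker upper bound $\hat{\kappa}(n)=O_\Pb(\elln^{1/2})$. Repeating the same chain of estimates, $\frac{\sqrt{\ell_n}}{\hat{\kappa}(n)}U(n)\geq c\sqrt{\ell_n/\elln}$ with high probability, and because $\ell_n/\elln=n^{s-q}\to\infty$ by $s>q$, the $O_\Pb(1)$ centering correction is again negligible. Multiplication by $\sqrt{b_n}$ produces a quantity of order $\sqrt{b_n\ell_n/\elln}=\sqrt{n/\elln}=\sqrt{\bn}$, and dividing by $\hat{\psi}_n=O_\Pb(1)$ preserves the rate, so $\sqrt{\bn}\,T_n^{-1}=O_\Pb(1)$. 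The only genuine subtlety, and the point where one must be careful, is the direction of each inequality: lower-bounding $|T_n|$ requires an \emph{upper} bound on $\hat{\kappa}(n)$ and on $\hat{\psi}_n$ together with a \emph{lower} bound on $U(n)$, the last of these being precisely what the $L^2$-convergence of Theorem \ref{Thm: Behaviour under A} to the strictly positive constant $I_\mu$ supplies.
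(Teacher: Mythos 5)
Your proposal is correct and follows essentially the same route as the paper, which proves this theorem by exactly the combination you use: Theorem \ref{Thm: Behaviour under A} for the lower bound on $U(n)$, Lemmas \ref{Lemma: estimated centring term alternative} and \ref{Lemma: hatpsin under A} for the boundedness of the centring term and $\hat{\psi}_n$, and Proposition \ref{Prop: convergence estimator kappahat kappaY under H}(2) resp.\ Lemma \ref{Lemma: convergence estimator kappahat kappaY under A} for $\hat{\kappa}(n)$ in the two cases. Your explicit tracking of the inequality directions and of the condition $s>q$ needed in case (2) (which the paper leaves implicit, discussing it only in Section \ref{Subsec: Estimation of LRV}) is a faithful elaboration of the argument the paper merely sketches.
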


If previous knowledge suggests a stationary variance, that is, if $\sigma(x)=\sigma$ for all $x\in [0,1]$, the assumptions of Theorem \ref{Cor: Asymptotic test H} can be relaxed. Most importantly, since we  have to estimate neither the centring term nor the limit variance, the moment assumptions can be significantly weakened. 
\begin{theorem}
\label{Cor: Asymptotic test, sigma=c}
Assume the variance function $\sigma$ is constant. Let there be a~$0<~\delta~\leq 1$ and a $\rho>(\delta^2+4\delta)/(2+2\delta)\vee 1$ such that $\mathbb{E}\big(\abs{Y_1}^{2+\delta}\big)<\infty$ and $\alpha_Y(k)\leq Ck^{-\rho (2+\delta)(1+\delta)/\delta^2}$. For $\ell_n=n^s$ and $\elln=n^q$, let $s>1/(1+\delta\frac{\rho-1}{\rho+1})\vee (1+\frac{\delta^2}{\rho (2+\delta)(1+\delta)})/(2+\frac{\delta^2}{\rho (2+\delta)(1+\delta)})$, $q<s$ and $q>(1-s)\cdot \max\big(\delta^{-1}\frac{\rho+1}{\rho-1}, \rbraces{1+\delta^2(\rho(1+\delta)(2+\delta))^{-1}}\big)$. Then it holds under the hypothesis 
$$\sqrt{b_n}\rbraces{\frac{\sqrt{\ell_n}}{\hat{\kappa}_{\tilde{X}}(n)}U(n)-\frac{2}{\sqrt{\pi}}} \distConv \NoD{0}{\frac{4}{3}+\frac{8}{\pi}\rbraces{\sqrt{3}-2}},$$
 where the estimator $\hat{\kappa}_{\tilde{X}}(n)$ is defined in \eqref{eq: kappahat Xtilde}.
\end{theorem}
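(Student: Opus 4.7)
The strategy is to specialize Theorem~\ref{Thm: Central limit theorem} to constant~$\sigma$, establish a $\sqrt{b_n}$-rate of consistency for $\hat{\kappa}_{\tilde{X}}(n)$ as an estimator of $\sigma\kappa_Y$ (not of $\kappa_Y$, since we no longer divide by $\bar{\hat\sigma}_n$), and combine the two via Slutsky's lemma.

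First, I specialize Theorem~\ref{Thm: Central limit theorem}. With $\sigma(x)\equiv\sigma$, the centring integral reduces to $\int_0^1\int_0^1\sqrt{2\sigma^2}\,dx\,dy\cdot\sqrt{2/\pi}=2\sigma/\sqrt{\pi}$. In the expression for $\psi^2$, the kernel factorises as $|\sigma(x)Z-\sigma(y)Z'|=\sigma|Z-Z'|$ and the inner $y$-integral becomes constant in~$y$, so $\psi^2=4\sigma^2\,\mathrm{Var}\bigl(\mathbb{E}(|Z-Z'|\mid Z)\bigr)$. This is exactly $\sigma^2$ times the asymptotic variance of Gini's mean difference for Gaussian data, which equals $C:=\tfrac{4}{3}+\tfrac{8}{\pi}(\sqrt{3}-2)$ by Gerstenberger and Vogel~\citep{Gerstenberger.2015}. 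The moment and mixing conditions assumed here are exactly those of Theorem~\ref{Thm: Central limit theorem}, so
$$\sqrt{b_n}\left(\frac{\sqrt{\ell_n}}{\kappa_Y}U(n)-\frac{2\sigma}{\sqrt{\pi}}\right)\distConv \NoD{0}{\sigma^2 C}.$$

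Second, I claim $\sqrt{b_n}\bigl(\hat{\kappa}_{\tilde{X}}(n)-\sigma\kappa_Y\bigr)\pConv 0$. Under the hypothesis and constant~$\sigma$ one has $X_i=\mu_H+\sigma Y_i$, and because $\hat{\kappa}_{\tilde{X}}(n)$ centres each subsampling block by an average of adjacent blocks with appropriately adjusted weights, the constant $\mu_H$ cancels exactly inside every absolute value; hence $\hat{\kappa}_{\tilde{X}}(n)/\sigma$ coincides with the analogous subsampling estimator constructed directly from the $Y_i$. The extra block-centring contributes an additive term of order $O_{\mathbb{P}}(c_0^{-1/2})$ uniformly over~$j$, and the resulting statistic is $\sqrt{b_n}$-consistent for $\kappa_Y$ by the same $L^2$-bound on block sums of an $\alpha$-mixing sequence that underlies the proof of Proposition~\ref{Prop: convergence estimator kappahat kappaY under H}(1). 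The essential point is that here we do \emph{not} have to approximate $\sigma$ by the block variance estimator $\hat{\sigma}_{j,n}$, which is precisely what forces the condition $\mathbb{E}(|Y_1|^{4+2\delta})<\infty$ in Proposition~\ref{Prop: convergence estimator kappahat kappaY under H}; the assumption $\mathbb{E}(|Y_1|^{2+\delta})<\infty$ together with the stated constraints on $s$ and $q$ therefore suffices. I expect this step to be the main technical obstacle, as it requires revisiting the proof of Proposition~\ref{Prop: convergence estimator kappahat kappaY under H}(1) and verifying which parts genuinely rely on the stronger moment assumption.

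Third, I combine both convergences. Writing $T_n=\sqrt{\ell_n}U(n)/\kappa_Y$ and $R_n=\hat{\kappa}_{\tilde{X}}(n)/\kappa_Y$, one rearranges
$$\sqrt{b_n}\left(\frac{T_n}{R_n}-\frac{2}{\sqrt{\pi}}\right)=\frac{1}{R_n}\left[\sqrt{b_n}\left(T_n-\frac{2\sigma}{\sqrt{\pi}}\right)-\frac{2}{\sqrt{\pi}}\sqrt{b_n}(R_n-\sigma)\right].$$
By step~2 the second summand in the bracket is $o_{\mathbb{P}}(1)$ and $R_n\pConv\sigma$, while by step~1 the first summand converges in distribution to $\NoD{0}{\sigma^2 C}$. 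Slutsky's theorem then delivers the limit $\sigma^{-1}\cdot\NoD{0}{\sigma^2 C}=\NoD{0}{C}$, which matches the claimed variance $\tfrac{4}{3}+\tfrac{8}{\pi}(\sqrt{3}-2)$ and completes the proof.
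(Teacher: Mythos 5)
Your proposal is correct and follows essentially the same route as the paper, which deduces the result from Theorem \ref{Thm: Central limit theorem} (specialized to constant $\sigma$, with $\psi^2=\sigma^2(\tfrac{4}{3}+\tfrac{8}{\pi}(\sqrt{3}-2))$), the law of large numbers, and Corollary \ref{Cor: Convergence speed LRV esti}. The step you flag as the main obstacle --- $\sqrt{b_n}$-consistency of $\hat{\kappa}_{\tilde{X}}(n)$ for $\sigma\kappa_Y$ under only $(2+\delta)$-moments --- is exactly that corollary, already proved in the paper via Proposition \ref{Prop: Consistency alpha-mix} without the $(4+2\delta)$-moment assumption (your aside that the block-centring contributes an $O_{\Pb}(c_0^{-1/2})$ additive error is imprecise --- the prefactor $\sqrt{2c_0/(1+2c_0)}$ corrects the variance inflation exactly --- but this does not affect the argument).
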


Theorem \ref{Cor: Asymptotic test H} (or Theorem \ref{Cor: Asymptotic test, sigma=c} in case of homoscedasticity) can now be applied to test for a stationary mean.  Given some data $x_1, \ldots, x_n$, the value of the test statistic  
 $\sqrt{b_n}\,\hat{\psi}_n^{-1}\rbraces{\frac{\sqrt{\ell_n}}{\hat{\kappa}(n)}U(n) -  \sqrt{{2}/{\pi}} \frac{1}{b_n(b_n-1)}\sum_{1\leq j\neq k \leq b_n} \sqrt{\hat{\sigma}_{j,n}^2+\hat{\sigma}_{k,n}^2} }$ can be explicitly computed and compared to the $(1-\alpha)$-quantile of the standard normal limit distribution. We reject the hypothesis of a constant mean if the value of the test statistic is larger than the respective quantile since Theorem \ref{Cor: Asymptotic test A} ensures the divergence of the test statistic under the alternative.

\section{Outline and main ideas of the proofs}
\label{Sec: Outline of Proofs}
This section gives an overview over the line of proof for the main theoretical results in Section \ref{Subsec: Limit theory for U(n)}. The proofs concerning the estimators in Section \ref{Subsec: Estimation of Parameters} as well as all technical details are deferred to an appendix. 

The proofs of Section \ref{Subsec: Limit theory for U(n)} consist of several approximation steps that reduce the statistic 
\begin{equation*}
  U(n)=\frac{1}{b_n(b_n-1)} \sum_{1\leq j\neq k \leq b_n} \left|\hat{\mu}_j -\hat{\mu}_k \right|
\end{equation*}
to a more manageable form, whose asymptotic behaviour is easier to analyse.  
In a first approximation step, we replace the variance of each observation $X_i=\mu(i/n)+\sigma(i/n)Y_i$, $i\in \{(j-1)\ell_n+1, \ldots, j\ell_n\}$, by the variance $\sigma(j\ell_n/n)=\sigma(j/b_n)$ of the last observation within the respective block $j$. The resulting U-statistic $U_1(n)$ is asymptotically close to a  Riemann-type double sum converging towards $\int_0^1\int_0^1 \abs{\mu(x)-\mu(y)}\mathrm{d}x\mathrm{d}y$. While this convergence holds for an arbitrary c\`adl\`ag mean function $\mu$, we require a second approximation step under the hypothesis of a constant mean to derive the limit theory presented in Theorems \ref{THM: LLN} and \ref{Thm: Central limit theorem}. Using a coupling technique for $\alpha$-mixing processes (see, Peligrad \citep{Peligrad.2002}), we replace the arguments $\frac{1}{\ell_n}\sum_{i=(j-1)\ell_n+1}^{j\ell_n} \sigma(j/b_n)Y_i+\mu_H$ of the U-statistic $U_1(n)$ by independent ones. 
The limit behaviour of the resulting statistic $U_2(n)$ can be derived by an extension of classical U-statistics theory to row-wise non-stationary triangular arrays. 

\subsection{A first approximation} By the piecewise Lipschitz-property of $\sigma$, the variances of the observations within each block $j$ are close to one another,  
$\sigma(i/n)\approx \sigma(j\ell_n/n)=\sigma(j/b_n)$ for $i\in \{(j-1)\ell_n+1, \ldots, j\ell_n\}, $ for all but finitely many blocks. Replacing the variance  of each individual observation by the variance of the last observation within the respective block, we arrive at the U-statistic
\begin{align*}
 U_1(n):=
 \frac{1}{b_n(b_n-1)}\sum_{1\leq j \neq k \leq b_n} & \bigg | \frac{1}{\ell_n}\sum_{i=(j-1)\ell_n+1}^{j\ell_n} \rbraces{\sigma\rbraces{\frac{j}{b_n}} Y_i + \mu\rbraces{\frac{i}{n}}} \\
&  - \frac{1}{\ell_n} \sum_{i=(k-1)\ell_n+1}^{k\ell_n} \rbraces{\sigma\rbraces{\frac{k}{b_n}} Y_i + \mu\rbraces{\frac{i}{n}}}\bigg |.
\end{align*}
We prove the statistic $U_1(n)$ to be sufficiently close to the original statistic $U(n)$, even if we require their difference to diminish faster than $\sqrt{n}$, which is the rate needed to replace $U$ by $U_1$ in the central limit theorem \ref{Thm: Central limit theorem}. 

\begin{proposition}{
\label{Prop: Approximation U U1}
Let the assumptions (A1) and (A2) be fulfilled. Then it holds 
$$\sqrt{n} \abs{U(n)-U_1(n)}\overset{L^2}{\longrightarrow} \;0 \quad \text{as } n \rightarrow \infty. $$ 
}\end{proposition}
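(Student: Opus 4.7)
The plan is to reduce $|U(n)-U_1(n)|$ to a single sum of block-wise residuals, then control their second moments using the $\alpha$-mixing covariance bound afforded by assumptions (A1) and (A2). Since the drift terms $\mu(i/n)$ enter identically in both $\hat\mu_j$ (within $U(n)$) and its counterpart in $U_1(n)$, they cancel upon subtraction. Setting
\[
D_j := \frac{1}{\ell_n}\sum_{i=(j-1)\ell_n+1}^{j\ell_n}\rbraces{\sigma\rbraces{\frac{i}{n}}-\sigma\rbraces{\frac{j}{b_n}}}Y_i,
\]
two successive applications of the triangle inequality (first $\big||a|-|b|\big|\le|a-b|$, then $|D_j-D_k|\le|D_j|+|D_k|$) give
\[
\abs{U(n)-U_1(n)} \;\le\; \frac{1}{b_n(b_n-1)}\sum_{1\le j\ne k\le b_n}\abs{D_j-D_k} \;\le\; \frac{2}{b_n}\sum_{j=1}^{b_n}\abs{D_j},
\]
so it suffices to show $n\,b_n^{-2}\,\E{\bigl(\sum_j\abs{D_j}\bigr)^2}\to 0$.

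The key observation is that for all but finitely many blocks, the Lipschitz property of $\sigma$ yields $\abs{\sigma(i/n)-\sigma(j/b_n)}\le L/b_n$ throughout block $j$, since $i/n$ varies by at most $\ell_n/n=1/b_n$ there. Combined with a classical $\alpha$-mixing covariance bound (Davydov's inequality), whose required summability $\sum_{k}\alpha_Y(k)^{\delta/(2+\delta)}<\infty$ is precisely (A2), I obtain $\sum_{i,i' \text{ in block } j}\abs{\cov{Y_i}{Y_{i'}}}\le C\ell_n$, and hence
\[
\E{D_j^2} \;\le\; \frac{L^2}{b_n^2\ell_n^2}\sum_{i,i'}\abs{\cov{Y_i}{Y_{i'}}} \;\le\; \frac{CL^2}{b_n^2\ell_n}
\]
for every ``regular'' block $j$, i.e.\ every block that does not contain a discontinuity of $\sigma$. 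For each of the at most $k_\sigma+1$ remaining ``jump'' blocks, I rely only on the boundedness of $\sigma$, which together with the same covariance bound yields $\E{D_j^2}\le C/\ell_n$.

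Aggregating these bounds via the Cauchy--Schwarz inequality $\E{\bigl(\sum_j\abs{D_j}\bigr)^2}\le \bigl(\sum_j\sqrt{\E{D_j^2}}\bigr)^2$, the regular blocks contribute at most $b_n\cdot C/(b_n\sqrt{\ell_n})=C/\sqrt{\ell_n}$ and the jump blocks contribute at most $(k_\sigma+1)\cdot C/\sqrt{\ell_n}$; since $k_\sigma$ is fixed, the sum is bounded by $C/\sqrt{\ell_n}$. Consequently, using $n=b_n\ell_n$,
\[
n\,\E{\abs{U(n)-U_1(n)}^2} \;\le\; \frac{4n}{b_n^2}\E{\bigg(\sum_{j=1}^{b_n}\abs{D_j}\bigg)^2} \;\le\; \frac{Cn}{b_n^2\ell_n} \;=\; \frac{C}{b_n} \;\xrightarrow[n\to\infty]{}\; 0.
\]
The main delicate point is the handling of the $O(1)$ jump blocks of $\sigma$, on which the Lipschitz gain $1/b_n$ is unavailable. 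The argument survives because only finitely many such blocks exist, so their total contribution is of order $1/\sqrt{\ell_n}$, i.e.\ of the same order as the regular-block contribution, and therefore does not spoil the rate.
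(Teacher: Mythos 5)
Your proposal is correct and follows essentially the same route as the paper: the same triangle-inequality reduction to $\frac{2}{b_n}\sum_j |D_j|$, the same use of the piecewise Lipschitz property of $\sigma$ (gain of $1/b_n$ on regular blocks, only boundedness on the finitely many jump blocks), and the same Davydov covariance bound under (A1)--(A2), yielding the identical rate $O(1/\sqrt{b_n})$ for $\|\sqrt{n}(U(n)-U_1(n))\|_2$. The only cosmetic difference is that the paper splits $\sigma=\sigma_L+\sigma_J$ additively and treats the two components in separate sums, whereas you partition the blocks into regular and jump blocks; the estimates are the same.
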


Proposition \ref{Prop: Approximation U U1} holds regardless of the particular form of the mean function, such that both under the hypothesis and under the alternative it will subsequently suffice to investigate the limit behaviour of the U-statistic $U_1$. 

\subsection{Outline of the proof of Theorem \ref{Thm: Behaviour under A}} 
Given $\mu$ is an arbitrary c\`adl\`ag function, the arguments within the U-statistic $U_1(n)$ approximately behave like 
\begin{align*}
 \frac{1}{\ell_n}\sigma\rbraces{\frac{j}{b_n}} \sum_{i=(j-1)\ell_n+1}^{j\ell_n}Y_i+ \frac{1}{\ell_n}\sum_{i=(j-1)\ell_n+1}^{j\ell_n}\mu\rbraces{\frac{i}{n}} 
 \approx  \frac{1}{\ell_n}\sum_{i=(j-1)\ell_n+1}^{j\ell_n}\mu\rbraces{\frac{i}{n}} \approx \mu\rbraces{\frac{j}{b_n}} 
\end{align*}
for all but finitely many blocks $j$, $1\leq j\leq b_n$, by the law of large numbers for the centred, $\alpha$-mixing time series $(Y_i)_{i\in\N}$ and by the c\`adl\`ag property of $\mu$. Hence, 
\begin{align*}
U_1(n)\approx  \frac{1}{b_n(b_n-1)}\sum_{1\leq j \neq k \leq b_n} \abs{ \mu\rbraces{\frac{j}{b_n}} -  \mu\rbraces{\frac{k}{b_n}} }.
\end{align*}
The latter is a Riemann-type sum converging towards the desired double Riemann-integral $\int_0^1 \int_0^1 |\mu(x) -\mu(y)| \mathrm{d}x\mathrm{d}y.$ A more detailed proof is included in the appendix. 

\subsection{A second approximation}
For the remainder of this section, we assume that the hypothesis of a constant mean function holds. Without loss of generality, set $\mu=0$. 
To derive the  limit theory under the hypothesis, a second approximation is needed to replace the arguments within the U-statistic $U_1(n)$ by independent ones. To do so, we make use of a coupling technique for $\alpha$-mixing random variables introduced by Peligrad \citep{Peligrad.2002}. 
 More precisely, we replace the blocks $(Y_{(j-1)\ell_n+1}, \ldots, Y_{j\ell_n})$, $1\leq j\leq b_n$, by slightly shortened blocks $(Y_{(j-1)\ell_n+1}, \ldots, Y_{j\ell_n-m_n})$. The remainder  blocks $(Y_{j\ell_n-m_n+1}, \ldots,  Y_{j\ell_n})$, $1\leq j\leq b_n$ of length $m_n=o(\ell_n)$ function as a separation  to weaken the dependence between the bigger blocks as $m_n\rightarrow \infty$ for $n\rightarrow \infty$, but they are asymptotically negligible themselves. We denote the resulting big block sums by 
$$\tilde{S}_{nj}:=\frac{1}{\ell_n}\sum_{i=(j-1)\ell_n+1}^{j\ell_n-m_n}Y_i.$$
According to Theorem 2 in Peligrad \citep{Peligrad.2002}, there exists (on a possibly richer probability space) a sequence of independent random variables $\tilde{S}_{nj}'$, $1\leq j \leq b_n$, such that for each $j$, $\tilde{S}_{nj}$ and $\tilde{S}_{nj}'$ have the same distribution and 
\begin{equation}
\label{Eq: Peligrad bound for expectation}
\mathbb{E}\big(\big |\tilde{S}_{nj}-\tilde{S}_{nj}'\big |\big)\leq 4 \int_0^{\alpha_Y(m_n)} Q_{\abs{\tilde{S}_{nj}}}(u)\mathrm{d}u,
\end{equation}
where $Q_{\abs{\tilde{S}_{nj}}}(u):=\inf \big\{x\in \R: \Pb\big(\big |\tilde{S}_{nj}\big |>x\big)\leq u\big\}$. This yields a U-statistic with independent arguments
$$U_2(n):= \frac{1}{b_n(b_n-1)} \sum_{1\leq j\neq k\leq b_n} \abs{\sigma(j/b_n)\tilde{S}_{nj}'-\sigma(k/b_n)\tilde{S}_{nk}'}$$
that is asymptotically still sufficiently close to $U_1(n)$.

\begin{proposition}{
\label{Prop: Approx U1 U2}
Let the assumptions (A1) and (A2) be fulfilled. Let $\ell_n=n^s$ with $s>0.5$ and assume that there exists a sequence $m_n\rightarrow \infty$ as $n\rightarrow \infty$ such that $m_n=o(n^{2s-1})$ and $b_n\alpha_Y(m_n)\rightarrow 0$. Then it holds under the hypothesis
$$\sqrt{n} \abs{U_1(n)-U_2(n)}\overset{L^1}{\longrightarrow} \; 0 \quad \text{as } n\rightarrow\infty.$$
}\end{proposition}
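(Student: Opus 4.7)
The plan is to reduce everything to a single-sum bound on the discrepancy between the "big-block'' averages $S_{nj}:=\frac{1}{\ell_n}\sum_{i=(j-1)\ell_n+1}^{j\ell_n}Y_i$ appearing in $U_1(n)$ (with $\mu\equiv 0$) and their independent copies $\tilde S'_{nj}$. By the reverse triangle inequality applied inside the absolute value and summing over $j\ne k$ (each index $j$ appears $2(b_n-1)$ times after accounting for both halves of each pair), one obtains
\begin{equation*}
|U_1(n)-U_2(n)|\;\le\;\frac{2}{b_n}\sum_{j=1}^{b_n}\sigma(j/b_n)\,\bigl|S_{nj}-\tilde S'_{nj}\bigr|
\;\le\;\frac{2\sigma_{\max}}{b_n}\sum_{j=1}^{b_n}\bigl|S_{nj}-\tilde S'_{nj}\bigr|,
\end{equation*}
since $\sigma$ is bounded. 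Taking expectations and using stationarity of $(Y_i)$, it therefore suffices to control $\E{|S_{n1}-\tilde S'_{n1}|}$.

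Next I would split the difference as
\begin{equation*}
|S_{n1}-\tilde S'_{n1}|\;\le\;|S_{n1}-\tilde S_{n1}|+|\tilde S_{n1}-\tilde S'_{n1}|,
\end{equation*}
and handle the two pieces separately. For the first piece, $S_{n1}-\tilde S_{n1}=\frac{1}{\ell_n}\sum_{i=\ell_n-m_n+1}^{\ell_n}Y_i$ is an average of $m_n$ consecutive terms of the stationary $\alpha$-mixing sequence $(Y_i)$. Under (A1) and (A2) the standard variance bound for partial sums of $\alpha$-mixing sequences gives $\E{(\sum_{i=1}^{m_n}Y_i)^2}\le C\,m_n$, so Cauchy--Schwarz yields $\E{|S_{n1}-\tilde S_{n1}|}\le C\sqrt{m_n}/\ell_n$. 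For the second piece I would apply Peligrad's coupling inequality \eqref{Eq: Peligrad bound for expectation} together with the elementary quantile estimate $Q_{|\tilde S_{n1}|}(u)\le u^{-1/2}\,\sqrt{\E{\tilde S_{n1}^2}}$ (Markov), followed by integration on $[0,\alpha_Y(m_n)]$; since the variance of $\tilde S_{n1}$ is again bounded by $C/\ell_n$, this produces $\E{|\tilde S_{n1}-\tilde S'_{n1}|}\le C\,\alpha_Y(m_n)^{1/2}/\sqrt{\ell_n}$.

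Combining the two bounds and multiplying by $\sqrt{n}$ leads to
\begin{equation*}
\sqrt{n}\,\E{|U_1(n)-U_2(n)|}\;\le\;C\Bigl(\sqrt{n\,m_n}/\ell_n+\sqrt{n/\ell_n}\,\alpha_Y(m_n)^{1/2}\Bigr)
\;=\;C\Bigl(n^{1/2-s}\sqrt{m_n}+\sqrt{b_n\,\alpha_Y(m_n)}\Bigr),
\end{equation*}
which vanishes precisely under the two assumptions $m_n=o(n^{2s-1})$ and $b_n\alpha_Y(m_n)\to 0$ imposed in the proposition. This would complete the proof of $L^1$-convergence to zero.

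The main obstacle is the second piece: the coupling bound in \eqref{Eq: Peligrad bound for expectation} is stated in terms of the quantile function of $|\tilde S_{n1}|$, and one must avoid using moments of order higher than two (which would require strengthening (A1)). The clean way around this is the Markov-based quantile estimate described above, which is sharp enough that the resulting $L^1$-rate, $\sqrt{b_n\,\alpha_Y(m_n)}$, matches exactly the hypothesis $b_n\alpha_Y(m_n)\to 0$, confirming that the rates in the proposition are chosen optimally for this approach. A small additional technicality is to handle the finitely many blocks near the jumps of $\sigma$ where the approximation $\sigma(i/n)\approx\sigma(j/b_n)$ fails, but because the number of such blocks is fixed and $\sigma$ is uniformly bounded, their contribution to the above bound is of smaller order than $b_n^{-1}$ and is absorbed without difficulty.
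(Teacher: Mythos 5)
Your proposal is correct and follows essentially the same route as the paper: the same decomposition into a remainder-block term of order $\sqrt{n m_n}/\ell_n$ and a coupling term controlled via Peligrad's inequality, with your Markov-based quantile bound $Q_{|\tilde S_{n1}|}(u)\le u^{-1/2}\|\tilde S_{n1}\|_2$ yielding exactly the same $\sqrt{\alpha_Y(m_n)}\,\|\tilde S_{n1}\|_2$ estimate that the paper obtains by Cauchy--Schwarz on $\int_0^{\alpha_Y(m_n)}Q_{|\tilde S_{n1}|}(u)\,\mathrm{d}u$. The only superfluous point is your closing remark about blocks near the jumps of $\sigma$: that issue belongs to the approximation of $U(n)$ by $U_1(n)$, whereas here both $U_1(n)$ and $U_2(n)$ already use $\sigma(j/b_n)$, so no such correction is needed.
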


\begin{remark} 
The exact choice of the sequence $m_n\rightarrow \infty$ as $n\rightarrow \infty$ is of no importance as long as the conditions $m_n=o(n^{2s-1})$ and $b_n\alpha_Y(m_n)\rightarrow 0$ are met. In case of polynomially decreasing mixing coefficients, i.e. $\alpha_Y(k)\leq C k^{-\rho (2+\delta)(1+\delta)/\delta^2}$, the existence of such a sequence  $(m_n)_{n\in\N}$ is guaranteed if we choose  $s>(1+\frac{\delta^2}{\rho(2+\delta)(1+\delta)})/(2+\frac{\delta^2}{\rho(2+\delta)(1+\delta)})$ which simplifies to $s>0.5$ if the mixing coefficients are decreasing at a geometric rate, that is, for $\rho\rightarrow \infty$. 
\end{remark}

$U_2(n)$ will be subsequently analysed by an adaptation of the classic U-statistics theory under independence to row-wise non-stationary triangular arrays.

\subsection{Proof of Theorem \ref{THM: LLN}} Due to the former approximation steps, it suffices to establish a law of large numbers for the U-statistic $U_2(n)$, whose arguments $\tilde{S}_{nj}'$  are independent with $\tilde{S}_{nj}'\overset{\mathcal{D}}{=}\tilde{S}_{nj}=\frac{1}{\ell_n} \sum_{i=(j-1)\ell_n+1}^{j\ell_n-m_n}Y_i.$ 
By the central limit theorem for $\alpha$-mixing random variables,
\begin{align*}
\frac{\sqrt{\ell_n}}{\kappa_Y} \tilde{S}_{nj}' \distConv Z_j
\end{align*}
for some standard normally distributed random variable $Z_j$. Since the $\tilde{S}_{nj}'$'s are independent, so are the $Z_j$'s, and roughly speaking, we intend to asymptotically replace the arguments $ \tilde{S}_{nj}'$ of $U_2(n)$ by $\frac{\kappa_Y}{\sqrt{\ell_n}} Z_j.$

\begin{proposition}
\label{Prop: LLN U2}
 Assume there is a $\rho>1$ and a $0<\delta \leq 1$ such that $\mathbb{E}\big(\abs{Y_1}^{2+\delta}\big)<\infty$ and $\alpha_Y(k)\leq Ck^{-\rho (2+\delta)(1+\delta)/\delta^2}$. 
 Then it holds under the hypothesis that 
$$\frac{\sqrt{\ell_n}}{\kappa_Y}U_2(n)\overset{L^2}{\longrightarrow}  \int_0^1 \int_0^1 \sqrt{\sigma^2(x)+\sigma^2(y)}\mathrm{d}x\mathrm{d}y \cdot \sqrt{\frac{2}{\pi}} \quad \text{as } n\rightarrow \infty. $$

\end{proposition}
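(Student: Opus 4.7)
The plan is to leverage the independence and (by stationarity of $(Y_i)_{i\in\N}$) identical distribution of the random variables $\tilde{S}_{nj}'$, $1\le j\le b_n$. Setting $W_{nj}:=\frac{\sqrt{\ell_n}}{\kappa_Y}\tilde{S}_{nj}'$ and $\sigma_j:=\sigma(j/b_n)$, one rewrites
\[
\frac{\sqrt{\ell_n}}{\kappa_Y}U_2(n) \;=\; \frac{1}{b_n(b_n-1)}\sum_{1\le j\neq k\le b_n}\bigl|\sigma_j W_{nj}-\sigma_k W_{nk}\bigr|,
\]
which is a generalized U-statistic with i.i.d.\ arguments and a kernel depending on the block indices via $\sigma$. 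I will reach the target $L:=\int_0^1\int_0^1\sqrt{\sigma^2(x)+\sigma^2(y)}\,\mathrm dx\,\mathrm dy\cdot\sqrt{2/\pi}$ in $L^2$ by controlling the bias and the variance separately.

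For the bias, a Yokoyama-type moment inequality applied to the $\alpha$-mixing partial sum $\sum_i Y_i$ under $\E{\abs{Y_1}^{2+\delta}}<\infty$ together with the polynomial mixing rate (the condition $\rho>1$ makes $\sum_k\alpha_Y(k)^{\delta/(2+\delta)}<\infty$) yields $\sup_{n,j}\E{\abs{W_{nj}}^{2+\delta}}<\infty$. Combined with the CLT for $\alpha$-mixing sequences, which gives $W_{nj}\distConv Z\sim\NoD{0}{1}$, this implies that for any bounded $a,b$ the family $\{\abs{aW_{n1}-bW_{n2}}\}_n$ is uniformly integrable, hence $\E{\abs{aW_{n1}-bW_{n2}}}\to\E{\abs{aZ-bZ'}}=\sqrt{a^2+b^2}\sqrt{2/\pi}$ pointwise in $(a,b)$. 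The elementary estimate $\abs{\E{\abs{aW-bW'}}-\E{\abs{a'W-b'W'}}}\le (\abs{a-a'}+\abs{b-b'})\E{\abs{W}}$ shows that the family of maps $(a,b)\mapsto\E{\abs{aW_{n1}-bW_{n2}}}$ is equi-Lipschitz on bounded sets, promoting pointwise to uniform convergence on $[\sigma_0,\|\sigma\|_\infty]^2$. Together with Riemann-integrability of $\sqrt{\sigma^2(x)+\sigma^2(y)}$ (which is piecewise Lipschitz on $[0,1]^2$) this gives
\[
\E{\tfrac{\sqrt{\ell_n}}{\kappa_Y}U_2(n)} \;=\; \frac{1}{b_n(b_n-1)}\sum_{j\neq k}\sqrt{\sigma_j^2+\sigma_k^2}\sqrt{2/\pi}+o(1) \;\longrightarrow\; L.
\]

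The variance is treated by the standard U-statistic covariance decomposition using independence of the $W_{nj}$'s. Writing $g_{jk}(u,v):=\abs{\sigma_j u-\sigma_k v}$, the covariance $\cov{g_{jk}(W_{nj},W_{nk})}{g_{lm}(W_{nl},W_{nm})}$ vanishes whenever $\{j,k\}\cap\{l,m\}=\emptyset$. Ordered pairs with $\{j,k\}=\{l,m\}$ contribute $O(b_n^2)$ to the double covariance sum, since each individual term is bounded by $C(\sigma_j^2+\sigma_k^2)\E{W_{n1}^2}=O(1)$. For quadruples with exactly one shared index, say $j=l$, conditioning on $W_{nj}$ reduces the covariance to $\cov{h_{jk}(W_{nj})}{h_{jm}(W_{nj})}$ where $h_{jk}(u):=\E{\abs{\sigma_j u-\sigma_k W_{nk}}}$ is $\|\sigma\|_\infty$-Lipschitz in $u$; each such covariance is therefore bounded by $\|\sigma\|_\infty^2\Var{W_{nj}}=O(1)$. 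Summing over the $O(b_n^3)$ one-overlap quadruples gives $O(b_n^3)$ in total. Dividing by $b_n^2(b_n-1)^2$ yields $\Var{\tfrac{\sqrt{\ell_n}}{\kappa_Y}U_2(n)}=O(b_n^{-1})\to 0$.

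The main obstacle is the uniform-in-$(j,k)$ step in the bias argument: a pointwise CLT is insufficient because the expectation must be exchanged with a sum of $O(b_n^2)$ non-identical terms while simultaneously passing to a Riemann integral. The equi-Lipschitz estimate, underpinned by the uniform $(2+\delta)$-moment bound on the $W_{nj}$'s, is what legitimates this exchange. Combined with the variance estimate, the $L^2$-convergence to $L$ follows.
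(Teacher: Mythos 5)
Your proof is correct, and its overall architecture (separate control of the expectation and the variance, then $L^2$-convergence) matches the paper's. The variance estimate is essentially the paper's argument: both exploit that covariances of kernel evaluations with disjoint index sets vanish by independence, that the $O(b_n^2)$ fully-overlapping pairs and $O(b_n^3)$ one-overlap triples each contribute $O(1)$ per term, and both arrive at $\Var{\frac{\sqrt{\ell_n}}{\kappa_Y}U_2(n)}=O(b_n^{-1})$; your conditioning step via the Lipschitz function $h_{jk}(u)=\E{\abs{\sigma_j u-\sigma_k W_{nk}}}$ is a slightly sharper route to the same bound than the paper's crude Cauchy--Schwarz estimate. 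The genuine difference lies in the treatment of the expectation. The paper proves the convergence of $\E{\frac{\sqrt{\ell_n}}{\kappa_Y}U_2(n)}$ as a weakened byproduct of Proposition \ref{Prop: Replacing centring term}, whose proof runs through a quantitative Berry--Esseen bound for $\alpha$-mixing sums (Tikhomirov's theorem combined with Petrov's tail refinement) to obtain a rate that is uniform in $(j,k)$ --- machinery that is genuinely needed later because the centring term must be replaced at rate $o(b_n^{-1/2})$ in the central limit theorem. You instead observe that for the law of large numbers no rate is required, and replace the Berry--Esseen input by a soft argument: uniform integrability of $\abs{aW_{n1}-bW_{n2}}$ (a uniform second-moment bound already suffices here, so the Yokoyama step is dispensable) gives pointwise convergence of $\phi_n(a,b):=\E{\abs{aW_{n1}-bW_{n2}}}$ to $\sqrt{a^2+b^2}\sqrt{2/\pi}$, and the equi-Lipschitz property of $\phi_n$ on the compact set $[\sigma_0,\sigma_{\sup}]^2$ upgrades this to uniform convergence, which is exactly what is needed to pass the limit through the Riemann sum of $O(b_n^2)$ non-identical terms. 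This buys a more elementary and self-contained proof of the proposition at the cost of yielding no rate, so it could not be recycled for Proposition \ref{Prop: Replacing centring term} itself, whereas the paper's heavier argument serves both purposes at once.
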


\begin{proof} The proof of Proposition \ref{Prop: LLN U2} consists of two steps. First, we show that the expectation $\mathbb{E}\big(\frac{\sqrt{\ell_n}}{\kappa_Y}U_2(n)\big)$ converges towards the desired quantity $\int_0^1 \int_0^1 \sqrt{\sigma^2(x)+\sigma^2(y)}\mathrm{d}x\mathrm{d}y \cdot \sqrt{\frac{2}{\pi}}$,  and afterwards, that the variance of $\sqrt{\ell_n}U_2(n)/\kappa_Y$ converges towards zero. 
We obtain
\begin{align*}
\E{\frac{\sqrt{\ell_n}}{\kappa_Y}U_2(n)} = & \frac{\sqrt{\ell_n}}{\kappa_Y} \frac{1}{b_n(b_n-1)}\sum_{1\leq j \neq k \leq b_n} \E{\abs{\sigma(j/b_n)\tilde{S}_{nj}'-\sigma(k/b_n)\tilde{S}_{nk}'}}\\
\approx  & \frac{1}{b_n(b_n-1)}\sum_{1\leq j \neq k \leq b_n} \E{\abs{\sigma(j/b_n)Z_j-\sigma(k/b_n)Z_k}} \\
= & \frac{1}{b_n(b_n-1)}\sum_{1\leq j \neq k \leq b_n} \sqrt{\sigma^2(j/b_n)+\sigma^2(k/b_n)} \sqrt{\frac{2}{\pi}}.
\end{align*}
The double sum  is a Riemann-type  approximation converging towards the desired integral $ \int_0^1 \int_0^1 \sqrt{\sigma^2(x)+\sigma^2(y)}\mathrm{d}x\mathrm{d}y$. The convergence of the expectation $\mathbb{E}\big({\frac{\sqrt{\ell_n}}{\kappa_Y}U_2(n)}\big)$ is a weaker version of Proposition \ref{Prop: Replacing centring term} below, whose detailed proof is given in the appendix. 

Regarding the variance, we have

\begin{align*}
& \Var{\frac{\sqrt{\ell_n}}{\kappa_Y}U_2(n)} 
=  \frac{2}{b_n^2(b_n-1)^2} \sum_{1\leq j_1\neq k_1\leq b_n} \Var{\frac{\sqrt{\ell_n}}{\kappa_Y}\abs{\sigma\rbraces{\frac{j_1}{b_n}}\tilde{S}_{nj_1}'-\sigma\rbraces{\frac{k_1}{b_n}}\tilde{S}_{nk_1}'}}\\
 & + \frac{4}{b_n^2(b_n-1)^2} \sum_{1\leq j_1\neq k_1\leq b_n}\sum_{\substack{1\leq k_2\leq b_n,\\ k_2\notin \{j_1, k_1\}}} \mathrm{Cov}\left(\frac{\sqrt{\ell_n}}{\kappa_Y}\abs{\sigma\rbraces{\frac{j_1}{b_n}}\tilde{S}_{nj_1}'-\sigma\rbraces{\frac{k_1}{b_n}}\tilde{S}_{nk_1}'},\right. \\
& \hspace{6.58cm} \left. \frac{\sqrt{\ell_n}}{\kappa_Y}\abs{\sigma\rbraces{\frac{j_1}{b_n}}\tilde{S}_{nj_1}'-\sigma\rbraces{\frac{k_2}{b_n}}\tilde{S}_{nk_2}'}\right),
\end{align*}
which is (after some calculation) shown to be of order $O(1/b_n)$. For the specifics, we refer to the detailed proof in the appendix. 
\end{proof}

Theorem \ref{THM: LLN} now follows from a combination of  the former approximation results, Propositions \ref{Prop: Approximation U U1} and \ref{Prop: Approx U1 U2}, together with Proposition \ref{Prop: LLN U2}.

\subsection{Proof of Theorem \ref{Thm: Central limit theorem}}
Given an appropriate centring and scaling of $U_2(n)$, we can additionally obtain a central limit theorem for the U-statistic itself.

\begin{proposition}
\label{Prop: CLT U2}
Assume there is a $\rho>1$ and a $0<\delta \leq 1$ such that $\mathbb{E}\big(\abs{Y_1}^{2+\delta}\big)<\infty$ and $\alpha_Y(k)\leq Ck^{-\rho (2+\delta)(1+\delta)/\delta^2}$. Let $\ell_n=n^s$ with $s>0.5$. 
Then it holds under the hypothesis that 
$$\sqrt{n}\frac{1}{\kappa_Y}\rbraces{U_2(n) -\E{U_2(n)} } \distConv \NoD{0}{\psi^2} \quad \text{as } n\rightarrow \infty, $$
where  $$\psi^2=4\int_0^1 \mathbb{E}\bigg(\bigg |\int_0^1 \E{\abs{\sigma(x)Z-\sigma(y)Z'}|Z}-\E{\abs{\sigma(x)Z-\sigma(y)Z'}}\mathrm{d}y \bigg |^2\bigg) \mathrm{d}x $$ 
for two independent standard normal random variables $Z$ and $Z'$.
\end{proposition}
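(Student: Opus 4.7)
The plan is to Hoeffding-decompose $V_n:=(\sqrt{\ell_n}/\kappa_Y)\,U_2(n)$ with respect to the row-independent triangular array $W_{nj}:=(\sqrt{\ell_n}/\kappa_Y)\,\tilde S'_{nj}$, $1\le j\le b_n$, and then treat the linear and degenerate parts of the decomposition separately.  Since $n=b_n\ell_n$, the claim reduces to $\sqrt{b_n}(V_n-\E{V_n})\distConv\NoD{0}{\psi^2}$.  With the non-stationary kernel $K_{jk}(w,w'):=\abs{\sigma(j/b_n)w-\sigma(k/b_n)w'}$ and its symmetry $K_{jk}(w,w')=K_{kj}(w',w)$, the decomposition takes the form
\begin{equation*}
\sqrt{b_n}(V_n-\E{V_n})\;=\;\frac{2}{\sqrt{b_n}}\sum_{j=1}^{b_n}\bar g_j(W_{nj})\;+\;\sqrt{b_n}\,D_n\;=:\;L_n+\sqrt{b_n}\,D_n,
\end{equation*}
with first-order projection $\bar g_j(w):=(b_n-1)^{-1}\sum_{k\neq j}\{\E{K_{jk}(w,W_{nk})}-\E{K_{jk}(W_{nj},W_{nk})}\}$ and a canonical (degenerate) second-order remainder $D_n$.

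The degenerate part is easier to dispatch: by the independence of the $W_{nj}$'s and the canonicity $\E{r_{jk}(W_{nj},W_{nk})\mid W_{nj}}=0$ a.s.\ (and likewise in the second argument), only index pairs agreeing as unordered sets survive in $\Var{D_n}$.  Combining $K_{jk}(w,w')^2\le 2\,\norm{\sigma}_\infty^2(w^2+w'^2)$ with the uniform bound $\sup_j\E{W_{nj}^2}<\infty$ (a standard Rio-type $L^2$ bound for $\alpha$-mixing partial sums under (A1)--(A2)) gives $\Var{\sqrt{b_n}D_n}=O(b_n^{-1})\to 0$.  For the linear piece $L_n$, a sum of row-wise independent, centred, non-identically distributed summands, I would apply Lindeberg--Feller.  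Because $\abs{\bar g_j(w)}\le C(1+\abs{w})$ and $\sup_j\E{\abs{W_{nj}}^{2+\delta}}<\infty$ by a Rio-type $(2+\delta)$-moment inequality (which is where the polynomial mixing rate and moment condition enter), the Lyapunov ratio is $O(b_n^{-\delta/2})\to 0$.  To pin down the asymptotic variance I would couple each $W_{nj}$ to an independent standard normal $Z_j$ via a quantitative CLT for $\alpha$-mixing partial sums, exploit that $K_{jk}$ is Lipschitz with constant $\le\sqrt 2\,\norm{\sigma}_\infty$, and read the remaining normalisations $(b_n-1)^{-1}\sum_{k\neq j}$ and $b_n^{-1}\sum_j$ as Riemann sums in $k/b_n$ and $j/b_n$, respectively.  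Dominated convergence then yields
\begin{equation*}
\Var{L_n}=\frac{4}{b_n}\sum_{j=1}^{b_n}\Var{\bar g_j(W_{nj})}\longrightarrow 4\int_0^1\E{\Big|\int_0^1\E{\abs{\sigma(x)Z-\sigma(y)Z'}\mid Z}-\E{\abs{\sigma(x)Z-\sigma(y)Z'}}\mathrm{d}y\Big|^2}\mathrm{d}x=\psi^2,
\end{equation*}
so that $L_n\distConv\NoD{0}{\psi^2}$, and Slutsky combines this with $\sqrt{b_n}D_n\to 0$ to finish.

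The technical heart of the argument is the uniform normal approximation underlying the variance identification: since $K_{jk}$ is Lipschitz but unbounded in $w$, a bare Wasserstein-$1$ bound does not close the variance computation on its own.  I would therefore truncate $W_{nj}$ at some level $M$ uniformly in $j$ (with the tail $\{\abs{W_{nj}}>M\}$ controlled through the uniform $(2+\delta)$-moment estimate, the one place where the assumption $\alpha_Y(k)\lesssim k^{-\rho(2+\delta)(1+\delta)/\delta^2}$ is essentially needed), then replace the truncated versions by their Gaussian counterparts via a quantitative CLT for $\alpha$-mixing partial sums, and finally let $n\to\infty$ before $M\to\infty$.  The lower bound $\sigma\ge\sigma_0>0$ keeps the inner conditional expectations non-degenerate uniformly in $j,k$, which is what prevents the Riemann-sum identification from degenerating at the boundary; once this double limit is handled, all remaining steps reduce to the standard Hoeffding--Lindeberg--Feller machinery adapted to a row-wise non-stationary, independent array.
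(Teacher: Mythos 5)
Your proposal follows essentially the same route as the paper: Hoeffding-decompose $U_2(n)$ with respect to the row-wise independent, non-stationary triangular array, dispose of the degenerate part by pairwise uncorrelatedness of its canonical summands (variance $O(1/b_n)$ after the $\sqrt{b_n}$-scaling), and apply a Lyapunov/Lindeberg--Feller central limit theorem to the linear part. The one step where you genuinely diverge is the identification of the limit variance $\psi^2$: you propose truncating the $W_{nj}$ at a level $M$, applying a quantitative normal approximation to the truncated variables, and interchanging the limits $n\to\infty$ and $M\to\infty$; the paper instead realises the weak convergence $\big(\tfrac{\sqrt{\ell_n}}{\kappa_Y}\tilde{S}_{n1}',\tfrac{\sqrt{\ell_n}}{\kappa_Y}\tilde{S}_{n2}'\big)\distConv(Z,Z')$ almost surely via Skorohod's representation theorem, upgrades it to $L^2$-convergence through uniform integrability (supplied by the same uniform higher-moment bound you invoke), and then uses the Lipschitz continuity of $w\mapsto\E{\abs{\sigma_j w-\sigma_k W_{nk}}}$ to transfer convergence to the conditional expectations, handling the deterministic terms $\theta_{jk}^{(n)}$ by a Berry--Esseen-type bound (Tikhomirov's theorem combined with Petrov's nonuniform estimate). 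Both devices resolve the same obstacle --- the kernel is Lipschitz but unbounded --- and yours is equally workable, though the Skorohod route avoids the double limit in $M$ and $n$. One small caveat: the uniform $(2+\delta)$-moment bound you assert for the normalised block sums is slightly too strong, since Yokoyama/Rosenthal-type inequalities under the stated polynomial mixing rate only deliver moments of order $2+\eta$ for $\eta<\delta$ (the paper takes $\eta=\delta/4$); this is all the Lyapunov condition requires, so your ratio becomes $O(b_n^{-\eta/2})$ rather than $O(b_n^{-\delta/2})$ and nothing breaks.
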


\begin{proof}
Note that
\begin{align*}
\frac{\sqrt{\ell_n}}{\kappa_Y} U_2(n)= \frac{1}{b_n(b_n-1)} \sum_{1\leq j\neq k\leq b_n} \abs{\frac{\sqrt{\ell_n}}{\kappa_Y}\sigma\rbraces{\frac{j}{b_n}}\tilde{S}_{nj}'-\frac{\sqrt{\ell_n}}{\kappa_Y}\sigma\rbraces{\frac{k}{b_n}}\tilde{S}_{nk}'}
\end{align*}
is a U-statistic of a row-wise non-stationary but  independent triangular array. We intend to split $U_2(n)$  up via the  Hoeffding-decomposition, which is a frequently used tool in U-statistics theory. We decompose the kernel $h(x,y)=\abs{x-y}$ by defining
\begin{align*}
\theta_{jk}^{(n)}=\theta_{kj}^{(n)}&:=\E{h\rbraces{\frac{\sqrt{\ell_n}}{\kappa_Y}\sigma\rbraces{\frac{j}{b_n}}\tilde{S}_{nj}', \frac{\sqrt{\ell_n}}{\kappa_Y}\sigma\rbraces{\frac{k}{b_n}}\tilde{S}_{nk}'}},\\
h_{1,jk}^{(n)}(x)&:=\E{h\rbraces{x, \frac{\sqrt{\ell_n}}{\kappa_Y}\sigma\rbraces{\frac{k}{b_n}}\tilde{S}_{nk}'}}-\theta_{jk}^{(n)},\\
\overline{h_{1,j}^{(n)}}\rbraces{x}&:=\frac{1}{b_n-1} \sum_{\substack{1\leq k\leq b_n\\k\neq j}} h_{1,jk}^{(n)}\rbraces{x},\\
h_{2,jk}^{(n)}(x,y)&:=h(x,y)-h_{1,jk}^{(n)}(x)-h_{1,kj}^{(n)}(y)-\theta_{jk}^{(n)}.
\end{align*}
Note that since the arguments of the U-statistic stem from a non-stationary triangular array, the Hoeffding-decomposition depends both on the sample size $n$ and on the indices $j$ and $k$.
Applying the Hoeffding-decomposition to $U_2$ yields
\begin{align*}
&\sqrt{b_n}\rbraces{\frac{\sqrt{\ell_n}}{\kappa_Y}U_2(n) - \frac{\sqrt{\ell_n}}{\kappa_Y}\E{U_2(n)} } \\
=&\frac{\sqrt{b_n}}{b_n(b_n-1)}   \sum_{1\leq j\neq k\leq b_n}\rbraces{ \abs{\frac{\sqrt{\ell_n}}{\kappa_Y}\sigma\rbraces{\frac{j}{b_n}}\tilde{S}_{nj}'-\frac{\sqrt{\ell_n}}{\kappa_Y}\sigma\rbraces{\frac{k}{b_n}}\tilde{S}_{nk}'}-  \theta_{jk}^{(n)}}\\
= & \frac{2}{\sqrt{b_n}} \sum_{1\leq j\leq b_n} \overline{h_{1,j}^{(n)}}\rbraces{\frac{\sqrt{\ell_n}}{\kappa_Y}\sigma\rbraces{\frac{j}{b_n}}\tilde{S}_{nj}'} +  \frac{\sqrt{b_n}}{b_n(b_n-1)}   \sum_{1\leq j\neq k\leq b_n}h_{2,jk}^{(n)}\rbraces{\frac{\sqrt{\ell_n}}{\kappa_Y}\sigma\rbraces{\frac{j}{b_n}}\tilde{S}_{nj}',\frac{\sqrt{\ell_n}}{\kappa_Y}\sigma\rbraces{\frac{k}{b_n}}\tilde{S}_{nk}'}.
\end{align*}
We will call the first term the linear and the second term the degenerate part of the Hoeffding-decomposition of $U_2(n)$. By independence of the $\tilde{S}_{nj}'$'s and by an application of Fubini's Theorem, 
\begin{align*}
\E{\overline{h_{1,j}^{(n)}}\rbraces{\frac{\sqrt{\ell_n}}{\kappa_Y}\sigma\rbraces{\frac{j}{b_n}}\tilde{S}_{nj}'} }=0
\end{align*}
as well as
\begin{align*}
\E{h_{2,jk}^{(n)}\rbraces{x,\frac{\sqrt{\ell_n}}{\kappa_Y}\sigma\rbraces{\frac{k}{b_n}}\tilde{S}_{nk}'}}=\E{h_{2,jk}^{(n)}\rbraces{\frac{\sqrt{\ell_n}}{\kappa_Y}\sigma\rbraces{\frac{j}{b_n}}\tilde{S}_{nj}',y}}=0
\end{align*}
for any $x,y\in\R$ and all $j,k\in \{1, \ldots, b_n\}$ with $j\neq k$. 
The linear part thus sums up the entries from a row-wise independent, centred, non-stationary triangular array and Lyapunov's central limit theorem yields the convergence in distribution towards the desired normal law $ \NoD{0}{\psi^2}$, whose limit variance is given by 
$$\psi^2= 4\limn \Var{\frac{1}{\sqrt{b_n}} \sum_{1\leq j\leq b_n} \overline{h_{1,j}^{(n)}}\rbraces{\frac{\sqrt{\ell_n}}{\kappa_Y}\sigma\rbraces{\frac{j}{b_n}}\tilde{S}_{nj}'}}.$$
The degenerate part of the Hoeffding-decomposition is a degenerate U-statistic with pairwise uncorrelated summands. Its variance is (after some calculation) consequently  given by 
\begin{align*}
\frac{1}{b_n(b_n-1)^2}   \sum_{1\leq j\neq k\leq b_n}\Var{h_{2,jk}^{(n)}\rbraces{\frac{\sqrt{\ell_n}}{\kappa_Y}\sigma\rbraces{\frac{j}{b_n}}\tilde{S}_{nj}',\frac{\sqrt{\ell_n}}{\kappa_Y}\sigma\rbraces{\frac{k}{b_n}}\tilde{S}_{nk}'}}=O\rbraces{\frac{1}{b_n-1}}.
\end{align*}
Hence, the degenerate part converges in $L^2$ towards zero and an application of Slutzky's lemma concludes the proof. Once more, a detailed proof is deferred to the appendix.
\end{proof}

So far, we used the mean $\frac{\sqrt{\ell_n}}{\kappa_Y}\E{U_2(n)} $ as the centring term within the central limit theorem. However, for a practical application of the test, we need to replace it by its limit $ \int_0^1 \int_0^1 \sqrt{\sigma^2(x)+\sigma^2(y)}\mathrm{d}x\mathrm{d}y\cdot \sqrt{2/\pi}$. The latter depends upon the data solely via the variance function. 

\begin{proposition}
\label{Prop: Replacing centring term}
Assume there is a $\rho>1$ and a $0<\delta \leq 1$ such that $\mathbb{E}\big({\abs{Y_1}^{2+\delta}}\big)<\infty$ and $\alpha_Y(k)\leq Ck^{-\rho (2+\delta)(1+\delta)/\delta^2}$. Let $\ell_n=n^s$ with $s>1/(1+\delta\frac{\rho-1}{\rho+1})$.  Then it holds under the hypothesis
$$\sqrt{b_n} \,\bigg |\mathbb{E}\bigg({\frac{\sqrt{\ell_n}}{\kappa_Y}U_2(n)}\bigg)-  \int_0^1 \int_0^1 \sqrt{\sigma^2(x)+\sigma^2(y)}\mathrm{d}x\mathrm{d}y \cdot \sqrt{\frac{2}{\pi}}\bigg |\longrightarrow 0 \quad \text{as } n \rightarrow \infty.$$
\end{proposition}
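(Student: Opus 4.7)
The plan is to control two separate sources of error. Writing $\sigma_j := \sigma(j/b_n)$ and using that $\tilde S_{nj}',\tilde S_{nk}'$ are independent with $\tilde S_{nj}'\overset{\mathcal D}{=}\tilde S_{nj}$, one has
\[
\mathbb E\left(\tfrac{\sqrt{\ell_n}}{\kappa_Y}U_2(n)\right)
=\tfrac{1}{b_n(b_n-1)}\sum_{1\le j\neq k\le b_n}
\mathbb E\left|\sigma_j\tfrac{\sqrt{\ell_n}}{\kappa_Y}\tilde S_{nj}'-\sigma_k\tfrac{\sqrt{\ell_n}}{\kappa_Y}\tilde S_{nk}'\right|,
\]
and the target is the corresponding double Riemann integral. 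I would introduce the intermediate quantity
$\tfrac{1}{b_n(b_n-1)}\sum_{j\neq k}\mathbb E|\sigma_j Z_j-\sigma_k Z_k|=\tfrac{1}{b_n(b_n-1)}\sum_{j\neq k}\sqrt{\sigma_j^2+\sigma_k^2}\cdot\sqrt{2/\pi}$
for i.i.d.\ standard normal $Z_j$, reducing the proof to bounding (A) the Gaussian-approximation error at the level of each summand and (B) the Riemann-sum error for the integral.

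For step (A), I would exploit that $h(x,y)=|x-y|$ is $1$-Lipschitz in each argument. Combining this with the independence of the $\tilde S_{nj}'$, a standard telescoping argument yields
\[
\left|\mathbb E\left|\sigma_j\tfrac{\sqrt{\ell_n}}{\kappa_Y}\tilde S_{nj}'-\sigma_k\tfrac{\sqrt{\ell_n}}{\kappa_Y}\tilde S_{nk}'\right|-\mathbb E|\sigma_j Z_j-\sigma_k Z_k|\right|
\le (\sigma_j+\sigma_k)\,d_n,
\]
where $d_n:=W_1\!\left(\mathcal L\bigl(\sqrt{\ell_n}\tilde S_{nj}'/\kappa_Y\bigr),\,\mathcal N(0,1)\right)$. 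For the $\alpha$-mixing partial sums $\sum_{i=1}^{\ell_n-m_n}Y_i$ under the assumed moment and polynomial mixing conditions, a Berry--Esseen-type bound (via known results of Tikhomirov or Sunklodas, transferred from Kolmogorov to Wasserstein-$1$ distance by Lipschitz-test-function integration, e.g.\ using $\mathbb E|X|=\int_0^\infty\mathbb P(|X|>t)\,dt$) yields $d_n=O(\ell_n^{-\gamma})$ with $\gamma=\delta(\rho-1)/(2(\rho+1))$. A minor correction absorbs the variance discrepancy $m_n/\ell_n$ produced by the truncation in $\tilde S_{nj}$, which contributes a term of order $m_n/\ell_n$, negligible for $m_n=o(\ell_n)$. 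Since $\sigma$ is bounded away from $0$ and from above, averaging over all $(j,k)$ preserves the rate, so the contribution from (A) is $O(\ell_n^{-\gamma})$.

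For step (B), the piecewise Lipschitz property of $\sigma$ (and hence of $(x,y)\mapsto\sqrt{\sigma^2(x)+\sigma^2(y)}$ away from its finitely many jumps, where it is uniformly bounded) gives a Riemann-sum error
\[
\left|\tfrac{1}{b_n(b_n-1)}\sum_{1\le j\neq k\le b_n}\sqrt{\sigma_j^2+\sigma_k^2}-\int_0^1\!\!\int_0^1\sqrt{\sigma^2(x)+\sigma^2(y)}\,\mathrm dx\,\mathrm dy\right|=O(1/b_n).
\]
Combining (A) and (B), the total error is $O(\ell_n^{-\gamma})+O(1/b_n)$. The condition $s>1/(1+\delta(\rho-1)/(\rho+1))$ is exactly the requirement $2s\gamma>1-s$, i.e.\ $\ell_n^{-\gamma}=o(b_n^{-1/2})$, while $1/b_n=o(b_n^{-1/2})$ is automatic. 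Hence multiplying the total error by $\sqrt{b_n}$ gives a quantity tending to zero, as claimed.

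The main obstacle I expect is step (A): one needs a Wasserstein-$1$ (or at least a Kolmogorov bound that can be upgraded) rate of convergence in the CLT for $\alpha$-mixing partial sums with the precise exponent $\delta(\rho-1)/(2(\rho+1))$ under the stated moment/mixing trade-off, and a careful book-keeping of the truncation variance bias $m_n/\ell_n$ together with the $\sigma_j,\sigma_k$ prefactors so that the bound is uniform over $(j,k)$. The Riemann-sum part is routine once the piecewise Lipschitz estimate is set up carefully around the jump points of $\sigma$.
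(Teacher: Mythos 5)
Your strategy coincides with the paper's: insert the Gaussian intermediate quantity $\frac{1}{b_n(b_n-1)}\sum_{j\neq k}\sqrt{\sigma_{j\ell_n}^2+\sigma_{k\ell_n}^2}\,\sqrt{2/\pi}$, control the Gaussian-approximation error via a Berry--Esseen-type rate for $\alpha$-mixing partial sums (Tikhomirov), bound the Riemann-sum error by $O(1/b_n)$ using the piecewise Lipschitz property of $\sigma$, and check that $s>1/(1+\delta\tfrac{\rho-1}{\rho+1})$ is precisely the condition making $\sqrt{b_n}\,\ell_n^{-(\delta/2)(\rho-1)/(\rho+1)}\to 0$. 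Your step (B) and the rate bookkeeping are exactly as in the paper.

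The one point where your argument is not yet a proof is the passage from a Kolmogorov bound to the Wasserstein-$1$ quantity $d_n$ in step (A). A uniform bound $\sup_x\lvert F_n(x)-\Phi(x)\rvert\leq\Delta_n$ does not by itself control $\int_{\mathbb{R}}\lvert F_n(x)-\Phi(x)\rvert\,\mathrm{d}x$; one must show that the pointwise difference decays in $x$. The paper fills exactly this hole with Petrov's moment inequality (Theorem 9, Chapter V in Petrov), which upgrades Tikhomirov's uniform bound to $\lvert F_n(x)-\Phi(x)\rvert\leq C\,\Delta_n\log(1/\Delta_n)/(1+x^2)$, the moment-mismatch term $\lambda_2$ vanishing because the statistic is normalized to have variance exactly one; the extra logarithm is harmless since the inequality defining $s$ is strict. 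Two further remarks. First, the paper applies Tikhomirov not to each marginal $\tilde{S}_{nj}'$ but to the single normalized mixing sum $(\sigma_{j\ell_n}^2+\sigma_{k\ell_n}^2)^{-1/2}\,\ell_n^{-1/2}\kappa_{Y,n}^{-1}\sum_{i}(\sigma_{j\ell_n}Y_i'-\sigma_{k\ell_n}Y_{i+\ell_n}')$, which yields a bound uniform in $(j,k)$ without your telescoping step; your marginal-plus-Lipschitz route is equally valid once a genuine $W_1$ rate is in hand. Second, the normalization forces one to replace $\kappa_Y$ by its finite-$n$ counterpart $\kappa_{Y,n}$ and to verify $\sqrt{b_n}\,\lvert\kappa_{Y,n}-\kappa_Y\rvert\to 0$; this discrepancy is not only the truncation contribution of order $m_n/\ell_n$ that you mention but also the error from cutting off the covariance series at lag $\ell_n-m_n$, which the paper handles by a separate lemma under the stated mixing rate.
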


Theorem \ref{Thm: Central limit theorem} is now a consequence of Propositions \ref{Prop: Approximation U U1} and \ref{Prop: Approx U1 U2}, yielding 
$$\sqrt{n}\rbraces{U(n)-U_2(n)}\overset{L^1}{\longrightarrow} \; 0,$$
of the central limit theorem for $U_2(n) $ in Proposition \ref{Prop: CLT U2}, of Proposition \ref{Prop: Replacing centring term}, and of Slutzky's lemma.

\section{Extensions}
\label{Sec: Extensions}
\subsection{Extensions of our testing procedure}
Noticing that our test statistic constitutes, from a broader viewpoint, a U-statistic of a triangular array, two natural extensions come to mind. Firstly, one could employ other suitable symmetric kernel functions $h:\R\times \R\rightarrow \R$ and replace the U-statistic $U(n)$ in \eqref{teststatistic} by a more general one
$$U_h(n)=\frac{1}{b_n(b_n-1)}\sum_{1\leq j\neq k\leq b_n}h\rbraces{\hat{\mu}_j, \hat{\mu}_k}.$$
If the kernel function $h$ is Lipschitz-continuous, we are optimistic that the line of proof in Section \ref{Sec: Outline of Proofs} is transferable. 
Secondly, our procedure could possibly be improved by considering more robust estimators of location, such as the block median, instead of the local sample means. More generally, the triangular array within the U-statistic could even consist of local estimators of other characteristics, such as the skewness or kurtosis, thereby providing a test for their stationarity.

Furthermore, our test can be used complementary to the results in Schmidt et al. \citep{Schmidt.2021} who develop  a test for the stationarity of the variance based on Gini's mean difference of the logarithmic local sample variances. 
The authors too work under the model \eqref{Eq: Triangular array X}, i.e. with observations  $X_i=\sigma({i}/{n}) Y_i +\mu({i}/{n})$, where the local mean $\mu:[0,1]\rightarrow \R$ is assumed to be (piecewise) Lipschitz-continuous and where the hypothesis of a constant variance  is tested against the alternative of an arbitrary c\`adl\`ag function $\sigma$. Since their assumptions are quite similar to those obtained here (see, their Theorem 2.4 and Corollary 2.8), both procedures can be combined to test for stationarity of the first two moments. Moreover, note that in case the test in \citep{Schmidt.2021} finds the variance to be stationary, we can apply the simplified version of our test from Theorem \ref{Cor: Asymptotic test, sigma=c}.

\subsection{Estimation of  the mean function}
\label{Subsec: Estimation of CP and mu}
In case our test rejects the hypothesis of a constant trend function,
a natural next step is to determine the structure of the mean function $\mu$.
 Since our test is able to detect any deviation from a constant mean on a non-Lebesgue negligible set, it can likewise be used if the mean function is suspected to follow some parametric model. In a first step, one estimates the parameters of the assumed model, and afterwards, one subtracts the resulting mean function $\hat{\mu}(t)$, $t\in[0,1]$, from the data. Applying our procedure to the residuals  
provides a test for the hypothesis that the mean function follows the specified parametric model. A similar problem has been treated in Zhang and Wu \citep{Zhang.2011}, who derive a test for certain parametric forms of the mean by analysing the $L^2$-distance between the estimated parametric trend and a non-parametric local linear estimator.

If previous knowledge suggests a piecewise constant mean function, that is, in the classical context of structural break testing, one could estimate the number and location of the change-points once the hypothesis is rejected. 
We propose a simple recursive procedure that, in each step, conducts our test and if the hypothesis is rejected, estimates the position $t^*$ of the dominant change-point. Afterwards, the time series is split  at $t^*$ into two parts and the procedure is repeated on each part until the hypothesis can no longer be rejected.
A simple approach to determine the position $t^*$ of the dominant change-point is to identify the two subsequent blocks $B_{j^*}=\{(j^*-1)\ell_n+1, \ldots, j^*\ell_n\}$ and $B_{j^*+1}=\{j^*\ell_n+1, \ldots, (j^*+1)\ell_n\}$ with the largest difference in the estimated block means,
 $$j^*:= \underset{j\in\{1, \ldots, b_n-1\}}{argmax}\abs{\hat{\mu}_j-\hat{\mu}_{j+1}},$$
 and to locate the change-point via
  $$t^*:= \underset{t\in B_{j^*}\cup B_{j^*+1}}{argmax} \bigg| \frac{1}{t-(j^*-1)\ell_n-1}\sum_{i=(j^*-1)\ell_n+1}^{t}X_i-\frac{1}{(j^*+1)\ell_n-t-1}\sum_{i=t+1}^{(j^*+1)\ell_n}X_i\bigg |.$$
In order to obtain a reliable estimate of the mean on both parts, $\{(j^*-1)\ell_n+1, \ldots, t\}$ and $\{t+1, \ldots, (j^*+1)\ell_n\}$, we exclude such values of $t$ that are too close to the boundaries of $B_{j^*}\cup B_{j^*+1}$. A similar procedure has already been used  in Wornowizki, Fried and Meintanis \citep{Wornowizki.2017} and in Schmidt et al. \citep{Schmidt.2021} to obtain an estimator for structural breaks in the variance. The estimated mean function $\hat{\mu}$ is obtained by taking the sample mean on each segment between two subsequent change points. The suitability of a piecewise constant mean can be tested as outlined above, by subtracting $\hat{\mu}$ from the data and repeating our test.

\section{Simulation study}
\label{Sec: Simulations}
In this section, we evaluate the finite sample properties of our test for a stationary mean introduced in Section \ref{Sec: Main results}. We consider both the test in the presence of a  possibly non-constant variance described in Theorem \ref{Cor: Asymptotic test H} and its simpler version for a stationary variance stated in Theorem \ref{Cor: Asymptotic test, sigma=c}. Since the long run variance estimator 
introduced in Section \ref{Subsec: Estimation of LRV} is of some interest in itself, we assess its performance as well. 

We consider several data-generating processes, among which are two examples of independent observations, namely standard normal, $X_i \sim \NoD{0}{1}$, and exponential ones, $X_i\sim Exp(1)$, and four examples of short-range dependence. As dependent processes, we consider two AR(1)-processes with $\alpha_1=0.4$ and $\alpha_1=0.7$, respectively, an ARMA(2,2)-process
$$X_i= 0.8X_{i-1}-0.4X_{i-2}+\varepsilon_i+0.5 \varepsilon_{i-1}+0.34 \varepsilon_{i-2} $$
and a GARCH(1,1)-process
$$X_i=\sigma_i\varepsilon_i\quad \text{with} \quad \sigma_i^2= 0.1+0.1X_{i-1}^2+0.8 \sigma_{i-1}^2,$$
each with independent standard normal innovations $(\varepsilon_i)_{i\in\N}$. 
 To facilitate a comparison, all processes are standardized to yield a theoretical long run variance of one.

All data-generating processes possess finite sixth moments (see, Theorem 5 in Lindner \citep{Lindner2009} for the GARCH-case) and, as pointed out in Example \ref{Example: ARMA, GARCH},  
are at least mixing at a geometric rate. 
Hence, the requirements of Theorem \ref{Cor: Asymptotic test H}  boil down to $s>0.5$, $s>q$ and $q>1-s$. We recommend the choices $s=0.7$, $q=0.4$ and $c_0=10$ for the tuning parameters, as they overall performed best in extensive additional simulations not reported here.   
  
Since our test is consistent against the alternative of arbitrary non-constant c\`adl\`ag mean functions, we include smoothly varying mean functions (a linear and a sine function) and mean functions with structural breaks (with one, two or four breaks). More precisely, we investigate the following (local) alternatives, each with a mean change of magnitude $n^{-1/2}$: 
  \begin{align*}
\mathbb{A}1:  \mu(x)=&0.3\sqrt{1000/n}\cdot x \\
\mathbb{A}2:  \mu(x)=&0.15\cdot \sin(4\pi x)\cdot\sqrt{1000/n} \\
\mathbb{A}3: \mu(x)=& 0.3 \sqrt{1000/n}\cdot\1_{\{1/2\leq x\leq 1\}}\\
\mathbb{A}4: \mu(x)= &0.3 \sqrt{1000/n}\cdot\1_{\{1/3\leq x<2/3\}}\\
\mathbb{A}5: \mu(x)=& 0.3 \sqrt{1000/n}\cdot\1_{\{1/5\leq x< 2/5\}}+0.3 \sqrt{1000/n}\cdot\1_{\{3/5 \leq x< 4/5\}}
\end{align*}

All simulations are conducted in R \citep{RCoreTeam.2019}. The long run variance $\kappa_Y^2$, the centring term and the parameter $\psi$ are estimated as described in Section \ref{Subsec: Estimation of Parameters}. 
All results are obtained for a nominal significance level of $\alpha=5\%$ and are based on 4000 replications each.

\subsection{Performance of the original test from Theorem \ref{Cor: Asymptotic test H} for a stationary variance}
\label{Subsec: Simu: Original Procedure sigma=1}
First, we assess our test's empirical size and power for constant variance functions by setting $\sigma(x)=1$. Table \ref{Table: Theoretical Results original procedure} shows the simulated rejection probabilities for the sample lengths $n=500$ and $2000$ under the hypothesis and the local alternatives $\mathbb{A}1$-$\mathbb{A}5$. The test usually behaves anti-conservative, especially in the presence of strong positive dependence. This finding can partly be explained by the performance of the long run variance estimator, which underestimates the long run variance for such processes (see, Section \ref{Subsec: Simus LRV} for details). Even for a moderate sample size, $n=500$, the empirical power obtained for the mean functions with one or two jumps ($\mathbb{A}3$ and $\mathbb{A}4$) is quite high. The power for the remaining mean functions, smooth  ($\mathbb{A}1$ and $\mathbb{A}2$) or with four jumps ($\mathbb{A}5$), is lower but clearly improves as the sample size increases to $n=2000$ (except for  $\mathbb{A}_1$).

\begin{table}[H]
 \centering
    \caption{
       Simulated rejection probabilities of the original test at the nominal significance level $\alpha=0.05$ for the sample sizes $n = 500, 2000$ and $\sigma=1$ under the null hypothesis $\mathbb{H}$ and various local alternatives $\mathbb{A}1$ to $\mathbb{A}5$ with effect sizes of magnitude $n^{-1/2}$ and for different data-generating processes.}
    \label{Table: Theoretical Results original procedure}
    \begin{tabular}{|c|cccccc|}
       \hline
         & N(0,1) & Exp(1) & AR(1), 0.4 & AR(1), 0.7 & ARMA(2,2) & GARCH(1,1) \\ \hline  
        \multicolumn{7}{|c|}{$n=500$} \\\hline   
          $\mathbb{H}$ & 0.079 & 0.074 & 0.085 & 0.148 & 0.072 & 0.075 \\
       $\mathbb{A}_1$ & 0.489 & 0.501 & 0.539 & 0.635 & 0.489 & 0.500 \\
       $\mathbb{A}_2$ & 0.505 & 0.492 & 0.562 & 0.664 & 0.519 & 0.508 \\
       $\mathbb{A}_3$ & 0.931 & 0.933 & 0.951 & 0.970 & 0.925 & 0.938 \\
       $\mathbb{A}_4$ & 0.820 & 0.821 & 0.847 & 0.908 & 0.816 & 0.822 \\
       $\mathbb{A}_5$ & 0.414 & 0.400 & 0.476 & 0.542 & 0.426 & 0.429 \\  \hline     
         \multicolumn{7}{|c|}{$n=2000$} \\\hline  
         $\mathbb{H}$ & 0.073 & 0.074 & 0.082 & 0.122 & 0.070 & 0.082 \\
      $\mathbb{A}_1$ & 0.406 & 0.409 & 0.454 & 0.514 & 0.408 & 0.413 \\
      $\mathbb{A}_2$ & 0.613 & 0.600 & 0.630 & 0.708 & 0.615 & 0.627 \\
      $\mathbb{A}_3$ & 0.924 & 0.929 & 0.936 & 0.959 & 0.924 & 0.932 \\
      $\mathbb{A}_4$ & 0.824 & 0.812 & 0.840 & 0.883 & 0.818 & 0.830 \\
      $\mathbb{A}_5$ & 0.851 & 0.848 & 0.873 & 0.900 & 0.860 & 0.848 \\\hline
    \end{tabular}
\end{table}

\subsection{Performance of the original test from Theorem \ref{Cor: Asymptotic test H} under heteroscedasticity}
\label{Subsec: Simu: Original Procedure sigma neq 1}
The limit theory in Theorem \ref{Cor: Asymptotic test H} was explicitly developed to allow for  non-stationary variance functions $\sigma^2$. In this section, we investigate the influence of the following functions: 
\begin{align*}
\sigma_1(x)=&(1-\Theta_\sigma/2)+\Theta_\sigma\cdot x \\
\sigma_2(x)=&1+\Theta_\sigma/2 \cdot \sin(4\pi x) \\
\sigma_3(x)=  & (1- \Theta_\sigma/2)\cdot\1_{\{0\leq x< 1/2\}}+ (1+\Theta_\sigma/2)\cdot\1_{\{1/2\leq x\leq 1\}}
\end{align*}

Since mean changes will obviously be easier to detect when the variance is low, we employ functions with $\int_0^1\sigma(x)\mathrm{d}x=1$ to enable a comparison with Section \ref{Subsec: Simu: Original Procedure sigma=1}.
 Table \ref{Table: Theoretical Rejection Rates Variance varies with 0.3} shows the simulated rejection probabilities for $n=500$ with $\Theta_\sigma=0.3\sqrt{1000/500}\approx 0.42$, in which case changes in the mean and in the standard deviation are of the same magnitude (additional results for $\Theta_\sigma=0.6\sqrt{1000/500}\approx 0.85$ are included in Table \ref{Table: Theoretical Rejection Rates Variance varies with 0.6} of the appendix). When compared to Table \ref{Table: Theoretical Results original procedure}, one observes little difference to the results obtained for $\sigma=1$: The empirical size seems to be slightly lower for the linear and the jump function, $\sigma_1$ and $\sigma_3$, and slightly higher for the sine function $\sigma_2$.   For all three non-constant variance functions, the empirical power tends to be a bit lower, with the jump function $\sigma_3$ yielding the lowest rejection rates across all alternatives and data generating processes considered. Nevertheless, our procedure seems altogether little affected by the investigated variance changes.

\begin{table}
    \centering
    \caption{
       Simulated rejection probabilities of the original test at the nominal significance level $\alpha=0.05$ for the sample size $n = 500$ under the null hypothesis $\mathbb{H}$ and various local alternatives $\mathbb{A}1$ to $\mathbb{A}5$ with effect sizes of magnitude $n^{-1/2}$ and for different data-generating processes and different variance functions.}
    \label{Table: Theoretical Rejection Rates Variance varies with 0.3}
    \begin{tabular}{|c|cccccc|}
       \hline
         & N(0,1) & Exp(1) & AR(1), 0.4 & AR(1), 0.7 & ARMA(2,2) & GARCH(1,1) \\ \hline
      \multicolumn{7}{|c|}{$\sigma_1(x)=(1-0.15\sqrt{2})+0.3\sqrt{2}\cdot x$} \\\hline 
      $\mathbb{H}$   &0.068 & 0.067 & 0.082 & 0.131 & 0.066 & 0.073 \\
       $\mathbb{A}_1$ & 0.472 & 0.464 & 0.520 & 0.637 & 0.470 & 0.481 \\
       $\mathbb{A}_2$ &0.480 & 0.478 & 0.532 & 0.643 & 0.481 & 0.488 \\
       $\mathbb{A}_3$& 0.926 & 0.930 & 0.940 & 0.968 & 0.912 & 0.928 \\
       $\mathbb{A}_4$& 0.807 & 0.814 & 0.859 & 0.908 & 0.823 & 0.811 \\
      $\mathbb{A}_5$ & 0.411 & 0.376 & 0.437 & 0.510 & 0.396 & 0.404 \\ \hline    
         \multicolumn{7}{|c|}{$\sigma_2(x)=1+0.15\sqrt{2} \cdot \sin(4\pi x) $} \\\hline      	
    $\mathbb{H}$   &    0.080 & 0.070 & 0.094 & 0.153 & 0.076 & 0.082 \\
     $\mathbb{A}_1$ &   0.473 & 0.474 & 0.521 & 0.624 & 0.484 & 0.483 \\
     $\mathbb{A}_2$ &   0.497 & 0.487 & 0.552 & 0.639 & 0.479 & 0.502 \\
     $\mathbb{A}_3$ &   0.927 & 0.919 & 0.949 & 0.966 & 0.927 & 0.919 \\
     $\mathbb{A}_4$ &   0.815 & 0.801 & 0.842 & 0.901 & 0.813 & 0.809 \\
     $\mathbb{A}_5$ &   0.398 & 0.388 & 0.452 & 0.522 & 0.399 & 0.388 \\\hline
                \multicolumn{7}{|c|}{$\sigma_3(x)=   (1- 0.15\sqrt{2})\cdot\1_{\{0\leq x< 1/2\}}+ (1+0.15\sqrt{2})\cdot\1_{\{1/2\leq x\leq 1\}}$} \\\hline      
     $\mathbb{H}$   &   0.068 & 0.060 & 0.078 & 0.126 & 0.066 & 0.077 \\
      $\mathbb{A}_1$ &  0.438 & 0.414 & 0.495 & 0.602 & 0.443 & 0.460 \\
      $\mathbb{A}_2$ &  0.446 & 0.443 & 0.501 & 0.620 & 0.445 & 0.469 \\
    $\mathbb{A}_3$ &    0.907 & 0.919 & 0.930 & 0.960 & 0.905 & 0.913 \\
    $\mathbb{A}_4$ &    0.790 & 0.786 & 0.830 & 0.882 & 0.789 & 0.787 \\
   $\mathbb{A}_5$ &     0.367 & 0.354 & 0.418 & 0.479 & 0.366 & 0.361 \\\hline
    \end{tabular}
\end{table}

\subsection{Performance of the simplified test from Theorem \ref{Cor: Asymptotic test, sigma=c}}
\label{Subsec: Simu: Simplified Procedure sigma=1}
In case previous knowledge on the time series suggests a constant variance, we can adopt the simplified procedure from Theorem \ref{Cor: Asymptotic test, sigma=c}. If we assume $\sigma=1$, the centring term  becomes $2/\sqrt{\pi}$ and the variance of the limit distribution simplifies to $\psi^2= 4/3+ 8/\sqrt{\pi}(\sqrt{3}-2)$. Comparing with the rejection rates of the original test, we find the results of the simplified test to be very similar, such that they are reported in the appendix only (see Table \ref{Table: Theoretical Results simplified procedure}, and see Table \ref{Table: Empirical Results both procedures} for size-corrected results of both tests).

The main advantage of this easier version  is the much faster computation time since we do not require a Monte Carlo estimation of $\psi^2$.  As a rough comparison, we compare the overall computation time (on a 3.8 GHz AMD Ryzen 5800X) required to obtain the results in Tables \ref{Table: Theoretical Results original procedure} and \ref{Table: Theoretical Results simplified procedure}, i.e. for $4000\cdot 36$ executions of our procedure (including the time for the simulation of the data sets). For $n=500$ ($n=2000$), the original test took on average 7.89 (16.86) seconds per execution, while the simplified version needed 0.0007 (0.0018) seconds. Note, however, that we use a rather large number of $7000$ replications in the Monte Carlo estimation of $\psi^2$, as we are mainly interested in the quality and not the speed of our procedure. In case one uses only $1000$ Monte Carlo replications, the procedure takes 0.1338 (0.3076) seconds per execution, with the results being similar to those in Table \ref{Table: Theoretical Results original procedure}. 

 Due to its fast computation time, we additionally analyse the asymptotic behaviour of the simplified test for growing sample sizes $n=500$, $1000$, $2000$, $3000$, $4000$, $5000$, $8000$, $12000$ and $16000$ in Figure \ref{Figure: Asymptotic behaviour simplified procedure, theoretical}. Both for the empirical size and power, we observe a stabilization of the rejection rates as $n$ increases: The empirical size (top left) approaches the theoretical significance level of $0.05$,  though the test stays liberal. The empirical power stabilizes at a certain level that depends on the alternative considered. Across all alternatives, the shapes of the graphs for the different time series resemble one another. The AR(1)-process with parameter $0.7$ often achieves a distinctly higher empirical power, but this difference vanishes once a size-correction has been conducted (Figure \ref{Figure: Asymptotic behaviour simplified procedure, empirical} in the appendix reports the size-corrected graphs). Besides, the relative position of the blocks compared to the mean changes has some influence on the quality of the test. For instance, the rejection rates of the third and fourth alternative peak at $n=4000$. This is due to the break points at $2000$ and at $1333$, respectively, being  an almost exact multiple of the block length $4000^{0.7}\approx 332$.

\begin{center}
\begin{figure}
 \begin{subfigure}[c]{0.49\textwidth}
  \includegraphics[width=\textwidth,  trim={0cm 0cm 0.7cm 1.9cm},clip]{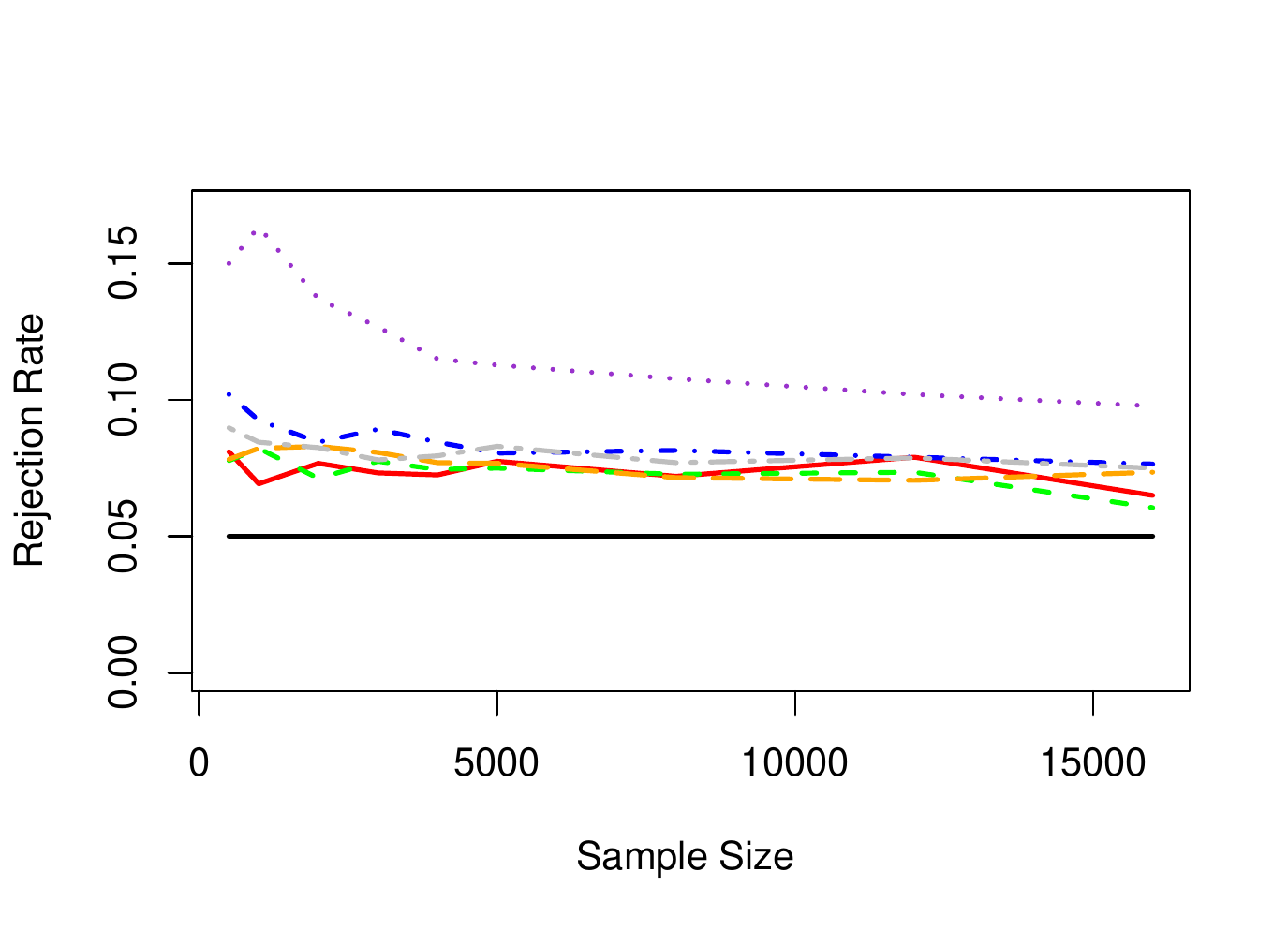}
 \end{subfigure}
   \begin{subfigure}[c]{0.49\textwidth}
  \includegraphics[width=\textwidth,  trim={0cm 0cm 0.7cm 1.9cm},clip]{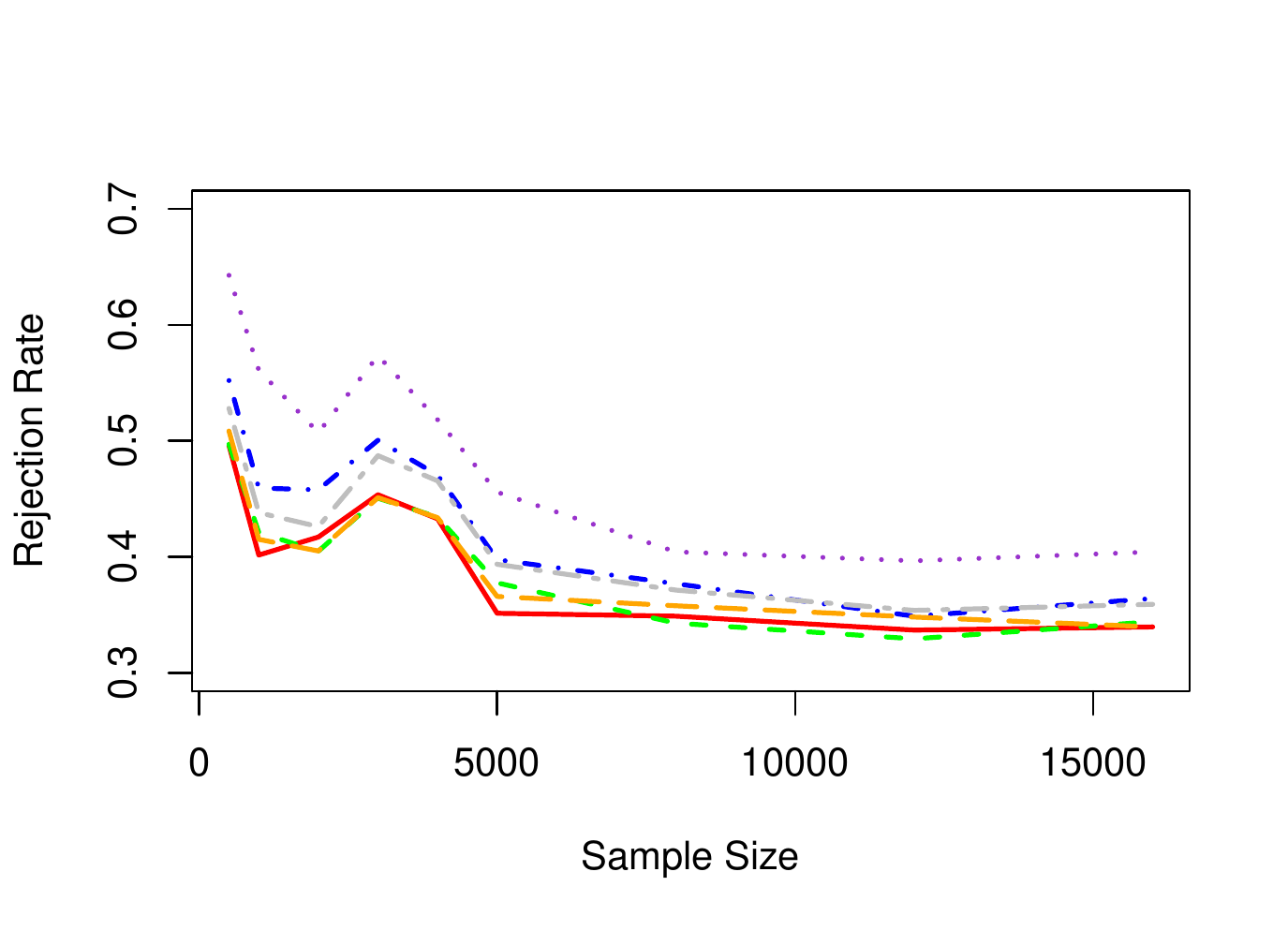}
  \end{subfigure}
   \begin{subfigure}[c]{0.49\textwidth}
  \includegraphics[width=\textwidth,  trim={0cm 0cm 0.7cm 1.9cm},clip]{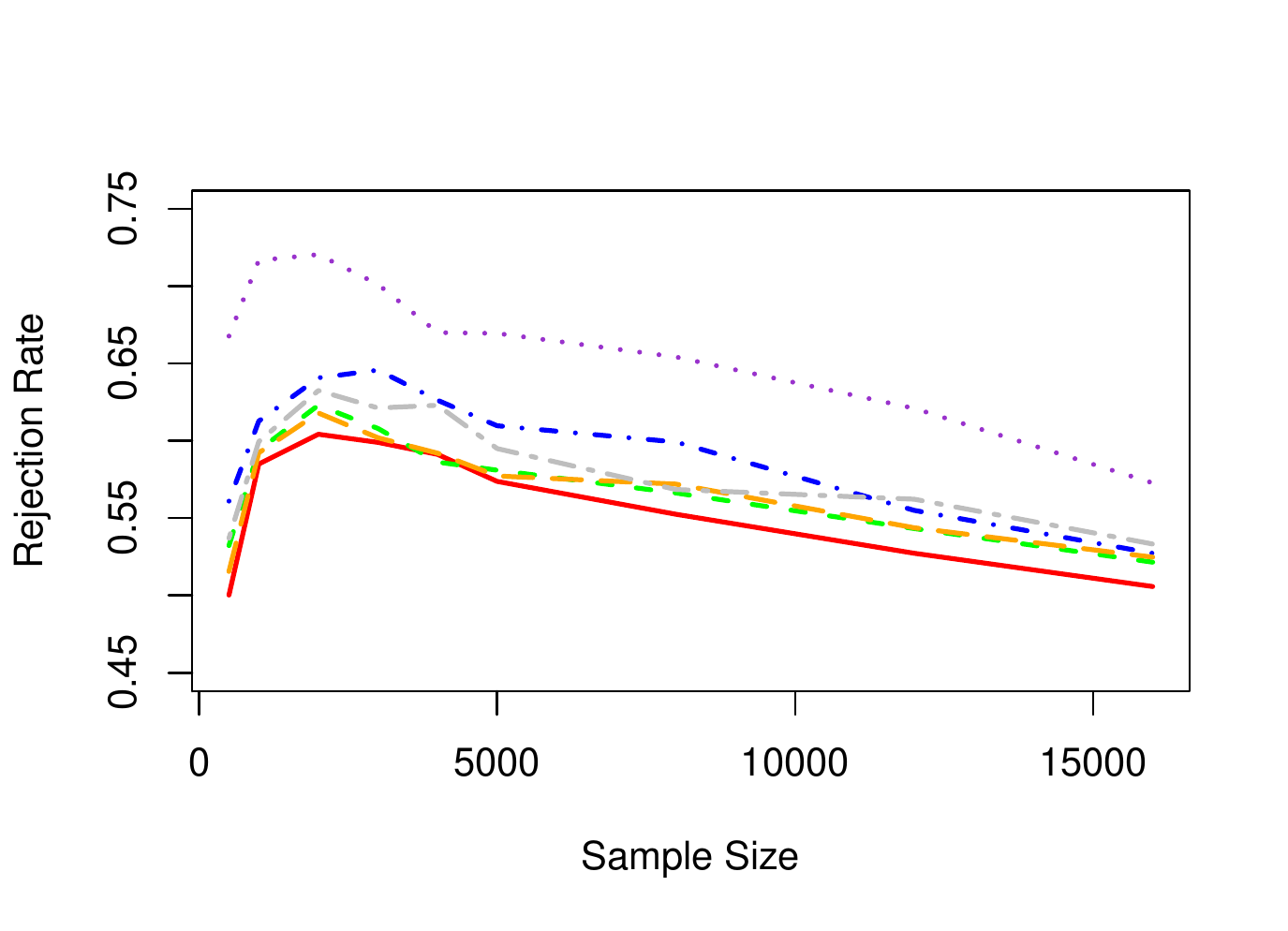}
  \end{subfigure}
   \begin{subfigure}[c]{0.49\textwidth}
  \includegraphics[width=\textwidth,  trim={0cm 0cm 0.7cm 1.9cm},clip]{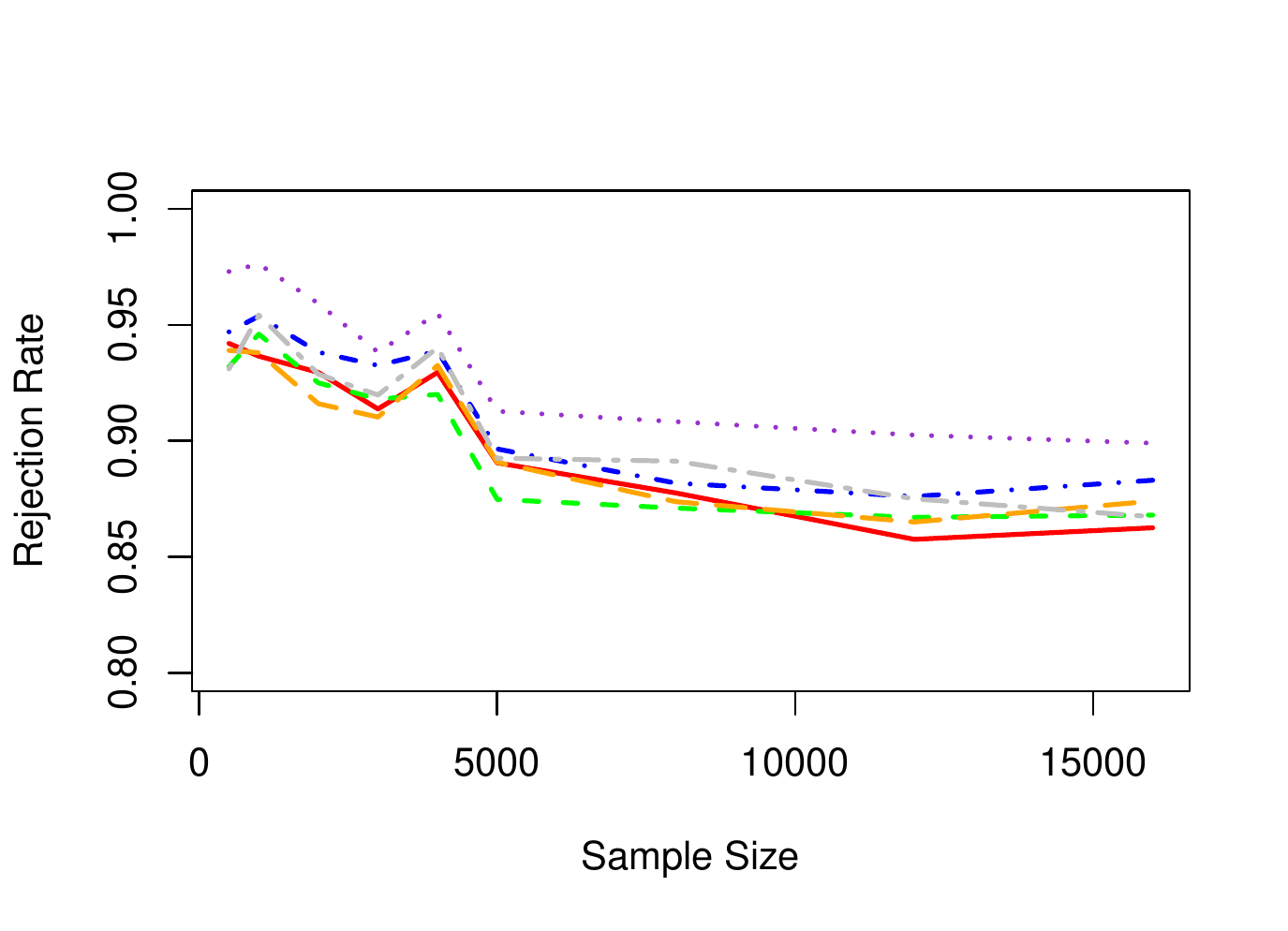}
  \end{subfigure}
     \begin{subfigure}[c]{0.49\textwidth}
  \includegraphics[width=\textwidth,  trim={0cm 0cm 0.7cm 1.9cm},clip]{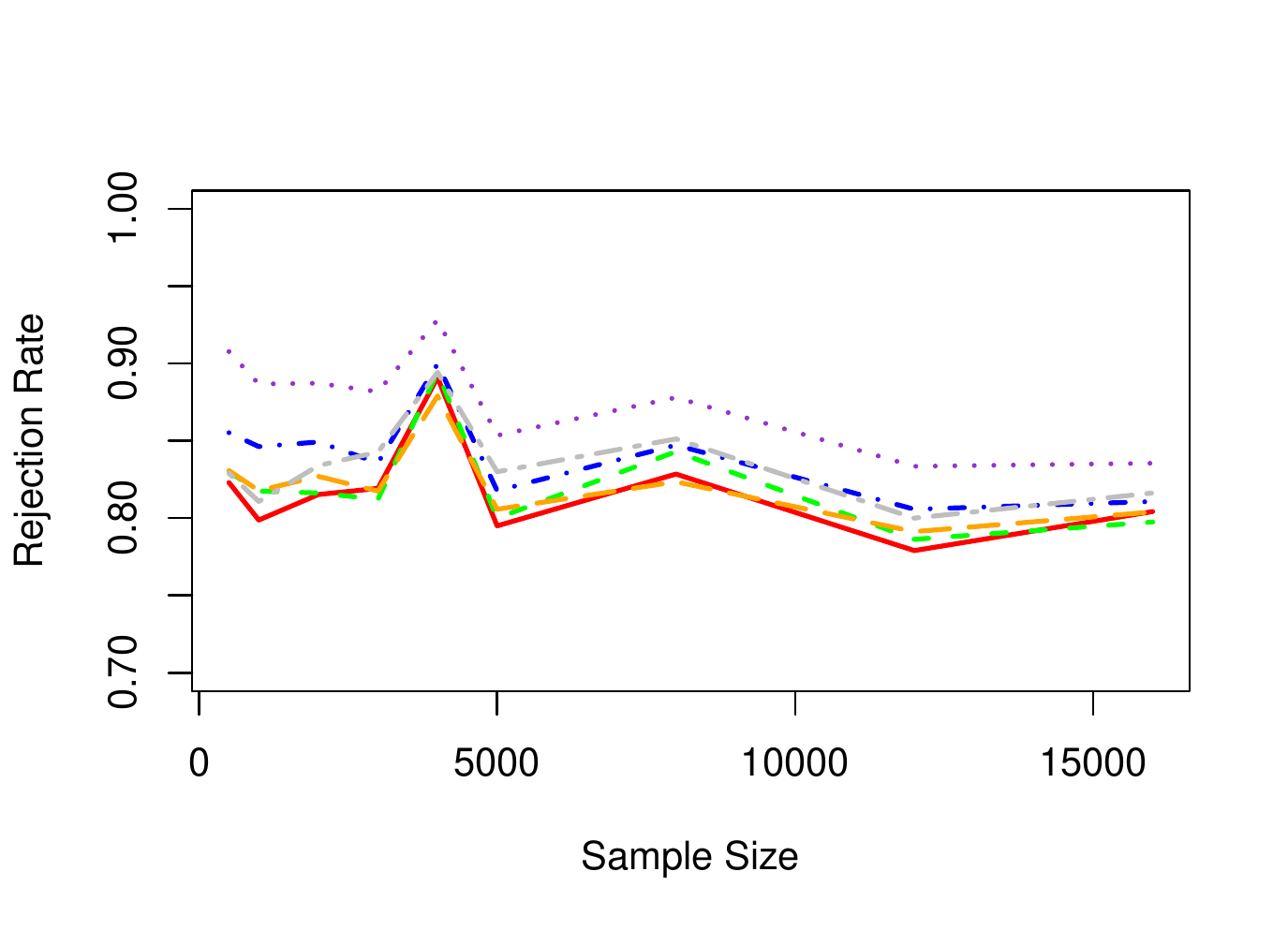}
  \end{subfigure}
     \begin{subfigure}[c]{0.49\textwidth}
  \includegraphics[width=\textwidth,  trim={0cm 0cm 0.7cm 1.9cm},clip]{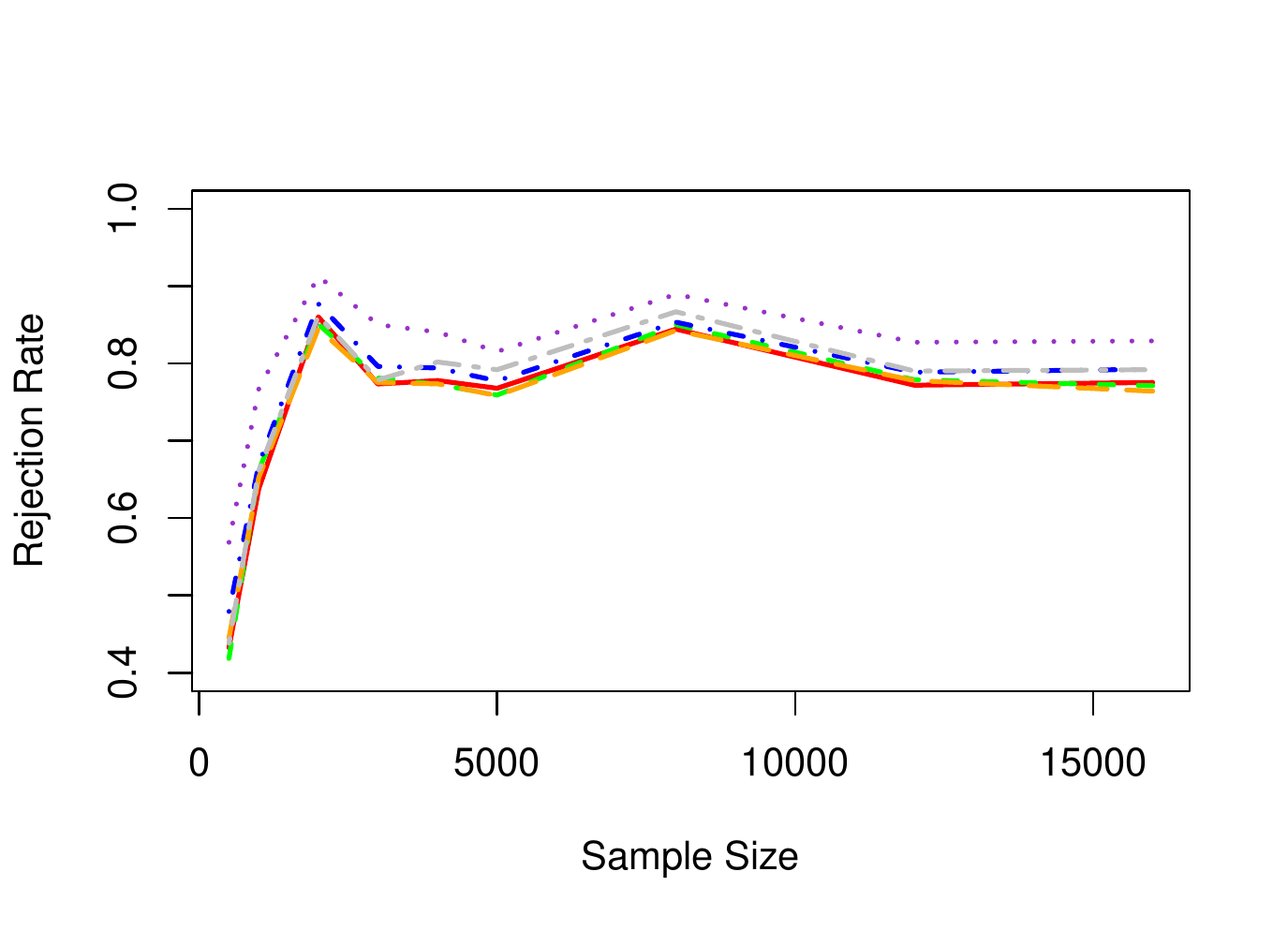}
  \end{subfigure}
   \begin{subfigure}[c]{\textwidth}
  \includegraphics[width=\textwidth, trim={0cm 7cm 0cm 6.5cm},clip]{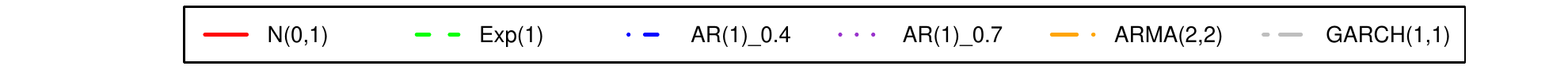}
  \end{subfigure}
  \caption{Empirical rejection rates at the nominal significance level $\alpha=0.05$ of the simplified test under the hypothesis (top left) and the (local) alternatives $\mathbb{A}_1$ (top right) -$\mathbb{A}_5$ (bottom right) as a function of the sample size for different distributions of the data-generating process.}
  \label{Figure: Asymptotic behaviour simplified procedure, theoretical}
\end{figure}
\end{center}

\subsection{Performance of the long run variance estimator}
\label{Subsec: Simus LRV}
The quality of the long run variance estimator plays a crucial role in the performance of our asymptotic test, such that we include a brief evaluation of the estimator proposed in Section \ref{Subsec: Estimation of LRV}.  Since our main concern is its behaviour in the presence of a non-constant mean, we limit our considerations to a constant variance $\sigma=1$  and consider the estimator $\hat{\kappa}_{\tilde{X}}$. To facilitate a comparison, we once more standardize the data-generating processes to yield a theoretical long run variance of 1.
 To examine the effects of a growing sample size in isolation, 
 we consider fixed alternatives $\mathbb{A}'_1$-$\mathbb{A}'_5$ here. More precisely, we replace the magnitude $0.3\sqrt{1000/n}$ of the local alternatives $\mathbb{A}_1$-$\mathbb{A}_5$ by $0.3\sqrt{2}\approx 0.42$, corresponding to the maximal mean change considered above (the change for $n=500$). Note that $\hat{\kappa}_{\tilde{X}}$ is consistent under all alternatives $\mathbb{A}'_1$-$\mathbb{A}'_5$ examined. 

Table \ref{Table: LRV estimator Bias RMSE} shows the empirical bias and root mean square error (RMSE) of  $\hat{\kappa}_{\tilde{X}}$ for $q=0.4$, $c_0=10$ and the sample lengths $n=500$ and $2000$. 
For the empirical bias, two opposite effects can be observed. On the one hand, depending on the underlying time series, $\hat{\kappa}_{\tilde{X}}$ has a negative bias that can best be seen for a constant mean ($\mathbb{H}$). This negative bias already appears with the subsampling estimators of type $\hat{\kappa}_Y$ developed for centred data and decreases as the sample size increases. 
On the other hand, the variations in the mean cause a positive bias. Its height depends on the alternative under consideration, where fewer variation in the form of a linear trend ($\mathbb{A}_1'$) or one structural break ($\mathbb{A}_2'$) is better coped with than the more pronounced variation of the sine function ($\mathbb{A}_2'$) or multiple structural breaks ($\mathbb{A}_4'$ and $\mathbb{A}_5'$). As $n$ increases, the positive bias that is due to mean distortions decreases as well. The RMSE is overall less influenced by the type of mean function and underlying time series, and likewise decreases as $n$ increases.

Recall that for the construction of the asymptotic test in Theorem \ref{Thm: Central limit theorem}, we divide the U-statistic by $\kappa_Y$. An underestimation of $\kappa_Y$, i.e. a negative bias of $\hat{\kappa}_{\tilde{X}}$, will thus lead to a large value of the test statistic, to high rejection rates and ultimately to an oversized test. In contrast, a positive bias caused by mean changes will lead to a loss of the test's power. The severity of each type of bias depends on the choice of $q$  (additional results for $q=0.3$ are reported in Table \ref{Table: LRV estimator Bias RMSE both qs} in the appendix). In general, one has to find a $q$ that balances both effects and if available, takes previous knowledge on the time series into account. Above, we choose $q=0.4$ to avoid a severely oversized test  (note that our theory for $s=0.7$ requires $q>(1-s)=0.3$).

\begin{table}[H]
\caption{Simulated Bias and RMSE of $\hat{\kappa}_{\tilde{X}}$ for the mean functions represented by $\mathbb{H}$ and the fixed alternatives $\mathbb{A}'_1$-$\mathbb{A}'_5$, the sample lengths $n=500, 2000$, and for the parameter choices $q=0.4$ with $\elln=n^q$ and $c_0=10$. The observations are standardized to yield a theoretical long run variance $\kappa_Y^2 =1$.}
\label{Table: LRV estimator Bias RMSE}
{\footnotesize
\begin{tabular}{|c|rc|rc|rc|rc|rc|rc|}
    \hline
& \multicolumn{2}{|c|}{\multirow{2}{*}{N(0,1)}} & \multicolumn{2}{c|}{\multirow{2}{*}{Exp(1)}} &  \multicolumn{2}{c|}{\multirow{2}{*}{AR(1), 0.4}} &  \multicolumn{2}{c|}{\multirow{2}{*}{AR(1), 0.7}} &  \multicolumn{2}{c|}{\multirow{2}{*}{ARMA(2,2)}} &  \multicolumn{2}{c|}{\multirow{2}{*}{GARCH(1,1)}} \\
  & \multicolumn{2}{|c|}{} & \multicolumn{2}{c|}{}  & \multicolumn{2}{c|}{} & \multicolumn{2}{c|}{} & \multicolumn{2}{c|}{} & \multicolumn{2}{c|}{} \\
 & \multicolumn{1}{|c}{Bias} & RMSE &  \multicolumn{1}{c}{Bias} & RMSE &  \multicolumn{1}{c}{Bias} & RMSE & \multicolumn{1}{c}{Bias} & RMSE & \multicolumn{1}{c}{Bias} & RMSE & \multicolumn{1}{c}{Bias} & RMSE 
  \\ \hline
 \multicolumn{13}{|c|}{$n=500$}\\\hline
   $\mathbb{H}$ &   0.003 & 0.125 & -0.003 & 0.129 & -0.038 & 0.123 & -0.121 & 0.163 & -0.001 & 0.122 & -0.017 & 0.137 \\
   $\mathbb{A}'_1$ &     0.005 & 0.121 & -0.002 & 0.128 & -0.039 & 0.125 & -0.121 & 0.165 & 0.000 & 0.124 & -0.014 & 0.135 \\
   $\mathbb{A}'_2$ &     0.119 & 0.179 & 0.110 & 0.177 & 0.082 & 0.156 & 0.006 & 0.128 & 0.108 & 0.175 & 0.101 & 0.176 \\
   $\mathbb{A}'_3$  &   0.048 & 0.137 & 0.041 & 0.138 & 0.007 & 0.123 & -0.076 & 0.140 & 0.043 & 0.135 & 0.028 & 0.141 \\
    $\mathbb{A}'_4$ &   0.101 & 0.165 & 0.094 & 0.164 & 0.062 & 0.137 & -0.010 & 0.121 & 0.092 & 0.157 & 0.085 & 0.160 \\
   $\mathbb{A}'_5$ &    0.282 & 0.324 & 0.284 & 0.328 & 0.252 & 0.298 & 0.190 & 0.247 & 0.278 & 0.320 & 0.276 & 0.319 \\
 \hline
  \multicolumn{13}{|c|}{$n=2000$}\\\hline
 $\mathbb{H}$ &       0.002 & 0.079 & -0.002 & 0.081 & -0.025 & 0.081 & -0.072 & 0.102 & -0.001 & 0.079 & -0.016 & 0.086 \\
 $\mathbb{A}'_1$ &      0.003 & 0.079 & -0.004 & 0.081 & -0.021 & 0.081 & -0.071 & 0.103 & -0.002 & 0.077 & -0.013 & 0.083 \\
  $\mathbb{A}'_2$ &   0.022 & 0.080 & 0.020 & 0.083 & 0.000 & 0.076 & -0.052 & 0.089 & 0.019 & 0.081 & 0.009 & 0.084 \\
  $\mathbb{A}'_3$ &    0.034 & 0.086 & 0.028 & 0.088 & 0.006 & 0.078 & -0.041 & 0.086 & 0.027 & 0.083 & 0.016 & 0.085 \\
  $\mathbb{A}'_4$ &    0.059 & 0.101 & 0.050 & 0.098 & 0.036 & 0.087 & -0.012 & 0.080 & 0.055 & 0.098 & 0.042 & 0.095 \\
  $\mathbb{A}'_5$ &    0.128 & 0.153 & 0.124 & 0.151 & 0.106 & 0.134 & 0.059 & 0.100 & 0.124 & 0.150 & 0.118 & 0.147 \\\hline  
    \end{tabular}}
\end{table}

\section{Data examples}
\label{Sec: Data Example}

\subsection{Central England temperature data} 
We consider the Hadley Centre Central England Temperature (HadCET) dataset publicly available at \url{https://www.metoffice.gov.uk/hadobs/hadcet/} monitoring the central England temperature from 1659 onwards. 
This data set has already been investigated by various authors, though possibly for shorter time spans (see, for instance, Khismatullina and Vogt \citep{KhismatullinaVogt.2020} or Zhang and Wu \citep{Zhang.2011} and the references therein).  We test for the stationarity of the mean of the annual average temperatures from 1659 to 2020, such that $n=362$. Our test rejects the hypothesis of a constant mean at significance level $\alpha=0.05$ and the recursive procedure described in Section \ref{Subsec: Estimation of CP and mu} detects four change points, 1691, 1702, 1896 and 1997, marked by the red vertical lines in Figure \ref{Fig: Temperature plot and CPS}. One can thus calculate a piecewise constant estimate $\hat{\mu}$ by taking the sample mean on each resulting segment (see, Figure \ref{Fig: Temperature plot and CPS with pw constant mean function} in the appendix). Testing the annual average temperatures centred by $\hat{\mu}$ for a stationary mean, the hypothesis is accepted at $5\%$ significance level.   
Alternatively, we fit two parametric models to the time series, a quadratic and a cubic trend, which are depicted in Figure \ref{Fig: Temperature plot and CPS} by the green and blue line, respectively. The time series is then centred by the respective fitted trend and once more, we test for the stationarity of its mean. 
While the quadratic trend fit is rejected for $\alpha=0.05$, the cubic model is accepted.
 We thus conclude that the mean function could be modelled by $\hat{\mu}(t)= 8.67(0.12)+3.7(1.1)t-9.2(2.5)t^2+7.2(1.6)t^3$ for $t\in[0,1]$ (standard errors are given in parentheses), which is in line with the findings in Zhang and Wu \citep{Zhang.2011}. 
We additionally apply the asymptotic test proposed in Schmidt et al. \citep{Schmidt.2021} and find that the hypothesis of a constant variance can not be rejected for $\alpha=0.05$. Hence, we can alternatively employ our simplified test, which arrives at the same 
conclusions as the original one. 

 \begin{figure}[H]
 \includegraphics[width=\textwidth, trim={0 0 0 1.3cm}, clip]{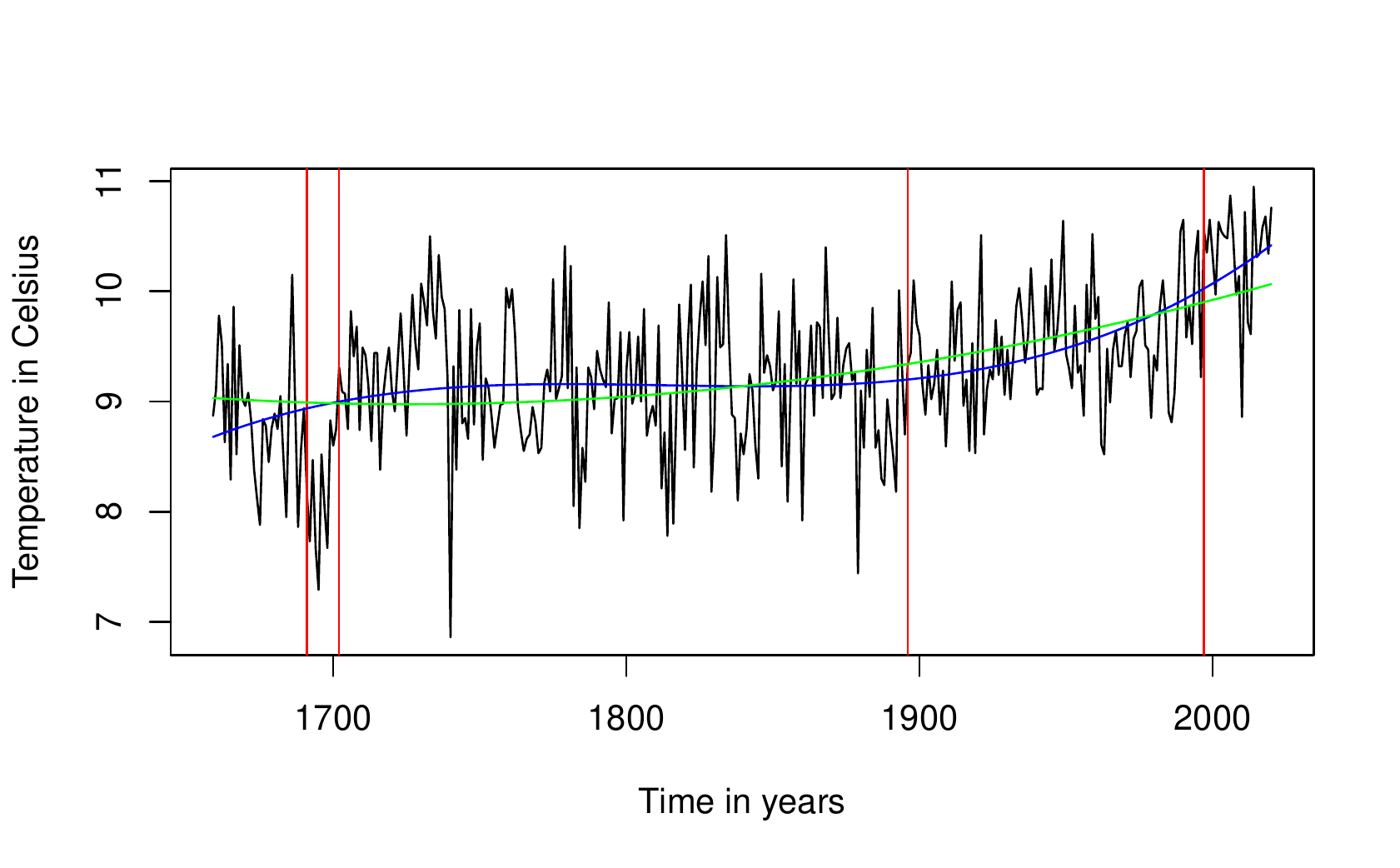}
\vspace*{-1cm}\caption{Annual central England temperatures from 1659 to 2020. Detected mean change points for $\alpha=0.05$ are marked by the red vertical lines. The fitted quadratic and cubic trend are depicted by the green and blue line, respectively.}
\label{Fig: Temperature plot and CPS}
\end{figure}

\subsection{Number of live births of girls in Germany} 
As a second data example, we consider the number of live births of girls in Germany from January 1950 to December 2020. 
The data is publicly available at the website of the Federal Statistical Office of Germany \url{https://www-genesis.destatis.de/genesis/online?operation=table&code=12612-0002}. The development of the number of births over time is depicted in Figure \ref{Fig: Number of births} and can roughly be divided into three periods:
A period of high or even rising birth numbers during the German ``Wirtschaftswunder'' after the Second World War, followed by a sharp decline marking the end of the baby boomer generation in 
1964, and a third period of comparatively low birth numbers from the early 1970's onwards. 

 \begin{figure}[H]
 \includegraphics[width=\textwidth, trim={0 0 0 1.9cm}, clip]{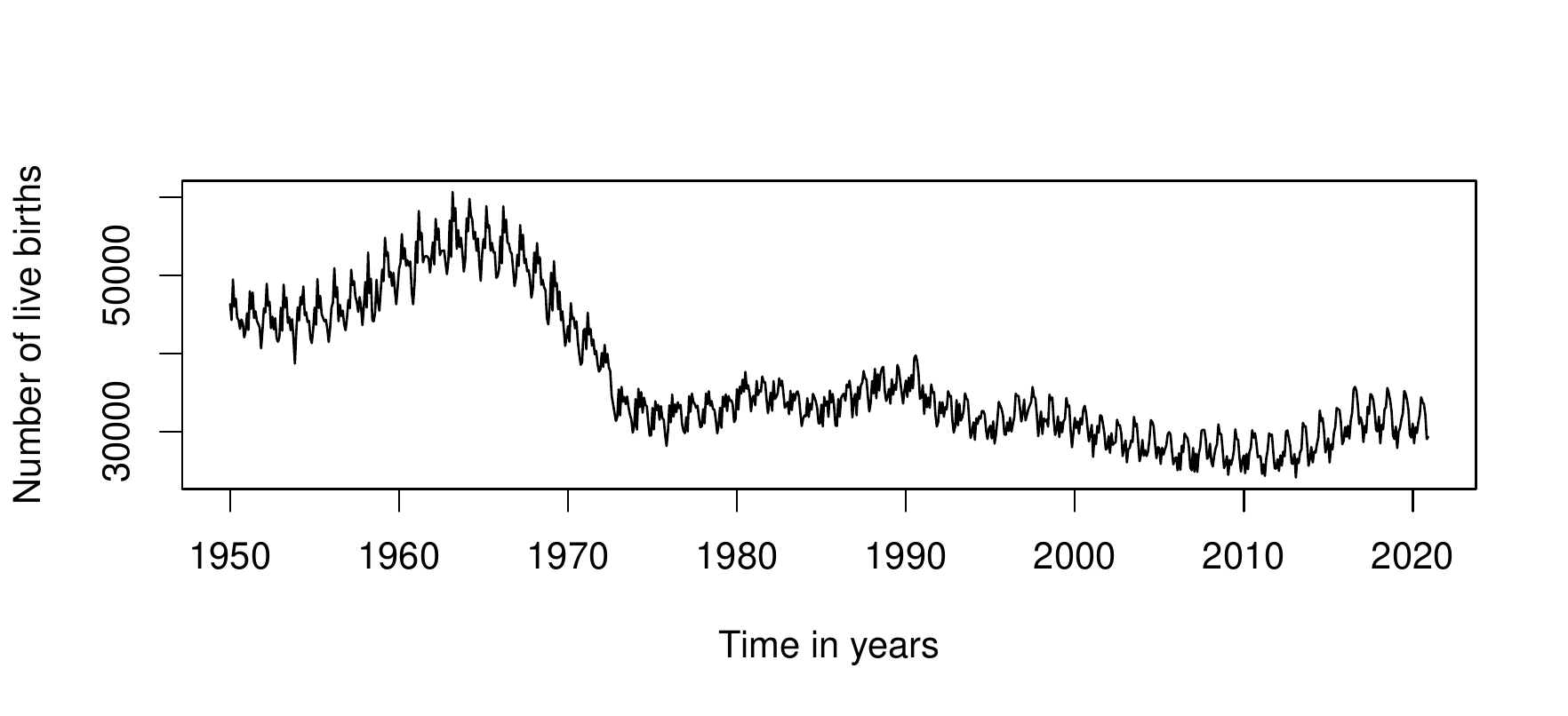}
\vspace*{-1cm}\caption{Monthly number of live births of girls in Germany from January 1950 to December 2020.}
\label{Fig: Number of births}
\end{figure}

To eliminate seasonal effects, we analyse the time series of annual differences $(Z_i)_{i\in\N}$ with $Z_i=X_i-X_{i-12}$, measuring the change 
 in the number of births. Starting in January 1951, we arrive at a total sample length of $n=840$. 
We subsequently use our test to assess the stationarity of the mean in the time series $(Z_i)_{i\in\N}$. 
At significance level $\alpha=0.05$,  the recursive procedure described in Section \ref{Subsec: Estimation of CP and mu} detects four mean change points located in February 1956, January 1965, November 1966 and August 1973. 
The upper graph of Figure \ref{Fig: Change in number of births} shows the observations $Z_1, \ldots, Z_{840}$ together with the locations of the estimated mean change points marked by the red vertical lines and the resulting piecewise constant mean function $\hat{\mu}$ depicted in blue. 
The hypothesis of a stationary variance is rejected at significance level $\alpha =0.05$ by the test of Schmidt et al. \citep{Schmidt.2021} and their recursive procedure detects altogether six change points in the variance, marked by the green vertical lines in the lower graph of Figure \ref{Fig: Change in number of births}. 
Despite the non-constant variance, our procedure seems to capture the mean changes quite well and is successful
in distinguishing them from the changes in the variance. 
Interestingly, our test detects no changes in the mean during the last nearly five decades, for which the sample mean of the annual differences is -54.39. 
Coming back to the original time series, this signifies a 
persisting decline in the number of births by approximately 54 per year.

 \begin{figure}
 \includegraphics[width=\textwidth, trim={0 0 0 1.9cm}, clip]{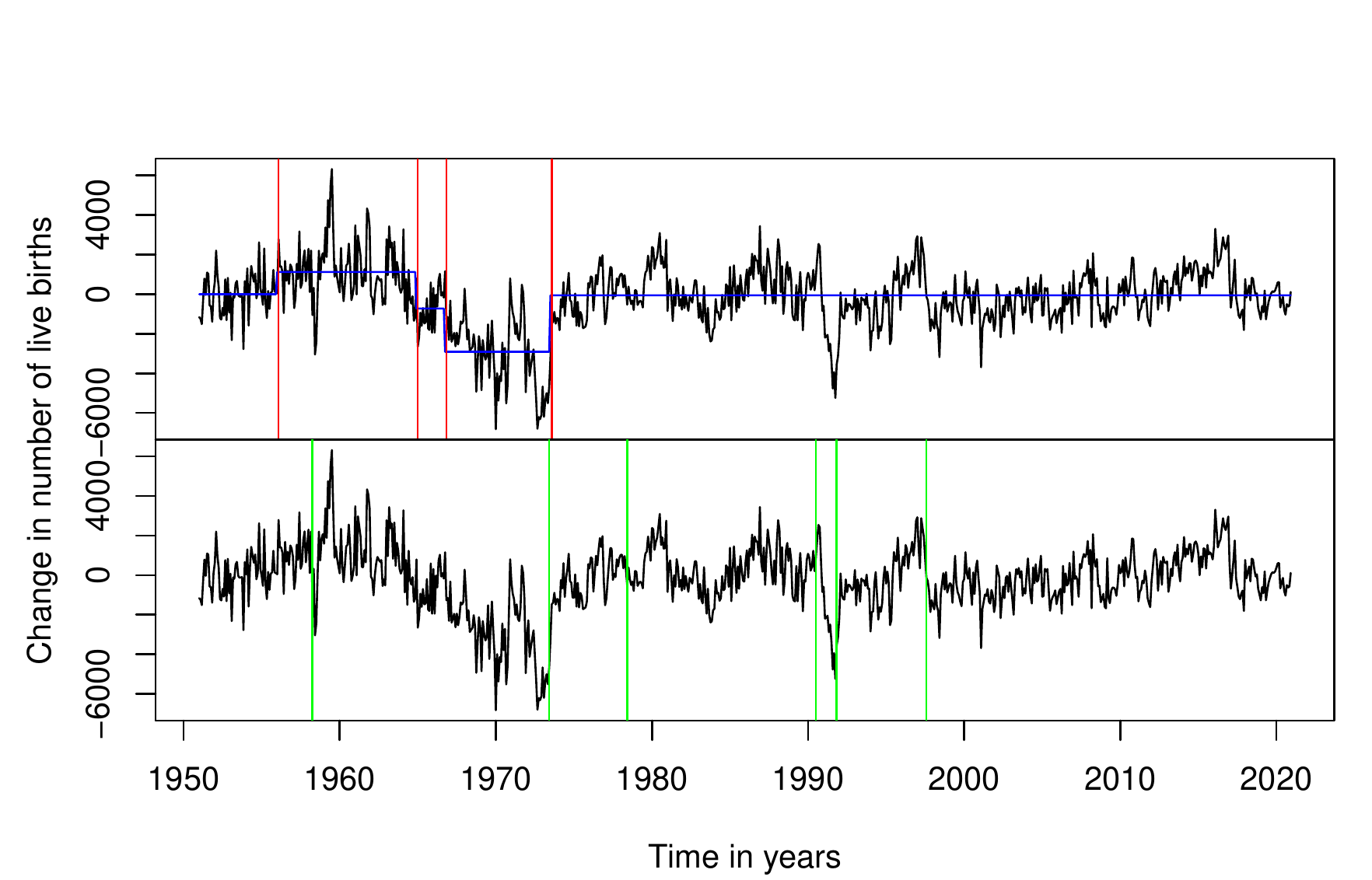}
\vspace*{-0.6cm}\caption{Change in the monthly number of live births of girls in Germany from January 1951 to December 2020. The upper graph shows the estimated change points in the mean, marked by the red vertical lines, together with the estimated piecewise constant mean function depicted in blue. The lower graph shows the estimated change points in the variance marked by the green vertical lines.}
\label{Fig: Change in number of births}
\end{figure}

\section*{Acknowledgements} The research was supported by the DFG Collaborative Research Center 823 \emph{Statistical Modelling of Nonlinear Dynamic Processes} and by the Friedrich-Ebert-Stiftung. 
Moreover, the author would like to thank Herold Dehling for valuable discussions and for proofreading the manuscript, both of which helped to improve the presentation significantly.

%

\pagebreak

\pagebreak
\appendix

\section{Proofs}
This appendix contains the proof details for the results in Sections \ref{Sec: Main results} and \ref{Sec: Outline of Proofs}. Throughout, we will use the shortening notations $\mu_i:=\mu(i/n)$ as well as $\sigma_i:=\sigma(i/n)$. Due to $\mu$ and $\sigma$ both being  c\`adl\`ag functions, they are bounded and we will denote $\mu_{\sup}:=\sup_{x\in [0,1]} \abs{\mu(x)}$ and $\sigma_{\sup}:=\sup_{x\in [0,1]} \sigma (x)$. Since $\sigma$ is additionally piecewise Lipschitz-continuous,  we split the variance function into $\sigma=:\sigma_L+\sigma_J$, where $\sigma_L$ and $\sigma_J$ denote the Lipschitz-continuous and the jump component, respectively. For $\sigma_L$, we will make frequent use of
$$\abs{\sigma_L\rbraces{\frac{i}{n}}-\sigma_L\rbraces{\frac{r}{n}}}\leq K \frac{\abs{i-r}}{n}\leq K \frac{1}{b_n},$$
whenever $i$ and $r$ are from the same block, that is, $i,r\in \{(j-1)\ell_n+1, \ldots, j\ell_n\}$ for some $1\leq j\leq b_n$. Without loss of generality, we  set $K=1$. Note that the above inequality likewise holds for $\sigma$ itself on all but (at most) $k_\sigma$ blocks, on which the absolute difference is dominated by the jump part $\sigma_J$ and can only be bounded by a constant.

  We will subsequently use $C$ to denote a positive constant, whose exact value is of no importance and  might even change from line to line. 
\subsection{A first approximation}
\begin{proof}[Proof of Proposition \ref{Prop: Approximation U U1}]
It holds 
\begin{align*}
& \sqrt{n} \abs{U(n)-U_1(n)}\\
\leq & \sqrt{n} \frac{1}{b_n(b_n-1)}\sum_{1\leq j \neq k \leq b_n} \abs{\rbraces{ \frac{1}{\ell_n}\sum_{i=(j-1)\ell_n+1}^{j\ell_n} (\sigma_i-\sigma_{j\ell_n}) Y_i} - \rbraces{\frac{1}{\ell_n}\sum_{i=(k-1)\ell_n+1}^{k\ell_n} (\sigma_i-\sigma_{k\ell_n}) Y_i }}\\
\leq & \sqrt{n} \frac{2}{b_n}\sum_{1\leq j \leq b_n} \abs{\frac{1}{\ell_n}\rbraces{\sum_{i=(j-1)\ell_n+1}^{j\ell_n} (\sigma_i-\sigma_{j\ell_n}) Y_i} }.
\end{align*}
Thus,
 \begin{align*}
& \norm{\sqrt{n} \rbraces{U(n)-U_1(n)}}_2\\
 \leq & 2\frac{\sqrt{n} }{b_n}\sum_{1\leq j \leq b_n} \norm{\frac{1}{\ell_n}\sum_{i=(j-1)\ell_n+1}^{j\ell_n} (\sigma_{L,i}-\sigma_{L,j\ell_n}) Y_i}_2 + 2\frac{\sqrt{n} }{b_n}\sum_{1\leq j \leq b_n} \norm{\frac{1}{\ell_n}\sum_{i=(j-1)\ell_n+1}^{j\ell_n} (\sigma_{J,i}-\sigma_{J,j\ell_n}) Y_i}_2,
\end{align*}
where we split $\sigma=\sigma_L+\sigma_J$ up into the Lipschitz-continuous part $\sigma_L$ and the jump part $\sigma_J$. Using the Lipschitz-continuity, we obtain 
\begin{align*}
& \frac{\sqrt{n} }{b_n}\sum_{1\leq j \leq b_n} \norm{\frac{1}{\ell_n}\sum_{i=(j-1)\ell_n+1}^{j\ell_n} (\sigma_{L,i}-\sigma_{L,j\ell_n}) Y_i}_2\\
= &  \frac{\sqrt{n} }{b_n}\sum_{1\leq j \leq b_n} \rbraces{\frac{1}{\ell_n^2}\sum_{i=(j-1)\ell_n+1}^{j\ell_n} \sum_{r=(j-1)\ell_n+1}^{j\ell_n} (\sigma_{L,i}-\sigma_{L,j\ell_n})(\sigma_{L,r}-\sigma_{L,j\ell_n})  \E{Y_iY_r} }^{1/2}\\
\leq  &  \frac{\sqrt{n} }{b_n}\sum_{1\leq j \leq b_n} \rbraces{\frac{1}{\ell_n^2}\sum_{i=(j-1)\ell_n+1}^{j\ell_n} \sum_{r=(j-1)\ell_n+1}^{j\ell_n} \abs{\sigma_{L,i}-\sigma_{L,j\ell_n}}\abs{\sigma_{L,r}-\sigma_{L,j\ell_n}} \abs{\E{Y_iY_r}} }^{1/2}\\
\leq &  \frac{\sqrt{n} }{b_n^2}b_n\rbraces{\frac{1}{\ell_n^2}\sum_{i=1}^{\ell_n} \sum_{r=1}^{\ell_n} \abs{\cov{Y_i}{Y_r}} }^{1/2}
=  \frac{\sqrt{n} }{b_n^2} \frac{b_n}{\sqrt{\ell_n}} \rbraces{\Var{Y_1}+\frac{2}{\ell_n}\sum_{k=1}^{\ell_n-1} (\ell_n-k) \abs{\cov{Y_1}{Y_{k+1}}} }^{1/2}\\
\leq & \frac{1}{\sqrt{b_n}} \rbraces{1+2\sum_{k=1}^{\infty}  \abs{\cov{Y_1}{Y_{k+1}}} }^{1/2}
\leq  \frac{1}{\sqrt{b_n}} \rbraces{1+ 16 \norm{Y_1}_{2+\delta}^2\sum_{k=1}^{\infty} \alpha_Y(k)^{\frac{\delta}{2+\delta}}}^{1/2},
\end{align*}
where the last inequality is due to Davydov's covariance inequality. By assumption (A2), the last sum is finite and by (A1), $\norm{Y_1}_{2+\delta}<\infty$, such that the last expression converges towards zero.

Recall that at most $k_\sigma$ blocks are influenced by a jump in the variance (of maximal absolute height $\Delta_\sigma$), whereas $\sigma_J$ is constant on the remaining $b_n-k_\sigma$ ones. Moreover, the error made due to the different jumps is additive (in case more than one jump occurs within one block). This leads to 
\begin{align*}
& \frac{\sqrt{n} }{b_n}\sum_{1\leq j \leq b_n} \norm{\frac{1}{\ell_n}\sum_{i=(j-1)\ell_n+1}^{j\ell_n} (\sigma_{J,i}-\sigma_{J,j\ell_n}) Y_i}_2 \\
 = & \frac{\sqrt{n} }{b_n}\sum_{1\leq j \leq b_n} \rbraces{\frac{1}{\ell_n^2}\sum_{i=(j-1)\ell_n+1}^{j\ell_n} \sum_{r=(j-1)\ell_n+1}^{j\ell_n} \abs{\sigma_{J,i}-\sigma_{J,j\ell_n}}\abs{\sigma_{J,r}-\sigma_{J,j\ell_n}} \abs{\E{Y_iY_r}} }^{1/2}\\
 \leq &  \frac{\sqrt{n} }{b_n}k_\sigma \Delta_\sigma \rbraces{\frac{1}{\ell_n^2}\sum_{i=1}^{\ell_n} \sum_{r=1}^{\ell_n} \abs{\cov{Y_i}{Y_r}} }^{1/2}
  \leq \frac{\sqrt{n} }{b_n \sqrt{\ell_n}}k_\sigma \Delta_\sigma \rbraces{1+ 16 \norm{Y_1}_{2+\delta}^2\sum_{k=1}^{\infty} \alpha_Y(k)^{\frac{\delta}{2+\delta}}}^{1/2}, 
\end{align*}
which is of order $O(1/\sqrt{b_n})$ as argued above. 

\end{proof}

\subsection{Behaviour of the test statistic under the alternative}
\begin{proof}[Proof of Theorem \ref{Thm: Behaviour under A}]
Due to Proposition \ref{Prop: Approximation U U1}, we only have to verify that 
$$ \abs{U_1(n)-\int_0^1 \int_0^1 \abs{\mu(x)-\mu(y)} \mathrm{d}x \mathrm{d}y}$$
converges in $L^2$ towards zero.
First, we will prove that $U_1(n)$ can be approximated by a deterministic expression, 
\begin{align*}
&\abs{U_1(n)-\frac{1}{b_n(b_n-1)} \sum_{1\leq j \neq k \leq b_n} \abs{\mu_{j\ell_n}-\mu_{k\ell_n}}}\\
\leq & \frac{2}{b_n} \sum_{1\leq j \leq b_n} \sigma_{j\ell_n} \frac{1}{\ell_n} \abs{\sum_{i=(j-1)\ell_n+1}^{j\ell_n} Y_i}
+ \frac{2}{b_n} \sum_{1\leq j \leq b_n} \abs{\frac{1}{\ell_n} \sum_{i=(j-1)\ell_n+1}^{j\ell_n} \mu_i - \mu_{j\ell_n}}.
\end{align*}
The first sum converges in $L^2$ towards zero due to 
\begin{align*}
\norm{\frac{1}{b_n} \sum_{1\leq j \leq b_n} \sigma_{j\ell_n} \frac{1}{\ell_n} \abs{\sum_{i=(j-1)\ell_n+1}^{j\ell_n} Y_i}}_2 \leq \sigma_{\sup} \frac{1}{\sqrt{\ell_n}} \norm{\frac{1}{\sqrt{\ell_n}}\sum_{i=1}^{\ell_n}Y_i}_2=O\rbraces{\frac{1}{\sqrt{\ell_n}}}
\end{align*}
since $\norm{\frac{1}{\sqrt{\ell_n}}\sum_{i=1}^{\ell_n}Y_i}_2\rightarrow \kappa_Y$. Due to $\mu$ be being a c\`adl\`ag function, there exists for every $\varepsilon>0$ an $r\in \N$  and $0=t_0<t_1< \ldots <t_r=1$ such that 
$$\max_{1\leq i\leq r} \sup_{s,t\in [t_{i-1}, t_i)} \abs{\mu(s)-\mu(t)} <\varepsilon$$
(see, Lemma 1, Section 14 in \citep{Billingsley.1968}). Hence, only $r$ of the $b_n$ intervals $(\frac{(j-1)\ell_n}{n}, \frac{j \ell_n}{n}]$, $1\leq j \leq b_n$, intersect with more than one of the intervals $[t_{i-1}, t_i)$. Considering the second sum above, there are at most $r$ cases with 
$$ \abs{\frac{1}{\ell_n} \sum_{i=(j-1)\ell_n+1}^{j\ell_n} \mu_i - \mu_{j\ell_n}} \leq \frac{1}{\ell_n} \sum_{i=(j-1)\ell_n+1}^{j\ell_n}  \abs{\mu_i - \mu_{j\ell_n}} \leq 2 \mu_{\sup}.$$ 
In the other $b_n-r$ cases, we have 
$$
 \frac{1}{\ell_n} \sum_{i=(j-1)\ell_n+1}^{j\ell_n}  \abs{\mu_i - \mu_{j\ell_n}} \leq \max_{1\leq i\leq r} \sup_{s,t\in [t_{i-1}, t_i)} \abs{\mu(s)-\mu(t)} <\varepsilon.
 $$
Altogether, the second and deterministic sum converges towards zero due to 
$$ \frac{2}{b_n} \sum_{1\leq j \leq b_n} \abs{\frac{1}{\ell_n} \sum_{i=(j-1)\ell_n+1}^{j\ell_n} \mu_i - \mu_{j\ell_n}}
\leq 4 \frac{r}{b_n} \mu_{\sup}+ 2 \frac{b_n-r}{b_n} \varepsilon < \varepsilon
$$
for $n$ chosen large enough. Thus, it remains to prove convergence towards the Riemann-integral,
$$\frac{1}{b_n(b_n-1)} \sum_{1\leq j \neq k \leq b_n} \abs{\mu_{j\ell_n}-\mu_{k\ell_n}} \rightarrow \int_0^1 \int_0^1 \abs{\mu(x)-\mu(y)} \mathrm{d}x \mathrm{d}y.$$ 
As a c\`adl\`ag function, $\mu$ is bounded and has only countably many discontinuities on $[0,1]$ (see, Section 14 in \citep{Billingsley.1968}). Hence, the points of discontinuity of the function $g(x,y):=\abs{\mu(x)-\mu(y)}$ on $[0,1]\times [0,1]$ form a Lebesgue null set and $g$ is bounded as well. By Lebesgue's integrability criterion for multiple Riemann-integrals (see, e.g., Theorem 14.5 in Apostol \citep{Apostol.1974}), $g$ is Riemann-integrable and we obtain convergence of the Riemann sums 
$$\frac{1}{b_n(b_n-1)} \sum_{1\leq j \neq k \leq b_n}\abs{\mu_{j\ell_n}-\mu_{k\ell_n}} = \frac{b_n}{b_n-1} \sum_{1\leq j \leq b_n}   \sum_{1\leq k \leq b_n}\abs{\mu\rbraces{\frac{j}{b_n}}-\mu\rbraces{\frac{k}{b_n}}} \frac{1}{b_n^2}$$
towards the desired integral. 
\end{proof}

\subsection{Behaviour of the test statistic under the hypothesis}
\subsubsection{A second approximation}
\begin{proof}[Proof of Proposition \ref{Prop: Approx U1 U2}]
It holds 
\begin{align*}
\sqrt{n}\abs{U_1(n)-U_2(n)}
\leq &\sqrt{n} \abs{U_1(n)-\frac{1}{b_n(b_n-1)} \sum_{1\leq j\neq k\leq b_n} \abs{\sigma_{j\ell_n}\tilde{S}_{nj}-\sigma_{k\ell_n}\tilde{S}_{nk}}}\\
+& \sqrt{n} \abs{U_2(n)-\frac{1}{b_n(b_n-1)} \sum_{1\leq j\neq k\leq b_n} \abs{\sigma_{j\ell_n}\tilde{S}_{nj}-\sigma_{k\ell_n}\tilde{S}_{nk}}}.
\end{align*}
For the first expression, it holds
\begin{align*}
&\sqrt{n} \E{\abs{U_1(n)-\frac{1}{b_n(b_n-1)} \sum_{1\leq j\neq k\leq b_n} \abs{\sigma_{j\ell_n}\tilde{S}_{nj}-\sigma_{k\ell_n}\tilde{S}_{nk}}}}\\
\leq & \frac{\sqrt{n}}{b_n(b_n-1)} \sum_{1\leq j\neq k\leq b_n} \E{\abs{\sigma_{j\ell_n}\frac{1}{\ell_n} \sum_{i=j\ell_n-m_n+1}^{j\ell_n} Y_i-\sigma_{k\ell_n}\frac{1}{\ell_n} \sum_{i=k\ell_n-m_n+1}^{k\ell_n} Y_i}}\\
\leq & 2 \sigma_{\sup} \sqrt{n} \E{\abs{\frac{1}{\ell_n} \sum_{i=\ell_n-m_n+1}^{\ell_n} Y_i}} 
\leq 2 \sigma_{\sup} \frac{\sqrt{n}\sqrt{m_n}}{\ell_n} \norm{\frac{1}{\sqrt{m_n}} \sum_{i=\ell_n-m_n+1}^{\ell_n} Y_i}_2
 \leq C \frac{\sqrt{b_n m_n}}{\sqrt{\ell_n}}.
\end{align*}
Turning towards the second expression, we obtain
\begin{align*}
&\sqrt{n} \E{\abs{U_2(n)-\frac{1}{b_n(b_n-1)} \sum_{1\leq j\neq k\leq b_n} \abs{\sigma_{j\ell_n}\tilde{S}_{nj}-\sigma_{k\ell_n}\tilde{S}_{nk}}}}\\
\leq &  \frac{\sqrt{n}}{b_n(b_n-1)} \sum_{1\leq j\neq k\leq b_n} \E{\abs{\sigma_{j\ell_n}(\tilde{S}_{nj}-\tilde{S}_{nj}')-\sigma_{k\ell_n}(\tilde{S}_{nk}-\tilde{S}_{nk}')}}\\
\leq & 2\sigma_{\sup} \sqrt{n}\E{\abs{\tilde{S}_{n1}-\tilde{S}_{n1}'}}
\leq  8\sigma_{\sup} \sqrt{n}  \int_0^{\alpha_Y(m_n)} Q_{\abs{\tilde{S}_{n1}}}(u)\mathrm{d}u\\
\leq & 8\sigma_{\sup} \sqrt{n}  \rbraces{\int_0^{1} \1_{\alpha_Y(m_n)}^2(u)\mathrm{d}u}^{1/2} \cdot \rbraces{\int_0^{1} Q_{\abs{\tilde{S}_{n1}}}^2(u)\mathrm{d}u}^{1/2}\\
= & 8 \sigma_{\sup} \sqrt{n} \sqrt{\alpha_Y(m_n)}\norm{\tilde{S}_{n1}}_2\\
=  & 8\sigma_{\sup} \sqrt{b_n} \sqrt{\alpha_Y(m_n)}\norm{\frac{1}{\sqrt{\ell_n}}\sum_{i=1}^{\ell_n-m_n}Y_i}_2\leq C \sqrt{b_n\alpha_Y(m_n)},
\end{align*}
where the third inequality follows by equation \eqref{Eq: Peligrad bound for expectation} 
and the fourth one is  an application of the Cauchy-Schwarz inequality.
\end{proof}
\subsubsection{Law of large numbers}

\begin{proof}[Proof of Proposition \ref{Prop: LLN U2}]
We will prove in Proposition \ref{Prop: Replacing centring term} below that $$\sqrt{b_n} \abs{\E{\frac{\sqrt{\ell_n}}{\kappa_Y} U_2(n)} - \int_0^1 \int_0^1 \sqrt{\sigma^2(x)+\sigma^2(y)}\mathrm{d}x\mathrm{d}y \cdot \sqrt{\frac{2}{\pi}}}\rightarrow 0.$$ 
A close look at the proof there reveals that 
$$\E{\frac{\sqrt{\ell_n}}{\kappa_Y} U_2(n)} \rightarrow \int_0^1 \int_0^1 \sqrt{\sigma^2(x)+\sigma^2(y)}\mathrm{d}x\mathrm{d}y \cdot \sqrt{\frac{2}{\pi}}$$
already holds under the weaker assumptions made here. 
It remains to prove that $\Var{\frac{\sqrt{\ell_n}}{\kappa_Y}U_2(n)}\rightarrow 0$. It holds
{\allowdisplaybreaks\begin{align*}
& \Var{\frac{\sqrt{\ell_n}}{\kappa_Y}U_2(n)} \\
= & \frac{1}{b_n^2(b_n-1)^2} \sum_{1\leq j_1\neq k_1\leq b_n}\sum_{1\leq j_2 \neq k_2\leq b_n} \cov{\frac{\sqrt{\ell_n}}{\kappa_Y}\abs{\sigma_{j_1\ell_n}\tilde{S}_{nj_1}'-\sigma_{k_1\ell_n}\tilde{S}_{nk_1}'}}{\frac{\sqrt{\ell_n}}{\kappa_Y}\abs{\sigma_{j_2\ell_n}\tilde{S}_{nj_2}'-\sigma_{k_2\ell_n}\tilde{S}_{nk_2}'}}\\
= & \frac{2}{b_n^2(b_n-1)^2} \sum_{1\leq j_1\neq k_1\leq b_n} \Var{\frac{\sqrt{\ell_n}}{\kappa_Y}\abs{\sigma_{j_1\ell_n}\tilde{S}_{nj_1}'-\sigma_{k_1\ell_n}\tilde{S}_{nk_1}'}}\\
 & + \frac{4}{b_n^2(b_n-1)^2} \sum_{1\leq j_1\neq k_1\leq b_n}\sum_{\substack{1\leq k_2\leq b_n,\\ k_2\notin \{j_1, k_1\}}} \cov{\frac{\sqrt{\ell_n}}{\kappa_Y}\abs{\sigma_{j_1\ell_n}\tilde{S}_{nj_1}'-\sigma_{k_1\ell_n}\tilde{S}_{nk_1}'}}{\frac{\sqrt{\ell_n}}{\kappa_Y}\abs{\sigma_{j_1\ell_n}\tilde{S}_{nj_1}'-\sigma_{k_2\ell_n}\tilde{S}_{nk_2}'}}.
\end{align*}}
Turning towards the sum of variances, we obtain 
{\allowdisplaybreaks\begin{align*}
& \frac{1}{b_n^2(b_n-1)^2} \sum_{1\leq j_1\neq k_1\leq b_n} \Var{\frac{\sqrt{\ell_n}}{\kappa_Y}\abs{\sigma_{j_1\ell_n}\tilde{S}_{nj_1}'-\sigma_{k_1\ell_n}\tilde{S}_{nk_1}'}}\\
 \leq & \frac{1}{b_n^2(b_n-1)^2} \sum_{1\leq j_1\neq k_1\leq b_n} \E{\rbraces{\frac{\sqrt{\ell_n}}{\kappa_Y}\abs{\sigma_{j_1\ell_n}\tilde{S}_{nj_1}'-\sigma_{k_1\ell_n}\tilde{S}_{nk_1}'}}^2}
 \leq  \frac{4\sigma_{\sup}^2}{b_n(b_n-1)} \E{\abs{\frac{\sqrt{\ell_n}}{\kappa_Y}\tilde{S}_{n1}'}^2}\\
 =& \frac{4\sigma_{\sup}^2}{b_n(b_n-1)} \E{\abs{\frac{1}{\sqrt{\ell_n}\kappa_Y}\sum_{i=1}^{\ell_n-m_n}Y_i}^2}=O\rbraces{\frac{1}{b_n(b_n-1)}}.
\end{align*}}
Quite similarly, an application of the Cauchy-Schwarz inequality yields
{\allowdisplaybreaks \begin{align*}
 & \frac{1}{b_n^2(b_n-1)^2} \sum_{1\leq j_1\neq k_1\leq b_n}\sum_{\substack{1\leq k_2\leq b_n,\\ k_2\notin \{j_1, k_1\}}} \cov{\frac{\sqrt{\ell_n}}{\kappa_Y}\abs{\sigma_{j_1\ell_n}\tilde{S}_{nj_1}'-\sigma_{k_1\ell_n}\tilde{S}_{nk_1}'}}{\frac{\sqrt{\ell_n}}{\kappa_Y}\abs{\sigma_{j_1\ell_n}\tilde{S}_{nj_1}'-\sigma_{k_2\ell_n}\tilde{S}_{nk_2}'}}\\
 \leq & \frac{1}{b_n^2(b_n-1)^2} \sum_{1\leq j_1\neq k_1\leq b_n}\sum_{\substack{1\leq k_2\leq b_n,\\ k_2\notin \{j_1, k_1\}}} \E{\rbraces{\frac{\sqrt{\ell_n}}{\kappa_Y}\abs{\sigma_{j_1\ell_n}\tilde{S}_{nj_1}'-\sigma_{k_1\ell_n}\tilde{S}_{nk_1}'}} \cdot\rbraces{\frac{\sqrt{\ell_n}}{\kappa_Y}\abs{\sigma_{j_1\ell_n}\tilde{S}_{nj_1}'-\sigma_{k_2\ell_n}\tilde{S}_{nk_2}'}}}\\ 
 \leq & \frac{1}{b_n^2(b_n-1)^2} \sum_{1\leq j_1\neq k_1\leq b_n}\sum_{\substack{1\leq k_2\leq b_n,\\ k_2\notin \{j_1, k_1\}}} \norm{\frac{\sqrt{\ell_n}}{\kappa_Y}\rbraces{\sigma_{j_1\ell_n}\tilde{S}_{nj_1}'-\sigma_{k_1\ell_n}\tilde{S}_{nk_1}'}}_2 \cdot \norm{\frac{\sqrt{\ell_n}}{\kappa_Y}\rbraces{\sigma_{j_1\ell_n}\tilde{S}_{nj_1}'-\sigma_{k_2\ell_n}\tilde{S}_{nk_2}'}}_2\\
 \leq & \frac{4 \sigma_{\sup}^2 (b_n-2)}{b_n (b_n-1)} \E{\abs{\frac{1}{\sqrt{\ell_n}\kappa_Y}\sum_{i=1}^{\ell_n-m_n}Y_i}^2}=O\rbraces{\frac{b_n-2}{b_n(b_n-1)}}.
\end{align*}}
\end{proof}
\subsubsection{Central limit theorem}
\begin{proof}[Proof of Proposition \ref{Prop: CLT U2}]
Note that
\begin{align*}
\frac{\sqrt{\ell_n}}{\kappa_Y} U_2(n)= \frac{1}{b_n(b_n-1)} \sum_{1\leq j\neq k\leq b_n} \abs{\frac{\sqrt{\ell_n}}{\kappa_Y}\sigma_{j\ell_n}\tilde{S}_{nj}'-\frac{\sqrt{\ell_n}}{\kappa_Y}\sigma_{k\ell_n}\tilde{S}_{nk}'}
\end{align*}
is a U-statistic of a row-wise non-stationary but  independent triangular array. We intend to split $U_2(n)$ with kernel $h(x,y)=\abs{x-y}$ up via the Hoeffding-decomposition. We define
\begin{align*}
\theta_{jk}^{(n)}=\theta_{kj}^{(n)}&:=\E{h\rbraces{\frac{\sqrt{\ell_n}}{\kappa_Y}\sigma_{j\ell_n}\tilde{S}_{nj}', \frac{\sqrt{\ell_n}}{\kappa_Y}\sigma_{k\ell_n}\tilde{S}_{nk}'}},\\
h_{1,jk}^{(n)}(x)&:=\E{h\rbraces{x, \frac{\sqrt{\ell_n}}{\kappa_Y}\sigma_{k\ell_n}\tilde{S}_{nk}'}}-\theta_{jk}^{(n)},\\
\overline{h_{1,j}^{(n)}}\rbraces{x}&:=\frac{1}{b_n-1} \sum_{\substack{1\leq k\leq b_n\\k\neq j}} h_{1,jk}^{(n)}\rbraces{x},\\
h_{2,jk}^{(n)}(x,y)&:=h(x,y)-h_{1,jk}^{(n)}(x)-h_{1,kj}^{(n)}(y)-\theta_{jk}^{(n)}.
\end{align*}
We thus obtain 
\begin{align*}
&\sqrt{b_n}\rbraces{\frac{\sqrt{\ell_n}}{\kappa_Y}U_2(n) - \frac{\sqrt{\ell_n}}{\kappa_Y}\E{U_2(n)} } \\
=&\frac{\sqrt{b_n}}{b_n(b_n-1)}   \sum_{1\leq j\neq k\leq b_n}\rbraces{ \abs{\frac{\sqrt{\ell_n}}{\kappa_Y}\sigma_{j\ell_n}\tilde{S}_{nj}'-\frac{\sqrt{\ell_n}}{\kappa_Y}\sigma_{k\ell_n}\tilde{S}_{nk}'}-  \theta_{jk}^{(n)}}\\
= & \frac{\sqrt{b_n}}{b_n(b_n-1)}   \sum_{1\leq j\neq k\leq b_n} \rbraces{h_{1,jk}^{(n)}\rbraces{\frac{\sqrt{\ell_n}}{\kappa_Y}\sigma_{j\ell_n}\tilde{S}_{nj}'}+h_{1,kj}^{(n)}\rbraces{\frac{\sqrt{\ell_n}}{\kappa_Y}\sigma_{k\ell_n}\tilde{S}_{nk}'}+h_{2,jk}^{(n)}\rbraces{\frac{\sqrt{\ell_n}}{\kappa_Y}\sigma_{j\ell_n}\tilde{S}_{nj}',\frac{\sqrt{\ell_n}}{\kappa_Y}\sigma_{k\ell_n}\tilde{S}_{nk}'}}\\
= & \frac{2}{\sqrt{b_n}} \sum_{1\leq j\leq b_n} \overline{h_{1,j}^{(n)}}\rbraces{\frac{\sqrt{\ell_n}}{\kappa_Y}\sigma_{j\ell_n}\tilde{S}_{nj}'} +  \frac{\sqrt{b_n}}{b_n(b_n-1)}   \sum_{1\leq j\neq k\leq b_n}h_{2,jk}^{(n)}\rbraces{\frac{\sqrt{\ell_n}}{\kappa_Y}\sigma_{j\ell_n}\tilde{S}_{nj}',\frac{\sqrt{\ell_n}}{\kappa_Y}\sigma_{k\ell_n}\tilde{S}_{nk}'}.
\end{align*}
We will call the first sum the linear part and the second sum the degenerate part of the Hoeffding-decomposition. In particular, we will prove that the linear part converges towards the desired normal limit and that the degenerate part, consisting of a degenerate U-statistic, converges towards zero in $L^2$. An application of Slutzky's lemma then concludes the proof. 
Starting with the linear part, note that it is a sum of independent and centred, though not identically distributed, random variables. We intend to apply Lyapunov's central limit theorem and define
$$Y_{nj}:= \frac{2}{\sqrt{b_n}} \overline{h_{1,j}^{(n)}}\rbraces{\frac{\sqrt{\ell_n}}{\kappa_Y}\sigma_{j\ell_n}\tilde{S}_{nj}'} \cdot \rbraces{\frac{4}{b_n}\sum_{k=1}^{b_n} \Var{\overline{h_{1,k}^{(n)}}\rbraces{\frac{\sqrt{\ell_n}}{\kappa_Y}\sigma_{k\ell_n}\tilde{S}_{nk}'} } } ^{-1/2}.$$
Then the $Y_{nj}$'s for $1\leq j \leq b_n$ are likewise independent 
 with mean zero and $\sum_{j=1}^{b_n} \Var{Y_{nj}}=1$. We have to verify that for some $\eta>0$, it holds
$$\limn \sum_{j=1}^{b_n} \E{\abs{Y_{nj}}^{2+\eta}}=0.$$
We have 
\begin{align}
\label{Eq: Proof CLT Ljapunov expectation}
& \sum_{j=1}^{b_n} \E{\abs{Y_{nj}}^{2+\eta}}\\
\nonumber =&b_n^{-\eta/2}  \rbraces{\frac{4}{b_n} \sum_{k=1}^{b_n}\Var{\overline{h_{1,k}^{(n)}}\rbraces{\frac{\sqrt{\ell_n}}{\kappa_Y}\sigma_{k\ell_n}\tilde{S}_{nk}'} }}^{-(1+\eta/2)} \rbraces{ \frac{2^{2+\eta}}{b_n} \sum_{j=1}^{b_n} \E{\abs{\overline{h_{1,j}^{(n)}}\rbraces{\frac{\sqrt{\ell_n}}{\kappa_Y}\sigma_{j\ell_n}\tilde{S}_{nj}'}}^{2+\eta}}},
\end{align}
which converges towards zero if we can prove that the last two terms in \eqref{Eq: Proof CLT Ljapunov expectation} are bounded. To show the boundedness for the latter of the two, we will bound the expectation $\E{\abs{\overline{h_{1,j}^{(n)}}\rbraces{\frac{\sqrt{\ell_n}}{\kappa_Y}\sigma_{j\ell_n}\tilde{S}_{nj}'}}^{2+\eta}}$ by a constant independent of $j$.
It holds by the $c_r$-inequality
\begin{align*}
&\E{\abs{\overline{h_{1,j}^{(n)}}\rbraces{\frac{\sqrt{\ell_n}}{\kappa_Y}\sigma_{j\ell_n}\tilde{S}_{nj}'}}^{2+\eta}}\\
= & \E{\abs{\frac{1}{b_n-1} \sum_{\substack{1\leq k\leq b_n\\k\neq j}} \E{h\rbraces{\frac{\sqrt{\ell_n}}{\kappa_Y}\sigma_{j\ell_n}\tilde{S}_{nj}', \frac{\sqrt{\ell_n}}{\kappa_Y}\sigma_{k\ell_n}\tilde{S}_{nk}'}\middle|\tilde{S}_{nj}'}-\theta_{jk}^{(n)}}^{2+\eta}}\\
\leq & 2^{1+\eta} \E{\abs{\frac{1}{b_n-1} \sum_{\substack{1\leq k\leq b_n\\k\neq j}} \E{h\rbraces{\frac{\sqrt{\ell_n}}{\kappa_Y}\sigma_{j\ell_n}\tilde{S}_{nj}', \frac{\sqrt{\ell_n}}{\kappa_Y}\sigma_{k\ell_n}\tilde{S}_{nk}'}\middle|\tilde{S}_{nj}'}}^{2+\eta}} 
+ 2^{1+\eta} \rbraces{\frac{1}{b_n-1} \sum_{\substack{1\leq k\leq b_n\\k\neq j}}\theta_{jk}^{(n)}}^{2+\eta}.
\end{align*}
We have $\theta_{jk}^{(n)}\leq 2 \sigma_{\sup}\E{\frac{\sqrt{\ell_n}}{\kappa_Y}\abs{\tilde{S}_{n1}'}}$, which is finite. Thus, the second of these terms  is bounded. 
Considering the finiteness of the first term, it holds by independence and by stationarity of the $\tilde{S}_{nj}'$'s that
\begin{align*}
&   \E{\abs{ \frac{1}{b_n-1} \sum_{\substack{1\leq k\leq b_n\\k\neq j}}\E{\abs{\frac{\sqrt{\ell_n}}{\kappa_Y}\sigma_{j\ell_n}\tilde{S}_{nj}'- \frac{\sqrt{\ell_n}}{\kappa_Y}\sigma_{k\ell_n}\tilde{S}_{nk}'}\middle|\tilde{S}_{nj}'}}^{2+\eta}} \\
\leq & \sigma_{\sup}^{2+\eta}\E{\abs{\frac{\sqrt{\ell_n}}{\kappa_Y}\abs{\tilde{S}_{nj}'}+  \frac{1}{b_n-1} \sum_{\substack{1\leq k\leq b_n\\k\neq j}} \E{\frac{\sqrt{\ell_n}}{\kappa_Y}\abs{\tilde{S}_{nk}'}}}^{2+\eta}}
\leq  C \E{\abs{\frac{\sqrt{\ell_n}}{\kappa_Y}\tilde{S}_{n1}'}^{2+\eta}}.
\end{align*}
Choose $\eta=\delta/4$, then Theorem  \ref{Literature: Thm Yokoyama} by Yokoyama yields
\begin{align*}
& \E{\abs{\frac{\sqrt{\ell_n}}{\kappa_Y}\tilde{S}_{n1}'}^{2+\eta}}
= \rbraces{\sqrt{\ell_n}\kappa_Y}^{-(2+\eta)}\E{\abs{\sum_{i=1}^{\ell_n-m_n} Y_i}^{2+\eta}}\\
\leq &C \ \rbraces{\sqrt{\ell_n}\kappa_Y}^{-(2+\eta)} (\ell_n-m_n)^{1+\eta/2}  \leq C,
\end{align*}
which is applicable due to $\E{\abs{Y_1}^{2+\delta}}<\infty$ and since, by the polynomial decay of the mixing coefficients, 
\begin{align*}
\sum_{k=1}^{\infty} k^{\eta/2} \alpha(k)^{(\delta-\eta)/(2+\delta)}
= \sum_{k=1}^{\infty} k^{\delta/8} \alpha(k)^{(3/4)\delta/(2+\delta)}
\leq C \sum_{k=1}^{\infty} k^{\delta/8} k^{-(3/4)\rho(1+\delta)/\delta}
\end{align*}
 is finite for $0<\delta\leq 1$ and $\rho>1$.
Since this upper bound holds for every $j$, we have shown 
$$ \frac{2^{2+\eta}}{b_n} \sum_{j=1}^{b_n} \E{\abs{\overline{h_{1,j}^{(n)}}\rbraces{\frac{\sqrt{\ell_n}}{\kappa_Y}\sigma_{j\ell_n}\tilde{S}_{nj}'}}^{2+\eta}}\leq C.$$
Next, we will check that 
$$\tilde{\psi}_n^2:=4\frac{1}{b_n} \sum_{j=1}^{b_n}\Var{\overline{h_{1,j}^{(n)}}\rbraces{\frac{\sqrt{\ell_n}}{\kappa_Y}\sigma_{j\ell_n}\tilde{S}_{nj}'} } $$
in \eqref{Eq: Proof CLT Ljapunov expectation} is bounded away from zero. Since $\E{\overline{h_{1,j}^{(n)}}\rbraces{\frac{\sqrt{\ell_n}}{\kappa_Y}\sigma_{j\ell_n}\tilde{S}_{nj}'} }=0$ by independence and an application of Fubini's Theorem, it holds 
\begin{align*}
\tilde{\psi}_n^2
= & 4\frac{1}{b_n} \sum_{j=1}^{b_n}\E{\abs{\overline{h_{1,j}^{(n)}}\rbraces{\frac{\sqrt{\ell_n}}{\kappa_Y}\sigma_{j\ell_n}\tilde{S}_{nj}'} }^2}\\
= & 4\frac{1}{b_n} \sum_{j=1}^{b_n}\E{\abs{\frac{1}{b_n-1} \sum_{\substack{1\leq k\leq b_n\\k\neq j}} \rbraces{\E{h\rbraces{\frac{\sqrt{\ell_n}}{\kappa_Y}\sigma_{j\ell_n}\tilde{S}_{nj}', \frac{\sqrt{\ell_n}}{\kappa_Y}\sigma_{k\ell_n}\tilde{S}_{nk}'}\middle|\tilde{S}_{nj}'}-\theta_{jk}^{(n)}}}^2}\\
= &4 \frac{1}{b_n} \sum_{j=1}^{b_n}\E{\abs{\frac{1}{b_n-1} \sum_{\substack{1\leq k\leq b_n\\k\neq j}} \rbraces{\E{h\rbraces{\frac{\sqrt{\ell_n}}{\kappa_Y}\sigma_{j\ell_n}\tilde{S}_{n1}', \frac{\sqrt{\ell_n}}{\kappa_Y}\sigma_{k\ell_n}\tilde{S}_{n2}'}\middle|\tilde{S}_{n1}'}-\theta_{jk}^{(n)}}}^2}
\end{align*}
by stationarity and independence of the $\tilde{S}_{nj}'$. Our intention is to show that $\tilde{\psi}_n^2$ is asymptotically equivalent to 
$$\psi_n^2:= 4\frac{1}{b_n} \sum_{j=1}^{b_n}\E{\abs{\frac{1}{b_n-1} \sum_{\substack{1\leq k\leq b_n\\k\neq j}} \rbraces{\E{h\rbraces{\sigma_{j\ell_n}Z,\sigma_{k\ell_n}Z'}\middle|Z}-\E{h\rbraces{\sigma_{j\ell_n}Z, \sigma_{k\ell_n}Z'}}}}^2}$$
for two independent standard normal random variables $Z$ and $Z'$. 
 By independence, it holds $(\frac{\sqrt{\ell_n}}{\kappa_Y}\tilde{S}_{n1}', \frac{\sqrt{\ell_n}}{\kappa_Y}\tilde{S}_{n2}')\distConv (Z, Z')$ as $n\rightarrow \infty$. An application of Skorohods representation theorem yields the existence of a probability space $(\tilde{\Omega}, \tilde{\mathcal{F}}, \tilde{P})$, on which there are random variables $(T_{n1})_{n\in\N}$, $(T_{n2})_{n\in\N}$, $N$ and $N'$ such that for all $n\in \N$, $(T_{n1},T_{n2})\overset{\mathcal{D}}{=}(\frac{\sqrt{\ell_n}}{\kappa_Y}\tilde{S}_{n,1}', \frac{\sqrt{\ell_n}}{\kappa_Y}\tilde{S}_{n,2}')$ and $(N,N')\overset{\mathcal{D}}{=} (Z,Z')$ and such that  
$(T_{n1},T_{n2})$ converges almost surely to $(N,N')$ as $n\rightarrow \infty$. We will now work with the newly defined random variables $T_{n1}, T_{n2}, N$ and $N'$ to prove that for each summand $j$, the $L^2$-distance of the respective expression within the expectations in $\tilde{\psi}_n$ and $\psi_n$ converges towards zero, i.e. we examine
\begin{align*}
& \left\lVert\frac{1}{b_n-1} \sum_{\substack{1\leq k\leq b_n\\k\neq j}} \rbraces{\E{h\rbraces{\sigma_{j\ell_n}T_{n1}, \sigma_{k\ell_n}T_{n2}}\middle|T_{n1}}-\theta_{jk}^{(n)}}\right.\\
& \left. -  \frac{1}{b_n-1} \sum_{\substack{1\leq k\leq b_n\\k\neq j}}\rbraces{\E{h\rbraces{\sigma_{j\ell_n}N,\sigma_{k\ell_n}N'}\middle|N}-\E{h\rbraces{\sigma_{j\ell_n}N, \sigma_{k\ell_n}N'}}}\right\rVert_2\\
\leq  &  \frac{1}{b_n-1} \sum_{\substack{1\leq k\leq b_n\\k\neq j}} \left\lVert  \E{h\rbraces{\sigma_{j\ell_n}T_{n1}, \sigma_{k\ell_n}T_{n2}}\middle|T_{n1}}-\E{h\rbraces{\sigma_{j\ell_n}N,\sigma_{k\ell_n}N'}\middle|N} \right\rVert_2\\
& + \frac{1}{b_n-1} \sum_{\substack{1\leq k\leq b_n\\k\neq j}}\abs{ \theta_{jk}^{(n)}-\E{h\rbraces{\sigma_{j\ell_n}N, \sigma_{k\ell_n}N'}}}.
\end{align*}
The convergence of the second, deterministic expression can be shown as in Proposition \ref{Prop: Replacing centring term}, and we obtain a bound independent of $j$ and $k$ of order $O((\ell_n-m_n)^{-(\delta/2)(\rho-1)/(\rho+1)} \log(C (\ell_n-m_n)))$. 
To prove convergence of the $L^2$-distance towards zero, define the functions 
$$f_{jk}^{(n)}(x):= \E{\abs{\sigma_{j\ell_n}x-\sigma_{k\ell_n}T_{n2}}} \quad \text{and} \quad f_{jk}(x):=\E{\abs{\sigma_{j\ell_n}x-\sigma_{k\ell_n}N'}}.$$
Then it holds for every $x,y\in\R$,
\begin{equation*}
\abs{f_{jk}^{(n)}(x)-f_{jk}(x)}\leq \sigma_{\sup} \E{\abs{T_{n2}-N'}}
\end{equation*}
as well as 
\begin{equation*}
\abs{f_{jk}(x)-f_{jk}(y)}\leq \sigma_{\sup} \abs{x-y}
\end{equation*}
and hence, 
\begin{align*}
& \left\lVert  \E{h\rbraces{\sigma_{j\ell_n}T_{n1}, \sigma_{k\ell_n}T_{n2}}\middle|T_{n1}}-\E{h\rbraces{\sigma_{j\ell_n}N,\sigma_{k\ell_n}N'}\middle|N} \right\rVert_2 = \left\lVert f_{jk}^{(n)}(T_{n1})-f_{jk}(N)\right\rVert_2\\
\leq & \left\lVert f_{jk}^{(n)}(T_{n1})- f_{jk}(T_{n1})\right\rVert_2+\left\lVert f_{jk}(T_{n1})-f_{jk}(N)\right\rVert_2
\leq   \sigma_{\sup} \rbraces{ \E{\abs{T_{n2}-N'}}+  \left\lVert T_{n1}-N\right\rVert_2}\\
\leq & 2 \sigma_{\sup}  \left\lVert T_{n1}-N\right\rVert_2. 
\end{align*}
In particular, note that the above bound is likewise independent of both $j$ and $k$. By construction, it now holds $T_{n1}\rightarrow N$ almost surely. Above, we have already proven that $\sup_{n\in\N} \E{\abs{T_{n1}}^{2+\eta}}=\sup_{n\in\N} \E{\abs{\frac{\sqrt{\ell_n}}{\kappa_Y}\tilde{S}_{n1}'}^{2+\eta}}<\infty $ for some $\eta>0$ such that $(T_{n1}^2)_{n\in\N}$ is uniformly integrable and hence
$$T_{n1}\overset{L^2}{\longrightarrow} N \quad \text{as } n \rightarrow \infty.$$
Putting the pieces together, one obtains, using $a^2-b^2=(a+b)(a-b)$ for the expressions within the expectations and the Cauchy-Schwarz-inequality,
\begin{align*}
& \abs{\tilde{\psi}_n^2-\psi_n^2}\\
\leq &4 \frac{1}{b_n} \sum_{j=1}^{b_n} \left\lVert\frac{1}{b_n-1} \sum_{\substack{1\leq k\leq b_n\\k\neq j}} \rbraces{\E{h\rbraces{\sigma_{j\ell_n}T_{n1}, \sigma_{k\ell_n}T_{n2}}\middle|T_{n1}}-\theta_{jk}^{(n)}}\right.\\
& \left. + \frac{1}{b_n-1} \sum_{\substack{1\leq k\leq b_n\\k\neq j}}\rbraces{\E{h\rbraces{\sigma_{j\ell_n}N,\sigma_{k\ell_n}N'}\middle|N}-\E{h\rbraces{\sigma_{j\ell_n}N, \sigma_{k\ell_n}N'}}}\right\rVert_2\\
& \cdot \left\lVert\frac{1}{b_n-1} \sum_{\substack{1\leq k\leq b_n\\k\neq j}} \rbraces{\E{h\rbraces{\sigma_{j\ell_n}T_{n1}, \sigma_{k\ell_n}T_{n2}}\middle|T_{n1}}-\theta_{jk}^{(n)}}\right.\\
& \left. -  \frac{1}{b_n-1} \sum_{\substack{1\leq k\leq b_n\\k\neq j}}\rbraces{\E{h\rbraces{\sigma_{j\ell_n}N,\sigma_{k\ell_n}N'}\middle|N}-\E{h\rbraces{\sigma_{j\ell_n}N, \sigma_{k\ell_n}N'}}}\right\rVert_2.
\end{align*}
We verified above that the second $L^2$-distance can be bounded independently of $j$ and converges towards zero, such that $\abs{\tilde{\psi}_n^2-\psi_n^2}\rightarrow 0$ if the first $L^2$-distance is bounded independent of $j$, which holds due to 
\begin{align*}
& \left\lVert\frac{1}{b_n-1} \sum_{\substack{1\leq k\leq b_n\\k\neq j}} \rbraces{\E{h\rbraces{\sigma_{j\ell_n}T_{n1}, \sigma_{k\ell_n}T_{n2}}\middle|T_{n1}}-\theta_{jk}^{(n)}}\right.\\
& \left. + \frac{1}{b_n-1} \sum_{\substack{1\leq k\leq b_n\\k\neq j}}\rbraces{\E{h\rbraces{\sigma_{j\ell_n}N,\sigma_{k\ell_n}N'}\middle|N}-\E{h\rbraces{\sigma_{j\ell_n}N, \sigma_{k\ell_n}N'}}}\right\rVert_2\\
\leq & \frac{1}{b_n-1} \sum_{\substack{1\leq k\leq b_n\\k\neq j}} \left(\norm{\E{\abs{\sigma_{j\ell_n}T_{n1}-\sigma_{k\ell_n}T_{n2}}\middle|T_{n1}}}_2 + \E{\abs{\sigma_{j\ell_n}T_{n1}-\sigma_{k\ell_n}T_{n2}}} \right.\\
 & \left. + \norm{\E{\abs{\sigma_{j\ell_n}N-\sigma_{k\ell_n}N'}\middle|N}}_2+  \E{\abs{\sigma_{j\ell_n}N-\sigma_{k\ell_n}N'}} \right)\\
 \leq & 4 \sigma_{\sup} \rbraces{\norm{T_{n1}}_2+\norm{N}_2}\leq 4 \sigma_{\sup} \rbraces{\sup_{n\in\N}\norm{T_{n1}}_2+\norm{N}_2}  \leq C.
\end{align*}
In the next step, we will prove that $\abs{\psi_n^2-\psi^2}\rightarrow 0$, where 
$$\psi^2:=4 \int_0^1 \E{\abs{\int_0^1 \E{\abs{\sigma(x)N-\sigma(y)N'}|N}-\E{\abs{\sigma(x)N-\sigma(y)N'}}\mathrm{d}y }^2} \mathrm{d}x$$
It holds
\begin{align*}
& \abs{\psi_n^2-\psi^2}\\
\leq & 4\left|\frac{1}{b_n} \sum_{j=1}^{b_n}\E{\abs{\frac{1}{b_n-1} \sum_{\substack{1\leq k\leq b_n\\k\neq j}} \rbraces{\E{\abs{\sigma_{j\ell_n}N-\sigma_{k\ell_n}N'}\middle|N}-\E{\abs{\sigma_{j\ell_n}N-\sigma_{k\ell_n}N'}}}}^2}\right.\\
& - \left. \frac{1}{b_n} \sum_{j=1}^{b_n}\E{\abs{\frac{1}{b_n} \sum_{\substack{1\leq k\leq b_n}} \rbraces{\E{\abs{\sigma_{j\ell_n}N-\sigma_{k\ell_n}N'}\middle|N}-\E{\abs{\sigma_{j\ell_n}N-\sigma_{k\ell_n}N'}}}}^2}  \right|\\
+& 4 \left|\frac{1}{b_n} \sum_{j=1}^{b_n}\E{\abs{\frac{1}{b_n} \sum_{\substack{1\leq k\leq b_n}} \rbraces{\E{\abs{\sigma_{j\ell_n}N-\sigma_{k\ell_n}N'}\middle|N}-\E{\abs{\sigma_{j\ell_n}N-\sigma_{k\ell_n}N'}}}}^2}\right.\\
& - \left. \int_0^1 \E{\abs{\int_0^1 \E{\abs{\sigma(x)N-\sigma(y)N'}|N}-\E{\abs{\sigma(x)N-\sigma(y)N'}}\mathrm{d}y }^2} \mathrm{d}x \right|.
\end{align*}
The first difference is of order $O(1/b_n)$. Defining $I_j:=(\frac{(j-1)\ell_n}{n}, \frac{j\ell_n}{n}]$ for $1\leq j\leq b_n$, the  second difference can be rewritten as
\begin{align*}
 & \left|\sum_{j=1}^{b_n}\int_{I_j}\E{\abs{ \sum_{\substack{1\leq k\leq b_n}} \int_{I_k} \E{\abs{\sigma_{j\ell_n}N-\sigma_{k\ell_n}N'}\middle|N}-\E{\abs{\sigma_{j\ell_n}N-\sigma_{k\ell_n}N'}}\mathrm{d}y}^2} \mathrm{d}x\right.\\
& - \left. \sum_{j=1}^{b_n}\int_{I_j} \E{\abs{ \sum_{\substack{1\leq k\leq b_n}} \int_{I_k}  \E{\abs{\sigma(x)N-\sigma(y)N'}|N}-\E{\abs{\sigma(x)N-\sigma(y)N'}}\mathrm{d}y }^2} \mathrm{d}x \right|\\
\leq & \sum_{j=1}^{b_n}\int_{I_j}\mathbb{E}\left. \bigg (\left.\bigg | \sum_{\substack{1\leq k\leq b_n}} \int_{I_k} \E{\abs{\sigma_{j\ell_n}N-\sigma_{k\ell_n}N'}\middle|N}- \E{\abs{\sigma(x)N-\sigma(y)N'}|N} \right. \right.\\
& \left. \left.+\E{\abs{\sigma(x)N-\sigma(y)N'}}-\E{\abs{\sigma_{j\ell_n}N-\sigma_{k\ell_n}N'}}\mathrm{d}y \right.\bigg | \right.  \\
&\left. \cdot \left. \bigg | \sum_{\substack{1\leq k\leq b_n}} \int_{I_k} \E{\abs{\sigma_{j\ell_n}N-\sigma_{k\ell_n}N'}\middle|N}+ \E{\abs{\sigma(x)N-\sigma(y)N'}|N} \right.\right.\\
& \left.\left.  -\E{\abs{\sigma(x)N-\sigma(y)N'}}-\E{\abs{\sigma_{j\ell_n}N-\sigma_{k\ell_n}N'}}\mathrm{d}y \right. \bigg |  \right. \bigg)\mathrm{d}x\\
\leq  & \sum_{j=1}^{b_n}\int_{I_j}\left. \bigg (\mathbb{E}\left.  \bigg (\left.  \bigg| \sum_{\substack{1\leq k\leq b_n}} \int_{I_k} \E{\abs{\sigma_{j\ell_n}N-\sigma_{k\ell_n}N'}\middle|N}- \E{\abs{\sigma(x)N-\sigma(y)N'}|N} \right. \right. \right.\\
& \left. \left.\left.+\E{\abs{\sigma(x)N-\sigma(y)N'}}-\E{\abs{\sigma_{j\ell_n}N- \sigma_{k\ell_n}N'}}\mathrm{d}y \right.  \bigg|^2 \right.  \bigg)\right.  \bigg)^{1/2}  \\
& \cdot \left.  \bigg(\mathbb{E}\left.  \bigg(\left.  \bigg| \sum_{\substack{1\leq k\leq b_n}} \int_{I_k} \E{\abs{\sigma_{j\ell_n}N-\sigma_{k\ell_n}N'}\middle|N}+ \E{\abs{\sigma(x)N-\sigma(y)N'}|N} \right.\right.\right.\\
& \left.\left.\left.  -\E{\abs{\sigma(x)N-\sigma(y)N'}}-\E{\abs{\sigma_{j\ell_n}N- \sigma_{k\ell_n}N'}}\mathrm{d}y \right.  \bigg|^2 \right.  \bigg)\right.  \bigg)^{1/2}\mathrm{d}x.
\end{align*}
We will consider the two expected values above separately. For the first one, we obtain by splitting the variance function into the Lipschitz-continuous and the jump part that
\begin{align*}
& \left.  \bigg \lVert \sum_{\substack{1\leq k\leq b_n}} \int_{I_k} \E{\abs{\sigma_{j\ell_n}N-\sigma_{k\ell_n}N'}\middle|N}- \E{\abs{\sigma(x)N-\sigma(y)N'}|N} \right.\\
 & \left. +\E{\abs{\sigma(x)N-\sigma(y)N'}}-\E{\abs{\sigma_{j\ell_n}N- \sigma_{k\ell_n}N'}}\mathrm{d}y\right.\bigg \rVert _2\\
 \leq &  \sum_{\substack{1\leq k\leq b_n}} \left\lVert\int_{I_k} \E{\abs{\sigma_{j\ell_n}N-\sigma_{k\ell_n}N'}\middle|N}- \E{\abs{\sigma(x)N-\sigma(y)N'}|N}\mathrm{d}y \right\rVert _2\\
 & +  \sum_{\substack{1\leq k\leq b_n}} \left\lVert\int_{I_k}\E{\abs{\sigma(x)N-\sigma(y)N'}}-\E{\abs{\sigma_{j\ell_n}N- \sigma_{k\ell_n}N'}}\mathrm{d}y\right\rVert _2\\
 \leq &  \sum_{\substack{1\leq k\leq b_n}} \left\lVert\int_{I_k} \abs{\sigma_{j\ell_n}-\sigma(x)} \cdot\abs{N}+ \abs{\sigma_{k\ell_n}-\sigma(y)}\E{\abs{N}}\mathrm{d}y \right\rVert _2\\
 & +  \sum_{\substack{1\leq k\leq b_n}} \left|\int_{I_k} \rbraces{\abs{\sigma_{j\ell_n}-\sigma(x)}+\abs{\sigma_{k\ell_n}-\sigma(y)}} \E{\abs{N}}\mathrm{d}y\right|\\
  \leq & \abs{\sigma_{j\ell_n}-\sigma(x)}\rbraces{\norm{N}_2+\E{\abs{N}}}+2\E{\abs{N}}\sum_{\substack{1\leq k\leq b_n}} \int_{I_k}\abs{\sigma_{k\ell_n}-\sigma(y)} \mathrm{d}y \\
 \leq & \frac{1}{b_n}\rbraces{\norm{N}_2+3\E{\abs{N}}}+  \abs{\sigma_{J,j\ell_n}-\sigma_J(x)}\rbraces{\norm{N}_2+\E{\abs{N}}}+2\E{\abs{N}} \frac{k_\sigma \Delta_\sigma}{b_n} 
.\end{align*}
For the second expectation, we similarly obtain 
\begin{align*}
& \left.  \bigg \lVert \sum_{\substack{1\leq k\leq b_n}} \int_{I_k} \E{\abs{\sigma_{j\ell_n}N-\sigma_{k\ell_n}N'}\middle|N}+ \E{\abs{\sigma(x)N-\sigma(y)N'}|N} \right.\\
& \left.  -\E{\abs{\sigma(x)N-\sigma(y)N'}}-\E{\abs{\sigma_{j\ell_n}N- \sigma_{k\ell_n}N'}}\mathrm{d}y \right.  \bigg \rVert_2\\
 \leq & \sum_{\substack{1\leq k\leq b_n}} \left\lVert\frac{2\sigma_{\sup}}{b_n}\rbraces{\abs{N}+\E{\abs{N}}} \right\rVert _2 +  \sum_{\substack{1\leq k\leq b_n}}\frac{4\sigma_{\sup}}{b_n} \E{\abs{N}}\\
\leq & \rbraces{2\sigma_{\sup}\E{N^2}^{1/2}+ 6\sigma_{\sup}\E{\abs{N}}} \leq C.
\end{align*}
Thus, 
\begin{align*}
& \abs{\psi_n^2-\psi^2} \\
\leq & O\rbraces{\frac{1}{b_n}}+ 4\sum_{j=1}^{b_n} \int_{I_j}  \rbraces{\frac{\norm{N}_2+3\E{\abs{N}}+2\E{\abs{N}}k_\sigma \Delta_\sigma}{b_n}+  \abs{\sigma_{J,j\ell_n}-\sigma_J(x)}\rbraces{\norm{N}_2+\E{\abs{N}}}} \cdot C\mathrm{d}x \\
\leq & O\rbraces{\frac{1}{b_n}}+ C\frac{(C+k_\sigma\Delta_\sigma) \cdot \rbraces{\norm{N}_2+\E{\abs{N}}}}{b_n}=O\rbraces{\frac{1}{b_n}}.
\end{align*}
Since $\psi^2>0$ as long as $\sigma$ is not almost surely equal to zero and since $\tilde{\psi}_n\rightarrow \psi$, we have $\tilde{\psi}_n>\psi/2$ for $n$ large enough, which proves the boundedness of the variance term $ \rbraces{\frac{4}{b_n} \sum_{k=1}^{b_n}\Var{\overline{h_{1,k}^{(n)}}\rbraces{\frac{\sqrt{\ell_n}}{\kappa_Y}\sigma_{k\ell_n}\tilde{S}_{nk}'} }}^{-(1+\eta/2)} $.
Hence, we can apply Lyapunov's central limit theorem and obtain
$$\frac{2}{\sqrt{b_n}} \sum_{1\leq j\leq b_n} \overline{h_{1,j}^{(n)}}\rbraces{\frac{\sqrt{\ell_n}}{\kappa_Y}\sigma_{j\ell_n}\tilde{S}_{nj}'} =  \sum_{1\leq j\leq b_n}  Y_{nj} \cdot \tilde{\psi}_n \distConv \NoD{0}{\psi^2}.$$

Turning towards the degenerate term, since $\E{h_{2,jk}^{(n)}\rbraces{\frac{\sqrt{\ell_n}}{\kappa_Y}\sigma_{j\ell_n}\tilde{S}_{nj}',\frac{\sqrt{\ell_n}}{\kappa_Y}\sigma_{k\ell_n}\tilde{S}_{nk}'}}=0$, it remains to prove that its variance converges towards zero. Due to the summands being pairwise uncorrelated, the variance of the degenerate part is given by
\begin{align*}
\frac{1}{b_n(b_n-1)^2}   \sum_{1\leq j\neq k\leq b_n}\Var{h_{2,jk}^{(n)}\rbraces{\frac{\sqrt{\ell_n}}{\kappa_Y}\sigma_{j\ell_n}\tilde{S}_{nj}',\frac{\sqrt{\ell_n}}{\kappa_Y}\sigma_{k\ell_n}\tilde{S}_{nk}'}}=O\rbraces{\frac{1}{b_n-1}}
\end{align*}
since each summand can be bounded by
{\allowdisplaybreaks
\begin{align*}
&\Var{ h_{2,jk}^{(n)}\rbraces{\frac{\sqrt{\ell_n}}{\kappa_Y}\sigma_{j\ell_n}\tilde{S}_{nj}',\frac{\sqrt{\ell_n}}{\kappa_Y}\sigma_{k\ell_n}\tilde{S}_{nk}'}}
=  \E{h_{2,jk}^{(n)}\rbraces{\frac{\sqrt{\ell_n}}{\kappa_Y}\sigma_{j\ell_n}\tilde{S}_{nj}',\frac{\sqrt{\ell_n}}{\kappa_Y}\sigma_{k\ell_n}\tilde{S}_{nk}'}^2}\\
= & \E{ \rbraces{h\rbraces{\frac{\sqrt{\ell_n}}{\kappa_Y}\sigma_{j\ell_n}\tilde{S}_{nj}',\frac{\sqrt{\ell_n}}{\kappa_Y}\sigma_{k\ell_n}\tilde{S}_{nk}'}-\theta_{jk}^{(n)}-h_{1,jk}^{(n)}\rbraces{\frac{\sqrt{\ell_n}}{\kappa_Y}\sigma_{j\ell_n}\tilde{S}_{nj}'}-h_{1,kj}^{(n)}\rbraces{\frac{\sqrt{\ell_n}}{\kappa_Y}\sigma_{k\ell_n}\tilde{S}_{nk}'} }^2}\\
= & \E{h\rbraces{\frac{\sqrt{\ell_n}}{\kappa_Y}\sigma_{j\ell_n}\tilde{S}_{nj}',\frac{\sqrt{\ell_n}}{\kappa_Y}\sigma_{k\ell_n}\tilde{S}_{nk}'}^2}- \rbraces{\theta_{jk}^{(n)}}^2\\
 & - 2\E{h\rbraces{\frac{\sqrt{\ell_n}}{\kappa_Y}\sigma_{j\ell_n}\tilde{S}_{nj}',\frac{\sqrt{\ell_n}}{\kappa_Y}\sigma_{k\ell_n}\tilde{S}_{nk}'} h_{1,jk}^{(n)}\rbraces{\frac{\sqrt{\ell_n}}{\kappa_Y}\sigma_{j\ell_n}\tilde{S}_{nj}'}} +  \E{h_{1,jk}^{(n)}\rbraces{\frac{\sqrt{\ell_n}}{\kappa_Y}\sigma_{j\ell_n}\tilde{S}_{nj}'}^2}\\
& - 2\E{h\rbraces{\frac{\sqrt{\ell_n}}{\kappa_Y}\sigma_{j\ell_n}\tilde{S}_{nj}',\frac{\sqrt{\ell_n}}{\kappa_Y}\sigma_{k\ell_n}\tilde{S}_{nk}'} h_{1,kj}^{(n)}\rbraces{\frac{\sqrt{\ell_n}}{\kappa_Y}\sigma_{k\ell_n}\tilde{S}_{nk}'}}+  \E{h_{1,kj}^{(n)}\rbraces{\frac{\sqrt{\ell_n}}{\kappa_Y}\sigma_{k\ell_n}\tilde{S}_{nk}'}^2} \\
= &  \E{h\rbraces{\frac{\sqrt{\ell_n}}{\kappa_Y}\sigma_{j\ell_n}\tilde{S}_{nj}',\frac{\sqrt{\ell_n}}{\kappa_Y}\sigma_{k\ell_n}\tilde{S}_{nk}'}^2}- \rbraces{\theta_{jk}^{(n)}}^2\\
& -\E{h_{1,jk}^{(n)}\rbraces{\frac{\sqrt{\ell_n}}{\kappa_Y}\sigma_{j\ell_n}\tilde{S}_{nj}'}^2}-  \E{h_{1,kj}^{(n)}\rbraces{\frac{\sqrt{\ell_n}}{\kappa_Y}\sigma_{k\ell_n}\tilde{S}_{nk}'}^2}  \\
\leq & \E{h\rbraces{\frac{\sqrt{\ell_n}}{\kappa_Y}\sigma_{j\ell_n}\tilde{S}_{nj}',\frac{\sqrt{\ell_n}}{\kappa_Y}\sigma_{k\ell_n}\tilde{S}_{nk}'}^2}
= \E{\abs{\frac{\sqrt{\ell_n}}{\kappa_Y}\sigma_{j\ell_n}\tilde{S}_{nj}'-\frac{\sqrt{\ell_n}}{\kappa_Y}\sigma_{k\ell_n}\tilde{S}_{nk}'}^2}\\
\leq & 4 \sigma_{\sup}^2 \E{\rbraces{\frac{\sqrt{\ell_n}}{\kappa_Y}\tilde{S}_{n1}'}^2}\leq C.
\end{align*}}

\end{proof}

\subsubsection{Replacing the centring term}

\begin{proof}[Proof of Proposition \ref{Prop: Replacing centring term}]
We split the difference up via
\begin{align}
\label{eq: Prop centring term, main ineq}
& \sqrt{b_n}\abs{\E{\frac{\sqrt{\ell_n}}{\kappa_Y}U_2(n)}-  \int_0^1 \int_0^1 \sqrt{\sigma^2(x)+\sigma^2(y)}\mathrm{d}x\mathrm{d}y \cdot \E{\abs{Z}}}\\
\nonumber \leq & \sqrt{b_n}\abs{\E{\frac{\sqrt{\ell_n}}{\kappa_Y}U_2(n)}- \frac{1}{b_n(b_n-1)}\sum_{1\leq j \neq k\leq b_n}  \sqrt{\sigma_{j\ell_n}^2+\sigma_{k\ell_n}^2}\E{\abs{Z}} } \\
\nonumber & + \sqrt{b_n}\abs{\frac{1}{b_n(b_n-1)}\sum_{1\leq j \neq k\leq b_n}  \sqrt{\sigma_{j\ell_n}^2+\sigma_{k\ell_n}^2}\E{\abs{Z}} - \int_0^1 \int_0^1 \sqrt{\sigma^2(x)+\sigma^2(y)}\mathrm{d}x\mathrm{d}y \cdot \E{\abs{Z}}}
\end{align}
for an independent standard normal random variable $Z$. First, consider the second expression in \eqref{eq: Prop centring term, main ineq}. We need to prove that 
\begin{align*}
& \sqrt{b_n}\abs{\frac{1}{b_n(b_n-1)}\sum_{1\leq j \neq k\leq b_n}  \sqrt{\sigma_{k\ell_n}^2+\sigma_{j\ell_n}^2} - \int_0^1 \int_0^1 \sqrt{\sigma^2(x)+\sigma^2(y)}\mathrm{d}x\mathrm{d}y} \\
\leq & \sqrt{b_n}\abs{\frac{b_n}{(b_n-1)}\sum_{1\leq j \leq b_n}\sum_{1\leq k \leq b_n} \frac{1}{b_n^2} \sqrt{\sigma^2\rbraces{\frac{k}{b_n}}+\sigma^2\rbraces{\frac{j}{b_n}}} - \int_0^1 \int_0^1 \sqrt{\sigma^2(x)+\sigma^2(y)}\mathrm{d}x\mathrm{d}y} \\
 & + \abs{\frac{\sqrt{b_n}}{b_n(b_n-1)}\sum_{1\leq j \leq b_n} \sqrt{2\sigma^2\rbraces{\frac{j}{b_n}}}}\longrightarrow 0.
\end{align*}
The second expression above converges towards zero since $\sigma$ is bounded. Turning towards the first expression, define the function $g:[0,1]\times[0,1]\rightarrow \R_{\geq 0}$ by $g(x,y):=\sqrt{\sigma^2(x)+\sigma^2(y)}$ and note that it is bounded from below by $\sqrt{2}\sigma_0$ and from above by $\sqrt{2}\sigma_{\sup}$. Since $\sigma$ is bounded (and, most importantly, bounded away from zero), it holds 
\begin{align*}
& \abs{g(x_1,y_1)-g(x_2,y_2)} = \abs{\sqrt{\sigma^2(x_1)+\sigma^2(y_1)}- \sqrt{\sigma^2(x_2)+\sigma^2(y_2)}}\\
\leq & C_1 \abs{\rbraces{\sigma^2(x_1)+\sigma^2(y_1)}-\rbraces{\sigma^2(x_2)+\sigma^2(y_2)}}\\
\leq & C_1 \rbraces{\abs{\rbraces{\sigma_L^2(x_1)+\sigma_L^2(y_1)}-\rbraces{\sigma_L^2(x_2)+\sigma_L^2(y_2)}}+\abs{\rbraces{\sigma_J^2(x_1)+\sigma_J^2(y_1)}-\rbraces{\sigma_J^2(x_2)+\sigma_J^2(y_2)}}}
\end{align*}
and $g$ is Riemann-integrable. Moreover, the difference between the double Riemann-sum and -integral is of order  
\begin{align*}
& \sqrt{b_n}\abs{\sum_{1\leq j \leq b_n}\sum_{1\leq k \leq b_n} \frac{1}{b_n^2} \sqrt{\sigma^2\rbraces{\frac{k}{b_n}}+\sigma^2\rbraces{\frac{j}{b_n}}} - \int_0^1 \int_0^1 \sqrt{\sigma^2(x)+\sigma^2(y)}\mathrm{d}x\mathrm{d}y} \\
= & \sqrt{b_n}\abs{\sum_{1\leq j \leq b_n}\sum_{1\leq k \leq b_n}\rbraces{ \int_{I_j}\int_{I_k} \sqrt{\sigma^2\rbraces{\frac{k}{b_n}}+\sigma^2\rbraces{\frac{j}{b_n}}}\mathrm{d}x\mathrm{d}y -  \int_{I_j}\int_{I_k} \sqrt{\sigma^2(x)+\sigma^2(y)}\mathrm{d}x\mathrm{d}y}} \\
\leq &  \sqrt{b_n}\sum_{1\leq j \leq b_n}\sum_{1\leq k \leq b_n}\int_{I_j}\int_{I_k} \abs{ \sqrt{\sigma^2\rbraces{\frac{k}{b_n}}+\sigma^2\rbraces{\frac{j}{b_n}}}-\sqrt{\sigma^2(x)+\sigma^2(y)}}\mathrm{d}x\mathrm{d}y \\
\leq &  2C_1\sigma_{\sup} \sqrt{b_n}\sum_{1\leq j \leq b_n}\sum_{1\leq k \leq b_n}\int_{I_j}\int_{I_k} \rbraces{\abs{\frac{k}{b_n}-x}+\abs{\frac{j}{b_n}-y}}\mathrm{d}x\mathrm{d}y\\
& +C_1\sqrt{b_n}\sum_{1\leq j \leq b_n}\sum_{1\leq k \leq b_n}\int_{I_j}\int_{I_k} \rbraces{\abs{\sigma^2_J\rbraces{\frac{k}{b_n}}-\sigma^2_J(x)}+\abs{\sigma^2_J\rbraces{\frac{j}{b_n}}-\sigma^2_J(y)}}\mathrm{d}x\mathrm{d}y\\
\leq & 4 C_1\sigma_{\sup}\frac{\sqrt{b_n}}{b_n}\sum_{1\leq j \leq b_n}\sum_{1\leq k \leq b_n}\int_{I_j}\int_{I_k} 1\mathrm{d}x\mathrm{d}y +4C_1\sqrt{b_n}\frac{k_\sigma\Delta_\sigma \sigma_{\sup}}{b_n}
= O\rbraces{\frac{1}{\sqrt{b_n}}}
\end{align*}
for each $n\in \N$, where $I_j:=((j-1)\ell_n/n, j\ell_n/n]$ for $1\leq j\leq b_n$, and where we omitted the asymptotically negligible factor $b_n/(b_n-1)$. We have thus shown convergence of the second expression in \eqref{eq: Prop centring term, main ineq}, and we now turn towards the first.

 We will first prove that we can replace $\kappa_Y$ by the sample-size dependent 
$$\kappa_{Y,n}:=\Var{\frac{1}{\sqrt{\ell_n}}\sum_{i=1}^{\ell_n-m_n}Y_i},$$
for which we need to check 
\begin{align*}
\sqrt{b_n}\abs{\E{\frac{\sqrt{\ell_n}}{\kappa_Y}U_2(n)}-\E{\frac{\sqrt{\ell_n}}{\kappa_{Y,n}}U_2(n)}}
\leq C \sqrt{b_n}\frac{\abs{\kappa_{Y,n}-\kappa_Y}}{\kappa_Y \kappa_{Y,n}}\longrightarrow 0.
\end{align*}
Since we assumed $\kappa_Y>0$ and we have $\kappa_{Y,n}\rightarrow \kappa_Y$ and thus $\kappa_{Y,n}>\kappa_Y/2$ for $n$ large enough, it suffices to show $\sqrt{b_n}\abs{\kappa_{Y,n}-\kappa_Y}\rightarrow 0$. This can be done analogously to the proof of Proposition A.1 in the supplement to \citep{Schmidt.2021}, given the existence of a $\rho>1$ and a $1\geq \delta>0$ such that $\E{\abs{Y_1}^{2+\delta}}<\infty$ and $\alpha_Y(k)\leq Ck^{-\rho (2+\delta)(1+\delta)/\delta^2}$ and given $\ell_n=n^s$ with $s>0.5$. 

Moreover, 
\begin{align*}
& \sqrt{b_n} \abs{\E{\frac{\sqrt{\ell_n}}{\kappa_{Y,n}}U_2(n)}- \frac{1}{b_n(b_n-1)}\sum_{1\leq j \neq k\leq b_n}  \sqrt{\sigma_{j\ell_n}^2+\sigma_{k\ell_n}^2}\E{\abs{Z}}}\\
 \leq &\frac{ \sqrt{b_n} }{b_n(b_n-1)} \sum_{1\leq j\neq k\leq b_n} \sqrt{\sigma_{j\ell_n}^2+\sigma_{k\ell_n}^2}\abs{\E{ \frac{1}{\sqrt{\sigma_{j\ell_n}^2+\sigma_{k\ell_n}^2}} \frac{\sqrt{\ell_n}}{\kappa_{Y,n}}\abs{\sigma_{j\ell_n}\tilde{S}_{nj}'-\sigma_{k\ell_n}\tilde{S}_{nk}'}}-\E{\abs{Z}}}\\
\leq &  \frac{\sqrt{2}\sigma_{\sup}  \sqrt{b_n} }{b_n(b_n-1)} \sum_{1\leq j\neq k\leq b_n} \abs{\E{\frac{1}{\sqrt{\sigma_{j\ell_n}^2+\sigma_{k\ell_n}^2}} \frac{\sqrt{\ell_n}}{\kappa_{Y,n}}\abs{\sigma_{j\ell_n}\tilde{S}_{n1}'-\sigma_{k\ell_n}\tilde{S}_{n2}'}}-\E{\abs{Z}}}.
\end{align*}
We will bound the above absolute difference by a term of order $o\rbraces{\frac{1}{\sqrt{b_n}}}$, independent of $j$ and $k$. 
Recall that the coupled random variables  $\tilde{S}_{n1}'$ and $\tilde{S}_{n2}'$ are independent with the same distribution as $\tilde{S}_{n1}$. Taking two independent blocks $(Y_1', \ldots, Y_{\ell_n-m_n}')$ and $(Y_{\ell_n+1}', \ldots, Y_{2\ell_n-m_n}')$ with the same distribution as a block of length $\ell_n-m_n$ from the original stationary time series $(Y_i)_{i\in\N}$, we can set 
\begin{align*}
\tilde{S}_{n1}'=\frac{1}{\ell_n}\sum_{i=1}^{\ell_n-m_n} Y_i' \quad \text{and} \quad \tilde{S}_{n2}'=\frac{1}{\ell_n}\sum_{i=\ell_n+1}^{2\ell_n-m_n} Y_i' 
\end{align*}
in the expectation above.

Define the cumulative distribution functions
$$ F_n(x):= F_{n,jk}(x):= \Pb\rbraces{\frac{1}{\sqrt{\sigma_{j\ell_n}^2+\sigma_{k\ell_n}^2}} \frac{\sqrt{\ell_n}}{\kappa_{Y,n}}\rbraces{\sigma_{j\ell_n}\tilde{S}_{n1}'-\sigma_{k\ell_n}\tilde{S}_{n2}'}\leq x} \quad \text{and} \quad \Phi(x):=\Pb\rbraces{Z\leq x}$$
and denote their maximal difference by
$$\Delta_n:=\sup_x\abs{F_n(x)-\Phi(x)}.$$
To obtain a bound for the difference $\abs{\E{\frac{1}{\sqrt{\sigma_{j\ell_n}^2+\sigma_{k\ell_n}^2}} \frac{\sqrt{\ell_n}}{\kappa_{Y,n}}\abs{\sigma_{j\ell_n}\tilde{S}_{n1}'-\sigma_{k\ell_n}\tilde{S}_{n2}'}}-\E{\abs{Z}}}$, we first find an upper bound for $\Delta_n$. Note that
$$\frac{1}{\sqrt{\sigma_{j\ell_n}^2+\sigma_{k\ell_n}^2}} \frac{\sqrt{\ell_n}}{\kappa_{Y,n}}\rbraces{\sigma_{j\ell_n}\tilde{S}_{n1}'-\sigma_{k\ell_n}\tilde{S}_{n2}'}= \frac{1}{\sqrt{\sigma_{j\ell_n}^2+\sigma_{k\ell_n}^2}} \frac{1}{\kappa_{Y,n}\sqrt{\ell_n}} \sum_{i=1}^{\ell_n-m_n} \rbraces{Y_i'\sigma_{j\ell_n}- Y_{i+\ell_n}' \sigma_{k\ell_n}}$$
is centred with variance one and that $(Y_i'\sigma_{j\ell_n}-Y_{i+\ell_n}'\sigma_{k\ell_n})$, $1\leq i\leq \ell_n$ is again $\alpha$-mixing with coefficients smaller than or equal to $2\alpha_Y(k)\leq Ck^{-\rho(2+\delta)(1+\delta)/\delta^2}$ (see, Theorem 1, Chapter 1.1, in Doukhan \citep{Doukhan.1994}), and has finite $(2+\delta)$-moments. According to Theorem 1 in Tikhomirov \citep{Tikhomirov.1980}, there now exists a constant $C_1$ depending solely on $\rho$ and $\delta$ (and thus being independent of $j$ and $k$)
such that 
$$\Delta_n\leq C_1 (\ell_n-m_n)^{-(\delta/2)(\rho-1)/(\rho+1)}.$$
In particular, for $n$ large enough, it holds $\Delta_n\leq 1/\sqrt{e}$ and Theorem 9, Chapter V, in Petrov \citep{Petrov.1975} yields for all $x\in \R$
$$\abs{F_n(x)-\Phi(x)}\leq \frac{C_2\Delta_n \log(1/\Delta_n)}{1+x^2}$$
for a constant $C_2$ independent of $F_n$. Together this yields 
 $$\abs{F_n(x)-\Phi(x)}\leq \frac{C(\ell_n-m_n)^{-(\delta/2)(\rho-1)/(\rho+1)} \log(C (\ell_n-m_n))}{1+x^2}.$$
By symmetry, one furthermore obtains 
 $$\abs{\tilde{F}_n(x)-\tilde{\Phi}(x)}\leq 2 \frac{C(\ell_n-m_n)^{-(\delta/2)(\rho-1)/(\rho+1)} \log(C (\ell_n-m_n))}{1+x^2},$$
 where
 $$ \tilde{F}_n(x):= \Pb\rbraces{\frac{1}{\sqrt{\sigma_{j\ell_n}^2+\sigma_{k\ell_n}^2}} \frac{\sqrt{\ell_n}}{\kappa_{Y,n}}\abs{\sigma_{j\ell_n}\tilde{S}_{n1}'-\sigma_{k\ell_n}\tilde{S}_{n2}'}\leq x} \quad \text{and} \quad \tilde{\Phi}(x):=\Pb\rbraces{\abs{Z}\leq x}.$$
 Hence, 
 \begin{align*}
& \sqrt{b_n} \abs{\E{\frac{1}{\sqrt{\sigma_{j\ell_n}^2+\sigma_{k\ell_n}^2}} \frac{\sqrt{\ell_n}}{\kappa_{Y,n}}\abs{\sigma_{j\ell_n}\tilde{S}_{n1}'-\sigma_{k\ell_n}\tilde{S}_{n2}'}}-\E{\abs{Z}}}
 \leq  \sqrt{b_n} \abs{\int_0^\infty (\tilde{F}_n(x)-\tilde{\Phi}(x) ) \mathrm{d}x}\\
 \leq &  \sqrt{b_n}\int_0^\infty \abs{\tilde{F}_n(x)-\tilde{\Phi}(x) } \mathrm{d}x
 \leq  \sqrt{b_n}   \int_0^\infty  2 \frac{C(\ell_n-m_n)^{-(\delta/2)(\rho-1)/(\rho+1)} \log(C (\ell_n-m_n))}{1+x^2} \mathrm{d}x\\
 \leq & C \sqrt{b_n}(\ell_n-m_n)^{-(\delta/2)(\rho-1)/(\rho+1)} \log(C (\ell_n-m_n)).
 \end{align*}
 Since this upper bound is independent of $j$ and $k$, we overall obtain 
\begin{align*}
& \frac{\sqrt{b_n} }{b_n(b_n-1)} \sum_{1\leq j\neq k\leq b_n} \abs{\E{\frac{1}{\sqrt{\sigma_{j\ell_n}^2+\sigma_{k\ell_n}^2}} \frac{\sqrt{\ell_n}}{\kappa_{Y,n}}\abs{\sigma_{j\ell_n}\tilde{S}_{n1}'-\sigma_{k\ell_n}\tilde{S}_{n2}'}}-\E{\abs{Z}}}\\
\leq & C \sqrt{b_n}(\ell_n-m_n)^{-(\delta/2)(\rho-1)/(\rho+1)} \log(C (\ell_n-m_n)),
\end{align*}
which converges towards zero for $s>\rbraces{1+\delta\frac{\rho-1}{\rho+1}}^{-1} $ and $m_n=o(\ell_n)$.
 \end{proof}

\subsection{Estimation of the nuisance parameters}
\subsubsection{Estimation of the long run variance}
We will start by examining the limit behaviour of the estimator 
\begin{equation*}
\hat{\kappa}_{\tilde{X}}(n)
=\sqrt{\frac{2c_0}{1+2c_0}}\frac{1}{\bn}\sqrt{\frac{\pi}{2}} \sum_{j=1}^{\bn} \frac{1}{\sqrt{\elln}}\abs{\sum_{i=(j-1)\elln+1}^{j\elln} X_i-\frac{1}{2c_0}\rbraces{\sum_{i=(j-1-c_0)\elln+1}^{(j-1)\elln}X_i+\sum_{i=j\elln+1}^{(j+c_0)\elln}X_i}}.
\end{equation*}
Afterwards, we will use Proposition \ref{Prop: Consistency alpha-mix} and Corollary \ref{Cor: Convergence speed LRV esti} below to prove Proposition \ref{Prop: convergence estimator kappahat kappaY under H}. 
\begin{proposition}
\label{Prop: Consistency alpha-mix}
 Assume there is  a $0< \delta \leq 1$ and a $\rho>(\delta^2+4\delta)/(2+2\delta)\vee 1$ such that $\mathbb{E}\big({\abs{Y_1}^{2+\delta}}\big)<\infty$ and $\alpha_Y(k)\leq Ck^{-\rho (2+\delta)(1+\delta)/\delta^2}$.  
 Let $\mu$ be either a constant or a piecewise Lipschitz-continuous function.
 If $\mu$ is piecewise Lipschitz-continuous, additionally assume that $q<2/3$ for $\elln=n^q$. 
 Then, 
$$\hat{\kappa}_{\tilde{X}}(n) \overset{L^2}{\longrightarrow} \int_0^1 \sigma(x)\mathrm{d}x\,\kappa_Y.$$
\end{proposition}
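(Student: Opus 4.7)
The plan is to establish $L^2$-convergence by showing separately that $\mathbb{E}[\hat{\kappa}_{\tilde{X}}(n)] \to \int_0^1 \sigma(x)\mathrm{d}x \cdot \kappa_Y$ and $\mathrm{Var}[\hat{\kappa}_{\tilde{X}}(n)] \to 0$. After substituting $X_i = \mu_i + \sigma_i Y_i$, the deterministic $\mu$-contribution to the $j$-th centered block sum is
\[
A_{j,n} := \sum_{i=(j-1)\elln+1}^{j\elln} \mu_i - \frac{1}{2c_0}\bigg(\sum_{i=(j-1-c_0)\elln+1}^{(j-1)\elln}\mu_i + \sum_{i=j\elln+1}^{(j+c_0)\elln}\mu_i\bigg).
\]
If $\mu \equiv \mu_H$, then $A_{j,n} = 0$ identically. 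If $\mu$ is piecewise Lipschitz, then on all but $O(c_0 k_\mu)$ blocks the window $B_j$ and its flanks avoid discontinuities, and the Lipschitz property gives $|A_{j,n}| \leq C c_0 \elln/\bn$. Dividing by $\sqrt{\elln}$, this deterministic residual is $O(c_0 \sqrt{\elln}/\bn) = O(n^{3q/2-1})$, which vanishes precisely under the assumption $q < 2/3$; the finitely many ``bad'' blocks contribute $O(c_0 k_\mu/\bn) = o(1)$ after averaging. An analogous but easier argument, together with $\|\elln^{-1/2}\sum_{i=1}^{\elln} Y_i\|_2 \to \kappa_Y$, shows that replacing $\sigma(i/n)$ by $\sigma(j\elln/n)$ inside each window introduces an $L^2$-negligible error.

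After these reductions, $\hat{\kappa}_{\tilde{X}}(n)$ is asymptotically equivalent in $L^2$ to
\[
\sqrt{\frac{2c_0}{1+2c_0}}\sqrt{\frac{\pi}{2}} \cdot \frac{1}{\bn} \sum_{j=1}^{\bn} \sigma\!\left(\frac{j}{\bn}\right)\cdot |T_{j,n}|, \quad T_{j,n} := \frac{1}{\sqrt{\elln}}\bigg(\sum_{i\in B_j} Y_i - \frac{1}{2c_0}\sum_{i\in F_j} Y_i\bigg),
\]
where $F_j$ collects the $2c_0$ flanking blocks around $B_j$. By stationarity and the standard variance calculation for partial sums of $\alpha$-mixing sequences, $\mathrm{Var}(T_{j,n}) \to \kappa_Y^2(2c_0+1)/(2c_0)$. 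Since $T_{j,n}$ is a weighted partial sum of an $\alpha$-mixing sequence with bounded weights, Tikhomirov's Berry--Esseen bound (as invoked already in the proof of Proposition \ref{Prop: Replacing centring term}) yields uniformly in $j$ a normal approximation $\sup_x |\Pb(T_{j,n}/\sqrt{\mathrm{Var}(T_{j,n})} \leq x) - \Phi(x)| \leq C \elln^{-(\delta/2)(\rho-1)/(\rho+1)}$. Combined with uniform integrability of $|T_{j,n}|$ (via Yokoyama's moment inequality using $\mathbb{E}|Y_1|^{2+\delta}<\infty$), this gives $\mathbb{E}|T_{j,n}| \to \kappa_Y\sqrt{(2c_0+1)/(2c_0)}\cdot\sqrt{2/\pi}$. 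The prefactor $\sqrt{2c_0/(1+2c_0)}\sqrt{\pi/2}$ then cancels exactly against these constants, so a Riemann-sum argument (applicable because piecewise Lipschitz $\sigma$ is Riemann integrable) yields $\mathbb{E}[\hat{\kappa}_{\tilde{X}}(n)] \to \kappa_Y\int_0^1\sigma(x)\mathrm{d}x$.

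For the variance, write $\hat{\kappa}_{\tilde{X}}(n) \approx C_{c_0} \bn^{-1}\sum_j \sigma(j/\bn)|T_{j,n}|$ and bound
\[
\mathrm{Var}[\hat{\kappa}_{\tilde{X}}(n)] \leq \frac{C}{\bn^2}\sum_{j,k=1}^{\bn}|\mathrm{Cov}(|T_{j,n}|,|T_{k,n}|)|.
\]
For $|j-k|\leq 2c_0+1$ the summands may overlap; applying Cauchy--Schwarz with $\|T_{j,n}\|_2^2 = O(1)$ gives a contribution of order $c_0/\bn$. For $|j-k|>2c_0+1$ the random variables $T_{j,n},T_{k,n}$ are measurable with respect to $\sigma$-fields separated by at least $\elln$ time steps, so Davydov's inequality bounds the covariance by $C\alpha_Y(\elln)^{\delta/(2+\delta)}\|T_{j,n}\|_{2+\delta}^2$, which decays polynomially; summing gives $O(\bn \alpha_Y(\elln)^{\delta/(2+\delta)}) \to 0$ under the mixing rate (the sharp condition $\rho>(\delta^2+4\delta)/(2+2\delta)$ precisely ensures this when combined with the normal-approximation error above). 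Hence $\mathrm{Var}[\hat{\kappa}_{\tilde{X}}(n)] \to 0$ and $L^2$-convergence follows.

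The main obstacle will be the quantitative, uniform-in-$j$ CLT for the weighted, non-stationary sum $T_{j,n}$, since the convergence of the expectation must be fast enough to survive averaging over $\bn$ summands without an extra $\sqrt{\bn}$ factor here (unlike in Proposition \ref{Prop: Replacing centring term}). A secondary delicacy is bookkeeping the Lipschitz bias $A_{j,n}$ so that the sharp bound $q<2/3$ emerges naturally rather than a stronger restriction. Everything else reduces to careful variance calculations for $\alpha$-mixing triangular arrays, using Davydov's and Yokoyama's inequalities as standard tools.
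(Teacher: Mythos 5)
Your overall architecture coincides with the paper's: isolate the deterministic $\mu$-bias (which vanishes for constant $\mu$ and is $O(c_0\sqrt{\elln}/\bn)=O(n^{3q/2-1})$ per block for piecewise Lipschitz $\mu$, giving exactly $q<2/3$), freeze $\sigma$ at the block level, then prove convergence of the expectation via a quantitative normal approximation and kill the variance by a mixing argument. The variance bound is fine (the paper uses a Rosenthal-type inequality for the $\alpha$-mixing sequence $(S_j(n))_j$ rather than your Davydov covariance splitting, but both deliver $o(1)$; note only that your count $O(\bn\,\alpha_Y(\elln)^{\delta/(2+\delta)})$ should be $O(\alpha_Y(\elln)^{\delta/(2+\delta)})$ after dividing by $\bn^2$, which removes any hidden constraint on $q$).

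The genuine gap is the step you yourself flag as "the main obstacle": you invoke Tikhomirov's Berry--Esseen theorem directly for $T_{j,n}=\elln^{-1/2}\bigl(\sum_{i\in B_j}Y_i-\tfrac{1}{2c_0}\sum_{i\in F_j}Y_i\bigr)$, but that theorem applies to partial sums of a \emph{strictly stationary} $\alpha$-mixing sequence, and the summands $w_iY_i$ with $w_i=1$ on $B_j$ and $w_i=-1/(2c_0)$ on the flanks are not stationary in $i$. The paper's missing idea is a two-stage reduction: first insert short separation blocks of length $\tilde m_n=o(\elln)$ and use Peligrad's coupling (Theorem 2 in \citep{Peligrad.2002}) to replace the $2c_0+1$ block sums by independent copies at an $L^1$-cost of order $\sqrt{\alpha_Y(\tilde m_n)}$; with independent blocks one can then re-index so that the combined centred sum becomes a partial sum over the central block of the genuinely stationary, $\alpha$-mixing sequence $Y_i'-\tfrac{1}{2c_0}\sum_{k=1}^{c_0}(Y'_{i-k\elln}+Y'_{i+k\elln})$, to which Tikhomirov's theorem (plus Petrov's tail refinement) legitimately applies. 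Without this coupling-and-reindexing device, the claimed uniform normal approximation for $T_{j,n}$ — and hence the convergence $\E{\abs{T_{j,n}}}\to\kappa_Y\sqrt{(2c_0+1)/(2c_0)}\sqrt{2/\pi}$ with the exact cancellation of the prefactor — is not justified. (Uniformity in $j$, which you worry about, is actually free by stationarity of $(Y_i)$; and no rate faster than $o(1)$ is needed here since there is no $\sqrt{b_n}$ in front.)
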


\begin{proof}[Proof of Proposition \ref{Prop: Consistency alpha-mix}.]
We have to show that 
 $$\E{\abs{\hat{\kappa}_{\tilde{X}}(n)-\int_0^1\sigma(x)\mathrm{d}x\,\kappa_Y}^2}\rightarrow 0.$$
We start with some simplifying assumptions. 
For ease of notation, assume that $c_0\in \N$ and ignore the fact that the first and last $c_0$ subsampling blocks are centred by less than $2c_0\elln$ observations, as their influence is asymptotically negligible. 
Define 
$$\hat{\kappa}_{\tilde{Y}}(n):= \sqrt{\frac{2c_0}{2c_0+1}} \frac{1}{\bn}\sqrt{\frac{\pi}{2}}\sum_{j=1}^{\bn} \frac{\sigma_{j\elln}}{\sqrt{\elln}}\abs{\sum_{i=(j-1)\elln+1}^{j\elln} Y_i- \frac{1}{2c_0}\rbraces{\sum_{i=(j-c_0-1)\elln+1}^{(j-1)\elln}Y_i+\sum_{i=j\elln+1}^{(j+c_0)\elln}Y_i}}.$$
In the following, we will show that instead of $\hat{\kappa}_{\tilde{X}}(n)$, we can consider $\hat{\kappa}_{\tilde{Y}}(n)$. It holds
\begin{align*}
& \E{\abs{\hat{\kappa}_{\tilde{X}}(n)-\hat{\kappa}_{\tilde{Y}}(n)}^2}\\
\leq & C\frac{\elln}{n^2}\bigg(\sum_{j=1}^{\bn} \bigg |\sum_{i=(j-1)\elln+1}^{j\elln} \mu_i - \frac{1}{2c_0} \sum_{i=(j-c_0-1)\elln+1}^{(j-1)\elln} \mu_i - \frac{1}{2c_0} \sum_{i=j\elln+1}^{(j+c_0)\elln} \mu_i\bigg |\bigg)^2\\
+&C\frac{\elln}{n^2} \mathbb{E}\bigg(\bigg(\sum_{j=1}^{\bn} \bigg|\sum_{i=(j-1)\elln+1}^{j\elln}(\sigma_i-\sigma_{j\elln}) Y_i -\frac{1}{2c_0}\sum_{i=(j-c_0-1)\elln+1}^{(j-1)\elln} (\sigma_i-\sigma_{j\elln})Y_i\\ 
&  -\frac{1}{2c_0}\sum_{i=j \elln+1}^{(j+c_0)\elln} (\sigma_i-\sigma_{j\elln}) Y_i\bigg |\bigg)^2\bigg).
\end{align*}
We show the first of these terms to be of order $O(\elln^3c_0^2/n^2+ \elln^3c_0^2k_\mu^2\Delta_\mu^2/n^2)$, where $k_\mu$ and $\Delta_\mu$ correspond to the number and maximal absolute jump height of the piecewise Lipschitz-continuous mean function $\mu$, respectively. First, consider the Lipschitz-continuous part $\mu_L$ of $\mu=:\mu_L+\mu_J$. For $i\in \{(j-1)\elln+1, \dots, j\elln\}$ and $r\in (\{(j-c_0-1)\elln+1, \dots, (j-1)\elln\}\cup\{ j\elln+1,\dots ,(j+c_0)\elln\})$, we obtain $\abs{\mu_{L,i}-\mu_{L,r}}\leq C\frac{(c_0+1)\elln}{n}= O(c_0\elln/n)$. 
Consequently,
\begin{align*}
 &\frac{\elln}{n^2} \rbraces{ \sum_{j=1}^{\bn} \abs{\sum_{i=(j-1)\elln+1}^{j\elln} \mu_{L,i}-\frac{1}{2c_0}\rbraces{\sum_{r=(j-c_0-1)\elln+1}^{(j-1)\elln}\mu_{L,r}+\sum_{r=j\elln+1}^{(j+c_0)\elln}\mu_{L,r}}}}^2\\
\leq  & \frac{\elln}{n^2}   \rbraces{ \sum_{j=1}^{\bn} \sum_{i=(j-1)\elln+1}^{j\elln} \frac{1}{2c_0\elln}\rbraces{\sum_{r=(j-c_0-1)\elln+1}^{(j-1)\elln}\abs{\mu_{L,i}-\mu_{L,r}}+\sum_{r=j\elln+1}^{(j+c_0)\elln}\abs{\mu_{L,i}-\mu_{L,r}}}}^2 \\
= & O\rbraces{\frac{c_0^2\elln^3}{n^2}}.
\end{align*}

We now have to account for the additional errors made due to the jumps in the mean function. Recall that $k_\mu$ and $\Delta_\mu\geq 0$ denote the number of jumps and the maximal absolute jump height, respectively. 
Imagine there occurs a jump in the $j$th block, whose height is at most $\Delta_\mu$. This $j$th block, at least in part, and the subsequent blocks will then have a mean function at a by $\Delta_\mu$ different level. Thus, the calculation of the centring terms of all blocks $(j-c_0)$ up to $(j+c_0)$  will be at least in part corrupted by the change and we obtain the upper bound $\abs{\mu_{J,i}-\mu_{J,r}} \leq \Delta_\mu$. Moreover, the error made due to the different jumps is additive such that we altogether obtain that the jump part $\mu_J$ of $\mu$ can bounded by a term of order
$$O\rbraces{\rbraces{ k_\mu\frac{1}{\bn} c_0 \elln^{1/2}\Delta_\mu}^2}=O\rbraces{\frac{\elln^3 c_0^2}{n^2}k_\mu^2\Delta_\mu^2}.$$

The second term of the above terms can be bounded (up to a constant) by 
\begin{align*}
&\frac{\elln}{n^2} \E{\abs{\sum_{j=1}^{\bn} \abs{\sum_{i=(j-1)\elln+1}^{j\elln}(\sigma_i-\sigma_{j\elln}) Y_i}}^2} + \frac{\elln}{n^2}\E{\abs{\sum_{j=1}^{\bn}\abs{ \frac{1}{2c_0}\sum_{i=(j-c_0-1)\elln+1}^{(j-1)\elln} (\sigma_i-\sigma_{j\elln})Y_i}}^2}\\
&+\frac{\elln}{n^2}\E{\abs{\sum_{j=1}^{\bn} \abs{\frac{1}{2c_0}\sum_{i=j \elln+1}^{(j+c_0)\elln} (\sigma_i-\sigma_{j\elln}) Y_i}}^2}.
\end{align*}
The three terms above are each of order $O(1/\bn^2)$, e.g., for the second term, one obtains
\begin{align*}
& \frac{\elln}{n^2}\E{\abs{\sum_{j=1}^{\bn}\abs{ \frac{1}{2c_0}\sum_{i=(j-c_0-1)\elln+1}^{(j-1)\elln} (\sigma_i-\sigma_{j\elln})Y_i}}^2}\\
\leq &\frac{2\elln}{n^2}  \E{\abs{\sum_{j=1}^{\bn}\abs{ \frac{1}{2c_0}\sum_{i=(j-c_0-1)\elln+1}^{(j-1)\elln} (\sigma_{L,i}-\sigma_{L,j\elln})Y_i}}^2}\\
& +\frac{2\elln}{n^2}  \E{\abs{\sum_{j=1}^{\bn}\abs{ \frac{1}{2c_0}\sum_{i=(j-c_0-1)\elln+1}^{(j-1)\elln} (\sigma_{J,i}-\sigma_{J,j\elln})Y_i}}^2},
\end{align*}
where the Lipschitz-continuous component of the variance function can be bounded by
\begin{align*}
& \frac{\elln}{n^2}\E{\abs{\sum_{j=1}^{\bn}\abs{ \frac{1}{2c_0}\sum_{i=(j-c_0-1)\elln+1}^{(j-1)\elln} (\sigma_{L,i}-\sigma_{L,j\elln})Y_i}}^2}\\
\leq & C \frac{\elln}{n^2} \rbraces{\sum_{j=1}^{\bn}\rbraces{ \E{\abs{\sum_{i=(j-c_0-1)\elln+1}^{(j-1)\elln} (\sigma_{L,i}-\sigma_{L,j\elln})Y_i}^2}}^{1/2}}^2\\
\leq & C \frac{\elln}{n^2} \rbraces{\sum_{j=1}^{\bn} \rbraces{\sum_{i=(j-c_0-1)\elln+1}^{(j-1)\elln}\sum_{r=(j-c_0-1)\elln+1}^{(j-1)\elln} \abs{\sigma_{L,i}-\sigma_{L,j\elln}}\abs{\sigma_{L,r}-\sigma_{L,j\elln}}\abs{\E{Y_iY_r}}}^{1/2}}^2\\
\leq &  C \frac{\elln}{n^2} \frac{(c_0+1)^2}{\bn^2} \rbraces{\bn \rbraces{\sum_{i=1}^{c_0\elln}\sum_{r=1}^{c_0\elln} \abs{\E{Y_iY_r}}}^{1/2}}^2
\leq C \frac{\elln^2}{n^2} \rbraces{\norm{Y_1}^2_2+2\sum_{k=2}^{\infty}\abs{\E{Y_1Y_k}}}\\
\leq & C \frac{\elln^2}{n^2} \rbraces{\norm{Y_1}^2_2+16\sum_{k=1}^{\infty}  \alpha_Y(k)^{\delta/(2+\delta)} \norm{Y}^2_{2+\delta}}
\end{align*}
by the Minkowski inequality and by Davydovs covariance inequality. Analogously, the jump component can be bounded by 
\begin{align*}
& \frac{\elln}{n^2}\E{\abs{\sum_{j=1}^{\bn}\abs{ \frac{1}{2c_0}\sum_{i=(j-c_0-1)\elln+1}^{(j-1)\elln} (\sigma_{J,i}-\sigma_{J,j\elln})Y_i}}^2}\\
\leq & C \frac{\elln}{n^2} \rbraces{\sum_{j=1}^{\bn} \rbraces{\sum_{i=(j-c_0-1)\elln+1}^{(j-1)\elln}\sum_{r=(j-c_0-1)\elln+1}^{(j-1)\elln} \abs{\sigma_{J,i}-\sigma_{J,j\elln}}\abs{\sigma_{J,r}-\sigma_{J,j\elln}}\abs{\E{Y_iY_r}}}^{1/2}}^2\\
\leq &  C \frac{\elln}{n^2} \rbraces{(2c_0+1)k_\sigma\Delta_\sigma}^2\sum_{i=1}^{c_0\elln}\sum_{r=1}^{c_0\elln} \abs{\E{Y_iY_r}}
\leq C \frac{\elln^2}{n^2} \rbraces{\norm{Y_1}^2_2+16\sum_{k=1}^{\infty}  \alpha_Y(k)^{\delta/(2+\delta)} \norm{Y}^2_{2+\delta}}.
\end{align*}

In the following, it hence suffices to consider
$\hat{\kappa}_{\tilde{Y}}(n)$ 
and to show $\E{\abs{\hat{\kappa}_{\tilde{Y}}(n)-\E{\hat{\kappa}_{\tilde{Y}}(n)}}^2}\rightarrow 0$ as well as
$\abs{\E{\hat{\kappa}_{\tilde{Y}}(n)}-\int_0^1\sigma(x)\mathrm{d}x\,\kappa_Y}^2\rightarrow 0$. Some parts of this proof resemble the proof of Proposition 3.1 in Dehling et al. \citep{Dehling.2013}. To show the first assertion, define $$S_j(n):= \sum_{i=(j-1)\elln+1}^{j\elln} Y_i -\frac{1}{2c_0}\sum_{i=(j-c_0-1)\elln+1}^{(j-1)\elln} Y_i -\frac{1}{2c_0}\sum_{i=j \elln+1}^{(j+c_0)\elln} Y_i$$ and note that the process $(S_j(n))_{j\in \N}$ is again $\alpha$-mixing with coefficients
\begin{align*}
 \alpha_{S(n)}(k)= & \sup_j \alpha(\sigma(S_1(n), \dots, S_j(n)), \sigma(S_{j+k}(n), \dots))\\ = & \sup_j \alpha(\sigma(Y_1, \dots, Y_{(j+c_0)\elln}), \sigma(Y_{(j+k-c_0-1)\elln+1}, \dots))\\
 = & \alpha_Y((k-2c_0-1)\elln+1) \longrightarrow 0 
\end{align*}
as $k\rightarrow \infty$. Thus, we can apply the Rosenthal-type inequality from Theorem \ref{Literature: Thm Rosenthal}, the $c_r$-inequality from Lemma \ref{Literature: Lemma cr-ineq} and the inequality by Yokoyama from Theorem \ref{Literature: Thm Yokoyama}  to obtain 
\begin{align*}
& \E{\abs{\hat{\kappa}_{\tilde{Y}}(n)-\E{\hat{\kappa}_{\tilde{Y}}(n)}}^2} = C \E{\abs{\frac{1}{\bn}\sum_{j=1}^{\bn} \frac{1}{\sqrt{\elln}} \sigma_{j\elln} \rbraces{\abs{S_j(n)}-\E{\abs{S_j(n)}}} }^2}\\
\leq & \frac{C}{\bn^2}\frac{1}{\elln}\sum_{j=1}^{\bn} \E{\abs{\sigma_{j\elln} \rbraces{\abs{S_j(n)}-\E{\abs{S_j(n)}}}}^{2+\delta/2}}^{\frac{2}{2+\delta/2}}\\
\leq & \sigma_{\sup}^2 \frac{C}{n} \rbraces{\rbraces{\E{\abs{S_1(n)}^{2+\delta/2}}}^{\frac{2}{2+\delta/2}}} 
\leq C \frac{1}{n} \rbraces{\elln^\frac{2+\delta/2}{2}}^{\frac{2}{2+\delta/2}}=O\rbraces{\frac{1}{\bn}},
\end{align*}
where the first inequality holds under the condition $\sum_{k=1}^\infty \alpha_{S(n)}(k)^{\frac{\delta/2}{2+\delta/2}}\leq \sum_{k=1}^\infty \alpha_{Y}(k)^{\frac{\delta/2}{2+\delta/2}} <\infty$  and the third inequality under the condition $\sum_{k=1}^\infty k^{\delta/4} \alpha_Y(k)^{\frac{\delta/2}{2+\delta}}<\infty$. Due to $\alpha_Y(k)\leq Ck^{-\rho(1+\delta)(2+\delta)/\delta^2}$, the first condition is met for $\rho>1$ since $0<\delta\leq 1$ and the second one for $\rho>(\delta^2+4\delta)/(2+2\delta)$, which is met for all $0<\delta\leq 1$ if $\rho>1.25$. 

Turning towards the last assertion $\abs{\E{\hat{\kappa}_{\tilde{Y}}(n)}-\int_0^1\sigma(x)\mathrm{d}x\,\kappa_Y}^2\rightarrow 0$, we note that 
$$\E{\hat{\kappa}_{\tilde{Y}}(n)}= \sqrt{\frac{2c_0}{2c_0+1}}\sqrt{\frac{\pi}{2}}\frac{1}{\sqrt{\elln}}\E{\abs{S_1(n)}} \frac{1}{\bn}\sum_{j=1}^{\bn} \sigma_{j\elln}$$
and 
\begin{align*}
& \abs{\E{\hat{\kappa}_{\tilde{Y}}(n)}-\int_0^1\sigma(x)\mathrm{d}x\,\kappa_Y}^2\\
\leq & C\rbraces{ \abs{\E{\hat{\kappa}_{\tilde{Y}}(n)}-\frac{1}{\bn}\sum_{j=1}^{\bn}\sigma_{j\elln} \kappa_Y}^2+\abs{\frac{1}{\bn}\sum_{j=1}^{\bn} \sigma_{j\elln} \kappa_Y-\int_0^1\sigma(x)\mathrm{d}x\,\kappa_Y}^2 }.
\end{align*}
Due to the piecewise Lipschitz-continuity of $\sigma$, the squared difference between the Riemann-sum and -integral, $\kappa_Y^2\abs{\frac{1}{\bn}\sum_{j=1}^{\bn} \sigma_{j\elln}-\int_0^1\sigma(x)\mathrm{d}x}^2$, is of order $O(1/\bn^2)$.
Turning towards the first term, we obtain
\begin{align*}
& \abs{\E{\hat{\kappa}_{\tilde{Y}}(n)}-\frac{1}{\bn}\sum_{j=1}^{\bn} \sigma_{j\elln} \kappa_Y}^2\\
\leq & \sigma_{\sup}^2\abs{\sqrt{\frac{2c_0}{2c_0+1}}\sqrt{\frac{\pi}{2}} \E{ \frac{1}{\sqrt{\elln}}\abs{\sum_{i=(j-1)\elln+1}^{j\elln} Y_i -\frac{1}{2c_0}\sum_{i=(j-c_0-1)\elln+1}^{(j-1)\elln} Y_i -\frac{1}{2c_0}\sum_{i=j \elln+1}^{(j+c_0)\elln} Y_i} }-\kappa_Y}^2\\
= &  \sigma_{\sup}^2\abs{\sqrt{\frac{2c_0}{2c_0+1}}\sqrt{\frac{\pi}{2}} \E{ \frac{1}{\sqrt{\elln}}\abs{\sum_{i=(j-1)\elln+1}^{j\elln} Y_i -\frac{1}{2c_0}\sum_{k=1}^{c_0}\rbraces{\sum_{i=(j-k-1)\elln+1}^{(j-k)\elln} Y_i +\sum_{i=(j+k-1) \elln+1}^{(j+k)\elln} Y_i}} }-\kappa_Y}^2.
\end{align*}
Each of the $(2c_0+1)$ block sums within the expectation above  converges towards a normal distribution by the central limit theorem for $\alpha$-mixing random variables. However, as these sums are dependent, to obtain an overall limit, we need to introduce a small separation block of  $\tilde{m}_n$ observations between them and use a coupling technique for the resulting sums. More precisely, 
\begin{align*}
&\left| \E{ \frac{1}{\sqrt{\elln}}\abs{\sum_{i=(j-1)\elln+1}^{j\elln} Y_i -\frac{1}{2c_0}\sum_{k=1}^{c_0}\rbraces{\sum_{i=(j-k-1)\elln+1}^{(j-k)\elln} Y_i +\sum_{i=(j+k-1) \elln+1}^{(j+k)\elln} Y_i}} } \right. \\
&  \left. - \E{ \frac{1}{\sqrt{\elln}}\abs{\sum_{i=(j-1)\elln+1}^{j\elln-\tilde{m}_n} Y_i -\frac{1}{2c_0}\sum_{k=1}^{c_0}\rbraces{\sum_{i=(j-k-1)\elln+1}^{(j-k)\elln-\tilde{m}_n} Y_i +\sum_{i=(j+k-1) \elln+1}^{(j+k)\elln-\tilde{m}_n} Y_i}} }\right|^2\\
\leq & \rbraces{\E{ \frac{1}{\sqrt{\elln}}\abs{\sum_{i=j\elln-\tilde{m}_n+1}^{j\elln} Y_i -\frac{1}{2c_0}\sum_{k=1}^{c_0}\rbraces{\sum_{i=(j-k)\elln-\tilde{m}_n+1}^{(j-k)\elln} Y_i +\sum_{i=(j+k)\elln-\tilde{m}_n+1}^{(j+k)\elln} Y_i}} }}^2\\
\leq & 4 \frac{\tilde{m}_n}{\elln} \E{\abs{\frac{1}{\sqrt{\tilde{m}_n}}\sum_{i=1}^{\tilde{m}_n}Y_i}^2} =O\rbraces{\frac{\tilde{m}_n}{\elln}}.
\end{align*}
Moreover, by Theorem \ref{Literature: Thm Peligrad}, there exists (on a possibly richer probability space) a sequence of independent random variables $W_j(n)$, $1\leq j \leq \bn$, such that $W_j(n) \overset{\mathcal{D}}{=}\sum_{i=(j-1)\elln+1}^{j\elln-\tilde{m}_n}Y_i $ for each $j$
and
 $$\E{\abs{\sum_{i=(j-1)\elln+1}^{j\elln-\tilde{m}_n}Y_i - W_j(n) }}
 \leq 4\sqrt{\alpha_Y(\tilde{m}_n)} \E{\abs{\sum_{i=1}^{\elln-\tilde{m}_n}Y_i}^2}^{1/2}.$$
Thus, we obtain
\begin{align*}
&\left|\E{ \frac{1}{\sqrt{\elln}}\abs{\sum_{i=(j-1)\elln+1}^{j\elln-\tilde{m}_n} Y_i -\frac{1}{2c_0}\sum_{k=1}^{c_0}\rbraces{\sum_{i=(j-k-1)\elln+1}^{(j-k)\elln-\tilde{m}_n} Y_i +\sum_{i=(j+k-1) \elln+1}^{(j+k)\elln-\tilde{m}_n} Y_i}} }\right. \\
&  \left. - \E{ \frac{1}{\sqrt{\elln}}\abs{W_j(n) -\frac{1}{2c_0}\sum_{k=1}^{c_0}\rbraces{W_{j-k}(n) +W_{j+k}(n)}} } \right|^2\\
\leq & \frac{4}{\elln} \rbraces{\E{\abs{\sum_{i=(j-1)\elln+1}^{j\elln-\tilde{m}_n}Y_i - W_j(n) }} }^2
 \leq C \alpha_Y(\tilde{m}_n) \E{\abs{\frac{1}{\sqrt{\elln}}\sum_{i=1}^{\elln-\tilde{m}_n}Y_i}^2}= O(\alpha_Y(\tilde{m}_n)) .
\end{align*}
Hence, we reduced the problem to considering
$$\abs{\sqrt{\frac{2c_0}{1+2c_0}}\sqrt{\frac{\pi}{2}}\E{ \frac{1}{\sqrt{\elln}}\abs{W_j(n) -\frac{1}{2c_0}\sum_{k=1}^{c_0}\rbraces{W_{j-k}(n) +W_{j+k}(n)}} }-\kappa_Y}^2$$
Following Proposition \ref{Prop: Replacing centring term}, we can once more assume a sum structure for the $W_j(n)$'s. Take $(2c_0+1)$ independent blocks $(Y'_{(j-c_0+u-1)\elln+1}, \ldots, Y'_{(j-c_0+u)\elln-\tilde{m}_n}))$, $u=0, \ldots, 2c_0$, with the same distribution as $(Y_1, \ldots, Y_{\elln-\tilde{m}_n})$. Then, we can
set  $W_{j-c_0+u}(n)=\sum_{i=(j-c_0+u-1)\elln+1}^{(j-c_0+u)\elln-\tilde{m}_n}Y_i'$
and rewrite the above expectation as

\begin{align*}
&\abs{\sqrt{\frac{2c_0}{1+2c_0}}\sqrt{\frac{\pi}{2}}\E{ \frac{1}{\sqrt{\elln}}\abs{\sum_{i=(j-1)\elln+1}^{j\elln-\tilde{m}_n} Y_i' -\frac{1}{2c_0}\sum_{k=1}^{c_0}\rbraces{\sum_{i=(j-k-1)\elln+1}^{(j-k)\elln-\tilde{m}_n} Y_i' +\sum_{i=(j+k-1) \elln+1}^{(j+k)\elln-\tilde{m}_n} Y_i'}} }-\kappa_Y}^2\\
\leq & 2\left( \tilde{\kappa}_{Y,n}^2 \frac{\pi}{2} \abs{\E{\abs{\sqrt{\frac{2c_0}{1+2c_0}}\frac{1}{\tilde{\kappa}_{Y,n}} \frac{1}{\sqrt{\elln}}\sum_{i=(j-1)\elln+1}^{j\elln-\tilde{m}_n} \rbraces{Y_i' -\frac{1}{2c_0}\sum_{k=1}^{c_0}\rbraces{ Y_{i-k\elln}' + Y_{i+k\elln}'}}} }-
\E{\abs{Z}}}^2 \right.\\
& \left. +\abs{\tilde{\kappa}_{Y,n}-\kappa_Y}^2\right),
\end{align*}
where $Z$ is a standard normal random variable and
$$\tilde{\kappa}_{Y,n}^2:= \Var{\frac{1}{\sqrt{\elln}}\sum_{i=1}^{\elln-\tilde{m}_n}Y_i'}.$$ 
The difference $\abs{\tilde{\kappa}_{Y,n}-\kappa_Y}^2 $ is of order $O\rbraces{\frac{\tilde{m}_n}{\elln}+\frac{1}{\elln-\tilde{m}_n}}$, which can be seen as in Proposition A.1 in the appendix to \citep{Schmidt.2021}. Turning towards the first difference, since the blocks $\sum_{i=(j-1)\elln+1}^{j\elln-\tilde{m}_n}Y_i'$, $1\leq j \leq \bn$ are independent, the $\rbraces{Y_i' -\frac{1}{2c_0}\sum_{k=1}^{c_0}\rbraces{ Y_{i-k\elln}' + Y_{i+k\elln}'}}$'s are stationary, centred, $\alpha$-mixing with coefficients smaller than or equal to $(2c_0+1)\alpha_Y$ by Theorem \ref{Literature: Doukhan Thm 1 Sec 1.1}, and have finite $(2+\delta)$-moments by assumption and by an application of the $c_r$-inequality. Moreover, 
$$\Var{\sqrt{\frac{2c_0}{1+2c_0}}\frac{1}{\tilde{\kappa}_{Y,n}} \frac{1}{\sqrt{\elln}}\sum_{i=(j-1)\elln+1}^{j\elln-\tilde{m}_n} \rbraces{Y_i' -\frac{1}{2c_0}\sum_{k=1}^{c_0}\rbraces{ Y_{i-k\elln}' + Y_{i+k\elln}'}}}=1.$$
By the central limit theorem for $\alpha$-mixing processes, it holds 
$$\sqrt{\frac{2c_0}{1+2c_0}}\frac{1}{\tilde{\kappa}_{Y,n}} \frac{1}{\sqrt{\elln}}\sum_{i=(j-1)\elln+1}^{j\elln-\tilde{m}_n} \rbraces{Y_i' -\frac{1}{2c_0}\sum_{k=1}^{c_0}\rbraces{ Y_{i-k\elln}' + Y_{i+k\elln}'}}\distConv Z$$
and the above difference of expectations converges towards zero, but we still need to determine the rate of convergence. This can be done analogously to Proposition \ref{Prop: Replacing centring term}, where Theorems \ref{Literature: Tikhomirov} and \ref{Literature: Petrov} are used to ultimately obtain the bound
\begin{align*}
&\abs{\E{\abs{\sqrt{\frac{2c_0}{1+2c_0}}\frac{1}{\tilde{\kappa}_{Y,n}} \frac{1}{\sqrt{\elln}}\sum_{i=(j-1)\elln+1}^{j\elln-\tilde{m}_n} \rbraces{Y_i' -\frac{1}{2c_0}\sum_{k=1}^{c_0}\rbraces{ Y_{i-k\elln}' + Y_{i+k\elln}'}}} }-
\E{\abs{Z}}}^2 \\
=& O\rbraces{\rbraces{\elln-\tilde{m}_n}^{-\delta(\rho-1)/(\rho+1)}\log\rbraces{C(\elln-\tilde{m}_n)}^2}.
\end{align*}
\end{proof}

\begin{corollary}
\label{Cor: Convergence speed LRV esti}
 Assume there is  a $0< \delta \leq 1$ and a $\rho>(\delta^2+4\delta)/(2+2\delta)\vee 1$ such that $\mathbb{E}\big({\abs{Y_1}^{2+\delta}}\big)<\infty$ and $\alpha_Y(k)\leq Ck^{-\rho (2+\delta)(1+\delta)/\delta^2}$.  
 Denote $\ell_n=n^s$ and $\elln=n^q$. 
Assume 
 $q<s$ as well as $q>(1-s) \cdot \max\big(\delta^{-1}\frac{\rho+1}{\rho-1},\rbraces{1+\delta^2(\rho(1+\delta)(2+\delta))^{-1}}\big)$. Then, it holds for  a constant mean function $\mu$ that
 $$ \sqrt{b_n}\abs{\hat{\kappa}_{\tilde{X}}(n)-\kappa_Y \int_0^1 \sigma(x)\mathrm{d}x}\overset{L^2}{\longrightarrow} \;0.$$
\end{corollary}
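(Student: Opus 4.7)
The proof will be a quantitative refinement of Proposition \ref{Prop: Consistency alpha-mix}: I would revisit every approximation step in that proof and verify that, under the present stronger conditions on $q$, each error is of order $o(1/\sqrt{b_n})$ in $L^2$. The starting point is the triangle inequality
\begin{align*}
\bigl\| \hat{\kappa}_{\tilde X}(n) - \kappa_Y \textstyle \int_0^1 \sigma(x)\mathrm{d}x \bigr\|_2
&\leq \bigl\| \hat{\kappa}_{\tilde X}(n) - \hat{\kappa}_{\tilde Y}(n) \bigr\|_2 \\
&\quad + \bigl\| \hat{\kappa}_{\tilde Y}(n) - \mathbb{E}[\hat{\kappa}_{\tilde Y}(n)] \bigr\|_2
 + \bigl| \mathbb{E}[\hat{\kappa}_{\tilde Y}(n)] - \kappa_Y \textstyle \int_0^1 \sigma(x)\mathrm{d}x \bigr|,
\end{align*}
with $\hat{\kappa}_{\tilde Y}(n)$ defined as in the proof of Proposition \ref{Prop: Consistency alpha-mix}, and I would treat the three terms separately.

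For the first term, note that under a constant mean, the $\mu$-contribution in the centring of $\hat{\kappa}_{\tilde{X}}(n)-\hat{\kappa}_{\tilde{Y}}(n)$ cancels exactly, so only the $\sigma$-continuity term remains. The computation in Proposition \ref{Prop: Consistency alpha-mix} shows this term is of order $O(1/b_n^2)$ in mean square, hence $O(1/b_n)$ in $L^2$. Since $s>q$ implies $b_n = n^{1-s} < \tilde{b}_n = n^{1-q}$, we have $\sqrt{b_n}/\tilde{b}_n \to 0$, so this contribution vanishes. For the second term, the Rosenthal/Yokoyama argument already produces $\|\hat{\kappa}_{\tilde Y}(n)-\mathbb{E}[\hat{\kappa}_{\tilde Y}(n)]\|_2 = O(1/\sqrt{\tilde{b}_n})$, and $\sqrt{b_n/\tilde{b}_n}\to 0$ takes care of it.

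The bias term is the delicate one, and where the precise conditions on $q$ will be used. Splitting $\mathbb{E}[\hat{\kappa}_{\tilde Y}(n)] - \kappa_Y \int_0^1 \sigma(x)\mathrm{d}x$ as in the proof of Proposition \ref{Prop: Consistency alpha-mix} produces four contributions: (i) the Riemann-type error $|\tfrac{1}{\tilde b_n} \sum_j \sigma_{j\tilde\ell_n} - \int \sigma| = O(1/\tilde{b}_n)$; (ii) the block-shortening error of order $\sqrt{\tilde{m}_n/\tilde{\ell}_n}$; (iii) the Peligrad coupling error of order $\sqrt{\alpha_Y(\tilde{m}_n)}$; and (iv) the CLT rate from Tikhomirov's theorem of order $\tilde{\ell}_n^{-\delta(\rho-1)/(\rho+1)}\log(\tilde{\ell}_n)$ (plus the $L^2$ variance replacement $|\tilde{\kappa}_{Y,n}-\kappa_Y|$, which is of the same type as (ii)--(iii)). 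I would choose a sequence $\tilde{m}_n \to \infty$ in the window
\[
 n^{(1-s)\delta^2/(\rho(2+\delta)(1+\delta))} \ll \tilde{m}_n \ll n^{q+s-1},
\]
whose non-emptiness is guaranteed by $q > (1-s)\bigl(1+\delta^2/(\rho(1+\delta)(2+\delta))\bigr)$. This choice makes $b_n\tilde{m}_n/\tilde{\ell}_n\to 0$ and $b_n\alpha_Y(\tilde{m}_n) \to 0$ simultaneously. The bound $b_n/\tilde{\ell}_n\to 0$ follows from $q>1-s$, and the CLT-rate contribution requires $\sqrt{b_n}\tilde{\ell}_n^{-\delta(\rho-1)/(\rho+1)}\log \tilde{\ell}_n \to 0$, which is exactly ensured by $q>(1-s)(\rho+1)/(\delta(\rho-1))$ (up to logarithmic factors).

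The main obstacle is the fourth, CLT-rate, contribution; tracking the Tikhomirov/Petrov bound is what forces the relatively strong lower bound on $q$. Every other piece is a transparent application of the earlier bookkeeping in the proof of Proposition \ref{Prop: Consistency alpha-mix}, and combining all four estimates with the triangle inequality above yields the claim.
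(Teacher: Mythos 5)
Your proposal is correct and follows essentially the same route as the paper: the proof of Corollary \ref{Cor: Convergence speed LRV esti} is precisely a bookkeeping pass through the proof of Proposition \ref{Prop: Consistency alpha-mix}, multiplying each squared error rate by $b_n$ (equivalently, each $L^2$ norm by $\sqrt{b_n}$) and reading off the conditions on $q$ together with the admissible window for $\tilde{m}_n$, exactly as you describe. One small slip: the non-squared CLT-rate contribution is $\elln^{-(\delta/2)(\rho-1)/(\rho+1)}\log(\elln)$ — the paper's $O\big(\elln^{-\delta(\rho-1)/(\rho+1)}\log^2(\elln)\big)$ is the \emph{squared} rate — and it is with this exponent that $\sqrt{b_n}$ times the bias vanishing is \emph{exactly} the condition $q>(1-s)\delta^{-1}(\rho+1)/(\rho-1)$; with the exponent as you wrote it, the hypothesis would be stronger than needed rather than sharp.
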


\begin{proof}[Proof of Corollary \ref{Cor: Convergence speed LRV esti}]
 A look at the proof of Proposition \ref{Prop: Consistency alpha-mix} reveals that the difference $\E{\abs{\hat{\kappa}_{\tilde{X}}(n)-\kappa_Y \int_0^1 \sigma(x)\mathrm{d}x}^2}$ is of order 
$$O\rbraces{\frac{1}{\bn}}+O\rbraces{\frac{\tilde{m}_n}{\elln}}+ O\rbraces{\alpha_Y(\tilde{m}_n)}+O\rbraces{\elln^{\, -\delta \frac{\rho-1}{\rho+1}}\log(\elln)^2}.$$
Multiplying with $b_n$, we arrive at the conditions $b_n/\bn\rightarrow 0$, $b_n/\elln \rightarrow 0$ as well as $b_n \elln^{\, -\delta \frac{\rho-1}{\rho+1}}\log(\elln)^2 \rightarrow 0$. Using the notation $\ell_n=n^s$ and $ \elln=n^q$, these translate to $s>q$, $q>(1-s)$ and $q>(1-s)\delta^{-1}\frac{\rho+1}{\rho-1}$. 
We additionally need to ensure the existence of a sequence $\tilde{m}_n\rightarrow \infty$ with $\frac{b_n \tilde{m}_n}{\elln}\rightarrow 0$ and $b_n \alpha_Y(\tilde{m}_n)\rightarrow 0$. The first condition translates to $\tilde{m}_n=o(n^{q+s-1})$ and given the assumption $(1-s)<q$, we can simply set $\tilde{m_n}=n^{q+s-1-\varepsilon}$ for some $\varepsilon>0$ small enough. Turning towards the second condition, we insert $\alpha_Y(\tilde{m}_n)\leq C \tilde{m}_n^{-\rho(1+\delta)(2+\delta)/\delta^2}$ together with the above form of $\tilde{m}_n$ and arrive at the condition $q-\varepsilon>(1-s)(1+\delta^2(\rho(1+\delta)(2+\delta))^{-1})$. We hence obtain the condition  $q>(1-s)(1+\delta^2(\rho(1+\delta)(2+\delta))^{-1})$ and define $\varepsilon:=(q-(1-s)(1+\delta^2(\rho(1+\delta)(2+\delta))^{-1}))/2\wedge (q+s-1)/2$.  
\end{proof}

\begin{proof}[Proof of Proposition \ref{Prop: convergence estimator kappahat kappaY under H}]
We split the expression up via
\begin{align}
\label{eq: Proof convergence kappahat kappaY H}
\sqrt{b_n}\abs{\hat{\kappa}(n)-\kappa_Y}
\leq \sqrt{b_n}\rbraces{\abs{\frac{\hat{\kappa}_{\tilde{X}}(n)-\kappa_Y \int_0^1 \sigma(x)\mathrm{d}x}{\frac{1}{b_n}\sum_{j=1}^{b_n}\hat{\sigma}_{j,n}}}+ \kappa_Y\abs{\frac{\frac{1}{b_n}\sum_{j=1}^{b_n}\hat{\sigma}_{j,n}-\int_0^1 \sigma(x)\mathrm{d}x}{\frac{1}{b_n}\sum_{j=1}^{b_n}\hat{\sigma}_{j,n}}}}.
\end{align}

i) Let us first consider the behaviour under the hypothesis of a constant mean function.
To prove the convergence in probability of \eqref{eq: Proof convergence kappahat kappaY H}, due to Corollary \ref{Cor: Convergence speed LRV esti}, it only remains to show
$$\sqrt{b_n}\abs{\frac{1}{b_n}\sum_{j=1}^{b_n}\hat{\sigma}_{j,n}-\int_0^1 \sigma(x)\mathrm{d}x} \pConv 0$$
since $\sigma\geq\sigma_0>0$. We can bound this difference from above by
\begin{align*}
  \sqrt{b_n}\abs{\frac{1}{b_n}\sum_{j=1}^{b_n}\sqrt{\hat{\sigma}_{j,n}^2}-\frac{1}{b_n}\sum_{j=1}^{b_n} \sqrt{\sigma^2\rbraces{\frac{j}{b_n}}} }+  \sqrt{b_n} \abs{\frac{1}{b_n}\sum_{j=1}^{b_n} \sigma\rbraces{\frac{j}{b_n}}- \int_0^1 \sigma(x)\mathrm{d}x}.
\end{align*}
The first term can be treated by an easier version of the proof of Proposition \ref{Prop: Convergence estimated centring term} to obtain convergence in probability (and in fact, even in $L^1$) towards zero given the conditions $s>0.5$ and $\E{\abs{Y_1}^{4+2\delta}}<\infty$. For the second term, the piecewise Lipschitz-continuity of $\sigma$ yields
\begin{align*}
\sqrt{b_n} \abs{\frac{1}{b_n}\sum_{j=1}^{b_n} \sigma\rbraces{\frac{j}{b_n}}- \int_0^1 \sigma(x)\mathrm{d}x}
\leq  \sqrt{b_n}\sum_{j=1}^{b_n} \int_{I_j} \abs{\sigma\rbraces{\frac{j}{b_n}}-\sigma(x)} \mathrm{d}x=O\rbraces{\frac{1}{b_n^{1/2}}}
\end{align*}
with $I_j:=\left(\frac{(j-1)\ell_n}{n}, \frac{j\ell_n}{n}\right]$ for $1\leq j\leq b_n$. \\

ii) Turning towards a piecewise Lipschitz-continuous mean function, we can drop the rate $\sqrt{b_n}$ in \eqref{eq: Proof convergence kappahat kappaY H}.  Proposition \ref{Prop: Consistency alpha-mix} yields $\E{\abs{\hat{\kappa}_{\tilde{X}}(n)-\kappa_Y \int_0^1 \sigma(x)\mathrm{d}x}^2}\rightarrow 0$ and it remains to prove $$\abs{\frac{1}{b_n}\sum_{j=1}^{b_n}\hat{\sigma}_{j,n}-\int_0^1 \sigma(x)\mathrm{d}x} \pConv 0.$$ 
In the following, we will even show $\E{\abs{\frac{1}{b_n}\sum_{j=1}^{b_n}\hat{\sigma}_{j,n}-\int_0^1 \sigma(x)\mathrm{d}x}} \rightarrow 0$. We start by bounding the difference from above by
\begin{align} 
\label{eq: Appendix Proof of consistency kappahat for Lipschitz}
&\abs{\frac{1}{b_n}\sum_{j=1}^{b_n}\hat{\sigma}_{j,n}-\frac{1}{b_n}\sum_{j=1}^{b_n}\rbraces{ \frac{1}{\ell_n} \sum_{i=(j-1)\ell_n+1}^{j\ell_n} \sigma_i^2Y_i^2-\rbraces{\frac{1}{\ell_n}\sum_{i=(j-1)\ell_n+1}^{j\ell_n} \sigma_iY_i}^2}^{1/2}}\\
\nonumber + & \abs{\frac{1}{b_n}\sum_{j=1}^{b_n}\rbraces{ \frac{1}{\ell_n} \sum_{i=(j-1)\ell_n+1}^{j\ell_n} \sigma_i^2Y_i^2-\rbraces{\frac{1}{\ell_n}\sum_{i=(j-1)\ell_n+1}^{j\ell_n} \sigma_iY_i}^2}^{1/2}-\frac{1}{b_n}\sum_{j=1}^{b_n}\sigma\rbraces{\frac{j}{b_n}}}\\
 \nonumber + & \abs{\frac{1}{b_n}\sum_{j=1}^{b_n} \sigma\rbraces{\frac{j}{b_n}}- \int_0^1 \sigma(x)\mathrm{d}x}.
\end{align}
The first expression in \eqref{eq: Appendix Proof of consistency kappahat for Lipschitz} can in turn be bounded by
\begin{align*}
& \abs{\frac{1}{b_n}\sum_{j=1}^{b_n}\hat{\sigma}_{j,n}-\frac{1}{b_n}\sum_{j=1}^{b_n}\rbraces{ \frac{1}{\ell_n} \sum_{i=(j-1)\ell_n+1}^{j\ell_n} \sigma_i^2Y_i^2-\rbraces{\frac{1}{\ell_n}\sum_{i=(j-1)\ell_n+1}^{j\ell_n} \sigma_iY_i}^2}^{1/2}}\\
\leq & \frac{1}{b_n}\sum_{j=1}^{b_n} \rbraces{\frac{2}{\ell_n}\sum_{i=(j-1)\ell_n+1}^{j\ell_n} \sigma_i\mu_iY_i}^{1/2}+ \frac{1}{b_n}\sum_{j=1}^{b_n} \rbraces{2\rbraces{\frac{1}{\ell_n} \sum_{i=(j-1)\ell_n+1}^{j\ell_n}\sigma_iY_i}\rbraces{\frac{1}{\ell_n} \sum_{i=(j-1)\ell_n+1}^{j\ell_n}\mu_i}}^{1/2} \\
&+\frac{1}{b_n}\sum_{j=1}^{b_n} \rbraces{\frac{1}{\ell_n} \sum_{i=(j-1)\ell_n+1}^{j\ell_n} \mu_i^2 -\rbraces{\frac{1}{\ell_n} \sum_{i=(j-1)\ell_n+1}^{j\ell_n}\mu_i}^2}^{1/2}.
\end{align*}
For the first of these terms, it holds
\begin{align*}
& \frac{1}{b_n}\sum_{j=1}^{b_n} \E{\abs{\frac{2}{\ell_n}\sum_{i=(j-1)\ell_n+1}^{j\ell_n} \sigma_i\mu_iY_i}^{1/2}}
\leq  \frac{1}{b_n}\sum_{j=1}^{b_n} \rbraces{\E{\abs{\frac{2}{\ell_n}\sum_{i=(j-1)\ell_n+1}^{j\ell_n} \sigma_i\mu_iY_i}^2}}^{1/4}\\
\leq & C \mu_{\sup}^{1/2}\sigma_{\sup}^{1/2} \rbraces{\frac{1}{\ell_n^2} \sum_{i=1}^{\ell_n} \sum_{r=1}^{\ell_n} \abs{\E{Y_iY_r}}}^{1/4}
\leq  C \mu_{\sup}^{1/2}\sigma_{\sup}^{1/2} \frac{1}{\ell_n^{1/4}}\rbraces{1+16\sum_{k=1}^\infty \alpha_Y(k)^{\frac{\delta}{2+\delta}} \norm{Y_1}^2_{2+\delta}}^{1/4} \longrightarrow 0
\end{align*}
by Davydov's covariance inequality. 
Similarly, for the second term, we have 
\begin{align*}
 & \frac{1}{b_n}\sum_{j=1}^{b_n} \E{\abs{2\rbraces{\frac{1}{\ell_n} \sum_{i=(j-1)\ell_n+1}^{j\ell_n}\sigma_iY_i}\rbraces{\frac{1}{\ell_n} \sum_{i=(j-1)\ell_n+1}^{j\ell_n}\mu_i}}^{1/2}}\\
\leq & C \mu_{\sup}^{1/2} \frac{1}{b_n}\sum_{j=1}^{b_n}\E{\abs{\frac{1}{\ell_n} \sum_{i=(j-1)\ell_n+1}^{j\ell_n}\sigma_iY_i}^{1/2}} \\
\leq & C \mu_{\sup}^{1/2}\sigma_{\sup}^{1/2} \frac{1}{\ell_n^{1/4}}\rbraces{1+16\sum_{k=1}^\infty \alpha_Y(k)^{\frac{\delta}{2+\delta}} \norm{Y_1}^2_{2+\delta}}^{1/4} \longrightarrow 0.
\end{align*}
To show the negligibility of the third term, we explicitly require the assumptions made on the mean function $\mu$. We split the problem up into considering a Lipschitz-continuous mean function $\mu_L$ and  a piecewise constant mean function $\mu_J$ (i.e. the jumps). As both parts are additive,
\begin{align*}
&\frac{1}{b_n}\sum_{j=1}^{b_n} \rbraces{\frac{1}{\ell_n} \sum_{i=(j-1)\ell_n+1}^{j\ell_n} \mu_i^2 -\rbraces{\frac{1}{\ell_n} \sum_{i=(j-1)\ell_n+1}^{j\ell_n}\mu_i}^2}^{1/2}\\
= &\frac{1}{b_n}\sum_{j=1}^{b_n} \rbraces{\frac{1}{\ell_n} \sum_{i=(j-1)\ell_n+1}^{j\ell_n} \rbraces{\mu_{L,i}+\mu_{J,i} -\frac{1}{\ell_n} \sum_{r=(j-1)\ell_n+1}^{j\ell_n}(\mu_{L,r}+\mu_{J,r})}^2}^{1/2}\\
\leq & \frac{1}{b_n}\sum_{j=1}^{b_n} \rbraces{\frac{2}{\ell_n} \sum_{i=(j-1)\ell_n+1}^{j\ell_n} \rbraces{\rbraces{\mu_{L,i}-\frac{1}{\ell_n} \sum_{r=(j-1)\ell_n+1}^{j\ell_n}\mu_{L,r}}^2+\rbraces{\mu_{J,i}-\frac{1}{\ell_n} \sum_{r=(j-1)\ell_n+1}^{j\ell_n}\mu_{J,r}}^2}}^{1/2}\\
\leq & \frac{\sqrt{2}}{b_n}\sum_{j=1}^{b_n} \rbraces{\frac{1}{\ell_n} \sum_{i=(j-1)\ell_n+1}^{j\ell_n} \mu_{L,i}^2 -\rbraces{\frac{1}{\ell_n} \sum_{i=(j-1)\ell_n+1}^{j\ell_n}\mu_{L,i}}^2}^{1/2}\\
&+\frac{\sqrt{2}}{b_n}\sum_{j=1}^{b_n} \rbraces{\frac{1}{\ell_n} \sum_{i=(j-1)\ell_n+1}^{j\ell_n} \mu_{J,i}^2 -\rbraces{\frac{1}{\ell_n} \sum_{i=(j-1)\ell_n+1}^{j\ell_n}\mu_{J,i}}^2}^{1/2}.
\end{align*}
Since $\mu_L$ is Lipschitz-continuous, we have 
\begin{align*}
\frac{1}{b_n}\sum_{j=1}^{b_n} \rbraces{\frac{1}{\ell_n} \sum_{i=(j-1)\ell_n+1}^{j\ell_n} \mu_{L,i}^2 -\rbraces{\frac{1}{\ell_n} \sum_{i=(j-1)\ell_n+1}^{j\ell_n}\mu_{L,i}}^2}^{1/2}\leq \sqrt{\frac{2\mu_{\sup}}{b_n}}.
\end{align*}
For the jump part, we obtain 
\begin{align*}
\frac{1}{b_n}\sum_{j=1}^{b_n} \rbraces{\frac{1}{\ell_n} \sum_{i=(j-1)\ell_n+1}^{j\ell_n} \mu_{J,i}^2 -\rbraces{\frac{1}{\ell_n} \sum_{i=(j-1)\ell_n+1}^{j\ell_n}\mu_{J,i}}^2}^{1/2}\leq \frac{k_\mu\Delta_\mu}{b_n}.
\end{align*}

To prove convergence of second term in  \eqref{eq: Appendix Proof of consistency kappahat for Lipschitz},  it suffices to show that 
\begin{align*}
& \E{\abs{\frac{1}{b_n}\sum_{j=1}^{b_n}\rbraces{ \frac{1}{\ell_n} \sum_{i=(j-1)\ell_n+1}^{j\ell_n} \sigma_i^2Y_i^2-\rbraces{\frac{1}{\ell_n}\sum_{i=(j-1)\ell_n+1}^{j\ell_n} \sigma_iY_i}^2}^{1/2}-\frac{1}{b_n}\sum_{j=1}^{b_n}\sigma_{j\ell_n}}}\\
\leq &\frac{1}{b_n}\sum_{j=1}^{b_n} \E{\abs{\rbraces{ \frac{1}{\ell_n} \sum_{i=(j-1)\ell_n+1}^{j\ell_n} \sigma_i^2Y_i^2-\rbraces{\frac{1}{\ell_n}\sum_{i=(j-1)\ell_n+1}^{j\ell_n} \sigma_iY_i}^2}^{1/2}-\rbraces{\sigma_{j\ell_n}^2}^{1/2}}}\\
\leq  &\frac{1}{b_n}\sum_{j=1}^{b_n} \rbraces{\E{\abs{\frac{1}{\ell_n} \sum_{i=(j-1)\ell_n+1}^{j\ell_n} \sigma_i^2Y_i^2-\rbraces{\frac{1}{\ell_n}\sum_{i=(j-1)\ell_n+1}^{j\ell_n} \sigma_iY_i}^2-\sigma_{j\ell_n}^2}}}^{1/2}\\
\leq &\frac{1}{b_n}\sum_{j=1}^{b_n} \left( \rbraces{\E{\abs{\frac{1}{\ell_n}\sum_{i=(j-1)\ell_n+1}^{j\ell_n} \sigma_iY_i}^2}}^{1/2}+ \rbraces{\E{\abs{\frac{1}{\ell_n} \sum_{i=(j-1)\ell_n+1}^{j\ell_n} (\sigma_i^2-\sigma_{j\ell_n}^2)Y_i^2}}}^{1/2} \right.\\
 & \left. + \rbraces{\E{\abs{\sigma_{j\ell_n}^2\frac{1}{\ell_n} \sum_{i=(j-1)\ell_n+1}^{j\ell_n} (Y_i^2-1)}}}^{1/2}  \right)
\end{align*}
converges towards zero. The first of these terms is of order $O(1/\ell_n^{1/2})$, while the second one is of order $O(1/b_n^{1/2})$ due to the piecewise Lipschitz-property of $\sigma$. It remains to prove that
$$\frac{1}{b_n}\sum_{j=1}^{b_n}\rbraces{\E{\abs{\sigma_{j\ell_n}^2\frac{1}{\ell_n} \sum_{i=(j-1)\ell_n+1}^{j\ell_n} (Y_i^2-1)}}}^{1/2}\leq \sigma_{\sup} \rbraces{\E{\abs{\frac{1}{\ell_n} \sum_{i=1}^{\ell_n} (Y_i^2-1)}}}^{1/2}$$
converges towards zero as well. Since $\frac{1}{\ell_n} \sum_{i=1}^{\ell_n} (Y_i^2-1)\pConv 0$, by Theorem 5.4 in \citep{Billingsley.1968}, it suffices to prove uniform integrability to additionally obtain 
$ \E{\abs{\frac{1}{\ell_n} \sum_{i=1}^{\ell_n} (Y_i^2-1)}}\rightarrow 0$. Hence, we need to prove that for some $\eta>0$, it holds 
$$\sup_n \E{\abs{\frac{1}{\ell_n} \sum_{i=1}^{\ell_n} (Y_i^2-1)}^{1+\eta}}<\infty.$$
The Rosenthal-type inequality from Theorem \ref{Literature: Thm Rosenthal} yields
\begin{align*}
\E{\abs{\frac{1}{\ell_n} \sum_{i=1}^{\ell_n} (Y_i^2-1)}^{1+\eta}} \leq \ell_n^{-(1+\eta) } \sum_{i=1}^{\ell_n} \rbraces{\E{\abs{Y_i^2-1}^{1+\eta+\varepsilon}}}^{(1+\eta)/(1+\eta+\varepsilon)} \longrightarrow 0
\end{align*}
for some $\varepsilon>0$ given the conditions $\E{\abs{Y_1}^{2(1+\eta+\varepsilon)}}<\infty$ and $\sum_{k=1}^{\infty} \alpha_{Y^2}(k)^{\varepsilon/(2+\varepsilon)}<\infty$. The first condition is fulfilled, e.g., if we set $\varepsilon=\eta=\delta/4$. The second condition then translates to 
$\sum_{k=1}^{\infty} \alpha_{Y}(k)^{\delta/(8+\delta)}<\infty$
due to $\alpha_{Y^2}(k)\leq \alpha_Y(k)$. Inserting $\alpha_Y(k)\leq C k^{-\rho(2+\delta)(1+\delta)/\delta^2}$ leads to the condition $\rho>\frac{\delta(8+\delta)}{(2+\delta)(1+\delta)}$. Alternatively, given the condition $\E{\abs{Y_1}^{4+2\delta}}<\infty$, one can proceed as in i). 
The third term in  \eqref{eq: Appendix Proof of consistency kappahat for Lipschitz}  is of order $O(1/b_n)$, which can be seen as in i). 
\end{proof}

We now turn towards the behaviour of the estimators $\hat{\kappa}_{\tilde{X}}(n)$ and  $\hat{\kappa}(n)$ under the alternative of a general c\`adl\`ag mean function. 
\begin{lemma} \label{Lemma: convergence estimator kappahat kappaY under A}
 Assume there is  a $0< \delta \leq 1$ and a $\rho>(\delta^2+4\delta)/(2+2\delta)\vee 1$ such that $\mathbb{E}\big({\abs{Y_1}^{2+\delta}}\big)<\infty$ and $\alpha_Y(k)\leq Ck^{-\rho (2+\delta)(1+\delta)/\delta^2}$.  
  Then it holds under the alternative of a general c\`adl\`ag mean function that 
$$\hat{\kappa}_{\tilde{X}}(n)= O_{\Pb}\left(\elln^{1/2}\right).$$
If additionally either $\mathbb{E}\big({\abs{Y_1}^{4+2\delta}}\big)<\infty$ or $\rho>\frac{\delta(8+\delta)}{(2+\delta)(1+\delta)}$, it holds
$$\hat{\kappa}(n)= \frac{\hat{\kappa}_{\tilde{X}}(n)}{\frac{1}{b_n}\sum_{j=1}^{b_n}\hat{\sigma}_{j,n}}= O_{\Pb}\left(\elln^{1/2}\right).$$
\end{lemma}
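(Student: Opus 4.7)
The plan is to split $\hat{\kappa}_{\tilde{X}}(n)$ via the triangle inequality into a mean-driven contribution and a noise contribution and to bound each separately. Writing $X_i = \mu_i + \sigma_i Y_i$ and defining
\[
A_j^{(\mu)} := \sum_{i=(j-1)\elln+1}^{j\elln} \mu_i - \frac{1}{2c_0}\Bigl(\sum_{i=(j-1-c_0)\elln+1}^{(j-1)\elln} \mu_i + \sum_{i=j\elln+1}^{(j+c_0)\elln} \mu_i\Bigr),
\]
with $A_j^{(Y)}$ the analogous expression obtained by replacing $\mu_i$ by $\sigma_i Y_i$, the elementary inequality $|a+b|\leq |a|+|b|$ yields
\[
\hat{\kappa}_{\tilde{X}}(n) \leq \sqrt{\tfrac{2c_0}{1+2c_0}}\sqrt{\tfrac{\pi}{2}}\Bigl(\frac{1}{\bn}\sum_{j=1}^{\bn}\frac{|A_j^{(\mu)}|}{\sqrt{\elln}}+\frac{1}{\bn}\sum_{j=1}^{\bn}\frac{|A_j^{(Y)}|}{\sqrt{\elln}}\Bigr).
\]
Since $\mu$ is c\`adl\`ag and therefore bounded, $|A_j^{(\mu)}|\leq 2\elln\mu_{\sup}$ pointwise, so the mean part is bounded deterministically by $C\sqrt{\elln}$. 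The noise part equals $\hat{\kappa}_{\tilde{X}}(n)$ evaluated for the auxiliary model with $\mu\equiv 0$; Proposition \ref{Prop: Consistency alpha-mix} (constant-mean case) then provides $L^2$-convergence to $\int_0^1\sigma(x)\mathrm{d}x\,\kappa_Y$, and in particular $O_{\Pb}(1)$. Combining gives $\hat{\kappa}_{\tilde{X}}(n)=O_{\Pb}(\elln^{1/2})$.

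For the second claim it suffices to show that the denominator $\bar{\hat{\sigma}}:=\frac{1}{b_n}\sum_{j=1}^{b_n}\hat{\sigma}_{j,n}$ is bounded below in probability by a strictly positive constant, since then $\hat{\kappa}(n) = \hat{\kappa}_{\tilde{X}}(n)/\bar{\hat{\sigma}} = O_{\Pb}(\elln^{1/2})$. Setting $B_j := \{(j-1)\ell_n+1, \ldots, j\ell_n\}$, $\bar{\mu}_j := \frac{1}{\ell_n}\sum_{i\in B_j}\mu_i$, $\bar{\nu}_j := \frac{1}{\ell_n}\sum_{i\in B_j}\sigma_i Y_i$, and decomposing $X_i-\hat{\mu}_j = (\mu_i-\bar{\mu}_j)+(\sigma_i Y_i-\bar{\nu}_j)$, one obtains $\hat{\sigma}_{j,n}^2 = V_j^{(\mu)}+V_j^{(Y)}+2\,\mathrm{EmpCov}_j$ with $V_j^{(\mu)} := \frac{1}{\ell_n}\sum_{i\in B_j}(\mu_i-\bar{\mu}_j)^2$, $V_j^{(Y)} := \frac{1}{\ell_n}\sum_{i\in B_j}(\sigma_i Y_i-\bar{\nu}_j)^2$ and $|\mathrm{EmpCov}_j|\leq \sqrt{V_j^{(\mu)} V_j^{(Y)}}$ by Cauchy--Schwarz. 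Consequently $\hat{\sigma}_{j,n}^2 \geq (\sqrt{V_j^{(Y)}}-\sqrt{V_j^{(\mu)}})^2$ and
\[
\hat{\sigma}_{j,n}\geq \bigl|\sqrt{V_j^{(Y)}}-\sqrt{V_j^{(\mu)}}\bigr| \geq \sqrt{V_j^{(Y)}}-\sqrt{V_j^{(\mu)}},
\]
whence $\bar{\hat{\sigma}} \geq \frac{1}{b_n}\sum_{j=1}^{b_n}\sqrt{V_j^{(Y)}}-\frac{1}{b_n}\sum_{j=1}^{b_n}\sqrt{V_j^{(\mu)}}$.

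The second average converges to zero owing to the c\`adl\`ag property of $\mu$: by Lemma 1, Section 14 in Billingsley \citep{Billingsley.1968}, for every $\varepsilon>0$ there exists a finite partition $0=t_0<\cdots<t_r=1$ of $[0,1]$ on each of whose pieces $\mu$ oscillates by less than $\varepsilon$, so that all but at most $r$ blocks are contained in a single piece and hence satisfy $V_j^{(\mu)}<\varepsilon^2$, while the exceptional blocks trivially satisfy $V_j^{(\mu)}\leq 4\mu_{\sup}^2$. This gives $\frac{1}{b_n}\sum\sqrt{V_j^{(\mu)}}\leq \frac{2r\mu_{\sup}}{b_n}+\varepsilon$, and sending first $n\to\infty$ and then $\varepsilon\to 0$ yields convergence to zero. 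The first average is $\frac{1}{b_n}\sum\hat{\sigma}_{j,n}$ computed in the auxiliary model with $\mu\equiv 0$; under the moment/mixing conditions assumed in the second part of the lemma, the calculation following \eqref{eq: Appendix Proof of consistency kappahat for Lipschitz} in the proof of Proposition \ref{Prop: convergence estimator kappahat kappaY under H}(ii) applies to this model and yields $\frac{1}{b_n}\sum\sqrt{V_j^{(Y)}}\pConv \int_0^1\sigma(x)\mathrm{d}x\geq \sigma_0>0$. Hence $\bar{\hat{\sigma}}>\sigma_0/2$ with probability tending to one, completing the argument. The main obstacle is precisely this denominator estimate: the Cauchy--Schwarz absorption of the cross term is painless, but one must carefully verify that the c\`adl\`ag property of $\mu$ alone --- a strictly weaker assumption than the piecewise-Lipschitz one featuring in Proposition \ref{Prop: convergence estimator kappahat kappaY under H} --- is enough to render $\frac{1}{b_n}\sum\sqrt{V_j^{(\mu)}}$ asymptotically negligible, whilst the convergence of $\frac{1}{b_n}\sum\sqrt{V_j^{(Y)}}$ continues to hold.
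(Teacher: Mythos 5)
Your proof is correct, and its overall skeleton matches the paper's: for the numerator you peel off the mean contribution, bound it deterministically by $2\mu_{\sup}\elln^{1/2}$, and invoke Proposition \ref{Prop: Consistency alpha-mix} for the remaining noise part, exactly as the paper does; for the denominator both arguments reduce to showing that $\frac{1}{b_n}\sum_{j}\hat{\sigma}_{j,n}$ stays above a positive constant by isolating the pure-noise block variance $V_j^{(Y)}=\frac{1}{\ell_n}\sum_i\sigma_i^2Y_i^2-\big(\frac{1}{\ell_n}\sum_i\sigma_iY_i\big)^2$, whose average converges to $\int_0^1\sigma(x)\,\mathrm{d}x\ge \sigma_0>0$ under the additional moment or mixing condition. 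The one place you genuinely deviate is the treatment of the cross term in $\hat{\sigma}_{j,n}^2=V_j^{(\mu)}+V_j^{(Y)}+2\,\mathrm{EmpCov}_j$: the paper keeps the cross terms in un-centered form, subtracts $\big|\frac{2}{\ell_n}\sum_i\sigma_i\mu_iY_i\big|^{1/2}$ and $\big|2\bar{\nu}_j\bar{\mu}_j\big|^{1/2}$, and shows their averages vanish in $L^1$ via Davydov's covariance inequality (using only $\mu_{\sup}<\infty$), whereas you absorb $\mathrm{EmpCov}_j$ by Cauchy--Schwarz into $\sqrt{V_j^{(\mu)}V_j^{(Y)}}$ to get the pointwise bound $\hat{\sigma}_{j,n}\ge \sqrt{V_j^{(Y)}}-\sqrt{V_j^{(\mu)}}$ and then annihilate $\frac{1}{b_n}\sum_j\sqrt{V_j^{(\mu)}}$ deterministically with Billingsley's oscillation lemma for c\`adl\`ag functions. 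Your variant is slightly more elementary --- no stochastic estimate is needed for the cross term --- and it makes transparent why a general c\`adl\`ag mean suffices; the paper's variant spends one more application of the mixing machinery but arrives at the same conclusion from boundedness of $\mu$ alone. Both are valid.
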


\begin{proof}[Proof of Lemma \ref{Lemma: convergence estimator kappahat kappaY under A}]
The long run variance estimator $\hat{\kappa}_{\tilde{X}}$ for a non-zero mean function can be split up into the estimator $\hat{\kappa}_{\tilde{X}; \mathbb{H}}(n)$ we obtain under the hypothesis, i.e. for $\mu=0$, and an additional term containing the mean function $\mu$. We have
\begin{align*}
&\abs{\hat{\kappa}_{\tilde{X}}(n)-\kappa_Y\int_0^1\sigma(x)\mathrm{d}x}
\leq  \abs{\hat{\kappa}_{\tilde{X}; \mathbb{H}}(n)-\kappa_Y\int_0^1\sigma(x)\mathrm{d}x}\\
& +  \sqrt{\frac{2c_0}{1+2c_0}}\sqrt{\frac{\pi}{2}}\frac{1}{\bn} \sum_{j=1}^{\bn} \frac{1}{\sqrt{\elln}} \abs{\sum_{i=(j-1)\elln+1}^{j\elln} \rbraces{\mu_i-\frac{1}{2c_0\elln}\rbraces{\sum_{r=(j-c_0-1)\elln+1}^{(j-1)\elln}\mu_r+\sum_{r=j\elln+1}^{(j+c_0)\elln}\mu_r}}}.
\end{align*}
We considered the first difference in Proposition \ref{Prop: Consistency alpha-mix} and showed that it is at most of order $O_{\mathbb{P}}(1)$ (in fact, we have shown more). The second term does not converge towards zero for an arbitrary mean function, we can only bound the expression by $\sqrt{2c_0/(1+2c_0)}\sqrt{2\pi} \mu_{\sup} \elln^{1/2}$. 

To prove the second assertion $\hat{\kappa}(n)=O_{\Pb}\left(\elln^{1/2}\right)$, we have to show that the sample mean of the estimated variances is bounded away from zero.  It holds
\begin{align*}
 &\frac{1}{b_n}\sum_{j=1}^{b_n}\hat{\sigma}_{j,n}
 =  \frac{1}{b_n}\sum_{j=1}^{b_n} \left(\frac{1}{\ell_n} \sum_{i=(j-1)\ell_n+1}^{j\ell_n} \sigma_i^2Y_i^2-\rbraces{\frac{1}{\ell_n}\sum_{i=(j-1)\ell_n+1}^{j\ell_n} \sigma_iY_i}^2  + \frac{2}{\ell_n}\sum_{i=(j-1)\ell_n+1}^{j\ell_n} \sigma_i\mu_iY_i \right. \\
&  \left. - 2\rbraces{\frac{1}{\ell_n} \sum_{i=(j-1)\ell_n+1}^{j\ell_n}\sigma_iY_i}\rbraces{\frac{1}{\ell_n} \sum_{i=(j-1)\ell_n+1}^{j\ell_n}\mu_i} +\frac{1}{\ell_n} \sum_{i=(j-1)\ell_n+1}^{j\ell_n} \mu_i^2- \rbraces{\frac{1}{\ell_n} \sum_{i=(j-1)\ell_n+1}^{j\ell_n}\mu_i}^2  \right)^{1/2}\\
\geq &  \frac{1}{b_n}\sum_{j=1}^{b_n} \left(\frac{1}{\ell_n} \sum_{i=(j-1)\ell_n+1}^{j\ell_n} \sigma_i^2Y_i^2-\rbraces{\frac{1}{\ell_n}\sum_{i=(j-1)\ell_n+1}^{j\ell_n} \sigma_iY_i}^2+\frac{1}{\ell_n} \sum_{i=(j-1)\ell_n+1}^{j\ell_n} \mu_i^2- \rbraces{\frac{1}{\ell_n} \sum_{i=(j-1)\ell_n+1}^{j\ell_n}\mu_i}^2 \right)^{1/2} \\
&  -\frac{1}{b_n}\sum_{j=1}^{b_n} \abs{ \frac{2}{\ell_n}\sum_{i=(j-1)\ell_n+1}^{j\ell_n} \sigma_i\mu_iY_i}^{1/2} - \frac{1}{b_n}\sum_{j=1}^{b_n}\abs{2\rbraces{\frac{1}{\ell_n} \sum_{i=(j-1)\ell_n+1}^{j\ell_n}\sigma_iY_i}\rbraces{\frac{1}{\ell_n} \sum_{i=(j-1)\ell_n+1}^{j\ell_n}\mu_i}}^{1/2} \\
\geq &  \frac{1}{b_n}\sum_{j=1}^{b_n} \left(\frac{1}{\ell_n} \sum_{i=(j-1)\ell_n+1}^{j\ell_n} \sigma_i^2Y_i^2-\rbraces{\frac{1}{\ell_n}\sum_{i=(j-1)\ell_n+1}^{j\ell_n} \sigma_iY_i}^2 \right)^{1/2} \\
&  -\frac{1}{b_n}\sum_{j=1}^{b_n} \abs{ \frac{2}{\ell_n}\sum_{i=(j-1)\ell_n+1}^{j\ell_n} \sigma_i\mu_iY_i}^{1/2} - \frac{1}{b_n}\sum_{j=1}^{b_n}\abs{2\rbraces{\frac{1}{\ell_n} \sum_{i=(j-1)\ell_n+1}^{j\ell_n}\sigma_iY_i}\rbraces{\frac{1}{\ell_n} \sum_{i=(j-1)\ell_n+1}^{j\ell_n}\mu_i}}^{1/2}.
 \end{align*}
The last two terms converge in $L^1$ towards zero, irrespective of the mean function $\mu$, as shown in the proof of Proposition \ref{Prop: convergence estimator kappahat kappaY under H}, while the first was shown to converge in $L^1$ towards $\int_0^1\sigma(x)\mathrm{d}x\geq \sigma_0>0$ given one of the conditions $\rho>\frac{\delta(8+\delta)}{(2+\delta)(1+\delta)}$ or $\E{\abs{Y_1}^{4+2\delta}}<\infty$.
\end{proof}

\subsubsection{Estimating the centring term}

\begin{proof}[Proof of Proposition \ref{Prop: Convergence estimated centring term}]
As shown in the proof of Proposition \ref{Prop: Replacing centring term}, it holds for the Riemann-approximation that 
$$\sqrt{b_n}\abs{\frac{1}{b_n(b_n-1)}\sum_{1\leq j\neq k \leq b_n} \sqrt{\sigma_{j\ell_n}^2+\sigma_{k\ell_n}^2}-\int_0^1\int_0^1 \sqrt{\sigma^2(x)+\sigma^2(y)} \mathrm{d}x\mathrm{d}y}\longrightarrow 0,$$
such that it suffices to prove 
$$\sqrt{b_n}\abs{\frac{1}{b_n(b_n-1)}\sum_{1\leq j\neq k \leq b_n} \sqrt{\hat{\sigma}_{j,n}^2+\hat{\sigma}_{k,n}^2} -  \frac{1}{b_n(b_n-1)}\sum_{1\leq j\neq k \leq b_n} \sqrt{\sigma_{j\ell_n}^2+\sigma_{k\ell_n}^2} }\overset{L^1}{\rightarrow} \;0.$$
We split the sum up via
\begin{align*}
&\sqrt{b_n}\abs{\frac{1}{b_n(b_n-1)}\sum_{1\leq j\neq k \leq b_n} \sqrt{\hat{\sigma}_{j,n}^2+\hat{\sigma}_{k,n}^2} -  \frac{1}{b_n(b_n-1)}\sum_{1\leq j\neq k \leq b_n} \sqrt{\sigma_{j\ell_n}^2+\sigma_{k\ell_n}^2} }\\
\leq & \frac{\sqrt{b_n}}{b_n(b_n-1)}\sum_{1\leq j\neq k \leq b_n} \abs{\sqrt{\hat{\sigma}_{j,n}^2+\hat{\sigma}_{k,n}^2} - \sqrt{\sigma_{j\ell_n}^2+\sigma_{k\ell_n}^2} } \1_{\{\abs{\sigma_{j\ell_n}^2-\hat{\sigma}_{j,n}^2}\leq \sigma_0^2/2\}}\\
& + \frac{\sqrt{b_n}}{b_n(b_n-1)}\sum_{1\leq j\neq k \leq b_n} \abs{\sqrt{\hat{\sigma}_{j,n}^2+\hat{\sigma}_{k,n}^2} - \sqrt{\sigma_{j\ell_n}^2+\sigma_{k\ell_n}^2} } \1_{\{\abs{\sigma_{k\ell_n}^2-\hat{\sigma}_{k,n}^2}\leq \sigma_0^2/2\}}\\
& +  \frac{\sqrt{b_n}}{b_n(b_n-1)}\sum_{1\leq j\neq k \leq b_n} \abs{\sqrt{\hat{\sigma}_{j,n}^2+\hat{\sigma}_{k,n}^2} - \sqrt{\sigma_{j\ell_n}^2+\sigma_{k\ell_n}^2} } \1_{\{\abs{\sigma_{j\ell_n}^2-\hat{\sigma}_{j,n}^2}> \sigma_0^2/2\}} \1_{\{\abs{\sigma_{k\ell_n}^2-\hat{\sigma}_{k,n}^2}>\sigma_0^2/2\}}.
\end{align*}
If $\abs{\sigma_{j\ell_n}^2-\hat{\sigma}_{j,n}^2}\leq \sigma_0^2/2$, we have $\hat{\sigma}_{j,n}^2+\hat{\sigma}_{k,n}^2\geq\sigma_0^2/2$ since $\sigma^2(x)\geq \sigma_0^2>0$ for all $x\in [0,1]$. Hence, for the first term, the square root is Lipschitz-continuous, yielding 
\begin{align*}
& \frac{\sqrt{b_n}}{b_n(b_n-1)}\sum_{1\leq j\neq k \leq b_n}\abs{ \sqrt{\hat{\sigma}_{j,n}^2+\hat{\sigma}_{k,n}^2} - \sqrt{\sigma_{j\ell_n}^2+\sigma_{k\ell_n}^2} } \1_{\{\abs{\sigma_{j\ell_n}^2-\hat{\sigma}_{j,n}^2}\leq \sigma_0^2/2\}}\\
\leq & \frac{\sqrt{b_n}}{b_n(b_n-1)}\sum_{1\leq j\neq k \leq b_n}\rbraces{  \abs{\sigma_{j\ell_n}^2-\hat{\sigma}_{j,n}^2}+ \abs{\sigma_{k\ell_n}^2-\hat{\sigma}_{k,n}^2} } \1_{\{\abs{\sigma_{j\ell_n}^2-\hat{\sigma}_{j,n}^2}\leq \sigma_0^2/2\}}\\
\leq & \frac{\sqrt{b_n}}{b_n(b_n-1)}\sum_{1\leq j\neq k \leq b_n}\rbraces{  \abs{\sigma_{j\ell_n}^2-\hat{\sigma}_{j,n}^2}+ \abs{\sigma_{k\ell_n}^2-\hat{\sigma}_{k,n}^2} }\\
\leq & \frac{2}{\sqrt{b_n}} \sum_{j=1}^{b_n} \abs{\sigma_{j\ell_n}^2-\hat{\sigma}_{j,n}^2}.
\end{align*}

 It holds 
\begin{align*}
& \frac{1}{\sqrt{b_n}} \sum_{j=1}^{b_n} \E{\abs{\sigma_{j\ell_n}^2-\hat{\sigma}_{j,n}^2}}\\
 = & \frac{1}{\sqrt{b_n}} \sum_{j=1}^{b_n} \E{\abs{\sigma_{j\ell_n}^2-\frac{1}{\ell_n} \sum_{i=(j-1)\ell_n+1}^{j\ell_n} \sigma_i^2Y_i^2+\rbraces{\frac{1}{\ell_n}\sum_{i=(j-1)\ell_n+1}^{j\ell_n} \sigma_iY_i}^2}}\\
\leq &  \frac{1}{\sqrt{b_n}} \sum_{j=1}^{b_n} \left(\E{\abs{\sigma_{j\ell_n}^2-\frac{1}{\ell_n} \sum_{i=(j-1)\ell_n+1}^{j\ell_n} \sigma_{j\ell_n}^2Y_i^2 }}\right.\\
& \left. +\E{\abs{\frac{1}{\ell_n} \sum_{i=(j-1)\ell_n+1}^{j\ell_n} \sigma_i^2Y_i^2-\frac{1}{\ell_n} \sum_{i=(j-1)\ell_n+1}^{j\ell_n} \sigma_{j\ell_n}^2Y_i^2}}+\E{\abs{\frac{1}{\ell_n}\sum_{i=(j-1)\ell_n+1}^{j\ell_n} \sigma_iY_i}^2}\right)\\
\leq & \frac{1}{\sqrt{b_n}} \sum_{j=1}^{b_n} \left( \sigma_{\sup}^2\E{\abs{\frac{1}{\ell_n} \sum_{i=(j-1)\ell_n+1}^{j\ell_n} (Y_i^2-1) }}+\E{\abs{\frac{1}{\ell_n} \sum_{i=(j-1)\ell_n+1}^{j\ell_n} (\sigma_i^2-\sigma_{j\ell_n}^2)Y_i^2}}\right.\\
& \left. +\frac{1}{\ell_n^2}\sum_{i=(j-1)\ell_n+1}^{j\ell_n}\sum_{r=(j-1)\ell_n+1}^{j\ell_n} \sigma_{\sup}^2\abs{\cov{Y_i}{Y_r}}\right)\\
\leq & \sqrt{b_n} \left( \sigma_{\sup}^2\E{\abs{\frac{1}{\ell_n} \sum_{i=1}^{\ell_n} (Y_i^2-1) }}+2\sigma_{sup} \frac{1+k_\sigma\Delta_\sigma}{b_n}\E{\abs{\frac{1}{\ell_n} \sum_{i=1}^{\ell_n}Y_i^2}}\right.
  \left. +\sigma_{\sup}^2\frac{1}{\ell_n^2}\sum_{i=1}^{\ell_n}\sum_{r=1}^{\ell_n} \abs{\cov{Y_i}{Y_r}}\right)\\
\leq  & C \sqrt{b_n} \E{\abs{\frac{1}{\ell_n} \sum_{i=1}^{\ell_n} (Y_i^2-1) }^2}^{1/2}+ \frac{C}{\sqrt{b_n}} \E{Y_1^2}
+ C\frac{\sqrt{b_n}}{\ell_n} \rbraces{\E{Y_1^2}+ 2\sum_{k=1}^\infty \abs{\cov{Y_1}{Y_{k+1}}}} \\
\leq  & C\frac{\sqrt{b_n}}{\sqrt{\ell_n}} \rbraces{\E{\abs{Y_1^2-1}^2}+ 2\sum_{k=1}^{\infty} \abs{\cov{Y_1^2-1}{Y_{k+1}^2-1}}}^{1/2}+ \frac{C}{\sqrt{b_n}} \E{Y_1^2}  \\
& +  C\frac{\sqrt{b_n}}{\ell_n} \rbraces{\E{Y_1^2}+16\norm{Y_1}_{2+\delta}^2 \sum_{k=1}^\infty \alpha_{Y}(k)^{\delta/(2+\delta)}}\\
\leq & C \frac{\sqrt{b_n}}{\sqrt{\ell_n}} \rbraces{\E{\abs{Y_1^2-1}^2}+16\norm{Y_1^2-1}_{2+\delta}^2 \sum_{k=1}^{\infty} \alpha_{Y^2}(k)^{\delta/(2+\delta)}}^{1/2}+ \frac{C}{\sqrt{b_n}} \E{Y_1^2}\\
& +  C\frac{\sqrt{b_n}}{\ell_n} \rbraces{\E{Y_1^2}+16\norm{Y_1}_{2+\delta}^2 \sum_{k=1}^\infty \alpha_{Y}(k)^{\delta/(2+\delta)}},
\end{align*}
where we used Davydov's covariance inequality in the last two steps. Since $\alpha_{Y^2}(k)\leq  \alpha_Y(k)$ for all $k\in \N$ and since we assumed $\sum_{k=1}^\infty \alpha_{Y}(k)^{\delta/(2+\delta)}<\infty$ as well as finite $(4+2\delta)$-moments of $Y$, the above expression converges towards zero. 

The second term is identical to the first. 
For the third term, we only have 
\begin{align*}
& \frac{\sqrt{b_n}}{b_n(b_n-1)}\sum_{1\leq j\neq k \leq b_n}\abs{ \sqrt{\hat{\sigma}_{j,n}^2+\hat{\sigma}_{k,n}^2} - \sqrt{\sigma_{j\ell_n}^2+\sigma_{k\ell_n}^2} } \1_{\{\abs{\sigma_{j\ell_n}^2-\hat{\sigma}_{j,n}^2}> \sigma_0^2/2\}} \1_{\{\abs{\sigma_{k\ell_n}^2-\hat{\sigma}_{k,n}^2}> \sigma_0^2/2\}}\\
\leq & \frac{\sqrt{b_n}}{b_n(b_n-1)}\sum_{1\leq j\neq k \leq b_n}\rbraces{\sqrt{\abs{\sigma_{j\ell_n}^2-\hat{\sigma}_{j,n}^2}}+\sqrt{\abs{\sigma_{k\ell_n}^2-\hat{\sigma}_{k,n}^2}} } \1_{\{\abs{\sigma_{j\ell_n}^2-\hat{\sigma}_{j,n}^2}> \sigma_0^2/2\}} \1_{\{\abs{\sigma_{k\ell_n}^2-\hat{\sigma}_{k,n}^2}> \sigma_0^2/2\}}\\
\leq &  \frac{2}{\sqrt{b_n}} \sum_{j=1}^{b_n} \sqrt{\abs{\sigma_{j\ell_n}^2-\hat{\sigma}_{j,n}^2}}\1_{\{\abs{\sigma_{j\ell_n}^2-\hat{\sigma}_{j,n}^2}> \sigma_0^2/2\}}.
\end{align*}
Moreover, 
\begin{align*}
& \frac{1}{\sqrt{b_n}} \sum_{j=1}^{b_n} \E{\sqrt{\abs{\sigma_{j\ell_n}^2-\hat{\sigma}_{j,n}^2}}\1_{\{\abs{\sigma_{j\ell_n}^2-\hat{\sigma}_{j,n}^2}> \sigma_0^2/2\}}}\\
\leq &  \frac{1}{\sqrt{b_n}} \sum_{j=1}^{b_n} \rbraces{\E{\abs{\sigma_{j\ell_n}^2-\hat{\sigma}_{j,n}^2}}}^{1/2}\Pb\rbraces{\abs{\sigma_{j\ell_n}^2-\hat{\sigma}_{j,n}^2}> \sigma_0^2/2}^{1/2}\\
\leq  & \frac{\sqrt{2}}{\sigma_0 }  \frac{1}{\sqrt{b_n}} \sum_{j=1}^{b_n} \rbraces{\E{\abs{\sigma_{j\ell_n}^2-\hat{\sigma}_{j,n}^2}}}^{1/2}\rbraces{\E{\abs{\sigma_{j\ell_n}^2-\hat{\sigma}_{j,n}^2}}}^{1/2}= C \frac{1}{\sqrt{b_n}} \sum_{j=1}^{b_n} \E{\abs{\sigma_{j\ell_n}^2-\hat{\sigma}_{j,n}^2}},  
\end{align*}
which converges towards zero as shown above.
\end{proof}

\begin{lemma}{
\label{Lemma: estimated centring term alternative} Under the alternative of a general c\`adl\`ag function $\mu$, it holds 
$$\frac{1}{b_n(b_n-1)}\sum_{1\leq j\neq k \leq b_n} \sqrt{\hat{\sigma}_{j,n}^2+\hat{\sigma}_{k,n}^2}\cdot \sqrt{\frac{2}{\pi}}=O_{\mathbb{P}}(1).$$
}\end{lemma}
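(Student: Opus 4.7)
The plan is to reduce the double sum to a simple average of the block standard deviations $\hat\sigma_{j,n}$, and then bound the $L^1$-norm of this average by a constant, which suffices for the $O_\mathbb{P}(1)$ conclusion via Markov's inequality.

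First, I would exploit the elementary inequality $\sqrt{a^2+b^2}\leq |a|+|b|$ to obtain
\begin{align*}
\frac{1}{b_n(b_n-1)}\sum_{1\leq j\neq k\leq b_n}\sqrt{\hat\sigma_{j,n}^2+\hat\sigma_{k,n}^2}\;\leq\; \frac{1}{b_n(b_n-1)}\sum_{1\leq j\neq k\leq b_n}\bigl(\hat\sigma_{j,n}+\hat\sigma_{k,n}\bigr)\;=\;\frac{2}{b_n}\sum_{j=1}^{b_n}\hat\sigma_{j,n}.
\end{align*}
An application of Jensen's inequality for the concave square-root function then yields $\frac{1}{b_n}\sum_{j=1}^{b_n}\hat\sigma_{j,n}\leq\bigl(\frac{1}{b_n}\sum_{j=1}^{b_n}\hat\sigma_{j,n}^2\bigr)^{1/2}$, reducing the problem to showing that $\frac{1}{b_n}\sum_{j=1}^{b_n}\E(\hat\sigma_{j,n}^2)=O(1)$.

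Using the decomposition $X_i=\mu_i+\sigma_iY_i$ together with the identity $\hat\sigma_{j,n}^2=\frac{1}{\ell_n}\sum_{i\in B_j}X_i^2-\hat\mu_j^2$ (where $B_j=\{(j-1)\ell_n+1,\ldots,j\ell_n\}$), and $\E(Y_1^2)=1$, one obtains
\begin{align*}
\E\bigl(\hat\sigma_{j,n}^2\bigr)\;\leq\;\frac{1}{\ell_n}\sum_{i\in B_j}\bigl(\mu_i^2+\sigma_i^2\bigr)+\mathrm{Var}(\hat\mu_j)\;\leq\;\mu_{\sup}^2+\sigma_{\sup}^2+\mathrm{Var}(\hat\mu_j).
\end{align*}
The variance $\mathrm{Var}(\hat\mu_j)=\ell_n^{-2}\sum_{i,r\in B_j}\sigma_i\sigma_r\,\mathrm{Cov}(Y_i,Y_r)$ is of order $O(1/\ell_n)$ by Davydov's covariance inequality under (A1)–(A2), exactly as in the treatment of the first approximation step. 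Hence $\E(\hat\sigma_{j,n}^2)$ is bounded uniformly in $j$ and $n$, giving $\E\bigl(\frac{1}{b_n}\sum_{j=1}^{b_n}\hat\sigma_{j,n}^2\bigr)=O(1)$. Markov's inequality then yields $\frac{1}{b_n}\sum_{j=1}^{b_n}\hat\sigma_{j,n}^2=O_\mathbb{P}(1)$, and combining with the two inequalities above finishes the argument.

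There is essentially no obstacle here since a càdlàg mean function is bounded, so the contributions of $\mu$ to $\hat\sigma_{j,n}^2$ are trivially controlled by $\mu_{\sup}^2$; the only calculation of substance is the standard mixing bound on $\mathrm{Var}(\hat\mu_j)$, which has already been used repeatedly in the paper.
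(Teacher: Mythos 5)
Your proposal is correct and follows essentially the same route as the paper: both bound the double sum by $C\,b_n^{-1}\sum_j\hat\sigma_{j,n}$ using $\sqrt{a^2+b^2}\leq|a|+|b|$, pass to $\bigl(b_n^{-1}\sum_j\hat\sigma_{j,n}^2\bigr)^{1/2}$ via Jensen, and then control the average of the squared block variances using only the boundedness of the c\`adl\`ag mean and the second moment of $Y_1$. The sole difference is the last step, where the paper bounds $b_n^{-1}\sum_j\hat\sigma_{j,n}^2$ pathwise by $C\bigl(\sigma_{\sup}^2\, n^{-1}\sum_i Y_i^2+4\mu_{\sup}^2\bigr)$ and invokes the law of large numbers, whereas you take expectations and apply Markov's inequality --- an equally valid and, if anything, slightly more elementary finish (note that the $\mathrm{Var}(\hat\mu_j)$ term in your bound actually enters with a negative sign and could simply be dropped, so the Davydov estimate is not even needed here).
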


\begin{proof}[Proof of Lemma \ref{Lemma: estimated centring term alternative}]
It holds 
\begin{align*}
& \frac{1}{b_n(b_n-1)}\sum_{1\leq j\neq k \leq b_n} \sqrt{\hat{\sigma}_{j,n}^2+\hat{\sigma}_{k,n}^2} \cdot \sqrt{\frac{2}{\pi}}
\leq C \frac{1}{b_n} \sum_{j=1}^{b_n} \hat{\sigma}_{j,n} 
\leq C \rbraces{ \frac{1}{b_n} \sum_{j=1}^{b_n} \hat{\sigma}_{j,n}^2 }^{1/2}
\end{align*}
and 
\begin{align*}
 \frac{1}{b_n} \sum_{j=1}^{b_n} \hat{\sigma}_{j,n}^2 
\leq & 2 \frac{1}{b_n} \sum_{j=1}^{b_n} \frac{1}{\ell_n}\sum_{i=(j-1)\ell_n+1}^{j\ell_n}\rbraces{\rbraces{\sigma_iY_i -\frac{1}{\ell_n} \sum_{r=(j-1)\ell_n+1}^{j\ell_n} \sigma_r Y_r}^2+ \rbraces{\mu_i -\frac{1}{\ell_n} \sum_{r=(j-1)\ell_n+1}^{j\ell_n} \mu_r}^2}\\
\leq &  C \rbraces{\frac{1}{b_n} \sum_{j=1}^{b_n} \rbraces{\frac{1}{\ell_n}\sum_{i=(j-1)\ell_n+1}^{j\ell_n} \sigma_i^2Y_i^2- \rbraces{\frac{1}{\ell_n}\sum_{i=(j-1)\ell_n+1}^{j\ell_n} \sigma_iY_i}^2 } + 4\mu_{\sup}^2} \\
\leq & C \rbraces{\sigma_{\sup}^2 \frac{1}{n}\sum_{i=1}^n Y_i^2 +4\mu_{\sup}^2} \pConv C\rbraces{\sigma_{\sup}^2\E{Y_1^2}+4\mu_{\sup}^2}.
\end{align*}

\end{proof}

\subsubsection{Estimating $\psi^2$}

\begin{proof}[Proof of Proposition \ref{Prop: hatpsin psin approx H}]
Recall the definition of 
$$\psi_n^2= 4\frac{1}{b_n} \sum_{j=1}^{b_n}\E{\abs{\frac{1}{b_n-1} \sum_{\substack{1\leq k\leq b_n\\k\neq j}} \rbraces{\E{h\rbraces{\sigma_{j\ell_n}Z,\sigma_{k\ell_n}Z'}\middle|Z}-\E{h\rbraces{\sigma_{j\ell_n}Z, \sigma_{k\ell_n}Z'}}}}^2}$$
from the proof of Proposition \ref{Prop: CLT U2}, where we showed that $\abs{\psi_n^2-\psi^2}=O(1/b_n)$. It remains to prove $\abs{\hat{\psi}_n^2-\psi^2_n}\overset{L^1}{\rightarrow} \;0$. Note that for arbitrary $x_i$ and $y_i$ in $\R$, $1\leq i\leq b_n$, it holds

\begin{align*}
& \abs{f_n(x_1, \ldots, x_{b_n})-f_n(y_1, \ldots, y_{b_n})} \\
=& \left|\frac{1}{b_n} \sum_{j=1}^{b_n} \left( \E{\abs{\frac{1}{b_n-1}\sum_{\substack{1\leq k \leq b_n\\ k\neq j}} \rbraces{\E{\abs{x_jZ-x_kZ'}|Z}-\E{\abs{x_jZ-x_kZ'}}}}^2} \right. \right. \\
& \left. \left.- \E{\abs{\frac{1}{b_n-1}\sum_{\substack{1\leq k \leq b_n\\ k\neq j}} \rbraces{\E{\abs{y_jZ-y_kZ'}|Z}-\E{\abs{y_jZ-y_kZ'}}}}^2}\right)\right|\\
= &  \frac{1}{b_n} \sum_{j=1}^{b_n} \mathbb{E}  \bigg(  \bigg|\frac{1}{b_n-1}\sum_{\substack{1\leq k \leq b_n\\ k\neq j}} \left(\E{\abs{x_jZ-x_kZ'}|Z}+\E{\abs{y_jZ-y_kZ'}|Z} \right. \\
 & \left.\left. \left. -\E{\abs{x_jZ-x_kZ'}}-\E{\abs{y_jZ-y_kZ'}}\right)\right. \bigg|\right. \\
 &  \left. \left. \cdot  \bigg |\frac{1}{b_n-1}\sum_{\substack{1\leq k \leq b_n\\ k\neq j}} \left( \E{\abs{x_jZ-x_kZ'}|Z}-\E{\abs{y_jZ-y_kZ'}|Z}  \right.\right.\right.\\
 & \left.\left.\left. -\E{\abs{x_jZ-x_kZ'}}+\E{\abs{y_jZ-y_kZ'}}\right) \right.  \bigg|\right.  \bigg)\\
 \leq &  \frac{1}{b_n} \sum_{j=1}^{b_n} \left.  \bigg (\mathbb{E}  \bigg ( \bigg|\frac{1}{b_n-1}\sum_{\substack{1\leq k \leq b_n\\ k\neq j}} \left(\E{\abs{x_jZ-x_kZ'}|Z}+\E{\abs{y_jZ-y_kZ'}|Z} \right.\right.\\
 & \left.\left. \left. \left. -\E{\abs{x_jZ-x_kZ'}}-\E{\abs{y_jZ-y_kZ'}}\right)\right.  \bigg |^2\right.  \bigg)\right.  \bigg)^{1/2} \\
 &   \cdot \left.  \bigg(\mathbb{E}  \bigg (  \bigg |\frac{1}{b_n-1}\sum_{\substack{1\leq k \leq b_n\\ k\neq j}} \left(\E{\abs{x_jZ-x_kZ'}|Z}-\E{\abs{y_jZ-y_kZ'}|Z} \right. \right. \\
 & \left.\left. \left. \left. -\E{\abs{x_jZ-x_kZ'}}+\E{\abs{y_jZ-y_kZ'}}\right)\right.  \bigg |^2\right.  \bigg)\right.  \bigg)^{1/2}.
\end{align*}
We will deal with both expectations separately, 
\begin{align*}
& \left.  \bigg \lVert \frac{1}{b_n-1}\sum_{\substack{1\leq k \leq b_n\\ k\neq j}} \left(\E{\abs{x_jZ-x_kZ'}|Z}+\E{\abs{y_jZ-y_kZ'}|Z} \right. \right.\\
 & \left.\left.  -\E{\abs{x_jZ-x_kZ'}}-\E{\abs{y_jZ-y_kZ'}}\right)\right.  \bigg \rVert_2\\
\leq &  \bigg \lVert (\abs{x_j}+\abs{y_j}) (E{\abs{Z}}+\abs{Z})+ \frac{1}{b_n-1}\sum_{\substack{1\leq k \leq b_n\\ k\neq j}} 2(\abs{x_k}+\abs{y_k})\E{\abs{Z}}\bigg \rVert _2\\
\leq & C \bigg (\abs{x_j}+\abs{y_j} + \frac{1}{b_n-1} \sum_{\substack{1\leq k\leq b_n \\ k\neq j}} \rbraces{\abs{x_k}+\abs{y_k}}\bigg )
\end{align*}
and, similarly, 
\begin{align*}
& \left. \bigg \lVert \frac{1}{b_n-1}\sum_{\substack{1\leq k \leq b_n\\ k\neq j}} \left(\E{\abs{x_jZ-x_kZ'}|Z}-\E{\abs{y_jZ-y_kZ'}|Z} \right.\right.\\
 & \left.\left.-\E{\abs{x_jZ-x_kZ'}}+\E{\abs{y_jZ-y_kZ'}}\right)\right. \bigg \rVert_2\\
\leq  & \bigg \lVert \frac{1}{b_n-1}\sum_{\substack{1\leq k \leq b_n\\ k\neq j}} \rbraces{\abs{(x_j-y_j)Z}+\E{\abs{(x_k-y_k)Z}} +\E{\abs{(x_j-y_j)Z}}+\E{\abs{(x_k-y_k)Z}}}\bigg \rVert _2\\
\leq & C \bigg ( \abs{x_j-y_j}+ \frac{1}{b_n-1}\sum_{\substack{1\leq k \leq b_n\\ k\neq j}} \abs{x_k-y_k}\bigg ).
\end{align*}
Hence, for $x_i,y_i\geq 0$, one obtains
\begin{align*}
& \abs{f_n(x_1, \ldots, x_{b_n})-f_n(y_1, \ldots, y_{b_n})} \\
 \leq & C \frac{1}{b_n} \sum_{j=1}^{b_n}  \rbraces{x_j+y_j + \frac{1}{b_n-1} \sum_{\substack{1\leq k\leq b_n \\ k\neq j}} \rbraces{x_k+y_k}}
 \rbraces{\abs{x_j-y_j}+ \frac{1}{b_n-1}\sum_{\substack{1\leq k \leq b_n\\ k\neq j}} \abs{x_k-y_k}}\\
 \leq & C\frac{1}{b_n} \sum_{j=1}^{b_n} \abs{x_j^2-y_j^2}+C \rbraces{\frac{b_n^2}{(b_n-1)^2}+2\frac{b_n}{b_n-1}} \rbraces{\frac{1}{b_n}\sum_{j=1}^{b_n} \rbraces{x_j+y_j}}\rbraces{\frac{1}{b_n}\sum_{j=1}^{b_n} \abs{x_j-y_j}}
\end{align*}
and consequently, 
\begin{align*}
& \abs{\hat{\psi}_n^2-\psi_n^2}\\
\leq &C\frac{1}{b_n} \sum_{j=1}^{b_n} \abs{\hat{\sigma}_{j,n}^2-\sigma_{j\ell_n}^2}+C \rbraces{\frac{b_n^2}{(b_n-1)^2}+2\frac{b_n}{b_n-1}} \rbraces{\frac{1}{b_n}\sum_{j=1}^{b_n} \rbraces{\hat{\sigma}_{j,n}+\sigma_{j\ell_n}}}\rbraces{\frac{1}{b_n}\sum_{j=1}^{b_n} \abs{\hat{\sigma}_{j,n}-\sigma_{j\ell_n}}}.
\end{align*}
For the first term, we obtain 
$$\frac{1}{b_n} \sum_{j=1}^{b_n} \E{\abs{\hat{\sigma}_{j,n}^2-\sigma_{j\ell_n}^2}}\rightarrow 0, $$
which can be seen as in  Propositions \ref{Prop: convergence estimator kappahat kappaY under H} ii) and \ref{Prop: Convergence estimated centring term}.
 Turning towards the second term, it holds
\begin{align*}
& \E{ \rbraces{\frac{1}{b_n}\sum_{j=1}^{b_n} \rbraces{\hat{\sigma}_{j,n}+\sigma_{j\ell_n}}} \cdot\rbraces{\frac{1}{b_n}\sum_{j=1}^{b_n} \abs{\hat{\sigma}_{j,n}-\sigma_{j\ell_n}}}}\\
\leq &\rbraces{\frac{1}{b_n}\sum_{j=1}^{b_n} \sigma_{j\ell_n}} \cdot \rbraces{\frac{1}{b_n}\sum_{j=1}^{b_n} \E{\abs{\hat{\sigma}_{j,n}-\sigma_{j\ell_n}}}}
+ \E{\rbraces{\frac{1}{b_n}\sum_{j=1}^{b_n} \hat{\sigma}_{j,n}} \cdot\rbraces{\frac{1}{b_n}\sum_{j=1}^{b_n} \abs{\hat{\sigma}_{j,n}-\sigma_{j\ell_n}}}}\\
\leq & \sigma_{\sup} \frac{1}{b_n}\sum_{j=1}^{b_n} \E{\abs{\hat{\sigma}_{j,n}^2-\sigma_{j\ell_n}^2}^{1/2}} +  \norm{\frac{1}{b_n}\sum_{j=1}^{b_n} \hat{\sigma}_{j,n}}_2\cdot\norm{\frac{1}{b_n}\sum_{j=1}^{b_n} \abs{\hat{\sigma}_{j,n}-\sigma_{j\ell_n}}}_2,
\end{align*}
where we omitted the asymptotically negligible factor $ \rbraces{\frac{b_n^2}{(b_n-1)^2}+2\frac{b_n}{b_n-1}}$. Moreover, it holds 
$$\frac{1}{b_n}\sum_{j=1}^{b_n} \E{\abs{\hat{\sigma}_{j,n}^2-\sigma_{j\ell_n}^2}^{1/2}}\leq \frac{1}{b_n}\sum_{j=1}^{b_n} \rbraces{\E{\abs{\hat{\sigma}_{j,n}^2-\sigma_{j\ell_n}^2}}}^{1/2}\rightarrow 0,$$
which can again be shown analogously to  Propositions \ref{Prop: convergence estimator kappahat kappaY under H} ii) and \ref{Prop: Convergence estimated centring term}.
The first factor of the second term above is bounded due to 
 \begin{align*}
\norm{\frac{1}{b_n}\sum_{j=1}^{b_n} \hat{\sigma}_{j,n}}_2
 \leq \frac{1}{b_n}\sum_{j=1}^{b_n} \norm{ \hat{\sigma}_{j,n}}_2 
 \leq  \frac{1}{b_n}\sum_{j=1}^{b_n} \rbraces{\E{\frac{1}{\ell_n}\sum_{i=(j-1)\ell_n+1}^{j\ell_n} \sigma_i^2Y_i^2}}^{1/2} \leq \sigma_{\sup} \norm{Y_1}_2, 
 \end{align*}
 whereas the second factor converges towards zero as it can once more be bounded by 
 \begin{align*}
\norm{\frac{1}{b_n}\sum_{j=1}^{b_n} \abs{\hat{\sigma}_{j,n}-\sigma_{j\ell_n}}}_{2}
\leq \frac{1}{b_n}\sum_{j=1}^{b_n} \norm{\hat{\sigma}_{j,n}-\sigma_{j\ell_n}}_2
 \leq \frac{1}{b_n}\sum_{j=1}^{b_n} \rbraces{\E{ \abs{\hat{\sigma}_{j,n}^2-\sigma_{j\ell_n}^2}}}^{1/2}.
 \end{align*}
\end{proof}

\begin{lemma}{
\label{Lemma: hatpsin under A}
 Under the alternative of a general c\`adl\`ag function $\mu$, it holds 
$$ \hat{\psi}_n=O_{\mathbb{P}}(1).$$
}\end{lemma}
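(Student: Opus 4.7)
The plan is to reduce the claim to the bound already developed on the increments of the function $f_n$ in the proof of Proposition \ref{Prop: hatpsin psin approx H}, combined with the moment bound on the block variances shown in the proof of Lemma \ref{Lemma: estimated centring term alternative}.

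First, I would observe that $\hat{\psi}_n^2 = 4 f_n(\hat{\sigma}_{1,n}, \ldots, \hat{\sigma}_{b_n,n})$ with all arguments non-negative, and that $f_n(0, \ldots, 0) = 0$. Applying the Lipschitz-type bound on $|f_n(x) - f_n(y)|$ derived in the proof of Proposition \ref{Prop: hatpsin psin approx H} with $y_j = 0$ for all $j$, I obtain
\begin{equation*}
f_n(x_1, \ldots, x_{b_n}) \leq C \frac{1}{b_n} \sum_{j=1}^{b_n} x_j^2 + C\left(\frac{1}{b_n} \sum_{j=1}^{b_n} x_j\right)^2 \leq C \frac{1}{b_n} \sum_{j=1}^{b_n} x_j^2,
\end{equation*}
where the final step is Jensen's inequality. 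Substituting $x_j = \hat{\sigma}_{j,n}$ yields
\begin{equation*}
\hat{\psi}_n^2 \leq C \frac{1}{b_n} \sum_{j=1}^{b_n} \hat{\sigma}_{j,n}^2.
\end{equation*}

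Next, I would invoke the computation carried out in the proof of Lemma \ref{Lemma: estimated centring term alternative}, which shows that under an arbitrary c\`adl\`ag mean function,
\begin{equation*}
\frac{1}{b_n} \sum_{j=1}^{b_n} \hat{\sigma}_{j,n}^2 \leq C \left(\sigma_{\sup}^2 \cdot \frac{1}{n} \sum_{i=1}^n Y_i^2 + 4\mu_{\sup}^2\right) \pConv C \left(\sigma_{\sup}^2 \E{Y_1^2} + 4\mu_{\sup}^2\right),
\end{equation*}
using the ergodicity of the $\alpha$-mixing process $(Y_i)_{i\in\N}$ with finite second moment. In particular, the right-hand side is $O_{\Pb}(1)$, and combining the two displays yields $\hat{\psi}_n = O_{\Pb}(1)$.

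There is no substantial obstacle here, since both ingredients—the pointwise bound on $f_n$ and the $L^1$-boundedness of the averaged block variances—have already been established in the surrounding material. The only point requiring a brief sanity check is that the bound from Proposition \ref{Prop: hatpsin psin approx H} is stated for the difference $|f_n(x) - f_n(y)|$ with non-negative arguments, so that setting $y = 0$ is permissible and produces precisely the asserted estimate on $f_n$ itself.
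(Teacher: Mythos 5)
Your proposal is correct and follows essentially the same route as the paper: a pointwise quadratic bound $f_n(x_1,\ldots,x_{b_n})\leq C\big(\frac{1}{b_n}\sum_j x_j^2 + (\frac{1}{b_n}\sum_j x_j)^2\big)$ (which the paper derives directly from the definition of $f_n$, while you obtain it by specializing the increment bound from the proof of Proposition \ref{Prop: hatpsin psin approx H} to $y=0$), combined with the $O_{\Pb}(1)$ bound on $\frac{1}{b_n}\sum_j\hat{\sigma}_{j,n}^2$ already established in the proof of Lemma \ref{Lemma: estimated centring term alternative}. The Jensen step merging the two terms is a harmless simplification.
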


\begin{proof}[Proof of Lemma \ref{Lemma: hatpsin under A}]
For arbitrary $x_i\geq 0$, $1\leq i\leq b_n$, it holds
\begin{align*}
& f_n(x_1, \ldots, x_{b_n}) 
\leq \frac{1}{b_n} \sum_{j=1}^{b_n}\E{\abs{\frac{1}{b_n-1} \sum_{\substack{1\leq k\leq b_n\\k\neq j}} 2x_k \E{\abs{Z}}+x_j(\abs{Z}+\E{\abs{Z}})}^2}\\
\leq & 8 \E{\abs{Z}}^2 \rbraces{\frac{1}{b_n-1} \sum_{k=1}^{b_n} x_k}^2  + \frac{2}{b_n}\sum_{j=1}^{b_n} x_j^2 \rbraces{\E{Z^2}+3\rbraces{\E{\abs{Z}}}^2}\\
\leq & C \rbraces{\rbraces{\frac{1}{b_n-1} \sum_{k=1}^{b_n} x_k}^2+ \frac{1}{b_n}\sum_{j=1}^{b_n} x_j^2}.
\end{align*}
Hence, 
$$ \hat{\psi}_n \leq C  \rbraces{\rbraces{\frac{1}{b_n-1} \sum_{j=1}^{b_n} \hat{\sigma}_{j,n}}^2+ \frac{1}{b_n}\sum_{j=1}^{b_n} \hat{\sigma}_{j,n}^2}.$$
The boundedness of these expressions has already been checked in the proof of Lemma \ref{Lemma: estimated centring term alternative}.

\end{proof}

\subsection{Proof of Theorems \ref{Cor: Asymptotic test H}, \ref{Cor: Asymptotic test A} and \ref{Cor: Asymptotic test, sigma=c}}
\begin{proof}[Proof of Theorem \ref{Cor: Asymptotic test H}]
We can rewrite the test statistic as
\begin{align*}
& \sqrt{b_n}\frac{1}{\hat{\psi}_n}\rbraces{\frac{\sqrt{\ell_n}}{\hat{\kappa}(n)}U(n) -  \frac{1}{b_n(b_n-1)}\sum_{1\leq j\neq k \leq b_n} \sqrt{\hat{\sigma}_j^2+\hat{\sigma}_{k,n}^2} \sqrt{\frac{2}{\pi}}}\\
= & \frac{\psi}{\hat{\psi}_n} \left(\frac{\sqrt{b_n}}{\psi} \frac{\sqrt{\ell_n}U(n)}{\kappa_Y} \rbraces{\frac{\kappa_Y-\hat{\kappa}(n)}{\hat{\kappa}(n)}} + \frac{\sqrt{b_n}}{\psi} \rbraces{\frac{\sqrt{\ell_n}}{\kappa_Y}U(n)-\int_0^1\int_0^1 \sqrt{\sigma^2(x)+\sigma^2(y)} \mathrm{d}x\mathrm{d}y \sqrt{\frac{2}{\pi}} }  \right. \\
 & \left.+  \frac{\sqrt{b_n}}{\psi}\rbraces{\int_0^1\int_0^1 \sqrt{\sigma^2(x)+\sigma^2(y)} \mathrm{d}x\mathrm{d}y \sqrt{\frac{2}{\pi}}-\frac{1}{b_n(b_n-1)}\sum_{1\leq j\neq k \leq b_n} \sqrt{\hat{\sigma}_{j,n}^2+\hat{\sigma}_{k,n}^2} \sqrt{\frac{2}{\pi}}} \right).
\end{align*}
The first of these terms converges in probability towards zero due to Theorem \ref{THM: LLN} and Proposition \ref{Prop: convergence estimator kappahat kappaY under H}. The second term converges in distribution towards the desired standard normal limit due to Theorem \ref{Thm: Central limit theorem}, while the third one converges in $L^1$ towards zero due to Proposition \ref{Prop: Convergence estimated centring term}. Moreover,  $ \psi/\hat{\psi}_n$ converges in $L^1$ towards one due to Proposition \ref{Prop: hatpsin psin approx H}. Since $L^1$-convergence implies convergence in probability, the assertion then follows by Slutzky's lemma. 
\end{proof}

\begin{proof}[Proof of Theorem \ref{Cor: Asymptotic test A}]
The proof follows from a combination of Theorem \ref{Thm: Behaviour under A}, Lemmas \ref{Lemma: estimated centring term alternative} and \ref{Lemma: hatpsin under A} and either Proposition \ref{Prop: convergence estimator kappahat kappaY under H} or Lemma \ref{Lemma: convergence estimator kappahat kappaY under A}, depending on the form of $\mu$.
\end{proof}

\begin{proof}[Proof of Theorem \ref{Cor: Asymptotic test, sigma=c}]
The proof is a mere consequence of Theorem \ref{THM: LLN}, Theorem \ref{Thm: Central limit theorem} and Corollary  \ref{Cor: Convergence speed LRV esti}.
\end{proof}
\pagebreak

\section{Auxiliary results from the literature}
This section collects some key results from the literature that are essential tools for our proofs. 

\begin{theorem}[Theorem 10.7 in Bradley \citep{Bradley.2007}]
Let $(X_i)_{i\in \mathbb{Z}}$ be a strictly stationary, $\alpha$-mixing sequence of random variables such that $\E{X_0}=0$. Suppose that for some $\delta>0$, one has that $\E{\abs{X_0}^{2+\delta}}<\infty$ and that the mixing-coefficients satisfy $\sum_{k=1}^{\infty} \alpha(k)^{\delta/(2+\delta)}<\infty$.
\begin{enumerate}
\item Then $\kappa^2:= \E{X_0^2}+ 2 \sum_{k=1}^\infty \E{X_0X_k}$ exists in $[0,\infty)$ and the sum is absolutely convergent.
\item If also $\kappa^2>0$, then $\sum_{i=1}^n X_i/(\sqrt{n}\kappa) \distConv \NoD{0}{1}$ as $n\rightarrow \infty$.
\end{enumerate}
\end{theorem}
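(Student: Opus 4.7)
The plan is to prove part (1) directly from Davydov's covariance inequality and part (2) by Bernstein's classical big-block/small-block decomposition combined with the coupling technique already used in the main text (Theorem 2 in Peligrad), followed by Lyapunov's CLT applied to an independent version of the big blocks.

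For part (1), strict stationarity together with Davydov's inequality gives $|\E{X_0 X_k}| \leq 8\|X_0\|_{2+\delta}^2\,\alpha(k)^{\delta/(2+\delta)}$, so the summability assumption on the mixing coefficients forces absolute convergence of $\sum_{k\geq 1}\E{X_0 X_k}$ and hence finiteness of $\kappa^2$. Non-negativity follows by noting that
\[
\frac{1}{n}\Var\Bigl(\sum_{i=1}^n X_i\Bigr) = \E{X_0^2} + 2\sum_{k=1}^{n-1}\Bigl(1-\frac{k}{n}\Bigr)\E{X_0 X_k}
\]
is non-negative for every $n$ and, by dominated convergence with the Davydov bound as the dominating sequence, converges to $\kappa^2$.

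For part (2) I would split $\{1,\ldots,n\}$ into $r_n$ alternating big blocks of length $p_n$ and small separating blocks of length $q_n$, with $q_n=o(p_n)$, $p_n=o(n)$, and $(n/p_n)\alpha(q_n)\to 0$; such a choice is possible under the bare mixing assumption by taking $q_n\to\infty$ slowly enough and $p_n$ as an appropriate intermediate rate. Writing $S_j$ for the $j$-th big block sum, the total contribution of the small blocks has variance $o(n)$ by the Davydov-based covariance bound and is thus negligible after dividing by $\sqrt n$. By Peligrad's coupling I construct (on a possibly enlarged space) independent copies $S_j'\stackrel{\mathcal D}{=}S_j$ such that $\E{|S_j-S_j'|}\leq 4\int_0^{\alpha(q_n)}Q_{|S_j|}(u)\,du$; a Cauchy-Schwarz bound then shows that the $L^1$-error incurred by replacing $\sum_j S_j$ with $\sum_j S_j'$ is $O(r_n\sqrt{p_n\,\alpha(q_n)})=o(\sqrt n)$. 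On the independent side, stationarity together with part (1) yields $\Var(S_j')/p_n\to\kappa^2$, so it remains to verify Lyapunov's condition for $\sum_j S_j'/\sqrt n$.

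The main obstacle will be verifying Lyapunov's moment condition, since a direct moment bound on $S_j'$ (e.g.\ Yokoyama's) would require a stronger summability of $\alpha(k)$ than is assumed. I would handle this by a truncation argument: replace each $X_i$ by $X_i^{(M)}:=X_i\1_{|X_i|\leq M}-\E{X_i\1_{|X_i|\leq M}}$, carry out the big-block/small-block scheme for the bounded array (where Lyapunov's condition is trivial because $|S_1'^{(M)}|\leq 2Mp_n$ and a fourth-moment bound yields the required $o(1)$), pass to the limit $n\to\infty$ for fixed $M$, and then let $M\to\infty$ using part (1) applied to the tail process $X_i-X_i^{(M)}$ together with $\E{|X_0|^{2+\delta}}<\infty$ to control the truncation error uniformly in $n$. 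Combining the truncated CLT with Slutsky finishes the argument and delivers the claimed normal limit under the bare hypotheses.
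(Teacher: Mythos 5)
The paper does not actually prove this statement: it is imported verbatim from Bradley's monograph as an auxiliary tool (Appendix B), so there is no internal argument to compare yours against and your proposal must stand on its own. It essentially does --- it is the classical Ibragimov--Bernstein proof correctly assembled. Part (1) via Davydov's inequality and the identification $\kappa^2=\lim_n n^{-1}\Var{\sum_{i=1}^n X_i}\geq 0$ is fine, and in part (2) you correctly isolate the genuine obstacle, namely that Lyapunov's condition cannot be extracted from a Yokoyama-type moment bound under the bare summability assumption, and you resolve it by truncation, which is the standard remedy. Three details need tightening. First, for the truncated blocks the crude bound $\abs{S_j^{(M)}}\leq 2Mp_n$ only yields a Lyapunov (equivalently, Lindeberg) ratio of order $p_n^2/n$, so you need $p_n=o(\sqrt{n})$ rather than merely $p_n=o(n)$; this remains feasible because monotonicity of $\alpha(\cdot)$ together with $\sum_{k\geq 1}\alpha(k)^{\delta/(2+\delta)}<\infty$ forces $\alpha(k)=o\big(k^{-(2+\delta)/\delta}\big)$ with exponent exceeding one, so polynomial choices $q_n=n^a$, $p_n=n^b$ with $a<b<1/2$ and $(n/p_n)\alpha(q_n)\to 0$ exist. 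Second, the negligibility of the small blocks should be argued via $\E{(\sum_{i\in A}X_i)^2}\leq \abs{A}\big(\E{X_0^2}+2\sum_{k\geq 1}\abs{\E{X_0X_k}}\big)=O(r_nq_n)=o(n)$, exploiting the absolute summability established in part (1); a per-block Minkowski bound would give only $O(r_n^2q_n)$, which is too lossy. Third, the double limit ($n\to\infty$ for fixed $M$, then $M\to\infty$) is handled by the convergence-together theorem (Theorem 4.2 in Billingsley \citep{Billingsley.1968}) rather than plain Slutsky, together with $\kappa_M\to\kappa>0$; note that for small $M$ one may have $\kappa_M^2=0$, in which case the truncated limit degenerates to the point mass at zero, which causes no harm. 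With these repairs the argument is complete and delivers the theorem under exactly the stated hypotheses.
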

\begin{theorem}[Davydov's covariance inequality; see, Theorem 3, Section 1.2, in  \citep{Doukhan.1994}]{
\label{Appendix: Auxiliary: THM Davydov}
Let $X$ and $Y$ be two random variables that are measurable with respect to the $\sigma$-fields $\mathcal{A}$ and $\mathcal{B}$, respectively. Then it holds

$$\abs{\cov{X}{Y}}\leq 8 \alpha(\mathcal{A},\mathcal{B})^{1/r} \|X\|_p \|Y\|_q,$$
for any $p,q,r\geq 1$ such that $\frac{1}{p}+\frac{1}{q}+\frac{1}{r}=1$.
}\end{theorem}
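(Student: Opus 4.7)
The plan is to proceed in three stages: first establish the inequality for indicators (essentially the definition of $\alpha$), then extend to bounded random variables by approximation with simple functions, and finally pass to the general $L^p$/$L^q$ setting through a quantile-function representation combined with a three-factor Hölder inequality.

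First I would note that, by the very definition of $\alpha(\mathcal{A}, \mathcal{B})$, one has $|\cov(\1_A, \1_B)| = |\Pb(A \cap B) - \Pb(A)\Pb(B)| \leq \alpha(\mathcal{A}, \mathcal{B})$ for all $A \in \mathcal{A}$, $B \in \mathcal{B}$. Next, for simple $X = \sum_i x_i \1_{A_i}$ ($\mathcal{A}$-measurable) and $Y = \sum_j y_j \1_{B_j}$ ($\mathcal{B}$-measurable), expanding bilinearly gives
\begin{equation*}
\cov(X,Y) = \sum_{i,j} x_i y_j\, \bigl(\Pb(A_i \cap B_j) - \Pb(A_i)\Pb(B_j)\bigr).
\end{equation*}
Grouping the terms according to the signs of $x_i$ and of $\Pb(A_i \cap B_j) - \Pb(A_i)\Pb(B_j)$ produces at most four $\mathcal{A}$-measurable subsets $A^{\pm\pm}$ and corresponding $\mathcal{B}$-measurable subsets, each contributing a term bounded by $\alpha(\mathcal{A}, \mathcal{B}) \|X\|_\infty \|Y\|_\infty$. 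Passing to uniform $L^\infty$-limits, this yields the bounded-case bound $|\cov(X,Y)| \leq 4\,\alpha(\mathcal{A}, \mathcal{B})\, \|X\|_\infty \|Y\|_\infty$ for arbitrary bounded $X$ and $Y$ that are $\mathcal{A}$- respectively $\mathcal{B}$-measurable.

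The main step is then to upgrade this to a quantile inequality. Denote by $Q_{|X|}(u) := \inf\{x \geq 0 : \Pb(|X| > x) \leq u\}$ the (right-continuous) inverse of the survival function of $|X|$, and analogously for $Y$. I would truncate at levels $M$ and $N$, $X_M := (X \wedge M) \vee (-M)$ and $Y_N := (Y \wedge N) \vee (-N)$, apply the bounded case to $(X_M, Y_N)$, and estimate the residual $\cov(X,Y) - \cov(X_M, Y_N)$ via the layer-cake identity $\E{|X|\1_{\{|X|>M\}}} = \int_0^1 Q_{|X|}(u)\, \1_{\{Q_{|X|}(u) > M\}} du$. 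Choosing $M = Q_{|X|}(\alpha(\mathcal{A},\mathcal{B}))$, $N = Q_{|Y|}(\alpha(\mathcal{A},\mathcal{B}))$ and assembling the bounds carefully yields the key integral inequality
\begin{equation*}
|\cov(X,Y)| \leq 2 \int_0^{2\alpha(\mathcal{A}, \mathcal{B})} Q_{|X|}(u)\, Q_{|Y|}(u)\, du.
\end{equation*}

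The conclusion is immediate from Hölder's inequality for three factors. With exponents $p, q, r \geq 1$ satisfying $1/p + 1/q + 1/r = 1$, one writes the right-hand side as $\int_0^1 Q_{|X|}(u)\, Q_{|Y|}(u)\, \1_{[0, 2\alpha]}(u)\, du$ and bounds it by $\|Q_{|X|}\|_p\, \|Q_{|Y|}\|_q\, (2\alpha(\mathcal{A},\mathcal{B}))^{1/r}$. Since $Q_{|X|}$ and $|X|$ are equimeasurable, $\|Q_{|X|}\|_p = \|X\|_p$ and likewise for $Y$; combining with the prefactor $2$ and the trivial bound $2^{1/r} \leq 2$ yields the stated constant $8$. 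The hard step is the truncation argument leading to the quantile integral inequality, as the bookkeeping of the four truncation regions $\{|X| \leq M, |Y| > N\}$, etc., and the translation of the resulting tail integrals into the clean expression involving $Q_{|X|}, Q_{|Y|}$ requires careful use of the layer-cake formula; the remaining steps are standard inequalities.
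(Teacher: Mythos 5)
The paper never proves this statement at all: it is an auxiliary result quoted verbatim from Doukhan (1994, Theorem 3, Section 1.2), whose proof there follows the direct route of truncating $X$ and $Y$ at levels $M,N$, bounding $\cov{X_M}{Y_N}$ by $4\alpha MN$ via the classical covariance inequality for bounded variables, controlling the remainders by Markov-type tail bounds, and optimizing over $M$ and $N$. Your route --- bounded case, then the quantile covariance inequality $\abs{\cov{X}{Y}}\le 2\int_0^{2\alpha}Q_{\abs{X}}(u)Q_{\abs{Y}}(u)\,\mathrm{d}u$ (which is Rio's 1993 inequality), then the three-factor H\"older inequality --- is a genuinely different and legitimate derivation; it passes through a sharper intermediate result and actually delivers the constant $4$ before you round up to $8$. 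The final H\"older step and the identification $\|Q_{\abs{X}}\|_{L^p[0,1]}=\|X\|_p$ by equimeasurability are correct.

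There is, however, one genuine gap: the sign-grouping argument for the bounded case does not work as written. The sign of $\Pb(A_i\cap B_j)-\Pb(A_i)\Pb(B_j)$ depends on both $i$ and $j$, so it does not induce a partition of the $A_i$ into four $\mathcal{A}$-measurable groups with matching $\mathcal{B}$-measurable partners; and controlling $\sum_{i,j}\abs{\Pb(A_i\cap B_j)-\Pb(A_i)\Pb(B_j)}$ over partitions is precisely the $\beta$-mixing (absolute regularity) coefficient, which is \emph{not} bounded by a multiple of $\alpha$. To obtain $\abs{\cov{X}{Y}}\le 4\,\alpha(\mathcal{A},\mathcal{B})\|X\|_\infty\|Y\|_\infty$ you need either the two-stage sign trick (replace $X$ by $\mathrm{sgn}(\E{Y\mid\mathcal{A}}-\E{Y})$, which is $\mathcal{A}$-measurable, then replace $Y$ by an analogous $\mathcal{B}$-measurable sign variable, reducing to four covariances of indicators each bounded by $\alpha$) or the layer-cake representation $X=\int_0^{\infty}\big(\1_{\{X>t\}}-\1_{\{X<-t\}}\big)\mathrm{d}t$, whose constituent sets are automatically $\mathcal{A}$-measurable, so that Fubini reduces everything to covariances of indicators. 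Relatedly, the passage from the bounded case to the quantile integral inequality is asserted rather than carried out; since that inequality is a known theorem the claim is true, but in a self-contained proof this truncation bookkeeping is exactly the step that has to be written out in full.
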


\begin{theorem}[Theorem 1, Section 1.1, in  \citep{Doukhan.1994}]{
\label{Literature: Doukhan Thm 1 Sec 1.1}
Let $(\mathcal{A}_n)_{n\in \N}$ and $(\mathcal{B}_n)_{n\in\N}$ be two sequences of $\sigma$-fields such that $(\mathcal{A}_n\vee \mathcal{B}_n)_{n\in \N}$ are independent. Then it holds
$$\alpha(\bigvee_{n=1}^\infty \mathcal{A}_n,\bigvee_{n=1}^\infty \mathcal{B}_n) \leq \sum_{n=1}^\infty \alpha(\mathcal{A}_n, \mathcal{B}_n).$$
}\end{theorem}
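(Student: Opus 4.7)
The plan is to reduce the infinite bound to a statement about finite truncations, prove the truncated statement by induction on the number of pairs, and reduce the inductive step to a core two-pair bound that follows from a telescoping identity.

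First, by the monotone class theorem applied to the algebras $\bigcup_N \bigvee_{n=1}^N \mathcal{A}_n$ and $\bigcup_N \bigvee_{n=1}^N \mathcal{B}_n$, every pair of sets $A \in \bigvee_{n=1}^\infty \mathcal{A}_n$ and $B \in \bigvee_{n=1}^\infty \mathcal{B}_n$ can be approximated in probability by $A_N \in \bigvee_{n=1}^N \mathcal{A}_n$ and $B_N \in \bigvee_{n=1}^N \mathcal{B}_n$. Continuity of probability reduces the problem to establishing, for each finite $N$, the truncated bound
\[
\alpha\Bigl(\bigvee_{n=1}^N \mathcal{A}_n, \bigvee_{n=1}^N \mathcal{B}_n\Bigr) \leq \sum_{n=1}^N \alpha(\mathcal{A}_n, \mathcal{B}_n),
\]
after which letting $N \to \infty$ yields the claim, since the right-hand side is monotone in $N$.

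Next, I would proceed by induction on $N$. The base case $N=1$ is immediate. For the inductive step, group $\mathcal{A}' := \bigvee_{n=1}^N \mathcal{A}_n$ and $\mathcal{B}' := \bigvee_{n=1}^N \mathcal{B}_n$; the hypothesis that all $\mathcal{A}_n \vee \mathcal{B}_n$ are jointly independent implies the block independence of $\mathcal{A}' \vee \mathcal{B}'$ and $\mathcal{A}_{N+1} \vee \mathcal{B}_{N+1}$. The inductive step thus collapses to a two-pair statement: whenever $\mathcal{F}_1 \vee \mathcal{G}_1$ is independent of $\mathcal{F}_2 \vee \mathcal{G}_2$, one has $\alpha(\mathcal{F}_1 \vee \mathcal{F}_2, \mathcal{G}_1 \vee \mathcal{G}_2) \leq \alpha(\mathcal{F}_1, \mathcal{G}_1) + \alpha(\mathcal{F}_2, \mathcal{G}_2)$. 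For rectangular test sets $A = C_1 \cap C_2$ with $C_i \in \mathcal{F}_i$ and $B = D_1 \cap D_2$ with $D_i \in \mathcal{G}_i$, the block independence implies $P(A \cap B) = P(C_1 \cap D_1) P(C_2 \cap D_2)$ and $P(A) P(B) = P(C_1) P(D_1) P(C_2) P(D_2)$, so adding and subtracting $P(C_1 \cap D_1) P(C_2) P(D_2)$ yields the telescoping identity
\[
P(A\cap B) - P(A) P(B) = P(C_1 \cap D_1)\bigl[P(C_2 \cap D_2) - P(C_2)P(D_2)\bigr] + P(C_2) P(D_2)\bigl[P(C_1 \cap D_1) - P(C_1) P(D_1)\bigr].
\]
Since all prefactors lie in $[0,1]$ and the bracketed quantities are bounded by $\alpha(\mathcal{F}_2, \mathcal{G}_2)$ and $\alpha(\mathcal{F}_1, \mathcal{G}_1)$ respectively, the desired inequality holds for rectangles.

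The main obstacle will be extending the rectangular bound to arbitrary $A \in \mathcal{F}_1 \vee \mathcal{F}_2$ and $B \in \mathcal{G}_1 \vee \mathcal{G}_2$: sets in a sigma-algebra join are generally not finite unions of rectangles, and the absolute value in the $\alpha$-coefficient does not combine additively across partitions. To handle this I would pass to the functional characterization of $\alpha$-mixing in terms of bounded random variables, which equates $\alpha(\mathcal{F},\mathcal{G})$ (up to a universal constant) with the supremum of $|\mathrm{Cov}(U,V)|$ over $U \in \mathcal{F}$, $V \in \mathcal{G}$ with $\|U\|_\infty, \|V\|_\infty \leq 1$. Simple functions of the form $U = \sum_i a_i \mathbf{1}_{C_i^{(1)} \cap C_i^{(2)}}$ and $V = \sum_j b_j \mathbf{1}_{D_j^{(1)} \cap D_j^{(2)}}$ are dense in such bounded random variables, and the key point is that the telescoping identity applied bilinearly preserves its factored structure: one factor depends only on the pair $(\mathcal{F}_1,\mathcal{G}_1)$ and the other only on $(\mathcal{F}_2,\mathcal{G}_2)$, so the two resulting covariance sums can each be dominated by the corresponding $\alpha$-coefficient via a Cauchy–Schwarz/duality argument. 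Specialising back to indicator functions $U = \mathbf{1}_A$, $V = \mathbf{1}_B$ then furnishes the two-pair bound for arbitrary sets and completes the induction.
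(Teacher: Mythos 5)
First, note that the paper itself offers no proof of this statement: it is quoted verbatim in the appendix of auxiliary results as Theorem 1, Section 1.1, of Doukhan's monograph, so your proposal can only be measured against the standard argument from the literature. Your overall architecture --- reduction to finite $N$ by monotone-class approximation, induction on $N$, and the collapse of the inductive step to the two-block inequality $\alpha(\mathcal{F}_1\vee\mathcal{F}_2,\mathcal{G}_1\vee\mathcal{G}_2)\leq\alpha(\mathcal{F}_1,\mathcal{G}_1)+\alpha(\mathcal{F}_2,\mathcal{G}_2)$ via the telescoping identity on rectangles --- is exactly that standard route, and the rectangle computation is correct.

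The gap is in the extension from rectangles to arbitrary $A$ and $B$, and it is not merely cosmetic. If you invoke the functional characterization only ``up to a universal constant'', i.e. $\sup|\mathrm{Cov}(U,V)|\leq 4\,\alpha$, you will prove the theorem with an extra factor of $4$ (or worse after two applications), not with the stated constant $1$; for the summability conditions used in this paper that would still suffice, but it is not the theorem as quoted. Moreover, ``Cauchy--Schwarz/duality'' is not the right tool for the first telescoped sum: after bilinear expansion over a disjoint-rectangle representation $A=\bigsqcup_i (C_i^{(1)}\cap C_i^{(2)})$, $B=\bigsqcup_j (D_j^{(1)}\cap D_j^{(2)})$, the coefficient $P(C_i^{(1)}\cap D_j^{(1)})$ multiplying $\mathrm{Cov}(\mathbf{1}_{C_i^{(2)}},\mathbf{1}_{D_j^{(2)}})$ does not factor as a product $c_i d_j$, so that sum is not itself a covariance of an $\mathcal{F}_2$-measurable variable against a $\mathcal{G}_2$-measurable one. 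What closes the argument with constant $1$ is: (i) the first sum equals the average over the first block of the covariances of the indicator \emph{sections} $\mathbf{1}_{A_{\omega_1}}$ and $\mathbf{1}_{B_{\omega_1}}$ (measurable with respect to $\mathcal{F}_2$ and $\mathcal{G}_2$ respectively, by the independence/product structure), hence is bounded in absolute value by $\alpha(\mathcal{F}_2,\mathcal{G}_2)$ pointwise and therefore after averaging; (ii) the second sum equals $\mathrm{Cov}(f,g)$ with $f(\omega_1)=P_2(A_{\omega_1})$ and $g(\omega_1)=P_2(B_{\omega_1})$ taking values in $[0,1]$, and the exact layer-cake identity $\mathrm{Cov}(f,g)=\int_0^1\int_0^1\mathrm{Cov}\big(\mathbf{1}_{\{f>t\}},\mathbf{1}_{\{g>s\}}\big)\,\mathrm{d}t\,\mathrm{d}s$ bounds this by $\alpha(\mathcal{F}_1,\mathcal{G}_1)$ with no constant loss. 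With these two replacements for your final step, the proof is complete.
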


\begin{theorem}[Theorem 2 in Peligrad \citep{Peligrad.2002}]
\label{Literature: Thm Peligrad}
Let $X_1, \ldots, X_n$ be a sequence of real-valued integrable random variables  on some probability space $(\Omega, \mathcal{F}, \Pb)$. Then, one can redefine the sequence on a possibly richer probability space together with an independent sequence of random variables $X_1', \ldots, X_n'$ such that for each $i$, $X_i'$ has the same distribution as $X_i$, $X_i'$ is independent of the $\sigma$-field generated by $X_1, \ldots, X_{i-1}$, and
$$\E{\abs{X_i-X_i'}}\leq 4 \int_0^{\alpha(i)} Q_{\abs{X_i}}(u)\mathrm{d}u,$$

where $\alpha(i):=\alpha(\sigma(X_1, \ldots, X_{i-1}), \sigma(X_i))$ and $Q_{\abs{X_i}}(u)=\inf \{x\in \R: \Pb(\abs{X_i}>x)\leq u\}$.
\end{theorem}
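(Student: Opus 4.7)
The plan is to enlarge the underlying probability space by adjoining a sequence $U_1, \ldots, U_n$ of i.i.d.\ Uniform$(0,1)$ random variables, independent of $(X_1, \ldots, X_n)$, and then to construct the coupled sequence $X_1', \ldots, X_n'$ inductively, one coordinate at a time, using a quantile-type coupling at each step. The core tool will be a one-step coupling lemma of Bradley/Rio type: given any integrable real random variable $X$ and any sub-$\sigma$-field $\mathcal{M}$ on an enriched space, with $\alpha_0 := \alpha(\mathcal{M}, \sigma(X))$, one can construct $X^* \overset{\mathcal{D}}{=} X$ that is independent of $\mathcal{M}$ and satisfies $\E|X - X^*| \leq 4 \int_0^{\alpha_0} Q_{|X|}(u)\,du$.

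Granted this single-step lemma, I would apply it with $\mathcal{M} = \mathcal{F}_{i-1} := \sigma(X_1, \ldots, X_{i-1})$ on the space enriched by the single auxiliary uniform $U_i$, so that the resulting $X_i'$ is a measurable function of $(X_1, \ldots, X_i, U_i)$ only. Two things then need to be checked: (a) $X_i'$ is independent of $\mathcal{F}_{i-1}$ with the correct distribution, which is the conclusion of the lemma; and (b) the sequence $(X_1', \ldots, X_n')$ is jointly independent. For (b), observe that $X_j'$ for $j < i$ is measurable with respect to $\mathcal{F}_{i-1} \vee \sigma(U_1, \ldots, U_{i-1})$. Since $(U_1, \ldots, U_{i-1})$ is independent of $(\mathcal{F}_{i-1}, X_i, U_i)$ by construction, a short factorization argument shows that $X_i'$ is in fact independent of $\mathcal{F}_{i-1} \vee \sigma(U_1, \ldots, U_{i-1})$, which contains $\sigma(X_1', \ldots, X_{i-1}')$; iterating yields full mutual independence.

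The main obstacle is establishing the single-step coupling lemma with the sharp bound $4\int_0^{\alpha_0} Q_{|X|}(u)\,du$. The key identity behind it is a dual representation of the $\alpha$-coefficient in terms of bounded $\mathcal{M}$- and $\sigma(X)$-measurable functions, $\alpha(\mathcal{M}, \sigma(X)) = \tfrac{1}{4} \sup \{|\cov(Z_1, Z_2)| : |Z_1|, |Z_2| \le 1\}$, which, combined with a layer-cake decomposition $X = \int_0^\infty (\mathbf{1}_{X>t} - \mathbf{1}_{X<-t})\,dt$, yields a quantile-type covariance inequality. One then realizes the optimal $X^*$ via an inverse-distribution coupling: conditioning on $\mathcal{M}$, draw $X^*$ from its (unconditional) law using the auxiliary uniform $U$ in such a way that the joint conditional law of $(X, X^*)$ given $\mathcal{M}$ is as close to the Fréchet--Hoeffding monotone coupling as possible. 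The explicit verification that this construction attains the $4\int_0^{\alpha_0} Q_{|X|}(u)\,du$ bound is the technically delicate part and is exactly where the dependence of the bound on $Q_{|X|}$ (rather than on $\|X\|_p$ for some $p>1$) enters; this is the content of Rio's quantile inequality, and adapting it to accommodate arbitrary integrable $X$ (without higher moment assumptions) is the heart of the proof.

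Once the one-step result is in place, assembling the full theorem is largely bookkeeping: iterate over $i = 1, \ldots, n$, maintaining the enriched filtration $\mathcal{F}_{i-1} \vee \sigma(U_1, \ldots, U_{i-1})$, and note that the per-step bound depends only on $\alpha(i) = \alpha(\sigma(X_1, \ldots, X_{i-1}), \sigma(X_i))$ and on the marginal quantile function $Q_{|X_i|}$, since adjoining the independent $U_j$'s to $\mathcal{F}_{i-1}$ does not alter the relevant $\alpha$-coefficient. The stated inequality $\E|X_i - X_i'| \le 4\int_0^{\alpha(i)} Q_{|X_i|}(u)\,du$ follows immediately from the single-step lemma applied at stage $i$.
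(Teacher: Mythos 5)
First, a point of reference: the paper does not prove this statement. It appears in the appendix of auxiliary results, quoted as Theorem 2 of Peligrad \citep{Peligrad.2002}, and is used as a black box (via \eqref{Eq: Peligrad bound for expectation} in the proof of Proposition \ref{Prop: Approx U1 U2} and in the long run variance analysis). So there is no in-paper argument to compare yours against; the only meaningful comparison is with the known proof of the cited result, whose architecture your outline does match.

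The parts you carry out in detail are sound. Adjoining i.i.d.\ uniforms $U_1,\ldots,U_n$ independent of $(X_1,\ldots,X_n)$, making $X_i'$ a function of $(X_1,\ldots,X_i,U_i)$, and deducing mutual independence is correct: since $(U_1,\ldots,U_{i-1})$ is independent of $\sigma(X_1,\ldots,X_n,U_i)$, the factorization argument upgrades $X_i'\perp\mathcal{F}_{i-1}$ to $X_i'\perp\big(\mathcal{F}_{i-1}\vee\sigma(U_1,\ldots,U_{i-1})\big)$, which contains $\sigma(X_1',\ldots,X_{i-1}')$, and successive independence of each term from its predecessors gives joint independence. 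The gap is that the entire analytic content of the theorem sits in the one-step coupling lemma, which you invoke but do not prove; as written, your argument reduces Peligrad's Theorem 2 to a statement of essentially the same depth. To close it you would need: (i) the explicit construction $V:=F_{X\mid\mathcal{M}}(X^-)+U\,\big(F_{X\mid\mathcal{M}}(X)-F_{X\mid\mathcal{M}}(X^-)\big)$, which is uniform on $[0,1]$ and independent of $\mathcal{M}$, followed by $X^*:=Q_X(V)$ with $Q_X$ the unconditional quantile function; (ii) the identity $\mathbb{E}\abs{X-X^*}=\mathbb{E}\int_{\R}\abs{F_{X\mid\mathcal{M}}(t)-F_X(t)}\,\mathrm{d}t$, i.e.\ the expected conditional $L^1$-Wasserstein distance; and (iii) the quantile covariance bound that controls this integral by $4\int_0^{\alpha_0}Q_{\abs{X}}(u)\,\mathrm{d}u$, obtained by writing $\mathbb{E}\abs{\mathbb{E}(\mathbf{1}_{\{X\le t\}}\mid\mathcal{M})-F_X(t)}$ as a covariance of a bounded $\mathcal{M}$-measurable sign variable with $\mathbf{1}_{\{X\le t\}}$, bounding it by $\min\rbraces{c\,\alpha_0,\Pb(\abs{X}>t)}$, and using $\int_0^\infty \min\rbraces{a,\Pb(\abs{X}>t)}\mathrm{d}t=\int_0^a Q_{\abs{X}}(u)\,\mathrm{d}u$. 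Your Fr\'echet--Hoeffding phrasing points at the right object, but without step (iii) the constant $4$ and the appearance of $Q_{\abs{X_i}}$ -- precisely what makes the bound usable for merely integrable $X_i$ -- are not established.
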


Frequently in our proofs, we require inequalities bounding the moments of a sum  of some random variables.
\begin{lemma}[$c_r$-inequality]{
\label{Literature: Lemma cr-ineq}
For two random variables $X$ and $Y$ with existing $r$-th moments for some $r> 0$, it holds
\begin{equation*}
\E{\abs{X+Y}^r} \leq \max\rbraces{ 1,2^{r-1} } \cdot \rbraces{\E{\abs{X}^r}+ \E{\abs{Y}^r}}.
\end{equation*}
}\end{lemma}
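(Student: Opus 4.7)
The plan is to prove the pointwise inequality $|x+y|^r \le \max(1, 2^{r-1})(|x|^r + |y|^r)$ for all real $x,y$, and then take expectations. The triangle inequality gives $|x+y|^r \le (|x|+|y|)^r$, so I reduce everything to bounding $(a+b)^r$ for $a,b \ge 0$.

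I would split into two cases according to whether the function $t \mapsto t^r$ is concave or convex on $[0,\infty)$. For the first case, $0 < r \le 1$, the function is concave with $f(0)=0$, hence subadditive: $(a+b)^r \le a^r + b^r$. A clean way to see this is to write, for $a,b > 0$,
\begin{equation*}
\frac{a^r + b^r}{(a+b)^r} = \left(\frac{a}{a+b}\right)^r + \left(\frac{b}{a+b}\right)^r \ge \frac{a}{a+b} + \frac{b}{a+b} = 1,
\end{equation*}
where the inequality uses $t^r \ge t$ for $t \in [0,1]$ when $r \le 1$. This establishes the bound with constant $1 = \max(1, 2^{r-1})$.

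For the second case, $r > 1$, the function $t \mapsto t^r$ is convex on $[0,\infty)$. Applying Jensen's inequality at the midpoint yields
\begin{equation*}
\left(\frac{a+b}{2}\right)^r \le \frac{a^r + b^r}{2},
\end{equation*}
i.e.\ $(a+b)^r \le 2^{r-1}(a^r + b^r)$, which matches $\max(1, 2^{r-1}) = 2^{r-1}$ for $r > 1$.

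Combining both cases and the triangle inequality gives, pointwise,
\begin{equation*}
|X+Y|^r \le (|X|+|Y|)^r \le \max(1, 2^{r-1}) \bigl(|X|^r + |Y|^r\bigr).
\end{equation*}
Taking expectations (using monotonicity and linearity, and the assumption that both $\E{|X|^r}$ and $\E{|Y|^r}$ are finite, which ensures the right-hand side is integrable) gives the claim. There is no real obstacle here; the only subtle point is keeping track of which constant applies for which range of $r$, and noting that the subadditivity argument for $r \le 1$ breaks down for $r > 1$, so the factor $2^{r-1}$ genuinely appears.
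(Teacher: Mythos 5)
Your proof is correct. The paper states this lemma without proof, as a standard auxiliary result quoted from the literature, so there is no in-paper argument to compare against; your two-case argument (subadditivity of $t\mapsto t^r$ via concavity for $0<r\le 1$, and Jensen's inequality at the midpoint for $r>1$, combined with the triangle inequality and monotonicity of expectation) is exactly the canonical proof and is complete as written.
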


\begin{theorem}[Rosenthal-type inequality; see, Theorem 2, Section 1.4, in \citep{Doukhan.1994}] 
\label{Literature: Thm Rosenthal}
Let $(X_i)_{i\in \N}$ be an $\alpha$-mixing sequence of random variables and let $T$ be a finite subset of $\N$ such that $\E{X_t}=0$ for all $t\in T$.
Assume there exists an $\varepsilon>0$ and a constant $c \in 2\N$ with $c\geq \tau$ such that
$$\sum_{k=1}^\infty \rbraces{k+1}^{c-2} \alpha(k)^{\varepsilon/(c+\varepsilon)}<\infty\quad \text{as well as } \quad
 \E{\abs{X_t}^{\tau+\varepsilon}}<\infty$$ for some $\tau>0$ and all $t\in T$. Then there exists a constant $C$ depending only on $\tau$ and the mixing coefficients $\alpha(k)$ of $X$ such that
 $$\E{\abs{\sum_{t\in T}X_t}^\tau}\leq C D(\tau, \varepsilon,T),$$
 where
 $$
 D(\tau, \varepsilon,T) := \begin{cases}
L(\tau,0,T) \quad \text{ for } 0<\tau\leq 1, \varepsilon\geq 0,\\
L(\tau,\varepsilon,T) \quad \text{ for } 1<\tau \leq 2, \varepsilon>0,\\
\max\rbraces{L(\tau,\varepsilon,T), (L(2,\varepsilon,T))^{\tau/2}} \quad \text{ for } \tau>2, \varepsilon>0,\\
\end{cases}
$$
with $$L(\mu, \varepsilon,T):= \sum_{t\in T} \rbraces{\E{\abs{X_t}^{\mu+\varepsilon}}}^{\mu/(\mu+\varepsilon)}.$$
\end{theorem}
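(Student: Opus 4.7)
My plan is to split the proof into three regimes according to the value of $\tau$ and in each case to reduce the bound on $\E{\abs{\sum_{t\in T}X_t}^\tau}$ either to an elementary subadditivity estimate or to repeated applications of Davydov's covariance inequality (Theorem~\ref{Appendix: Auxiliary: THM Davydov}).  Throughout, write $S_T=\sum_{t\in T}X_t$ and, without loss of generality, order $T$ as $T=\{t_1<\dots<t_N\}$.  I will keep track of the fact that the final constant $C$ must depend only on $\tau$ and on the mixing coefficients through the series $\sum_k (k+1)^{c-2}\alpha(k)^{\varepsilon/(c+\varepsilon)}$, in particular not on $N=|T|$.

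For $0<\tau\leq 1$ the $c_r$-inequality (Lemma~\ref{Literature: Lemma cr-ineq}) is subadditive and gives $\E{\abs{S_T}^\tau}\leq \sum_{t\in T}\E{\abs{X_t}^\tau}=L(\tau,0,T)$ at once, with $C=1$; no mixing is needed.  For $1<\tau\leq 2$ I would argue by induction on $N=|T|$ using the pointwise elementary bound
\[
\abs{a+b}^\tau\leq \abs{a}^\tau+\tau \abs{a}^{\tau-1}\mathrm{sgn}(a)\,b+C_\tau\abs{b}^\tau,\qquad 1<\tau\leq 2,
\]
applied with $a=S_{T\setminus\{t_N\}}$ and $b=X_{t_N}$.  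The non-linear cross term is treated by Davydov's inequality with Hölder exponents $p=\tau+\varepsilon$ for $X_{t_N}$ and $q=(\tau+\varepsilon)/(\tau-1)$ for $\abs{a}^{\tau-1}\mathrm{sgn}(a)$, using that this latter variable lies in the $\sigma$-field generated by $X_{t_1},\dots,X_{t_{N-1}}$ and thus is separated from $X_{t_N}$ by a gap of at least $t_N-t_{N-1}$.  Summing the resulting geometric-mixing bound over successive steps and using $\E{\abs{a}^{(\tau-1)q}}^{1/q}\leq L(\tau,\varepsilon,T)^{(\tau-1)/\tau}$ by the inductive hypothesis and Jensen closes the recursion and yields $\E{\abs{S_T}^\tau}\leq C L(\tau,\varepsilon,T)$.

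The heart of the proof is the case $\tau>2$, which I would handle by induction on the even integer $c\geq \tau$.  The base case $c=2$ is immediate: expanding $\E{S_T^2}=\sum_{s,t\in T}\cov{X_s}{X_t}$ and applying Davydov with $p=q=2+\varepsilon$ and $r=(2+\varepsilon)/\varepsilon$ gives $\abs{\cov{X_s}{X_t}}\leq 8\alpha(\abs{s-t})^{\varepsilon/(2+\varepsilon)}\norm{X_s}_{2+\varepsilon}\norm{X_t}_{2+\varepsilon}$, and summing the diagonal by Cauchy--Schwarz and the off-diagonal by the lag-summability hypothesis produces $\E{S_T^2}\leq C L(2,\varepsilon,T)$ with $C$ independent of $N$.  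For the inductive step $c-2\to c$, I would expand
\[
\E{S_T^c}=\sum_{i_1,\dots,i_c\in T}\E{X_{i_1}\cdots X_{i_c}}
\]
and group tuples according to the position $j^*$ of the largest gap between consecutive sorted indices.  At this gap I apply Davydov with exponents $p=q=(c+\varepsilon)/(\text{something suitable})$ to split each centred product into the product of two lower-order expectations, one on each side of the gap, each of which is bounded by the inductive hypothesis at order $c-2$ (hence by $\max(L(c-2,\varepsilon,T),L(2,\varepsilon,T)^{(c-2)/2})$).  Finally one passes from $c$ to arbitrary $\tau\in(c-2,c]$ by Lyapunov's inequality $\E{\abs{S_T}^\tau}\leq (\E{S_T^c})^{\tau/c}$.

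The main obstacle will be the combinatorial book-keeping in the inductive step: for each fixed maximal gap of size $k$ one must count the number of contributing tuples and check that this count times $\alpha(k)^{\varepsilon/(c+\varepsilon)}$ produces precisely the weight $(k+1)^{c-2}\alpha(k)^{\varepsilon/(c+\varepsilon)}$ appearing in the hypothesis, so that its summability provides a finite constant independent of $N$.  A parallel delicate point is choosing the Hölder exponents in each application of Davydov so that only $(c+\varepsilon)$-moments of the individual $X_t$ appear and so that the resulting bound composes cleanly with the inductive hypothesis to yield $\max(L(c,\varepsilon,T),L(2,\varepsilon,T)^{c/2})$ rather than a strictly larger expression.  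Once this combinatorial/Hölder bookkeeping is controlled, the final bound $C\cdot D(\tau,\varepsilon,T)$ assembles from the three cases described above.
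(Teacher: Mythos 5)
The paper does not prove this statement at all: it is quoted verbatim as an auxiliary result from the literature (Theorem 2, Section 1.4, in Doukhan \citep{Doukhan.1994}), so the only ``proof'' the paper offers is the citation. Your attempt to reprove it therefore goes beyond what the paper does. The first case ($0<\tau\leq 1$ via subadditivity of $x\mapsto x^{\tau}$) is correct and needs no mixing, but the other two cases contain genuine gaps.

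For $1<\tau\leq 2$ your recursion does not close. With the H\"older exponents you propose ($p=\tau+\varepsilon$ for $X_{t_N}$ and $q=(\tau+\varepsilon)/(\tau-1)$ for $\abs{a}^{\tau-1}\mathrm{sgn}(a)$), the Davydov bound involves $\E{\abs{a}^{\tau+\varepsilon}}^{(\tau-1)/(\tau+\varepsilon)}$, i.e.\ a $(\tau+\varepsilon)$-moment of the partial sum, which the inductive hypothesis (a bound on its $\tau$-th moment) does not control; Jensen runs in the wrong direction here. If you instead place $\abs{a}^{\tau-1}$ in $L^{\tau/(\tau-1)}$ so that only $\norm{a}_{\tau}^{\tau-1}$ appears, the residual exponent on $\alpha$ becomes $\varepsilon/(\tau(\tau+\varepsilon))$ rather than $\varepsilon/(\tau+\varepsilon)$, which the stated summability hypothesis does not cover. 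For $\tau>2$ the plan is blocked even earlier: expanding $\E{S_T^{c}}$ for the even integer $c\geq\tau$ requires finite $c$-th moments, but only $(\tau+\varepsilon)$-moments are assumed and $c$ may exceed $\tau+\varepsilon$, so $\E{S_T^{c}}$ need not exist. Even when it does, Lyapunov yields $\E{\abs{S_T}^{\tau}}\leq\rbraces{\E{S_T^{c}}}^{\tau/c}\leq C\max\rbraces{L(c,\varepsilon,T)^{\tau/c},L(2,\varepsilon,T)^{\tau/2}}$, and $L(c,\varepsilon,T)^{\tau/c}$ dominates rather than is dominated by $L(\tau,\varepsilon,T)$, so the claimed form of the bound is not recovered. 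Finally, the combinatorial gap-counting that you correctly identify as the heart of the even-moment case is left unresolved. As it stands the sketch is not a proof; if you need this inequality, cite Doukhan as the paper does, or follow his argument in detail.
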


\begin{theorem}[Theorem 1 in Yokoyama \citep{Yokoyama.1980}] 
\label{Literature: Thm Yokoyama}
Let $(X_i)_{i\in \N}$ be a strictly stationary, $\alpha$-mixing sequence of random variables such that $\E{X_1}=\mu$.  Assume there exist constants $\delta$ with $0<\delta\leq \infty$ and  $t$ with $2\leq t <2+\delta$ such that $$\E{\abs{X_1}^{2+\delta}}<\infty \quad \text{and} \quad \sum_{k=1}^{\infty} k^{t/2-1}\alpha(k)^{\rbraces{2+\delta-t}/(2+\delta)}<\infty.$$
Then it holds
\begin{equation*}
\E{\abs{\sum_{i=1}^n(X_i-\mu)}^t} \leq C n^{t/2}
\end{equation*}
\end{theorem}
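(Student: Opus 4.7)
The plan is to prove this moment bound by combining a block--decoupling argument with induction on the moment order $t$, using tools already collected in the appendix. Without loss of generality, set $\mu=0$ and write $S_n = \sum_{i=1}^{n} X_i$.

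For the base case $t=2$, the bound follows immediately from stationarity together with Davydov's covariance inequality (Theorem \ref{Appendix: Auxiliary: THM Davydov}):
\begin{equation*}
\E{S_n^{2}} = n \Var{X_1} + 2 \sum_{k=1}^{n-1}(n-k)\cov{X_1}{X_{k+1}} \leq n \rbraces{ \Var{X_1} + 16 \|X_1\|_{2+\delta}^{2} \sum_{k=1}^{\infty}\alpha(k)^{\delta/(2+\delta)} } \leq C n,
\end{equation*}
since the hypothesis specialized to $t=2$ is precisely $\sum_{k}\alpha(k)^{\delta/(2+\delta)}<\infty$.

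For general $t\in(2,2+\delta)$, I would partition $\{1,\ldots,n\}$ into $q=\lfloor n/(2p)\rfloor$ alternating ``big'' and ``small'' blocks of length $p$, writing $A_j$ for the sum over the $j$-th big block. Peligrad's coupling (Theorem \ref{Literature: Thm Peligrad}) yields independent copies $A_j'\stackrel{d}{=}A_j$ with $\E{\abs{A_j - A_j'}}$ controlled by $\alpha(p)$ times a tail integral of $\abs{A_j}$. Rosenthal's classical inequality for independent, centred random variables then gives
\begin{equation*}
\E{\abs{\textstyle\sum_{j=1}^{q} A_j'}^{t}} \leq C \rbraces{ q^{t/2}\rbraces{\E{A_1^{2}}}^{t/2} + q\,\E{\abs{A_1}^{t}} }.
\end{equation*}
By the base case applied to a block, $\E{A_1^{2}}\leq Cp$, so the first summand is at most $C(qp)^{t/2}\leq Cn^{t/2}$, which is the target order. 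The second summand is handled by the inductive hypothesis at a strictly smaller $t$, giving $\E{\abs{A_1}^{t}}\leq Cp^{t/2}$ and hence $q\,\E{\abs{A_1}^{t}}\leq Cn\,p^{t/2-1}$, which is subdominant once $p$ grows with $n$. An analogous estimate applies to the small blocks, treating them as a shifted copy of the same structure; the coupling error between $\sum A_j$ and $\sum A_j'$ is bounded using an $L^t$-version of Peligrad's inequality derived via H\"older.

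The chief obstacle is calibrating the block length $p=p_n$ so that (i) the coupling error, (ii) the cross-block covariance contributions, and (iii) the inductive remainder $np^{t/2-1}$ are all simultaneously of order $o(n^{t/2})$. This is exactly where the hypothesis $\sum_{k} k^{t/2-1}\alpha(k)^{(2+\delta-t)/(2+\delta)}<\infty$ is tailored to intervene: it guarantees that choosing $p$ at the right polynomial rate makes all error contributions summable against the prefactors produced by the Rosenthal bound. As a shortcut I would first attempt to invoke the Rosenthal-type inequality for mixing sequences (Theorem \ref{Literature: Thm Rosenthal}) directly with $\tau=t$ and $\varepsilon=2+\delta-t$: for $t\geq 2$ the ``Gaussian'' term $L(2,\varepsilon,T)^{t/2}=n^{t/2}(\E{\abs{X_1}^{2+\delta}})^{t/(2+\delta)}$ dominates $L(t,\varepsilon,T)=n(\E{\abs{X_1}^{2+\delta}})^{t/(2+\delta)}$, which delivers the conclusion. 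The mixing hypothesis in Doukhan's formulation carries a slightly different exponent than Yokoyama's, however, so bridging between the two conditions is the delicate final point.
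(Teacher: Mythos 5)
This statement is imported verbatim from the literature (Theorem 1 in Yokoyama \citep{Yokoyama.1980}); the paper gives no proof of it, so your sketch can only be judged on its own terms. Your base case $t=2$ via Davydov's inequality is correct, and blocking plus Rosenthal is indeed the standard flavour of argument. However, the inductive step as written does not close. You invoke ``the inductive hypothesis at a strictly smaller $t$'' to claim $\E{\abs{A_1}^{t}}\leq Cp^{t/2}$, but $\E{\abs{A_1}^{t}}$ is a moment of the \emph{same} order $t$ as the quantity being proved, merely for a shorter sum; no induction on the moment order can produce it. What is actually needed is a recursion in the number of summands (a dyadic doubling $n\mapsto 2n$, in Yokoyama's case combined with induction on $\lfloor t/2\rfloor$ for large $t$), and the entire difficulty of the theorem is showing that this recursion closes with a constant $C$ that stays bounded along the iteration while the error terms remain summable under the stated mixing condition --- precisely the point your sketch leaves open.

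Two further steps would fail as stated. First, Peligrad's coupling (Theorem \ref{Literature: Thm Peligrad}) gives only an $L^1$ bound on $\abs{A_j-A_j'}$; upgrading to $L^t$ by H\"older or interpolation against $L^{2+\delta}$ requires a bound on $\E{\abs{A_j}^{2+\delta}}$, for which only the crude estimate $p^{1+\delta}\E{\abs{X_1}^{2+\delta}}$ is available (the theorem itself cannot be used at order $2+\delta$, since the hypothesis requires $t<2+\delta$ strictly), and you do not verify that the resulting exponent on $\alpha(p)$ is compatible with Yokoyama's summability assumption. Second, the proposed shortcut through the Rosenthal-type inequality for mixing sequences (Theorem \ref{Literature: Thm Rosenthal}) requires $c\in 2\N$ with $c\geq \tau=t$, hence $c\geq 4$ whenever $t>2$; the associated condition $\sum_{k}(k+1)^{c-2}\alpha(k)^{\varepsilon/(c+\varepsilon)}<\infty$ is then strictly stronger than $\sum_{k} k^{t/2-1}\alpha(k)^{(2+\delta-t)/(2+\delta)}<\infty$, so this route proves the bound only under a stronger hypothesis, not the stated one. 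The ``delicate final point'' you flag is therefore not a technicality but the substance of the result.
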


The next two theorems enable us to control the difference between some distribution function $F$ and the standard normal distribution $\Phi$.

\begin{theorem}[Theorem 9, Chapter V, in Petrov \citep{Petrov.1975}]
\label{Literature: Petrov}
Let $\Phi$ denote the distribution function of the  standard normal distribution, let $F$ be another arbitrary distribution function and define $\Delta:= \sup_x \abs{F(x)-\Phi(x)}$. Suppose that $0<\Delta<1/\sqrt{e}$ and that $F$ has finite absolute moments of order $p$ for some $p>0$. Then there exists a constant $C_p$ depending only on $p$ such that
\begin{equation*}
\abs{F(x)-\Phi(x)} \leq \frac{C_p \Delta\log\rbraces{\frac{1}{\Delta}}^{p/2}+\lambda_p}{1+\abs{x}^p}
\end{equation*}
for all $x\in \R$, where
\begin{equation*}
\lambda_p= \abs{\int_{-\infty}^\infty\abs{x}^p\mathrm{d}F(x)-\int_{-\infty}^\infty\abs{x}^p\mathrm{d}\Phi(x)}.
\end{equation*}
\end{theorem}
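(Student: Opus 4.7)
The statement asserts a pointwise decay bound on $F(x)-\Phi(x)$ that interpolates between the uniform Kolmogorov gap $\Delta$ (dominant for small $|x|$) and the moment discrepancy $\lambda_p$ (dominant in the tails). My approach is a two-regime argument, joined at a threshold $T$ chosen to balance the contributions.

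For $|x|\leq T$ with $T \asymp \sqrt{\log(1/\Delta)}$, the uniform bound $|F(x)-\Phi(x)|\leq \Delta$ gives immediately
\begin{equation*}
(1+|x|^p)\,|F(x)-\Phi(x)|\;\leq\;(1+T^p)\,\Delta\;=\;O\bigl(\Delta\,\log(1/\Delta)^{p/2}\bigr),
\end{equation*}
matching the first term in the claim. This is the trivial range, where the moment information is not yet useful; the logarithmic factor arises precisely from $T^p$.

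For the tail regime $|x|>T$, I would consider $x>0$ (the left tail is handled symmetrically by the reflection $u\mapsto -u$) and set $g(u):=F(u)-\Phi(u)$. Finiteness of the $p$-th moments of both $F$ and $\Phi$ ensures $u^p\bigl(1-F(u)\bigr)\to 0$ and $u^p\bigl(1-\Phi(u)\bigr)\to 0$ as $u\to\infty$, which justifies integration by parts on the half-line and yields
\begin{equation*}
x^p\,g(x)\;=\;-\int_x^\infty u^p\,d(F-\Phi)(u)\;-\;p\int_x^\infty u^{p-1}\,g(u)\,du.
\end{equation*}
The first term on the right is controlled by $\lambda_p$ together with the analogous partial moment on $(-\infty,-x]$, since $\lambda_p$ is precisely the absolute integral of $|u|^p\,d(F-\Phi)$ over the whole line. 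This gives the $\lambda_p/|x|^p$ contribution to the claim.

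The main obstacle is the second term: the na\"ive estimate $|g|\leq \Delta$ produces a divergent integral of $u^{p-1}$, so the uniform bound alone is insufficient. One must also use the pointwise tail estimate $|g(u)|\leq (1-F(u))+(1-\Phi(u))\leq M_p/u^p$ (from Markov's inequality), so that $|g(u)|\leq \min(\Delta,\,M_p/u^p)$. Splitting $\int_x^\infty u^{p-1}|g(u)|\,du$ at the crossover point $u_\ast=(M_p/\Delta)^{1/p}$ and applying each estimate on its favourable side, and then running a short bootstrap to upgrade $|g(u)|=o(u^{-p})$ into the sharp rate, lets one close the loop and obtain the claimed $\bigl(C_p\Delta\log(1/\Delta)^{p/2}+\lambda_p\bigr)/(1+|x|^p)$ bound. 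The hardest part is balancing the threshold $T$ so that the boundary contribution from $T^p\Delta$ matches the remainder coming from the integration-by-parts tail — this is what produces the $(\log(1/\Delta))^{p/2}$ factor and has to be tuned carefully. Because this is the classical estimate of Petrov (Theorem V.9 of \citep{Petrov.1975}), whose full proof is technical, in practice I would simply cite that reference; the sketch above captures the structural idea.
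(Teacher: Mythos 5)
This statement is one of the quoted auxiliary results in the paper's appendix of ``Auxiliary results from the literature'': the paper gives no proof at all and simply cites Theorem 9, Chapter V, of Petrov \citep{Petrov.1975}, using the bound as a black box in the proofs of Propositions \ref{Prop: Replacing centring term} and \ref{Prop: Consistency alpha-mix}. Your final decision to cite the reference is therefore exactly what the paper does and is the appropriate treatment here.

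Since you did offer a sketch, one step in it would not survive being written out. In the tail regime you integrate by parts over $[x,\infty)$ and assert that $\int_x^\infty u^p\,\mathrm{d}(F-\Phi)(u)$ is ``controlled by $\lambda_p$''. It is not: $\lambda_p$ is the absolute value of the difference of the \emph{full} absolute moments, so cancellation between $[x,\infty)$ and its complement means this partial moment can be large even when $\lambda_p=0$. To transfer from the full-line quantity to the tail you must pass through the complementary region, and if you estimate $\int_{|u|<x}|u|^p\,\mathrm{d}(F-\Phi)(u)$ by integration by parts with the uniform bound $|F-\Phi|\le\Delta$ you pick up a term of order $x^p\Delta$, which is unbounded in $x$; the companion term $p\int_x^\infty u^{p-1}g(u)\,\mathrm{d}u$ suffers from the same defect, and the proposed $\min(\Delta, M_p/u^p)$ splitting plus ``bootstrap'' does not repair it. The classical fix is to exploit monotonicity of the tail moment: for $x\ge T:=\sqrt{2\log(1/\Delta)}$ one has $x^p\rbraces{1-F(x)}\le\int_x^\infty u^p\,\mathrm{d}F(u)\le\int_T^\infty u^p\,\mathrm{d}F(u)$, so the moment transfer (full-line difference $\lambda_p$, plus integration by parts over the \emph{fixed} window $[-T,T]$, plus the Gaussian tail moment beyond $T$) is carried out once at the threshold $T$, where the uniform bound costs only $O(T^p\Delta)=O\rbraces{\Delta\log(1/\Delta)^{p/2}}$. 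That is where the logarithmic factor actually comes from in the tail regime. Your inner-regime argument for $|x|\le T$ is correct as written.
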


\begin{theorem}[Theorem 1 in Tikhomirov \citep{Tikhomirov.1980}]
\label{Literature: Tikhomirov}
Let $(X_i)_{i\in \N}$ be a strictly stationary, $\alpha$-mixing sequence of random variables with mean zero and finite variance. Let $\Phi$ denote the distribution function of the standard normal distribution and define $$F_n(x):= \Pb\rbraces{\frac{\sum_{i=1}^nX_i}{\sqrt{\Var{\sum_{i=1}^n X_i}}}\leq x}\quad  \text{as well as} \quad \Delta_n:=\sup_x\abs{F_n(x)-\Phi(x)}.$$
Suppose that there exist constants $C_1>0$ and $\rho>1$ such that $$\alpha_X(k)\leq C_1k^{-\rho(2+\delta)(1+\delta)/\delta^2}$$ holds for all $k\in \N$ and some  $0<\delta\leq 1$ such that $\E{\abs{X_1}^{2+\delta}}<\infty$. Then,
$$\kappa^2=\E{X_1^2}+ 2\sum_{k=2}^\infty \E{X_1X_k}<\infty$$
 and if $\kappa^2>0$,  there exists a constant $C_2$ depending solely on $C_1, \rho $ and $\delta$ such that
\begin{equation*}
\Delta_n\leq C_2 n^{-(\delta/2)(\rho-1)/(\rho+1)}.
\end{equation*}
\end{theorem}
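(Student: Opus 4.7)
The plan is to bound the Kolmogorov distance $\Delta_n$ via Esseen's smoothing inequality and then estimate the characteristic-function difference by passing from the dependent sum $S_n=\sum_{i=1}^n X_i$ to a sum of essentially independent Bernstein blocks, paying the $\alpha$-mixing price at each replacement step.

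First, by Bradley's Theorem 10.7 (cited above) the assumed moment condition and mixing rate already yield $\kappa^2<\infty$ and $\Var(S_n)/n \to \kappa^2$; a standard computation using Davydov's inequality shows $|\Var(S_n)/n - \kappa^2| = O(n^{-\eta})$ for some $\eta>0$ depending on $\rho,\delta$. Since this correction is of strictly smaller order than the target rate, I can normalize so that $S_n/\sqrt{n\kappa^2}$ has variance one up to a negligible error, and from now on write $\varphi_n(t):=\E\bigl[\exp(it S_n/\sqrt{n\kappa^2})\bigr]$.

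Second, apply Esseen's smoothing lemma: for any $T>0$,
$$\Delta_n \;\le\; \frac{1}{\pi}\int_{-T}^{T}\frac{|\varphi_n(t)-e^{-t^2/2}|}{|t|}\,dt \;+\; \frac{C}{T}.$$
Third, partition $\{1,\dots,n\}$ into $r$ alternating big blocks $B_1,\dots,B_r$ of length $p$ and small blocks $G_1,\dots,G_r$ of length $q$, with $r(p+q)\approx n$. Write $U_j=\sum_{i\in B_j}X_i$, $V_j=\sum_{i\in G_j}X_i$. The Rosenthal-type inequality (Theorem in the appendix) gives $\bigl\|\sum V_j\bigr\|_2 \lesssim \sqrt{rq}$, so replacing $\varphi_n$ by the characteristic function of $\sum_j U_j/\sqrt{n\kappa^2}$ costs at most $C|t|\sqrt{rq/n}$. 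Next, a Volkonskii–Rozanov-type $\alpha$-mixing lemma (repeated conditioning with Davydov) yields
$$\Bigl|\,\E\!\exp\!\Bigl(it\sum_j U_j/\sqrt{n\kappa^2}\Bigr)\;-\;\prod_{j=1}^r\E\!\exp\!\bigl(itU_j/\sqrt{n\kappa^2}\bigr)\Bigr|\;\le\;16(r-1)\alpha_X(q).$$
Fourth, for the now product-form characteristic function I Taylor-expand each factor as $1-\tfrac{t^2}{2}\Var(U_j)/(n\kappa^2)+R_j(t)$, where the $c_r$-inequality and Yokoyama's moment bound (Theorem in the appendix, applicable since $\rho(2+\delta)(1+\delta)/\delta^2>2$) give $\E|U_j|^{2+\delta}\le Cp^{1+\delta/2}$, hence $|R_j(t)|\le C|t|^{2+\delta}(p/n)^{1+\delta/2}$. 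Taking logs, multiplying, and comparing with $e^{-t^2/2}$ (using also the variance convergence from step one to control $\sum_j\Var(U_j)$) delivers, for all $|t|\le T$,
$$|\varphi_n(t)-e^{-t^2/2}|\;\le\; C\Bigl(|t|\sqrt{rq/n}\,+\,r\,\alpha_X(q)\,+\,r\,|t|^{2+\delta}(p/n)^{1+\delta/2}\Bigr).$$
Finally, insert this into Esseen's bound and optimize the four free parameters $T,p,q,r$ (with $r\sim n/p$). Using $\alpha_X(q)\le C_1 q^{-\rho(2+\delta)(1+\delta)/\delta^2}$, choose $q$ so that $r\alpha_X(q)$ matches the Taylor-remainder term $rT^{2+\delta}(p/n)^{1+\delta/2}$ on the critical scale $T\asymp n^{b}$, which forces $q$ to be a small power of $p$; then choose $p=n^{a}$ balancing the remaining small-block term $T\sqrt{rq/n}$ with $1/T$. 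After some algebra the optimizer yields $T\asymp n^{(\delta/2)(\rho-1)/(\rho+1)}$ and the Esseen bound simplifies to the claimed $\Delta_n\le C_2 n^{-(\delta/2)(\rho-1)/(\rho+1)}$, with $C_2$ depending only on $C_1,\rho,\delta$ because each ingredient (Yokoyama, Rosenthal, Davydov) has constants of that same dependence.

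The main obstacle will be the final balancing calculus: the exponent $(\delta/2)(\rho-1)/(\rho+1)$ is not obvious a priori, and getting exactly this rate (rather than something slightly worse) requires that the three error contributions be equated simultaneously, which pins down $p,q,T$ up to a multiplicative constant. Tracking the dependence of all constants on $C_1,\rho,\delta$ through the Taylor expansion of a product of $r\to\infty$ factors — rather than just getting an $O(\cdot)$ estimate — is the technical heart of the argument and is where Tikhomirov's original paper devotes most of its effort.
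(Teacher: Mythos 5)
First, note that the paper does not prove this statement at all: it is quoted verbatim from the literature (Theorem 1 of Tikhomirov, 1980) as an auxiliary tool in the appendix, so the only meaningful comparison is with Tikhomirov's original argument. That argument is \emph{not} a Bernstein-blocking/Esseen-smoothing proof; it is a Stein-type characteristic-function method in which one shows that $\varphi_n(t)=\E\bigl(e^{itS_n/\sqrt{\Var(S_n)}}\bigr)$ approximately satisfies the differential equation $\varphi_n'(t)=-t\varphi_n(t)+r_n(t)$ with a remainder $r_n$ controlled via conditional expectations and the mixing coefficients, and then integrates the equation before applying the smoothing inequality. This is not a stylistic difference: your route cannot produce the stated exponent.

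Here is the concrete gap. In the big-block/small-block scheme the effective sample size is the number of big blocks $r\approx n/p$, not $n$. After decoupling, the Lyapunov ratio for the $r$ independent block sums $U_j$ (with $\E\abs{U_j}^{2+\delta}\leq Cp^{1+\delta/2}$ by Yokoyama and $\Var(\sum_j U_j)\asymp rp$) is
\begin{equation*}
\frac{\sum_{j=1}^r\E\abs{U_j}^{2+\delta}}{\bigl(\Var\bigl(\sum_{j=1}^r U_j\bigr)\bigr)^{(2+\delta)/2}}\asymp\frac{rp^{1+\delta/2}}{(rp)^{1+\delta/2}}=r^{-\delta/2},
\end{equation*}
so your Taylor-remainder/Esseen step can never do better than $r^{-\delta/2}$; equivalently, your term $r\abs{t}^{2+\delta}(p/n)^{1+\delta/2}$ integrated against $\mathrm{d}t/\abs{t}$ up to the largest admissible $T\asymp\sqrt{n/p}$ gives exactly this order. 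Since the decoupling error $r\alpha_X(q)$ forces $q\to\infty$ and hence $p\to\infty$, one has $r=o(n)$, and the balancing you describe yields an exponent of the form $\delta/\bigl(2(1+\beta)\bigr)$ with $\beta\geq\delta$; as $\rho\to\infty$ this tends to $\delta/(2(1+\delta))$, whereas the claimed exponent $(\delta/2)(\rho-1)/(\rho+1)$ tends to $\delta/2$. For example, with $\delta=1$ and geometric mixing your scheme tops out near $n^{-1/4}$ while the theorem asserts nearly $n^{-1/2}$. So the "final balancing calculus" you flag as the main obstacle is not merely delicate — the three error contributions cannot be equated at the claimed scale, and the blocking approach provably falls short. (There are also smaller issues, e.g.\ the Esseen integral of the decoupling term $r\alpha_X(q)/\abs{t}$ diverges at $t=0$ and needs a separate small-$t$ argument, but these are fixable; the rate loss is not.)
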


Lastly, we state a very useful result concerning the properties of a c\`adl\`ag function.

\begin{lemma}[Lemma 1, Section 14, in \citep{Billingsley.1968}]{
Let $f:[0,1]\rightarrow \R$ be a c\`adl\`ag function (right-continuous with left-hand limits). Then for every $\varepsilon>0$, there exist  points $0=t_0<t_1< ... <t_r=1$ such that
$$\sup_{s,t\in[t_{i-1},t_i)} \abs{f(s)-f(t)}<\varepsilon$$
for all $i=1, ..., r$.
}\end{lemma}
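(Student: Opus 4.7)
The argument is a supremum/compactness argument that simultaneously uses both defining features of a càdlàg function: right-continuity at every point and existence of the left-hand limit at every point. I would introduce the set
\[
A := \bigl\{ t \in [0,1] : \text{there exist } 0 = t_0 < t_1 < \cdots < t_r = t \text{ with } \sup_{s,u \in [t_{i-1},t_i)} |f(s) - f(u)| < \varepsilon \text{ for each } i \bigr\},
\]
so that the conclusion reduces to proving $1 \in A$. Nonemptyness is immediate from right-continuity at $0$: there is $\delta_0 > 0$ with $|f(s) - f(0)| < \varepsilon/2$ on $[0, \delta_0)$, so the one-piece partition $t_0 = 0 < t_1 = \delta_0$ shows $\delta_0 \in A$.

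Next, set $T := \sup A$ and verify that the supremum is attained, i.e.\ $T \in A$. This is the step that invokes the existence of the left limit $f(T-)$: one obtains $\delta > 0$ such that any two values of $f$ on $(T-\delta, T)$ differ by less than $\varepsilon$. Pick $s \in A$ with $T - \delta < s \leq T$, take an admissible partition of $[0,s)$, and append the single point $T$; the new final interval $[s, T)$ lies inside $(T-\delta, T)$ and therefore has oscillation below $\varepsilon$, so $T \in A$.

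Finally, I would rule out $T < 1$. If $T < 1$, right-continuity at $T$ supplies $\delta' > 0$ with $|f(u) - f(T)| < \varepsilon/2$ on $[T, T + \delta')$. Appending any point $T' \in (T, \min(T+\delta', 1)]$ to the admissible partition of $[0, T)$ ending at $T$ yields a valid partition up to $T'$, contradicting $T = \sup A$. Hence $T = 1 \in A$, and the desired partition exists.

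The only real obstacle is the interplay in the two middle steps: right-continuity alone cannot prevent $\sup A$ from being the limit of ever-closer endpoints without itself belonging to $A$—one can imagine a function accumulating jumps of size $\geq \varepsilon/2$ just below $T$. Existence of the left limit $f(T-)$ is precisely what forbids this accumulation and forces the supremum to be attained, and this is the step one must be careful to state correctly.
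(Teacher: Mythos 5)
The paper does not prove this lemma at all—it is quoted verbatim from Billingsley (1968) as an auxiliary result, and your supremum argument (show $A\neq\emptyset$ via right-continuity at $0$, show $\sup A\in A$ via existence of the left limit, rule out $\sup A<1$ via right-continuity at $\sup A$) is precisely the proof given there, correctly reproduced including the key observation that the left limit is what prevents an accumulation of large jumps just below $\sup A$. The only cosmetic repair needed is in the three continuity steps: bounding $|f(s)-f(0)|$, $|f(s)-f(T-)|$ and $|f(u)-f(T)|$ by $\varepsilon/2$ only yields $|f(s)-f(u)|<\varepsilon$ for each \emph{pair}, so the supremum over the interval is merely $\leq\varepsilon$; taking $\varepsilon/3$ (or $\varepsilon/4$) instead gives the strict inequality $\sup_{s,u}|f(s)-f(u)|<\varepsilon$ demanded by the statement.
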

As pointed out  in \citep{Billingsley.1968}, this clearly implies that there are only finitely many jumps that  exceed a given positive number. Moreover, every c\`adl\`ag function $f$ is bounded on $[0,1]$ and has at most countably many discontinuities.

\pagebreak
\section{Additional results from the simulation study and the data examples}
\subsection{Performance of the original test from Theorem \ref{Cor: Asymptotic test H} for a non-stationary variance}

Table \ref{Table: Theoretical Rejection Rates Variance varies with 0.6} shows the simulated rejection probabilities  for $n=500$ and for different variance functions with changes of magnitude $\Theta_\sigma=0.6\sqrt{2}\approx 0.85$. 

\begin{table}[H]
\centering
    \caption{
    Simulated rejection probabilities of the original test at the nominal significance level $\alpha=0.05$ for the sample size $n = 500$ under the null hypothesis $\mathbb{H}$ and various local alternatives $\mathbb{A}1$ to $\mathbb{A}5$ with effect sizes of magnitude $n^{-1/2}$ and for different data-generating processes and different variance functions.}
    \label{Table: Theoretical Rejection Rates Variance varies with 0.6}
    \begin{tabular}{|c|cccccc|}
       \hline
        &  N(0,1) & Exp(1) & AR(1), 0.4 & AR(1), 0.7 & ARMA(2,2) & GARCH(1,1) \\ \hline
               \multicolumn{7}{|c|}{$\sigma(x)=(1-0.3\sqrt{2})+0.6\sqrt{2}\cdot x$} \\\hline 
       $ \mathbb{H}$&0.058 & 0.052 & 0.080 & 0.113 & 0.057 & 0.067 \\
        $\mathbb{A}_1$ &0.434 & 0.422 & 0.488 & 0.596 & 0.430 & 0.452 \\
       $\mathbb{A}_2$  &0.427 & 0.423 & 0.473 & 0.584 & 0.433 & 0.436 \\
       $\mathbb{A}_3$  &0.908 & 0.922 & 0.931 & 0.959 & 0.910 & 0.912 \\
       $\mathbb{A}_4$  &0.791 & 0.779 & 0.825 & 0.888 & 0.784 & 0.787 \\
       $\mathbb{A}_5$  &0.341 & 0.304 & 0.394 & 0.479 & 0.330 & 0.339 \\\hline
          \multicolumn{7}{|c|}{$\sigma(x)=1+0.3\sqrt{2} \cdot \sin(4\pi x) $} \\\hline 
     $ \mathbb{H}$  &0.076 & 0.072 & 0.101 & 0.152 & 0.082 & 0.088 \\
        $\mathbb{A}_1$ &0.435 & 0.452 & 0.488 & 0.590 & 0.439 & 0.464 \\
       $\mathbb{A}_2$  &0.433 & 0.435 & 0.481 & 0.606 & 0.438 & 0.443 \\
       $\mathbb{A}_3$  &0.902 & 0.898 & 0.926 & 0.958 & 0.899 & 0.902 \\
       $\mathbb{A}_4$  &0.758 & 0.772 & 0.804 & 0.878 & 0.777 & 0.764 \\
       $\mathbb{A}_5$  &0.354 & 0.331 & 0.384 & 0.487 & 0.339 & 0.340 \\\hline
              \multicolumn{7}{|c|}{$\sigma(x)=   (1- 0.3\sqrt{2})\cdot\1_{\{0\leq x< 1/2\}}+ (1+0.3\sqrt{2})\cdot\1_{\{1/2\leq x\leq 1\}}$} \\\hline   
      $ \mathbb{H} $&0.053 & 0.056 & 0.070 & 0.107 & 0.058 & 0.058 \\
       $\mathbb{A}_1$  &0.380 & 0.355 & 0.422 & 0.531 & 0.376 & 0.386 \\
       $\mathbb{A}_2$  &0.330 & 0.346 & 0.378 & 0.484 & 0.340 & 0.342 \\
        $\mathbb{A}_3$ &0.850 & 0.872 & 0.866 & 0.920 & 0.840 & 0.850 \\
        $\mathbb{A}_4$ &0.682 & 0.675 & 0.732 & 0.819 & 0.680 & 0.690 \\
       $\mathbb{A}_5$  &0.240 & 0.198 & 0.260 & 0.368 & 0.249 & 0.234 \\\hline
    \end{tabular}
\end{table}
\pagebreak

\subsection{Performance of the simplified test from Theorem \ref{Cor: Asymptotic test, sigma=c}}
Table \ref{Table: Theoretical Results simplified procedure} reports the simulated rejection rates for the simplified test.  To facilitate a comparison, we additionally report for both tests the size-corrected empirical power in Table \ref{Table: Empirical Results both procedures}, where we use the empirical 95\% percentile of the respective test results for the same distribution and the same sample size as critical values. Moreover, Figure \ref{Figure: Asymptotic behaviour simplified procedure, empirical} shows the size-corrected empirical rejection rates of the simplified test as a function of the sample size. 

\begin{table}[H]
\centering
    \caption{Simulated rejection probabilities of the simplified test at the nominal significance level $\alpha=0.05$ for the sample sizes $n = 500, 2000$ under the null hypothesis $\mathbb{H}$ and various local alternatives $\mathbb{A}1$ to $\mathbb{A}5$ with effect sizes of magnitude $n^{-1/2}$ and for different data-generating processes.}
     \label{Table: Theoretical Results simplified procedure}
    \begin{tabular}{|c|cccccc|}
       \hline
         & N(0,1) & Exp(1) & AR(1), 0.4 & AR(1), 0.7 & ARMA(2,2) & GARCH(1,1) \\ \hline    
          \multicolumn{7}{|c|}{$n=500$} \\\hline     				
      $\mathbb{H}$ &  0.081 & 0.078 & 0.102 & 0.150 & 0.078 & 0.090 \\
     $\mathbb{A}_1$ &   0.496 & 0.497 & 0.552 & 0.643 & 0.508 & 0.528 \\
      $\mathbb{A}_2$ &  0.500 & 0.532 & 0.561 & 0.668 & 0.516 & 0.537 \\
      $\mathbb{A}_3$ &  0.942 & 0.932 & 0.947 & 0.973 & 0.939 & 0.931 \\
      $\mathbb{A}_4$ &  0.823 & 0.830 & 0.855 & 0.908 & 0.831 & 0.829 \\
      $\mathbb{A}_5$ &  0.432 & 0.418 & 0.479 & 0.569 & 0.445 & 0.438 \\\hline
        \multicolumn{7}{|c|}{$n=2000$} \\\hline   
      $\mathbb{H}$ &  0.077 & 0.071 & 0.084 & 0.137 & 0.083 & 0.082 \\
    $\mathbb{A}_1$ &    0.417 & 0.405 & 0.458 & 0.507 & 0.405 & 0.426 \\
    $\mathbb{A}_2$ &    0.604 & 0.623 & 0.641 & 0.720 & 0.618 & 0.632 \\
    $\mathbb{A}_3$ &    0.930 & 0.925 & 0.938 & 0.959 & 0.916 & 0.929 \\
    $\mathbb{A}_4$ &    0.815 & 0.816 & 0.849 & 0.887 & 0.827 & 0.834 \\
    $\mathbb{A}_5$ &    0.860 & 0.851 & 0.877 & 0.913 & 0.845 & 0.862 \\\hline 
    \end{tabular}
\end{table}
\begin{table}[H]
    \centering
    \caption{
    Simulated rejection probabilities of the original and the simplified test at the significance level $\alpha=0.05$ for the sample sizes $n = 500, 2000$ under the null hypothesis $\mathbb{H}$ and various local alternatives $\mathbb{A}1$ to $\mathbb{A}5$ with effect sizes of magnitude $n^{-1/2}$ and for different data-generating processes. For the alternatives, a size-correction at significance level $\alpha=0.05$ has been conducted.}
    \label{Table: Empirical Results both procedures}
    \begin{tabular}{|c|cccccc|}
       \hline
          &N(0,1) & Exp(1) & AR(1), 0.4 & AR(1), 0.7 & ARMA(2,2) & GARCH(1,1) \\ \hline  
          \multicolumn{7}{|c|}{Original test} \\\hline  
         \multicolumn{7}{|c|}{$n=500$} \\\hline     
    $\mathbb{H}$  &  0.079 & 0.074 & 0.085 & 0.148 & 0.072 & 0.075 \\
   $\mathbb{A}_1$     &0.407 & 0.442 & 0.426 & 0.450 & 0.425 & 0.428 \\
    $\mathbb{A}_2$      &0.412 & 0.426 & 0.434 & 0.420 & 0.432 & 0.421 \\
    $\mathbb{A}_3$    &0.901 & 0.908 & 0.917 & 0.916 & 0.896 & 0.908 \\
    $\mathbb{A}_4$     &0.748 & 0.772 & 0.776 & 0.772 & 0.765 & 0.765 \\
    $\mathbb{A}_5$    &0.298 & 0.322 & 0.313 & 0.236 & 0.325 & 0.311 \\   \hline         
        \multicolumn{7}{|c|}{$n=2000$} \\\hline  
    $\mathbb{H}$  &  0.073 & 0.074 & 0.082 & 0.122 & 0.070 & 0.082 \\
     $\mathbb{A}_1$    & 0.334 & 0.347 & 0.362 & 0.339 & 0.349 & 0.316 \\
    $\mathbb{A}_2$    &0.534 & 0.539 & 0.540 & 0.540 & 0.557 & 0.536 \\
    $\mathbb{A}_3$     &0.900 & 0.905 & 0.899 & 0.904 & 0.895 & 0.896 \\
    $\mathbb{A}_4$    &0.772 & 0.761 & 0.780 & 0.790 & 0.772 & 0.755 \\
      $\mathbb{A}_5$    &0.802 & 0.802 & 0.816 & 0.811 & 0.819 & 0.787 \\\hline            				
 \multicolumn{7}{|c|}{Simplified test} \\\hline  
  \multicolumn{7}{|c|}{$n=500$} \\\hline           				
    $\mathbb{H}$    &0.081 & 0.078 & 0.102 & 0.150 & 0.078 & 0.090 \\
     $\mathbb{A}_1$     &0.407 & 0.410 & 0.429 & 0.446 & 0.437 & 0.428 \\
     $\mathbb{A}_2$   &0.387 & 0.425 & 0.413 & 0.406 & 0.423 & 0.413 \\
     $\mathbb{A}_3$    &0.901 & 0.897 & 0.903 & 0.918 & 0.913 & 0.892 \\
    $\mathbb{A}_4$    &0.752 & 0.763 & 0.768 & 0.773 & 0.770 & 0.753 \\
      $\mathbb{A}_5$    &0.298 & 0.296 & 0.295 & 0.233 & 0.349 & 0.304 \\\hline
                \multicolumn{7}{|c|}{$n=2000$} \\\hline  
     $\mathbb{H}$   &0.077 & 0.071 & 0.084 & 0.137 & 0.083 & 0.082 \\
    $\mathbb{A}_1$      &0.346 & 0.338 & 0.354 & 0.339 & 0.320 & 0.328 \\
    $\mathbb{A}_2$    &0.534 & 0.562 & 0.541 & 0.548 & 0.527 & 0.530 \\
     $\mathbb{A}_3$    &0.902 & 0.906 & 0.899 & 0.902 & 0.882 & 0.890 \\
     $\mathbb{A}_4$   &0.767 & 0.765 & 0.786 & 0.766 & 0.770 & 0.759 \\
     $\mathbb{A}_5$     &0.814 & 0.810 & 0.814 & 0.811 & 0.790 & 0.792 \\ \hline

    \end{tabular}
\end{table}

\begin{center}
\begin{figure}[H]
 \begin{subfigure}[c]{0.49\textwidth}
  \includegraphics[width=\textwidth,  trim={0cm 0cm 0.8cm 1.9cm},clip]{Pictures/ConstVar_size_wachsendesn.pdf}
 \end{subfigure}
   \begin{subfigure}[c]{0.49\textwidth}
  \includegraphics[width=\textwidth,  trim={0cm 0cm 0.8cm 1.9cm},clip]{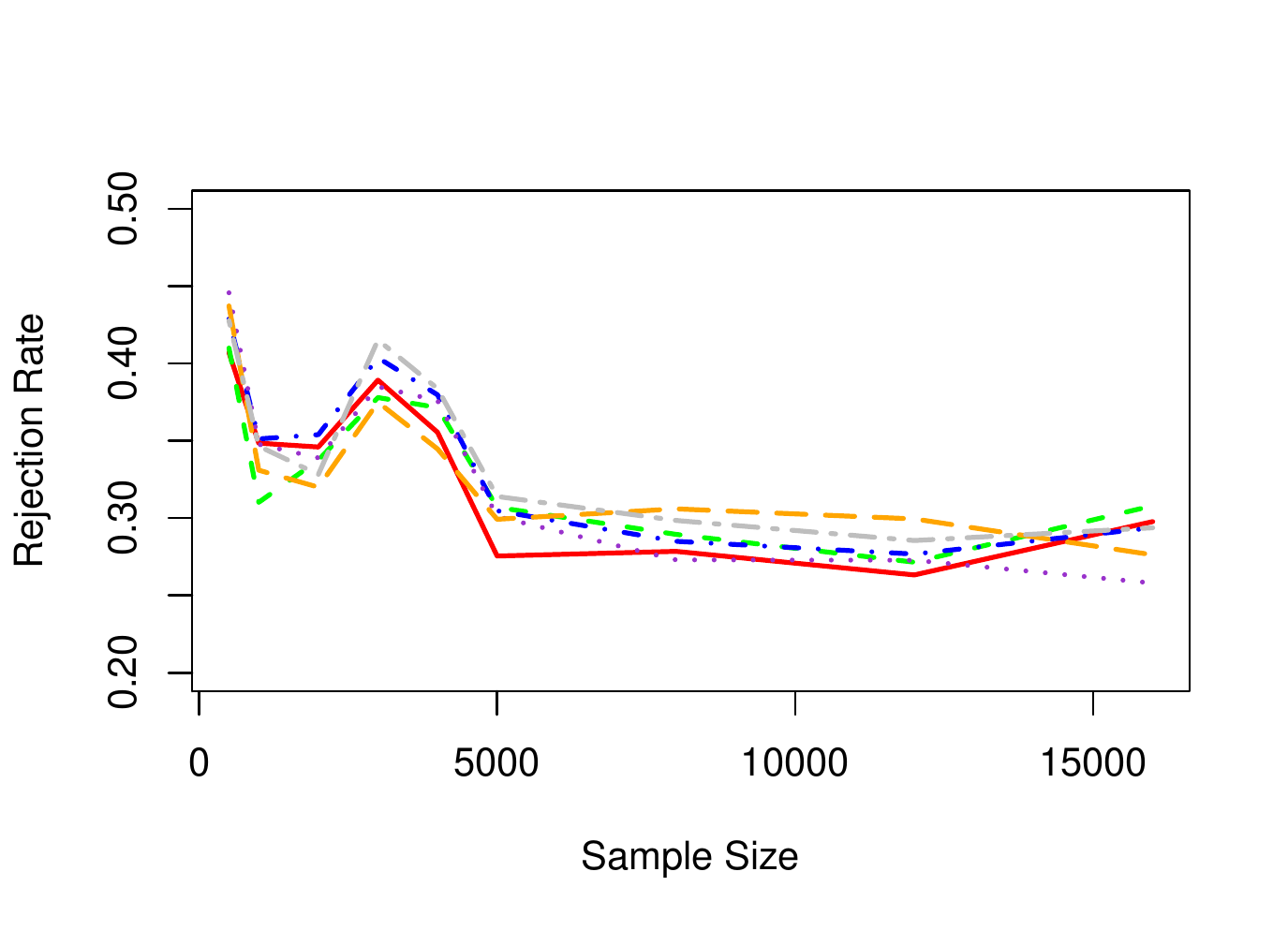}
  \end{subfigure}
   \begin{subfigure}[c]{0.49\textwidth}
  \includegraphics[width=\textwidth,  trim={0cm 0cm 0.8cm 1.9cm},clip]{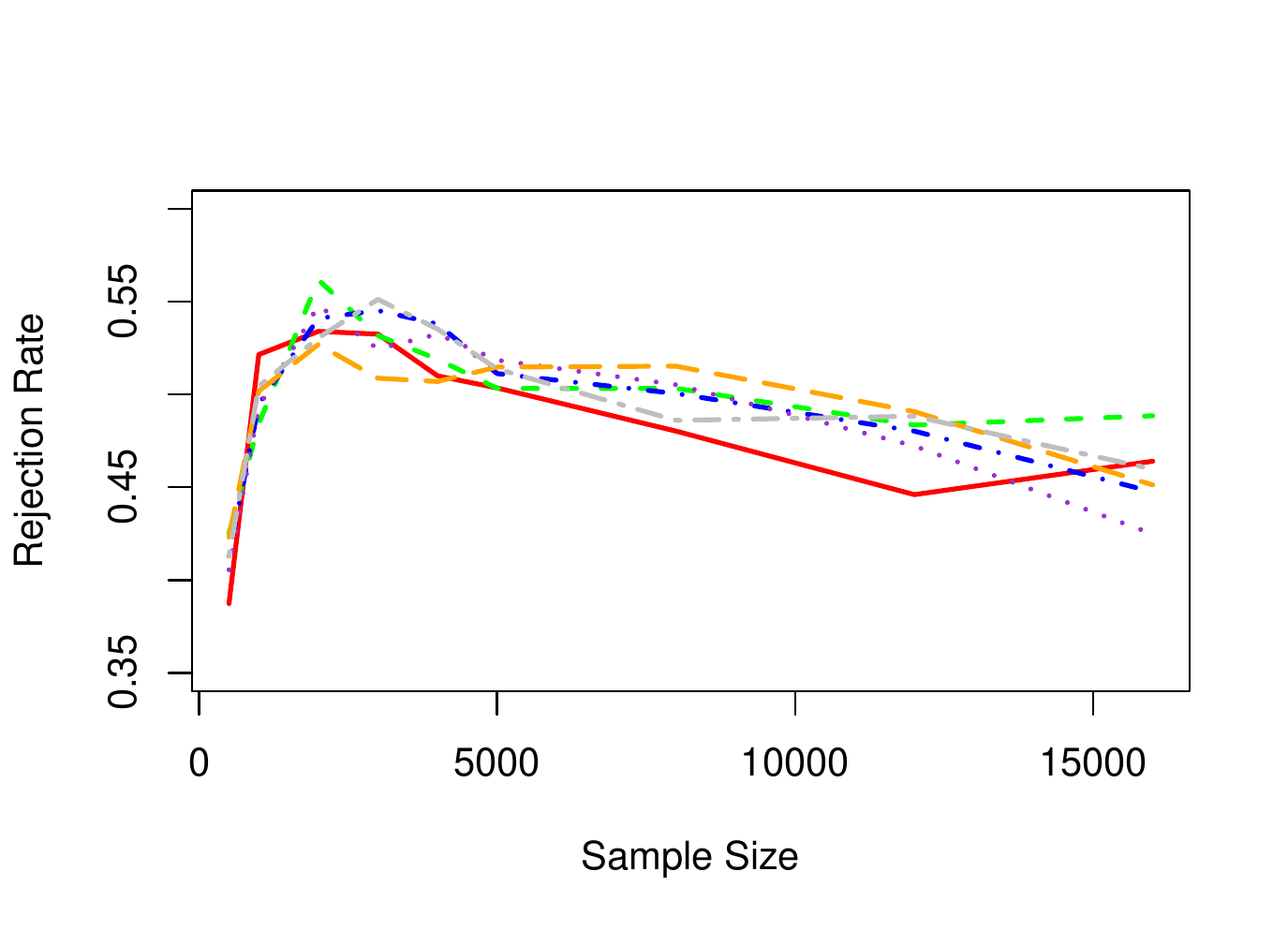}
  \end{subfigure}
   \begin{subfigure}[c]{0.49\textwidth}
  \includegraphics[width=\textwidth,  trim={0cm 0cm 0.8cm 1.9cm},clip]{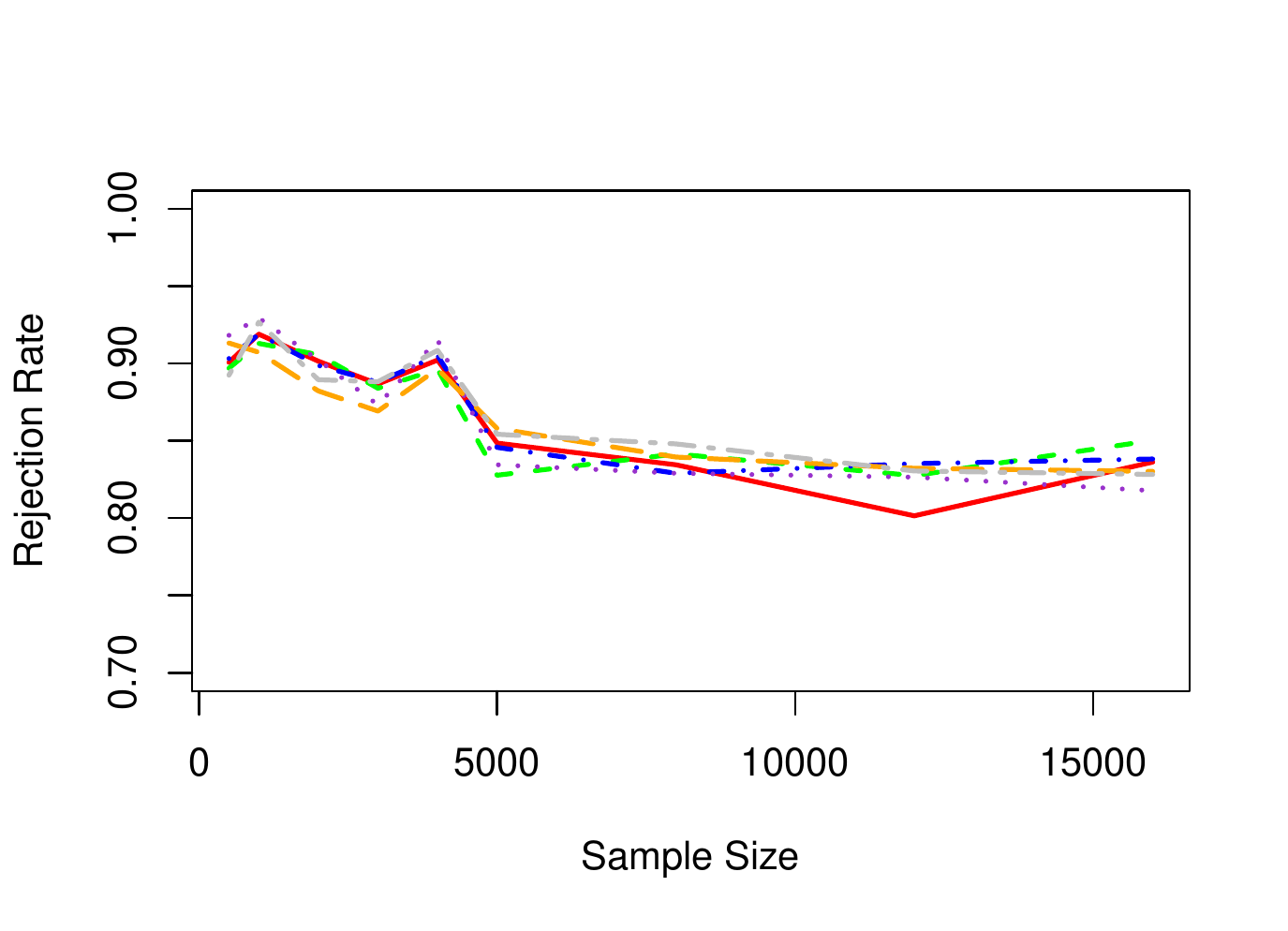}
  \end{subfigure}
     \begin{subfigure}[c]{0.49\textwidth}
  \includegraphics[width=\textwidth,  trim={0cm 0cm 0.8cm 1.9cm},clip]{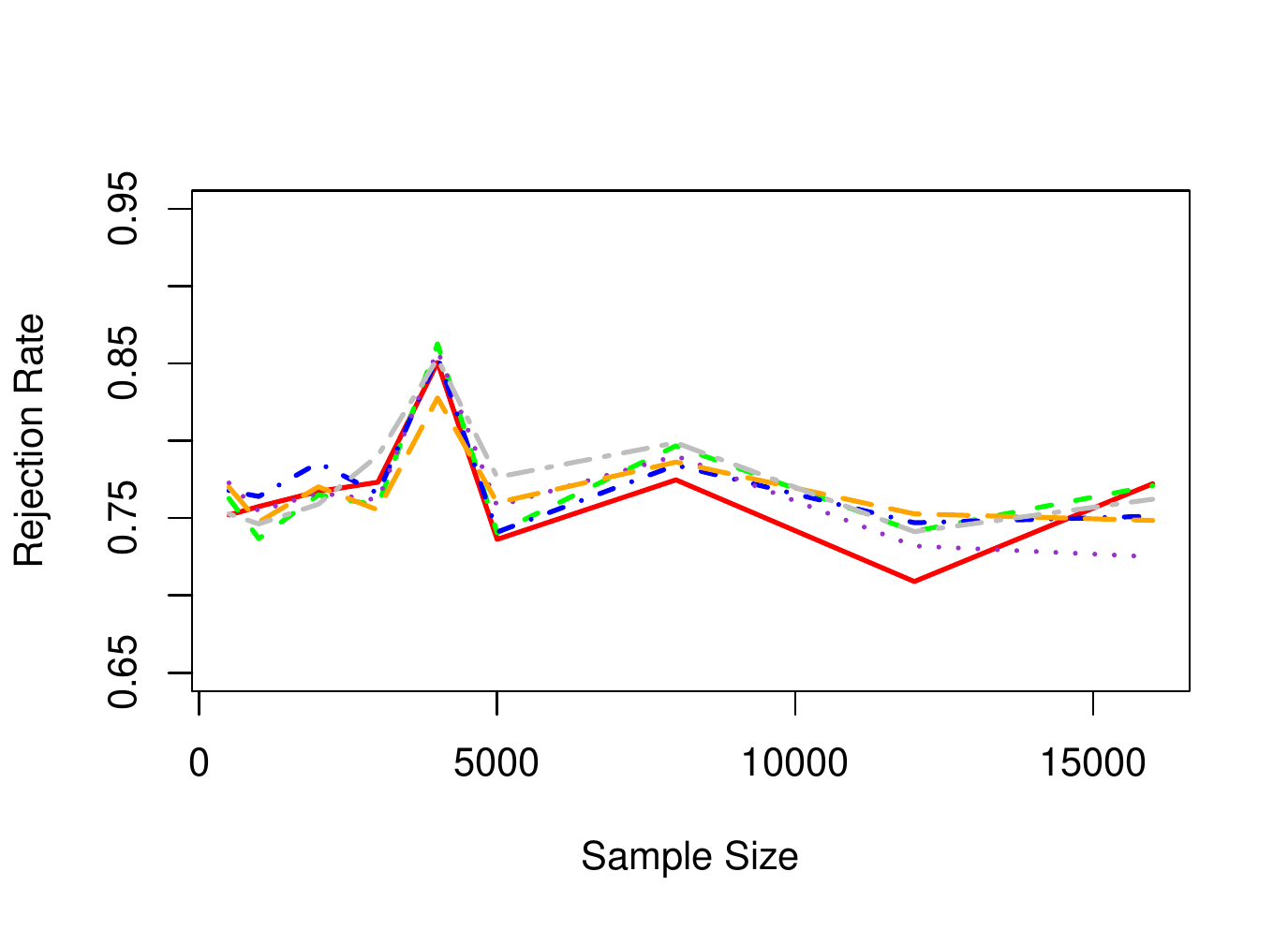}
  \end{subfigure}
     \begin{subfigure}[c]{0.49\textwidth}
  \includegraphics[width=\textwidth,  trim={0cm 0cm 0.8cm 1.9cm},clip]{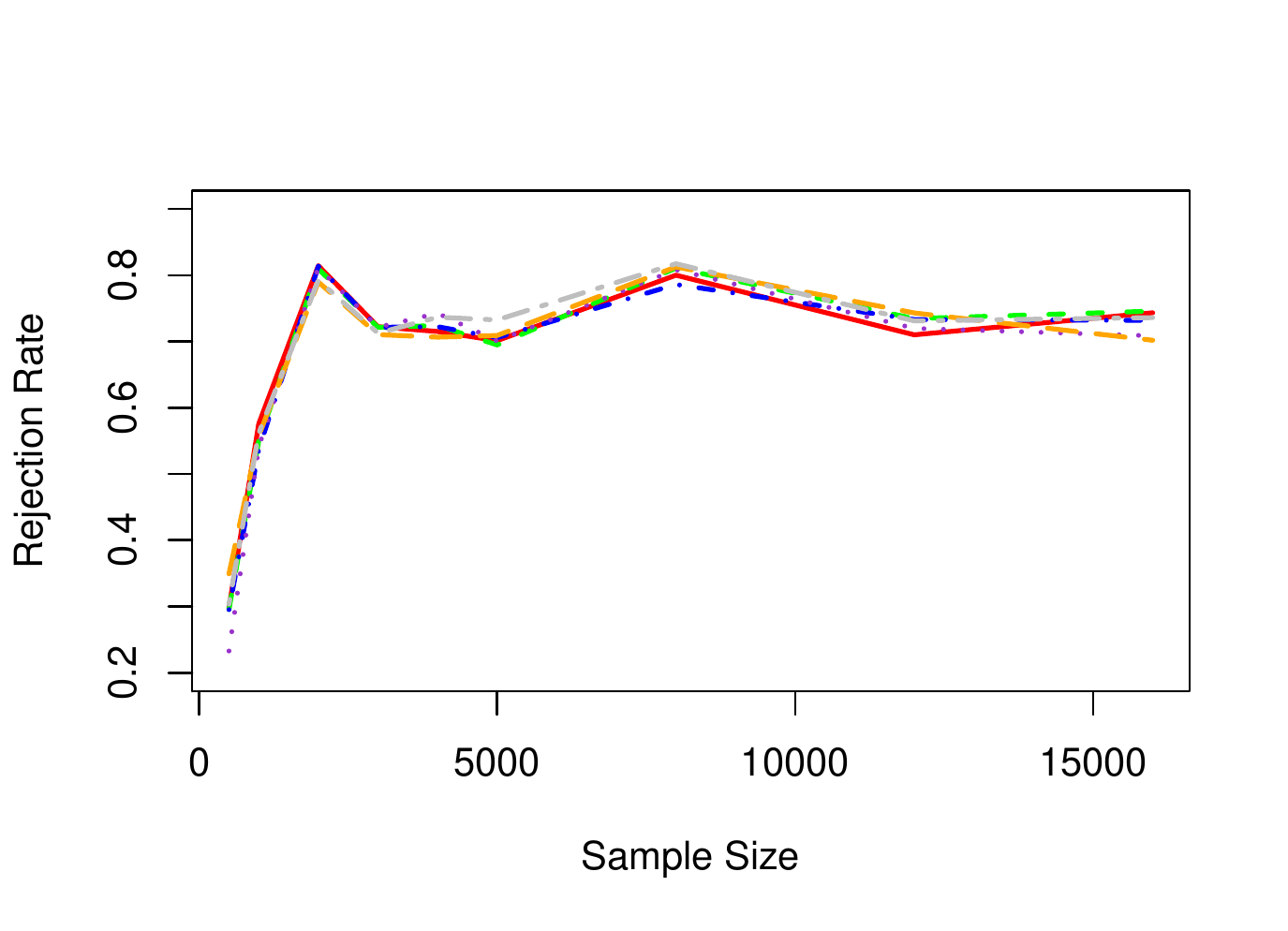}
  \end{subfigure}
   \begin{subfigure}[c]{\textwidth}

  \includegraphics[width=\textwidth, trim={0cm 7cm 0cm 6.5cm},clip]{Pictures/Legend_horizontal_thick.pdf}
  \end{subfigure}
  \caption{Empirical rejection rates of the simplified test at the significance level $\alpha=0.05$ under the hypothesis (top left) and the (local) alternatives $\mathbb{A}_1$ (top right) -$\mathbb{A}_5$ (bottom right) as a function of the sample size for different distributions of the data-generating process. For  the alternatives, a size-correction
at significance level $\alpha=0.05$ has been conducted.} 
\label{Figure: Asymptotic behaviour simplified procedure, empirical}
\end{figure}
\end{center}

\pagebreak

\subsection{Performance of the long run variance estimator}
Table \ref{Table: LRV estimator Bias RMSE both qs} shows the performance of the estimator $\hat{\kappa}_{\tilde{X}}$ for different choices $q=0.3$ and $q=0.4$ of the block length $\elln=n^q$. 

\begin{table}[H]
\caption{Simulated Bias and RMSE of $\hat{\kappa}_{\tilde{X}}$ for the mean functions represented by $\mathbb{H}$ and the fixed alternatives $\mathbb{A}'_1$-$\mathbb{A}'_5$, the sample lengths $n=500, 2000$, and for the parameter choices $q=0.3, 0.4$ with $\elln=n^q$ and $c_0=10$. The observations are standardized to yield a theoretical long run variance $\kappa_Y^2 =1$.}
\label{Table: LRV estimator Bias RMSE both qs}
{\footnotesize
\begin{tabular}{|c|rc|rc|rc|rc|rc|rc|}
    \hline
& \multicolumn{2}{|c|}{\multirow{2}{*}{N(0,1)}} & \multicolumn{2}{c|}{\multirow{2}{*}{Exp(1)}} &  \multicolumn{2}{c|}{\multirow{2}{*}{AR(1), 0.4}} &  \multicolumn{2}{c|}{\multirow{2}{*}{AR(1), 0.7}} &  \multicolumn{2}{c|}{\multirow{2}{*}{ARMA(2,2)}} &  \multicolumn{2}{c|}{\multirow{2}{*}{GARCH(1,1)}} \\
  & \multicolumn{2}{|c|}{} & \multicolumn{2}{c|}{}  & \multicolumn{2}{c|}{} & \multicolumn{2}{c|}{} & \multicolumn{2}{c|}{} & \multicolumn{2}{c|}{} \\
 & \multicolumn{1}{|c}{Bias} & RMSE &  \multicolumn{1}{c}{Bias} & RMSE &  \multicolumn{1}{c}{Bias} & RMSE & \multicolumn{1}{c}{Bias} & RMSE & \multicolumn{1}{c}{Bias} & RMSE & \multicolumn{1}{c}{Bias} & RMSE 
  \\ \hline
 \multicolumn{13}{|c|}{$n=500$, $q=0.4$}\\\hline
   $\mathbb{H}$ &   0.003 & 0.125 & -0.003 & 0.129 & -0.038 & 0.123 & -0.121 & 0.163 & -0.001 & 0.122 & -0.017 & 0.137 \\
   $\mathbb{A}'_1$ &     0.005 & 0.121 & -0.002 & 0.128 & -0.039 & 0.125 & -0.121 & 0.165 & 0.000 & 0.124 & -0.014 & 0.135 \\
   $\mathbb{A}'_2$ &     0.119 & 0.179 & 0.110 & 0.177 & 0.082 & 0.156 & 0.006 & 0.128 & 0.108 & 0.175 & 0.101 & 0.176 \\
   $\mathbb{A}'_3$  &   0.048 & 0.137 & 0.041 & 0.138 & 0.007 & 0.123 & -0.076 & 0.140 & 0.043 & 0.135 & 0.028 & 0.141 \\
    $\mathbb{A}'_4$ &   0.101 & 0.165 & 0.094 & 0.164 & 0.062 & 0.137 & -0.010 & 0.121 & 0.092 & 0.157 & 0.085 & 0.160 \\
   $\mathbb{A}'_5$ &    0.282 & 0.324 & 0.284 & 0.328 & 0.252 & 0.298 & 0.190 & 0.247 & 0.278 & 0.320 & 0.276 & 0.319 \\
 \hline
  \multicolumn{13}{|c|}{$n=2000$, $q=0.4$}\\\hline
 $\mathbb{H}$ &       0.002 & 0.079 & -0.002 & 0.081 & -0.025 & 0.081 & -0.072 & 0.102 & -0.001 & 0.079 & -0.016 & 0.086 \\
 $\mathbb{A}'_1$ &      0.003 & 0.079 & -0.004 & 0.081 & -0.021 & 0.081 & -0.071 & 0.103 & -0.002 & 0.077 & -0.013 & 0.083 \\
  $\mathbb{A}'_2$ &   0.022 & 0.080 & 0.020 & 0.083 & 0.000 & 0.076 & -0.052 & 0.089 & 0.019 & 0.081 & 0.009 & 0.084 \\
  $\mathbb{A}'_3$ &    0.034 & 0.086 & 0.028 & 0.088 & 0.006 & 0.078 & -0.041 & 0.086 & 0.027 & 0.083 & 0.016 & 0.085 \\
  $\mathbb{A}'_4$ &    0.059 & 0.101 & 0.050 & 0.098 & 0.036 & 0.087 & -0.012 & 0.080 & 0.055 & 0.098 & 0.042 & 0.095 \\
  $\mathbb{A}'_5$ &    0.128 & 0.153 & 0.124 & 0.151 & 0.106 & 0.134 & 0.059 & 0.100 & 0.124 & 0.150 & 0.118 & 0.147 \\\hline
  \multicolumn{13}{|c|}{$n=500$, $q=0.3$}\\\hline
   $\mathbb{H}$ &      0.003 & 0.086 & -0.012 & 0.093 & -0.085 & 0.116 & -0.240 & 0.250 & -0.008 & 0.085 & -0.023 & 0.102 \\
    $\mathbb{A}'_1$ &      0.002 & 0.087 & -0.011 & 0.095 & -0.086 & 0.117 & -0.242 & 0.252 & -0.008 & 0.087 & -0.021 & 0.104 \\
     $\mathbb{A}'_2$ &     0.010 & 0.086 & -0.001 & 0.095 & -0.074 & 0.108 & -0.226 & 0.237 & 0.001 & 0.086 & -0.010 & 0.102 \\
    $\mathbb{A}'_3$ &      0.013 & 0.088 & -0.004 & 0.095 & -0.073 & 0.108 & -0.227 & 0.238 & 0.003 & 0.086 & -0.010 & 0.101 \\
    $\mathbb{A}'_4$ &      0.024 & 0.092 & 0.008 & 0.095 & -0.062 & 0.103 & -0.212 & 0.225 & 0.016 & 0.088 & 0.001 & 0.100 \\
    $\mathbb{A}'_5$ &      0.046 & 0.099 & 0.034 & 0.099 & -0.038 & 0.090 & -0.182 & 0.197 & 0.038 & 0.095 & 0.025 & 0.105 \\ \hline
          \multicolumn{13}{|c|}{$n=2000$, $q=0.3$}\\\hline
   $\mathbb{H}$ &           0.001 & 0.052 & -0.009 & 0.056 & -0.058 & 0.076 & -0.168 & 0.174 & -0.011 & 0.052 & -0.021 & 0.062 \\
    $\mathbb{A}'_1$ &      0.000 & 0.052 & -0.008 & 0.057 & -0.057 & 0.077 & -0.168 & 0.174 & -0.010 & 0.053 & -0.021 & 0.061 \\
    $\mathbb{A}'_2$ &     0.001 & 0.052 & -0.007 & 0.056 & -0.055 & 0.074 & -0.167 & 0.173 & -0.009 & 0.053 & -0.022 & 0.062 \\
    $\mathbb{A}'_3$ &     0.007 & 0.054 & -0.002 & 0.057 & -0.050 & 0.071 & -0.161 & 0.167 & -0.004 & 0.053 & -0.015 & 0.063 \\
    $\mathbb{A}'_4$ &     0.015 & 0.055 & 0.005 & 0.057 & -0.043 & 0.067 & -0.151 & 0.158 & 0.002 & 0.052 & -0.007 & 0.061 \\
    $\mathbb{A}'_5$ &     0.025 & 0.060 & 0.016 & 0.059 & -0.032 & 0.060 & -0.142 & 0.149 & 0.015 & 0.055 & 0.004 & 0.059 \\    \hline    
    \end{tabular}}
\end{table}

\pagebreak
\subsection{Data example: Central England temperature data 1659-2020}
Figure  \ref{Fig: Temperature plot and CPS with pw constant mean function} shows the estimated change points in the mean of the annual central England temperatures for the significance level $\alpha=0.05$. Additionally, it depicts the estimated piecewise constant mean function one obtains by computing the sample mean on each segment between two subsequent change points.

 \begin{figure}[H]
 \includegraphics[width=\textwidth, trim={0 0 0 1.3cm}, clip]{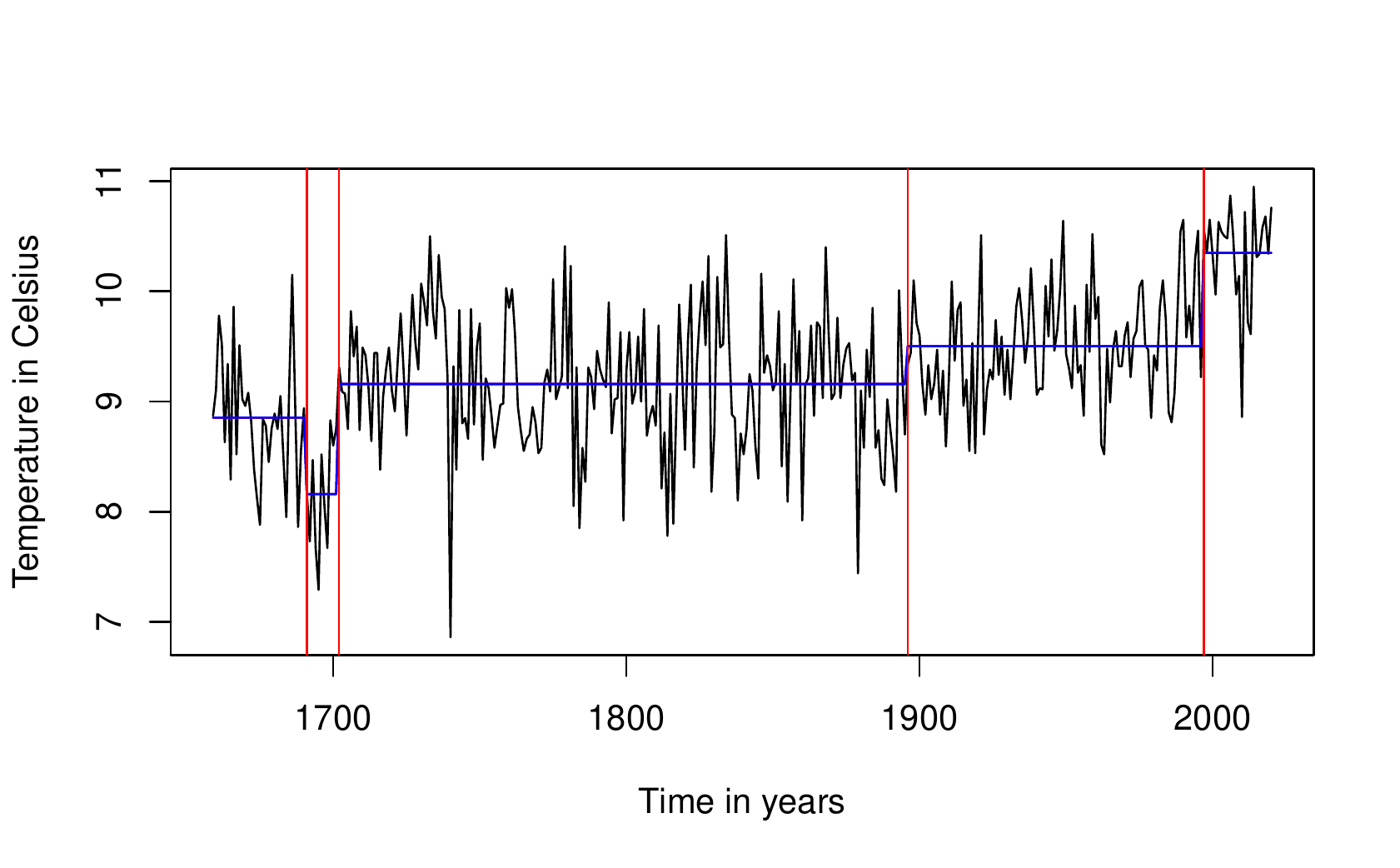}
\vspace*{-1cm}\caption{Annual central England temperatures from 1659 to 2020. Detected mean change points for $\alpha=0.05$ are marked by the red vertical lines. The estimated piecewise constant mean function is depicted by the blue line.}
\label{Fig: Temperature plot and CPS with pw constant mean function}
\end{figure}

%
%
%
%
%
%
%
%
%
%

\end{document}